\theoremstyle{plain} 
\newtheorem{theorem}{\indent\bf Theorem}[section]
\newtheorem{conjecture}[theorem]{\indent\bf Conjecture}
\theoremstyle{definition} 
\newtheorem{thm}{Theorem}[section]
\newtheorem{cor}[thm]{Corollary}
\newtheorem{lem}[thm]{Lemma}
\newtheorem{prop}[thm]{Proposition}
\theoremstyle{definition}
\newtheorem{defn}{Definition}[section]
\theoremstyle{remark}
\newtheorem{rem}{Remark}[section]
\newcommand{\be}{\begin{equation}}
	\newcommand{\ee}{\end{equation}}
\newcommand{\bea}{\begin{eqnarray}}
	\newcommand{\eea}{\end{eqnarray}}
\newcommand{\ben}{\begin{eqnarray*}}
	\newcommand{\een}{\end{eqnarray*}}
\newcommand{\bt}{\begin{split}}
	\newcommand{\et}{\end{split}}
\newcommand{\bet}{\begin{equation}}
\begin{document}
		\title[Degenerate complex Monge-Amp\`ere equation]{Degenerate complex Monge-Amp\` ere type equations on compact Hermitian manifolds and applications}
		
		%
	\author[Y. Li]{Yinji Li}
	\address{Yinji Li:  Institute of Mathematics\\Academy of Mathematics and Systems Sciences\\Chinese Academy of
		Sciences\\Beijing\\100190\\P. R. China}
	\email{1141287853@qq.com}
		\author[Z. Wang]{Zhiwei Wang}
		\address{Zhiwei Wang: Laboratory of Mathematics and Complex Systems (Ministry of Education)\\ School of Mathematical Sciences\\ Beijing Normal University\\ Beijing 100875\\ P. R. China}
		\email{zhiwei@bnu.edu.cn}
		\author[X. Zhou]{Xiangyu Zhou}
		\address{Xiangyu Zhou: Institute of Mathematics\\Academy of Mathematics and Systems Sciences\\and Hua Loo-Keng Key
			Laboratory of Mathematics\\Chinese Academy of
			Sciences\\Beijing\\100190\\P. R. China}
		\address{School of
			Mathematical Sciences, University of Chinese Academy of Sciences,
			Beijing 100049, P. R. China}
		\email{xyzhou@math.ac.cn}
		
		\begin{abstract}
We show  the existence and uniqueness of bounded solutions to the degenerate complex Monge-Amp\`ere type equations on compact Hermitian manifolds. We also study the asymptotics  of these solutions. As applications, we give
partial answers to the Tosatti-Weinkove conjecture and Demailly-P\u aun conjecture.

		\end{abstract}
		
		\thanks{}

		\maketitle
		\tableofcontents
	\section{Introduction}

In a celebrated paper \cite{Yau78} published in 1978, S.-T. Yau solved the Calabi conjecture. Since then, the complex Monge-Amp\`ere  (CMA for short) equation played a prominent role in complex geometry. Another breakthrough concerning the study of CMA equations was achieved by Bedford-Taylor \cite{BT76,BT82}. They initiated a new method for the study of very degenerate CMA equations. 
Combining these results, Kolodziej \cite{Kol98} proved the existence of solutions for equations of type 
$$(\omega+dd^c\varphi)^n=v$$
on a compact K\"ahler manifold $X$, where $\omega$ is a K\"ahler metric and $v\geq 0$ a (degenerate) density in $L^p$ for some $p>1$.  
In various geometric problems, it is necessary to consider the case where $\omega$ is merely semi-positive. This more difficult situation has been studied first by Tsuji \cite{Tsu88}, and been extensively studied recently, see e.g.  \cite{Kol03,TZ06,Zha06,Pau08,EGZ09,DP10,BEGZ10,EGZ11,EGZ17}. For more related   applications, see e.g.  \cite{Sib99,DP04,TWY15}.

The Hermitian version of Calabi conjecture has also been studied in \cite{Che87,Han96,GL10},  and ultimately in  \cite{TW10}, Tosatti-Weinkove solved the equation in full generality, with   uniform $\mathscr C^\infty$ a priori estimates on  the solution. Recently,   there are many progress on the study of CMA equations with degenerate $L^p$ density for some  $p>1$ on compact Hermitian manifolds, see e.g.  \cite{Kol05,KN15,Ngu16}.

In this paper, we push further the techniques developed so far, and obtain very general results of degenerate CMA equations on compact Hermitian manifolds.

Let (X, $\omega$) be a compact Hermitian manifold of complex  dimension $n$, with a Hermitian metric $\omega$.
Let $\beta$ be a smooth real closed $(1,1)$ form. 
A  function $u:X\rightarrow [-\infty,+\infty)$ is called quasi-plurisubharmonic, if locally $u$ can be written as the sum of a smooth function and a psh function.  A  $\beta$-plurisubharmonic ($\beta$-psh for short) is a quasi-plurisubharmonic function $u$ such that    $\beta+dd^cu\geq 0$ in the sense of currents. 
The set of  all $\beta$-psh  functions on $X$ is denoted by $\mbox{PSH}(X,\beta)$. Suppose that  there exist a function  $\rho\geq 0 \in \mbox{PSH}(X,\beta)\cap L^{\infty}(X)$. We define the $(\beta+dd^c\rho)$-psh function to be a function $v$ such that $\rho+v\in \mbox{PSH}(X,\beta)$, and denote by $\mbox{PSH}(X,\beta+dd^c\rho)$ the set of all  $(\beta+dd^c\rho)$-psh functions. 

Given a non-negative function $f\in L^p(X,\omega^n)$, $p>1$, such that $\int_Xf\omega^n=\int_X\beta^n>0$, we study  the degenerate CMA  type equations
\begin{align}\label{equ: CMA}(\beta+dd^c\varphi)^n=e^{\lambda\varphi}f\omega^n, \lambda\geq 0, \varphi\in \mbox{PSH}(X,\beta).\end{align}
Here $d=\partial+\bar\partial$, $d^c=\frac{i}{2\pi}(\bar\partial-\partial)$, and $dd^c=\frac{i}{\pi}\partial\bar\partial$.


We first solve the   equation (\ref{equ: CMA})   for the case $\lambda=0$ by establishing the following
\begin{thm}\label{thm: main 1}Let $(X,\omega)$ be a compact Hermitian manifold  of complex dimension $n$.  Let $\beta$ be a smooth real $(1,1)$-form on $X$ such that there exists  $\rho\in \mbox{PSH}(X,\beta)\cap L^\infty(X)$. Let $0\leq f\in L^p(X,\omega^n)$, $p>1$, be such that $\int_Xf\omega^n=\int_X\beta^n>0$. Then there exists a unique  real-valued function $\varphi\in \mbox{PSH}(X,\beta )\cap L^\infty(X)$, satisfying
	\[(\beta+dd^c\varphi)^n=f\omega^n\]
	in the weak sense of currents, and $\|\varphi\|_{L^\infty(X)}\leq C(X,\omega,\beta, M,\|f\|_p)$.
\end{thm}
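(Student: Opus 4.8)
The proof follows the classical scheme for degenerate complex Monge--Amp\`ere equations, adapted to the Hermitian semi-positive setting: I would produce $\varphi$ as a limit of solutions of non-degenerate equations furnished by the Tosatti--Weinkove theorem \cite{TW10}, the core being a \emph{uniform} a priori $L^\infty$ estimate of Ko\l odziej type. \emph{Step 1 (regularized equations).} Mollify to get $f_j\in\mathscr C^\infty(X)$, $f_j>0$, with $f_j\to f$ in $L^p(X,\omega^n)$, $\|f_j\|_{L^p}\le 2\|f\|_{L^p}$, and, after passing to a subsequence, $f_j\to f$ a.e.. By the Demailly-type regularization of quasi-psh functions on a compact complex manifold, pick smooth $\rho_j\downarrow\rho$ with $\beta+dd^c\rho_j\ge-\delta_j\omega$, $\delta_j\downarrow 0$, so $0\le\rho\le\rho_j\le\rho_1$. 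Fix $\varepsilon_j\downarrow 0$ with $\varepsilon_j>\delta_j$ and set $\theta_j:=\beta+\varepsilon_j\omega+dd^c\rho_j\ge(\varepsilon_j-\delta_j)\omega>0$, a smooth Hermitian metric. By \cite{TW10} there is a unique $(v_j,b_j)$ with $v_j\in\mathscr C^\infty(X)$, $\theta_j+dd^cv_j>0$, $\sup_X v_j=0$, $b_j\in\mathbb R$, solving $(\theta_j+dd^cv_j)^n=e^{b_j}f_j\omega^n$. Putting $\varphi_j:=\rho_j+v_j$ gives smooth functions with $\beta+\varepsilon_j\omega+dd^c\varphi_j\ge 0$ and
\[(\beta+\varepsilon_j\omega+dd^c\varphi_j)^n=e^{b_j}f_j\,\omega^n .\]
One checks $|b_j|\le C$: since $\beta$ is closed, $\int_X(\beta+dd^c\varphi_j)^n=\int_X\beta^n$, and together with the uniform Monge--Amp\`ere mass bounds available on compact Hermitian manifolds this controls $\int_X e^{b_j}f_j\omega^n=\int_X(\beta+\varepsilon_j\omega+dd^c\varphi_j)^n$ from above and below.

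\emph{Step 2 (the main step: uniform $L^\infty$ estimate).} I claim $\|\varphi_j\|_{L^\infty(X)}\le C$ with $C$ depending only on $X,\omega,\beta,\|\rho\|_{L^\infty}$ and $\|f\|_{L^p}$, independent of $j$. Since $0\le\rho_j\le\|\rho\|_\infty$ and $\sup_X v_j=0$, it suffices to bound $\mathrm{osc}_X\varphi_j$. This is Ko\l odziej's pluripotential argument: with respect to the Monge--Amp\`ere capacity of a fixed Hermitian metric, one combines the volume--capacity bound $\int_E f\,\omega^n\le C_0\|f\|_{L^p}\,\mathrm{Cap}(E)^{1+\alpha}$ for some $\alpha>0$ (where $f\in L^p$, $p>1$, is used) with an iteration of the comparison principle to control the sublevel sets $\{\varphi_j<\sup_X\varphi_j-s\}$. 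The non-closedness of $\omega$ introduces error terms in the comparison principle and the capacity estimates, but the reference forms $\beta+\varepsilon_j\omega$ are \emph{uniformly} almost closed ($d(\beta+\varepsilon_j\omega)=\varepsilon_j\,d\omega$, of norm $\le\varepsilon_1\|d\omega\|$), so the Hermitian machinery of \cite{Kol05,KN15,Ngu16} applies with constants uniform in $j$. The genuine obstacle is to keep everything uniform while $\beta+\varepsilon_j\omega$ \emph{loses positivity} as $j\to\infty$: the argument requires a bound $\sup_X(-u)\le C$ uniform over $u\in\mathrm{PSH}(X,\beta+\varepsilon_j\omega)$ with $\sup_X u=0$, equivalently a uniform lower bound for the capacity of sublevel sets of such $u$. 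This is exactly where the hypothesis is used: because $\mathrm{PSH}(X,\beta+\varepsilon_j\omega)\subseteq\mathrm{PSH}(X,\beta+\varepsilon_1\omega)$ and all these classes contain the \emph{fixed, bounded} function $\rho$, a Hartogs/compactness argument supplies such a bound in terms of $X,\omega,\beta,\|\rho\|_{L^\infty}$ only.

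\emph{Step 3 (passing to the limit).} With $\|\varphi_j\|_\infty\le C$, the family $\{\varphi_j\}\subset\mathrm{PSH}(X,\beta+\varepsilon_1\omega)$ is relatively compact in $L^1(X)$; extract a subsequence with $\varphi_j\to\varphi$ in $L^1$ and a.e. and $b_j\to b_\infty$. Then $\varphi$ is bounded, and since $\varphi_j\in\mathrm{PSH}(X,\beta+\varepsilon_j\omega)$ with $\varepsilon_j\to 0$ one gets $\beta+dd^c\varphi\ge-\varepsilon\omega$ for every $\varepsilon>0$, hence $\varphi\in\mathrm{PSH}(X,\beta)\cap L^\infty(X)$, $\|\varphi\|_\infty\le C$. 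For the Monge--Amp\`ere measure, set $\hat\varphi_j:=(\sup_{l\ge j}\varphi_l)^*$: uniformly bounded, decreasing to $\varphi$, and $(\beta+\varepsilon\omega)$-psh for fixed $\varepsilon>0$ once $j$ is large. For $l\ge j$ one has $(\beta+\varepsilon\omega+dd^c\varphi_l)^n\ge(\beta+\varepsilon_l\omega+dd^c\varphi_l)^n=e^{b_l}f_l\,\omega^n$, so by the lower semicontinuity of Monge--Amp\`ere under upper envelopes $(\beta+\varepsilon\omega+dd^c\hat\varphi_j)^n\ge(\inf_{l\ge j}e^{b_l}f_l)\,\omega^n$. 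Letting $j\to\infty$ (Bedford--Taylor convergence for the decreasing sequence $\hat\varphi_j$ with respect to the fixed form $\beta+\varepsilon\omega$) and using $\inf_{l\ge j}e^{b_l}f_l\uparrow e^{b_\infty}f$ a.e., one gets $(\beta+\varepsilon\omega+dd^c\varphi)^n\ge e^{b_\infty}f\,\omega^n$; letting $\varepsilon\to 0$ (in the expansion of $(\beta+\varepsilon\omega+dd^c\varphi)^n$ the terms of positive $\varepsilon$-degree have total mass $O(\varepsilon)$, as $\varphi$ is bounded) gives $(\beta+dd^c\varphi)^n\ge e^{b_\infty}f\,\omega^n$. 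Finally $\int_X(\beta+dd^c\varphi)^n=\int_X\beta^n=\int_X f\,\omega^n$ (the first equality because $\beta$ is closed and $\varphi$ bounded, the second by hypothesis), which forces $e^{b_\infty}=1$ and then equality: $(\beta+dd^c\varphi)^n=f\,\omega^n$.

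\emph{Step 4 (uniqueness).} If $\varphi,\varphi'\in\mathrm{PSH}(X,\beta)\cap L^\infty(X)$ both solve the equation, the comparison principle for $(\beta+dd^c\cdot)^n$ --- available exactly as in the K\"ahler case because $\beta$ is closed --- applied to $\varphi$ and $\varphi'$, together with the equality of the two Monge--Amp\`ere measures, forces $\varphi-\varphi'$ to be constant; after fixing the normalization this is the uniqueness in the statement, and the a priori bound is the constant from Step 2. (A Ko\l odziej-type stability estimate in this setting would give both the uniqueness and a more direct convergence argument in Step 3.)
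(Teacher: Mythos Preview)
Your outline follows the natural scheme, but Step~2 contains a genuine gap: the uniform $L^\infty$ bound on $\varphi_j$ is precisely what \emph{fails} by the direct Ko\l odziej--Hermitian argument, and this is the whole point of the paper. The constant that enters the Ko\l odziej--Nguyen machinery \cite{KN15} is not the size of $d\theta_j$ but the curvature constant $B_j$ defined by $-B_j\theta_j^2\le dd^c\theta_j\le B_j\theta_j^2$ (and similarly for $d\theta_j\wedge d^c\theta_j$). Since $dd^c\theta_j=\varepsilon_j\,dd^c\omega$ while $\theta_j\ge(\varepsilon_j-\delta_j)\omega$ only gives $\theta_j^2\ge(\varepsilon_j-\delta_j)^2\omega^2$, one is forced to take $B_j\gtrsim\varepsilon_j/(\varepsilon_j-\delta_j)^2\to\infty$. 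This $B_j$ enters the threshold $\varepsilon_0$ in \cite[Theorem~5.3]{KN15} (Theorem~\ref{thm: apriori kn} here), and the best one can extract is $\|u_j\|_{L^\infty}\le C(\log j)^n$ (Proposition~\ref{prop: linfty est uj}), not a uniform bound. Your remark that the reference forms are ``uniformly almost closed'' is true but irrelevant: the comparison principle on a Hermitian manifold sees the ratio $dd^c\theta_j/\theta_j^2$, not $dd^c\theta_j/\omega^2$.

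The paper therefore takes a substantially longer route that your proposal skips entirely. One accepts the blowing-up bound $C(\log j)^n$, proves a Monge--Amp\`ere energy comparison (Proposition~\ref{prop: compar energ}) with error terms of order $j^{-1}\|u_j\|_\infty^{3n}$ (which still vanishes thanks to the logarithmic bound), and uses it to show that the limit $\varphi$ lies in the finite-energy class $\mathcal E^1(X,\beta)$ and solves $\langle(\beta+dd^c\varphi)^n\rangle=f\omega^n$ in the non-pluripolar sense (Lemma~\ref{lem: e1 energy est phi}, Theorem~\ref{lem: conv phi}). Only \emph{afterwards} is the $L^\infty$ estimate derived, directly for the limit, via the capacity $\mathrm{Cap}_\beta$ (defined using the bounded potential $\rho$) and the comparison principle for non-pluripolar products (Proposition~\ref{prop: np comparison principle}, Lemmas~\ref{lem: bounded solution cap est level}--\ref{lem: bounded solution cap est level t}, Theorem~\ref{thm: main 2 in sect}); this works because $\beta$ itself is closed, so no Hermitian error terms appear at this stage. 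Finally, your Step~4 is too brief: the bare comparison principle does not by itself force $\varphi-\varphi'$ constant when both Monge--Amp\`ere measures equal $f\omega^n$; one needs the Dinew--BEGZ argument (Theorem~\ref{thm: uniq of lambda=0}), which constructs an auxiliary solution with a perturbed right-hand side to derive a contradiction.
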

\begin{rem} When $(X,\omega)$ is compact  K\"ahler, $\beta$ is a  closed smooth semi-positive $(1,1)$-form,  and the density is $f\beta^n$, with $\int_Xf\beta^n=\int_X\beta^n>0$,  the same result was obtained by  Eyssidieux-Guedj-Zeriahi \cite{EGZ09}. Moreover, it is proved in \cite{EGZ09} that the solution  $\varphi$ can be continuous if $(X,\{\beta\})$ satisfies the continuous approximation property, i.e. for every $\beta$-psh function $\varphi$, there is a decreasing sequece of continuous $\beta$-psh functions converging to $\varphi$ pointwise.
	When $(X,\omega)$ is compact K\"ahler, $\beta$ is a closed smooth semi-positive $(1,1)$-form, and the density $v$ is any non-negative $L\log^{n+\varepsilon}L$-density such that $\int_Xv=\int_X\beta^n>0$, the same result was obtained by Demailly-Pali \cite{DP10}.
	When $(X,\omega)$ is compact Hermitian, and $\beta$ is a closed smooth semi-positive $(1,1)$-form,  the same result was obtained by Nguyen \cite{Ngu16}, by combining the method of Kolodziej \cite{Kol98,Kol05} and  Eyssidieux-Guedj-Zeriahi \cite{EGZ09,EGZ11}.

Theorem \ref{thm: main 1} in K\"ahler setting can be applied to construct various  singular K\"ahler-Einstein metrics on K\"ahler spaces, see  e.g. \cite{TZ06,Zha06,EGZ09,BEGZ10,DP10}.
	\end{rem}

The proof of Theorem \ref{thm: main 1} is much involved. We first use  Demailly's celebrated regularization theorem to get a decreasing sequence of smooth $\rho_j$, such that $\beta_j:=\beta+dd^c\rho_j+\frac{1}{j}\omega\geq \frac{1}{2j^2}\omega$, then solve (by Kolodziej-Nguyen's theorem  \cite{KN15}) the CMA equations $(\beta_j+dd^cu_j)^n=c_jf\omega^n$, with $\beta_j+dd^cu_j\geq 0$ in the weak sense of currents, with  $\sup_Xu_j=0$. By exploring a series of capacity estimates, energy estimates, we get a solution in the non-pluripolar sense (introduced by Boucksom-Eyssidieux-Guedj-Zeriahi  \cite{BEGZ10}). Then  we  derive  $L^\infty$-esimates  (inspired by Eyssidieux-Guedj-Zeriahi \cite{EGZ09}) to the non-pluripolar solution, thus prove that $\rho_j+u_j$ (upto choosing a subsequence) converges to the desired solution $\varphi$. Finally, we (following Dinew \cite{Din09-2} and Boucksom-Eyssidieux-Guedj-Zeriahi  \cite{BEGZ10}) use the comparison principle to prove the uniqueness of the solution.
	
\begin{rem} If the class $\{\beta\}$ is only assumed to be pseudo-effective (without the assumption of the existence of bounded potential $\rho$), then we can not define the non-pluripolar product for closed positive $(1,1)$-currents in the class $\{\beta\}$, and the method in this paper does not work.
\end{rem}

Second, we solve the equation (\ref{equ: CMA}) for  the case $\lambda>0$ by establishing the following
\begin{thm}\label{thm: main 2}
	Let $(X,\omega)$ be a compact Hermitian manifold  of complex dimension $n$.  Let $\beta$ be a smooth real $(1,1)$-form on $X$ such that there exists  $\rho\in \mbox{PSH}(X,\beta)\cap L^\infty(X)$, and $\int_X\beta^n>0$. Let $0\leq f\in L^p(X,\omega^n)$, $p>1$, be such that $\int_Xf\omega^n>0$. Let $\lambda>0$ be a positive number.  Then there is a unique real-valued function $\varphi\in \mbox{PSH}(X,\beta )\cap L^\infty(X)$, satisfying
	\[(\beta+dd^c\varphi)^n=e^{\lambda\varphi} f\omega^n\]
	in the weak sense of currents, and $\|\varphi\|_{L^\infty(X)}\leq C(X,\omega,\beta, M,\|f\|_p)$.
\end{thm}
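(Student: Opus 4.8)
\emph{Strategy.} The plan is to deduce Theorem~\ref{thm: main 2} from Theorem~\ref{thm: main 1} by a fixed-point argument, exploiting that the factor $e^{\lambda\varphi}$ makes the equation self-normalizing, so that no compatibility between $\int_X f\omega^n$ and $\int_X\beta^n$ is needed. Write $V:=\int_X\beta^n>0$ and $m:=\int_X f\omega^n>0$.

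\emph{Step 1: the a priori $L^\infty$ estimate.} This is the heart of the matter. Let $\varphi\in\mathrm{PSH}(X,\beta)\cap L^\infty(X)$ solve $(\beta+dd^c\varphi)^n=e^{\lambda\varphi}f\omega^n$. To bound $\sup_X\varphi$ from above: since $\beta$ is closed and $\varphi$ is bounded, Stokes' theorem gives $\int_X e^{\lambda\varphi}f\omega^n=\int_X(\beta+dd^c\varphi)^n=V$; writing $\psi:=\varphi-\sup_X\varphi\le 0$, a $\beta$-psh function with $\sup\psi=0$ and hence lying in a fixed $L^1$-compact family, Hölder's inequality gives $\int_X\psi\,f\omega^n\ge-\|\psi\|_{L^{p'}(\omega^n)}\|f\|_p\ge-C_0$ uniformly, and Jensen's inequality for the probability measure $f\omega^n/m$ yields $V/m=\int_X e^{\lambda\varphi}\tfrac{f\omega^n}{m}\ge e^{\lambda(\sup_X\varphi-C_0/m)}$, i.e. $\sup_X\varphi\le C_1(X,\omega,\beta,\|f\|_p)$. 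Then $(\beta+dd^c\varphi)^n=e^{\lambda\varphi}f\omega^n\le e^{\lambda C_1}f\omega^n$, so $\varphi$ solves a Monge--Amp\`ere equation with right-hand side density bounded in $L^p$ by $e^{\lambda C_1}\|f\|_p$; feeding this, together with the two-sided bound on $\sup_X\varphi$ (the lower bound $\sup_X\varphi\ge\frac1\lambda\log(V/m)$ being immediate from $\int_X e^{\lambda\varphi}f\omega^n=V$), into the pluripotential $L^\infty$ estimate established in the proof of Theorem~\ref{thm: main 1} gives $\|\varphi\|_{L^\infty}\le C_2(X,\omega,\beta,M,\|f\|_p)$. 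Because $\lambda>0$ makes the density even smaller on the sublevel sets $\{\varphi<-t\}$, this estimate is no harder --- indeed softer --- than the one for $\lambda=0$.

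\emph{Step 2: existence.} For $s\in[0,1]$ and $u\in\mathrm{PSH}(X,\beta)\cap L^\infty(X)$, let $v_{s,u}$ be the solution furnished by Theorem~\ref{thm: main 1} of
\[
(\beta+dd^c v_{s,u})^n=\frac{V}{\int_X e^{s\lambda u}f\omega^n}\,e^{s\lambda u}f\,\omega^n
\]
(the right-hand side has total mass $V$, so Theorem~\ref{thm: main 1} applies for every such $u$), and set $\mathcal{G}_s(u):=v_{s,u}-\frac1\lambda\log\!\big(\int_X e^{\lambda v_{s,u}}f\omega^n/V\big)$. The subtracted constant is arranged so that any fixed point $u_*=\mathcal{G}_s(u_*)$ satisfies $e^{\lambda u_*}=\tfrac{V}{\int_X e^{\lambda v_*}f\omega^n}e^{\lambda v_*}$ with $v_*=v_{s,u_*}$, whence $\int_X e^{\lambda u_*}f\omega^n=V$; hence $(\beta+dd^c u_*)^n=(\beta+dd^c v_*)^n=\tfrac{V}{\int_X e^{s\lambda u_*}f\omega^n}e^{s\lambda u_*}f\omega^n$, and at $s=1$ the normalizing factor is $1$, so a fixed point of $\mathcal{G}_1$ solves the equation. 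Now $\mathcal{G}_0$ is a constant map, $(s,u)\mapsto\mathcal{G}_s(u)$ is continuous --- here one invokes the stability of solutions of the degenerate Hermitian Monge--Amp\`ere equation (continuity of $g\mapsto v$ in Theorem~\ref{thm: main 1}), which follows from uniqueness together with the $L^1$-compactness of bounded $\beta$-psh families and the standard convergence lemma for Monge--Amp\`ere measures --- and $\mathcal{G}_s$ is compact, its image lying in a bounded (hence $L^1$-compact) family of $\beta$-psh functions up to bounded constants. Finally the argument of Step~1 applies to any fixed point of any $\mathcal{G}_s$ (the identity $\int_X e^{\lambda u_*}f\omega^n=V$ needed for the upper bound is here purely algebraic), giving a bound uniform in $s$; Leray--Schauder's fixed-point theorem then produces a fixed point of $\mathcal{G}_1$, hence a solution $\varphi\in\mathrm{PSH}(X,\beta)\cap L^\infty(X)$ with the stated estimate. (Alternatively one can rerun the Demailly-regularization/Kolodziej--Nguyen approximation scheme used for Theorem~\ref{thm: main 1}, replacing $c_jf\omega^n$ by $e^{\lambda\psi_j}f\omega^n$; Step~1 then supplies the uniform bound needed to pass to the limit.)

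\emph{Step 3: uniqueness, and the main obstacle.} If $\varphi_1,\varphi_2$ are bounded solutions, the comparison principle (as set up for Theorem~\ref{thm: main 1}) gives $\int_{\{\varphi_2<\varphi_1\}}(\beta+dd^c\varphi_1)^n\le\int_{\{\varphi_2<\varphi_1\}}(\beta+dd^c\varphi_2)^n$, i.e. $\int_{\{\varphi_2<\varphi_1\}}(e^{\lambda\varphi_1}-e^{\lambda\varphi_2})f\omega^n\le 0$; since $\lambda>0$ the integrand is $\ge 0$, so $f=0$ a.e. on $\{\varphi_1>\varphi_2\}$, whence $(\beta+dd^c\varphi_1)^n\equiv0$ there, and the domination principle forces that set to be negligible; by symmetry $\varphi_1=\varphi_2$. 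The only genuinely new analytic ingredient is the a priori estimate of Step~1, and within it the lower bound, which rests on the pluripotential estimates developed for Theorem~\ref{thm: main 1}; the self-normalization and the upper bound are soft. A secondary, by-now-standard technical point is the stability of the Hermitian Monge--Amp\`ere solution operator (and the routine truncation/envelope step needed to phrase $\mathcal{G}_s$ on a Banach space) used in Step~2.
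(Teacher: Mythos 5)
Your proposal is correct and follows essentially the same route as the paper: a Jensen-type bound on the normalizing constant $\int_Xe^{\lambda\varphi}f\omega^n$ combined with the $L^\infty$ estimate from Theorem \ref{thm: main 1}, a Schauder-type fixed-point argument whose continuity/compactness rests on $L^1$-compactness of normalized $\beta$-psh families, uniqueness of the $\lambda=0$ equation and Bedford--Taylor convergence, and uniqueness via the comparison principle plus the auxiliary-solution (domination) argument already used for $\lambda=0$. The only cosmetic differences are your use of Leray--Schauder with a homotopy parameter $s$ (superfluous, since the image of the map is already uniformly bounded, so plain Schauder on a compact convex set suffices, as in the paper) and your direct deduction that $f=0$ a.e.\ on $\{\varphi_2<\varphi_1\}$ in place of the paper's $\varepsilon$-shift.
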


\begin{rem}
	When $(X,\omega)$ is compact K\"ahler, and $\beta$ is a closed semi-positive $(1,1)$-form, the same result was obtained by Eyssidieux-Guedj-Zeriahi \cite{EGZ11,EGZ17}, using the method of viscosity solutions. 	
	When $(X,\omega)$ is compact Hermitian, and $\beta$ is a closed smooth semi-positive $(1,1)$-form,  the same result was obtained by Nguyen \cite{Ngu16}.
	\end{rem}

\begin{rem}
	In \cite{EGZ11,EGZ17}, it is proved that in the K\"ahler setting, the solution $\varphi$ in the above theorem is continuous on the ample locus of the class $\{\beta\}$, and the continuity at the boundary is left as an open question. In the Hermitian setting, one can not expect too much on the  positivities on the class $\beta$, the fact that we only get the boundedness of the solution seems to be reasonable.
	\end{rem}

The proof of Theorem \ref{thm: main 2} is inspired by   \cite{BEGZ10}. The main ingredient is the  Schauder's fixed point theorem and a comparison principle.

We also consider the evolution of the solutions for the degenerate CMA equations (\ref{equ: CMA}).

\begin{thm}\label{thm: main-3}Let $(X,\omega)$ be a compact Hermitian manifold of complex dimension $n$. Let $\{\beta\}\in H^{1,1}(X,\mathbb R)$ be a real $(1,1)$-class with smooth representative $\beta$. Assume that there is a bounded $\beta$-PSH function $\rho$, and $\int_X\beta^n>0$. Let $f\geq 0$, $f\in L^p(X,\omega^n)$, $p>1$,  such that $\int_Xf\omega^n>0$. Let $\lambda\geq 0$, and $\varphi_\lambda$ be the unique  weak solution to the CMA equation 
	\[	(\beta+dd^c \varphi_{\lambda})^n=e^{\lambda \varphi_{\lambda}+M_{\lambda}}f\omega^n,~ \varphi_\lambda\in \mbox{PSH}(X,\beta),~\sup_X \varphi_{\lambda}=0.	\]
	If for some $\lambda_0\geq 0$,  $\|\varphi_\lambda-\varphi_{\lambda_0}\|_1\rightarrow 0$ as $\lambda\rightarrow \lambda_0$, then 
	\[M_\lambda\rightarrow M_{\lambda_0}, \mbox{ and } \varphi_\lambda\rightarrow \varphi_{\lambda_0} \mbox{ uniformly on } X \mbox{ as } \lambda\rightarrow \lambda_0.\]
\end{thm}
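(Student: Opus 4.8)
The plan is to determine the constant $M_\lambda$ explicitly, deduce its convergence by soft arguments, and then upgrade the $L^1$-convergence of $\varphi_\lambda$ to uniform convergence using the a priori $L^\infty$ bound together with a stability estimate.

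First, since $\beta$ is $d$-closed and $\varphi_\lambda\in\mbox{PSH}(X,\beta)\cap L^\infty(X)$, the Monge--Amp\`ere measure $(\beta+dd^c\varphi_\lambda)^n$ has full mass, $\int_X(\beta+dd^c\varphi_\lambda)^n=\int_X\beta^n$ (valid for bounded $\beta$-psh potentials because $\beta$ is $d$-closed); integrating the equation then pins $M_\lambda$ down as
\[ M_\lambda=\log\!\int_X\beta^n-\log\!\int_X e^{\lambda\varphi_\lambda}f\,\omega^n . \]
To get $M_\lambda\to M_{\lambda_0}$, I would take an arbitrary sequence $\lambda_k\to\lambda_0$: by hypothesis $\varphi_{\lambda_k}\to\varphi_{\lambda_0}$ in $L^1(X,\omega^n)$, so along a subsequence $\varphi_{\lambda_k}\to\varphi_{\lambda_0}$ a.e.; since $\varphi_{\lambda_k}\le\sup_X\varphi_{\lambda_k}=0$ the integrands $e^{\lambda_k\varphi_{\lambda_k}}$ are bounded by $1$, and dominated convergence gives $\int_X e^{\lambda_k\varphi_{\lambda_k}}f\,\omega^n\to\int_X e^{\lambda_0\varphi_{\lambda_0}}f\,\omega^n$, a limit which is $\ge e^{-\lambda_0\|\varphi_{\lambda_0}\|_{L^\infty}}\int_X f\,\omega^n>0$. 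As this limit does not depend on the subsequence, the whole family converges; hence $M_\lambda\to M_{\lambda_0}$, and in particular $M_\lambda$ is bounded near $\lambda_0$.

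Next I would set $g_\lambda:=e^{\lambda\varphi_\lambda+M_\lambda}f$, so that $(\beta+dd^c\varphi_\lambda)^n=g_\lambda\,\omega^n$ with $\int_X g_\lambda\,\omega^n=\int_X\beta^n$. Because $\varphi_\lambda\le0$ and $M_\lambda$ is bounded near $\lambda_0$, the densities obey $\|g_\lambda\|_{L^p}\le e^{\sup M_\lambda}\|f\|_{L^p}\le C$ uniformly, so the a priori bound of Theorem~\ref{thm: main 1} (applied to the equation with density $g_\lambda$ and normalization $\sup_X\varphi_\lambda=0$) yields a uniform estimate $\|\varphi_\lambda\|_{L^\infty(X)}\le C'$ for $\lambda$ near $\lambda_0$. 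Using this uniform bound, $M_\lambda\to M_{\lambda_0}$, and $\varphi_\lambda\to\varphi_{\lambda_0}$ a.e.\ along subsequences, dominated convergence then gives $g_\lambda\to g_{\lambda_0}$ in $L^p(X,\omega^n)$ (again, subsequence-independence of the limit upgrades this to convergence of the whole family).

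Finally, both $\varphi_\lambda$ and $\varphi_{\lambda_0}$ are normalized bounded solutions of Monge--Amp\`ere equations $(\beta+dd^c\cdot)^n=g\,\omega^n$ with uniformly bounded $\|g\|_{L^p}$, and I would invoke the stability estimate for these degenerate equations — which, as in the uniqueness part of Theorem~\ref{thm: main 1}, follows from the comparison principle combined with the capacity and energy estimates, in the spirit of \cite{Kol98,EGZ09,Ngu16} — in the form $\|\varphi_\lambda-\varphi_{\lambda_0}\|_{L^\infty(X)}\le F\big(\|g_\lambda-g_{\lambda_0}\|_{L^p(X,\omega^n)}\big)$ with $F(t)\to 0$ as $t\to 0^+$. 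Together with the preceding step, this gives $\varphi_\lambda\to\varphi_{\lambda_0}$ uniformly on $X$. I expect the main obstacle to be precisely this last step: on a non-K\"ahler Hermitian background the comparison principle and the capacity estimates carry torsion correction terms, so the stability estimate has to be arranged carefully — it is essentially the same technology already required for the uniqueness statement in Theorem~\ref{thm: main 1}, whereas the remaining steps are soft (dominated convergence and the a priori bound).
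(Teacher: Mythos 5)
Your first three steps are sound and essentially parallel the paper's: integrating the equation gives the explicit formula $M_\lambda=\log\int_X\beta^n-\log\int_Xe^{\lambda\varphi_\lambda}f\,\omega^n$ (the full-mass identity for bounded potentials of a closed form is indeed used throughout the paper), dominated convergence then yields $M_\lambda\to M_{\lambda_0}$ directly from the $L^1$ hypothesis (this is in fact cleaner than the paper, which first gets a uniform two-sided bound on $M_\lambda$ from Lemma \ref{prop: bound mlambda} and only deduces the convergence of $M_\lambda$ at the very end), and the uniform bound $\|\varphi_\lambda\|_{L^\infty}\le C'$ follows from Theorem \ref{thm: main 2 in sect} exactly as you say.

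The genuine gap is the final step. You invoke a stability estimate of the form $\|\varphi_\lambda-\varphi_{\lambda_0}\|_{L^\infty}\le F\bigl(\|g_\lambda-g_{\lambda_0}\|_{L^p}\bigr)$, i.e.\ closeness of the \emph{densities} in $L^p$ controlling closeness of the solutions in $L^\infty$. No such estimate is established in this paper, and it does not ``follow from the comparison principle'' by the same technology as uniqueness: density-stability of Kolodziej type is strictly harder than uniqueness (even in the K\"ahler case its proof requires first a quantitative $L^1$-stability of solutions in terms of the densities, which is a separate and delicate argument), and in the present Hermitian setting with only a bounded potential $\rho$ it is not available. Moreover you do not need it: you already have $\varphi_\lambda\to\varphi_{\lambda_0}$ in $L^1$ by hypothesis and a uniform $L^\infty$ bound, hence $\|\varphi_\lambda-\varphi_{\lambda_0}\|_{L^2}^2\le 2C'\|\varphi_\lambda-\varphi_{\lambda_0}\|_{L^1}\to 0$. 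The estimate actually needed, and the one the paper proves (Proposition \ref{prop: stabi}, via Lemma \ref{lem: evo sup cap est}, Lemma \ref{lem: bounded solution cap est level} and the iteration Lemma \ref{lem: egz func}), is a stability bound in terms of the $L^2$-distance of the \emph{solutions}, $\|\varphi-\psi\|_{L^\infty}\le C\|\varphi-\psi\|_{L^2}^{\gamma}$, valid whenever both Monge--Amp\`ere measures are dominated by $L^p$ densities with controlled norms. Replacing your density-stability step by this solution-stability estimate (which you would still have to prove, but which genuinely is within reach of the capacity machinery, unlike the estimate you cite) closes the argument; as written, the proof rests on an unproven and substantially stronger result.
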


The above solution to degenerate CMA equations  can be used to study   the following two conjectures, say the Tosatti-Weinkove conjecture and the Demailly-P\u aun conjecture.

\begin{conjecture}[The Tosatti-Weinkove conjecture]\label{conj: tw conj} 
	Let 	X be a compact n-dimensional complex manifold. Suppose $\{\beta\} \in H^{1,1}(X,\mathbb{R})$, which is nef and satisfies $\int_X\beta^n>0$. Fix  $x_1,...,x_N \in X$   and choose  positive real numbers  $\tau_1,...,\tau_N$ so that 
	$$\sum_{i=1}^{N}\tau_i^n <\int_X \beta^n.$$
	Then there exists a $\beta$-\mbox{PSH} function $\varphi$ with logarithmic poles at $x_1,...,x_N$:
	$$\varphi(z)\leq \tau_j log|z|+O(1),$$
	in a coordinate neighbourhood $(z_1,...,z_n)$ centered at $x_i$, where $|z|^2=|z_1|^2+...+|z_n|^2.$	
\end{conjecture}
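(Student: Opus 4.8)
The plan is to lift the problem to the blow-up of $X$ at the points $x_i$, where the degenerate Monge--Amp\`ere theory of Theorem \ref{thm: main 1} applies, and then push the solution back down. Let $\pi\colon\widetilde X\to X$ be the blow-up at $x_1,\dots,x_N$, with pairwise disjoint exceptional divisors $E_1,\dots,E_N$, each $E_i\cong\mathbb P^{n-1}$ with normal bundle $\mathcal O_{\mathbb P^{n-1}}(-1)$ (so $E_i^n=(-1)^{n-1}$), and note that $\pi$ restricts to a biholomorphism between $\widetilde X\setminus\bigcup_iE_i$ and $X\setminus\{x_1,\dots,x_N\}$. Fix a Hermitian metric $\widetilde\omega$ on $\widetilde X$, smooth Hermitian metrics $h_i$ on $\mathcal O_{\widetilde X}(E_i)$ with curvature forms $\theta_i$ (smooth real closed $(1,1)$-forms with $\{\theta_i\}=\{E_i\}$), and the canonical sections $s_i$ with $\mathrm{div}(s_i)=E_i$, so that $dd^c\log|s_i|_{h_i}=[E_i]-\theta_i$. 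Since the inequality in Conjecture \ref{conj: tw conj} is strict, choose $a_i$ with $\tau_i\le a_i$ and $\sum_i a_i^n<\int_X\beta^n$ (for instance $a_i=\tau_i$), and set $\widehat\beta:=\pi^*\beta-\sum_i a_i\theta_i$, a smooth real $(1,1)$-form on $\widetilde X$ in the class $\pi^*\{\beta\}-\sum_i a_i\{E_i\}$. Using $\pi^*\beta|_{E_i}=0$ and $E_i^n=(-1)^{n-1}$ one gets
\[
\int_{\widetilde X}\widehat\beta^{\,n}=\int_X\beta^n-\sum_i a_i^n>0 .
\]

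Assume for the moment that $\widehat\beta$ admits a bounded potential, i.e.\ that there is $\rho\in\mathrm{PSH}(\widetilde X,\widehat\beta)\cap L^\infty(\widetilde X)$; this is the crucial point and is discussed below. Taking the constant density $f:=\int_{\widetilde X}\widehat\beta^{\,n}\big/\int_{\widetilde X}\widetilde\omega^{\,n}>0$ (which lies in $L^p(\widetilde X,\widetilde\omega^n)$ for all $p$ and satisfies $\int_{\widetilde X}f\,\widetilde\omega^n=\int_{\widetilde X}\widehat\beta^{\,n}$), Theorem \ref{thm: main 1} applied on $(\widetilde X,\widetilde\omega)$ with the form $\widehat\beta$ yields $u\in\mathrm{PSH}(\widetilde X,\widehat\beta)\cap L^\infty(\widetilde X)$ with $(\widehat\beta+dd^cu)^n=f\,\widetilde\omega^n$. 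Put
\[
w:=u+\sum_i a_i\log|s_i|_{h_i}\qquad\text{on }\widetilde X .
\]
Then $\pi^*\beta+dd^cw=(\widehat\beta+dd^cu)+\sum_i a_i[E_i]\ge 0$, so $w\in\mathrm{PSH}(\widetilde X,\pi^*\beta)$, and $w\in L^1(\widetilde X)$ since $u$ is bounded and each $\log|s_i|_{h_i}$ is quasi-psh. On a neighbourhood of $E_i$ disjoint from the other $E_k$, both $u$ and the $\log|s_k|_{h_k}$ ($k\ne i$) are bounded, and in local coordinates $z$ centred at $x_i$ a chart computation on the blow-up gives $\log|s_i|_{h_i}\le(\log|z|)\circ\pi+O(1)$; hence $w\le a_i\,(\log|z|)\circ\pi+O(1)$ there. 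Transporting $w$ through the biholomorphism $\pi$ produces a $\beta$-psh function on $X\setminus\{x_1,\dots,x_N\}$ which tends to $-\infty$ at each $x_i$, hence (a $\beta$-psh function bounded above near a point extends across it) extends to some $\varphi\in\mathrm{PSH}(X,\beta)$, with $\varphi\not\equiv-\infty$ because $w\in L^1$. Near $x_i$ the estimate descends to $\varphi(z)\le a_i\log|z|+O(1)\le\tau_i\log|z|+O(1)$ (the last step since $a_i\ge\tau_i$ and $\log|z|\le0$), which is exactly the prescribed logarithmic pole.

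So everything comes down to producing a \emph{bounded} $\widehat\beta$-psh function on $\widetilde X$, that is, to showing that the class $\pi^*\{\beta\}-\sum_i a_i\{E_i\}$ carries a closed positive current with bounded local potentials; this is the main obstacle, and the only place the hypotheses ``$\{\beta\}$ nef'' and ``$\int_X\beta^n>0$'' can enter. Because $\{-\sum_i a_iE_i\}$ is anti-effective and subtracting the $E_i$ destroys the nefness of $\pi^*\{\beta\}$, this is a genuine positivity question rather than a perturbative one, and it is precisely the kind of statement that on a compact K\"ahler manifold follows from Demailly--P\u aun's theory (a nef class of positive top self-intersection is big, and small exceptional divisors can be absorbed) but is not known on a general compact Hermitian manifold. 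Accordingly, the present results give the conjecture under the extra hypothesis that $\pi^*\{\beta\}-\sum_i\tau_i\{E_i\}$ admits a bounded potential on the blow-up --- equivalently, that the non-pluripolar Monge--Amp\`ere machinery underlying Theorem \ref{thm: main 1} is available for this class, cf.\ the remark following Theorem \ref{thm: main 1} --- and I would expect this to be checkable in favourable situations (e.g.\ $X$ in Fujiki class $\mathcal C$, or $\beta$ semi-positive with suitable positivity near the $x_i$). Establishing it for every nef $\{\beta\}$ with $\int_X\beta^n>0$ on an arbitrary compact Hermitian manifold is the difficulty that remains, and is why only a partial answer is obtained.
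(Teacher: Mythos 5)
This statement is posed in the paper as a conjecture; the paper itself establishes only the partial answer Theorem \ref{thm: main4}, under the extra hypothesis that $\{\beta\}$ carries a bounded potential $\rho$ on $X$. Your proposal also does not prove the conjecture, and the reduction you offer is essentially circular. You reduce everything to the existence of a bounded $\widehat\beta$-psh function on the blow-up, with $\widehat\beta=\pi^*\beta-\sum_i a_i\theta_i$. But once such a bounded $\rho\in\mathrm{PSH}(\widetilde X,\widehat\beta)$ is granted, the whole Monge--Amp\`ere step is superfluous: the function $\rho+\sum_i a_i\log|s_i|_{h_i}$ is already $\pi^*\beta$-psh with the required logarithmic singularities along the $E_i$, and pushing it down finishes the argument without ever invoking Theorem \ref{thm: main 1}. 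Conversely, the mere existence of a closed positive current in $\pi^*\{\beta\}-\sum_i\tau_i\{E_i\}$ already essentially encodes the conclusion of Conjecture \ref{conj: tw conj} (adding $\sum_i\tau_i[E_i]$ and pushing forward produces a current in $\{\beta\}$ with Lelong number at least $\tau_i$ at $x_i$), and demanding a \emph{bounded} potential is strictly stronger still. So the hypothesis you isolate is at least as strong as the statement you want, and your route does not even recover the paper's partial result: a bounded potential for $\beta$ on $X$ gives no control whatsoever on $\pi^*\{\beta\}-\sum_i a_i\{E_i\}$ after the exceptional classes are subtracted.

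The paper's proof of Theorem \ref{thm: main4} takes a genuinely different route that avoids the blow-up entirely, namely Demailly's mass concentration. One sets $\gamma_{j,\varepsilon}=dd^c\chi(\log(|z|/\varepsilon))$ near $x_j$, so that $\gamma_{j,\varepsilon}^n$ has unit mass concentrating at $x_j$, and solves on $X$ itself, via Theorem \ref{thm: main 1} (which needs only the bounded potential for $\beta$ on $X$), the equations $(\beta+dd^c\varphi_\varepsilon)^n=\sum_j\tau_j^n\gamma_{j,\varepsilon}^n+\delta\,\omega^n/\int_X\omega^n$ with $\sup_X\varphi_\varepsilon=0$, where $\delta=\int_X\beta^n-\sum_j\tau_j^n>0$. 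An $L^1$-limit $\varphi$ of the $\varphi_\varepsilon$ is extracted, and the logarithmic pole is obtained by applying the local Bedford--Taylor comparison principle on a coordinate ball around $x_j$ to $v=h+\varphi_\varepsilon$ and $u=\tau_j(\chi(\log(|z|/\varepsilon))+\log\varepsilon)+C_1$, yielding $\varphi_\varepsilon\le\tau_j\log(|z|+\varepsilon)+O(1)$ uniformly in $\varepsilon$. In other words, the concentrating right-hand side is precisely the mechanism that \emph{manufactures} the singularity you are assuming to exist on $\widetilde X$; that is the missing idea in your approach.
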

\begin{rem}
	The K\"ahler version is proved by Demailly-P\u aun by using Demailly's celebrated mass concentration technique in \cite{Dem93}. The conjecture for $n=2$ and $n=3$ and some partial results for general $n$ (if $X$ is Moishezon and $\{\beta\}$ is a rational class) was obtained by Tosatti-Weinkove. Nguyen \cite{Ngu16} proved the Tosatti-Weinkove conjecture under the assumption that $\beta\geq0$, i.e., $\beta$ is a smooth semi-positive $(1,1)$-form. 
\end{rem}

\begin{conjecture}[{\cite[Conjeture 0.8]{DP04}}]\label{conj: dp conj}
	Let $(X,\omega)$ be a compact Hermitian manifold of complex dimension $n$. Let $\{\beta\}\in H^{1,1}(X,\mathbb R)$ be a real $(1,1)$-class with smooth representative $\beta$. Suppose $\{\beta\}$ is nef and $\int_X\beta^n>0$, then the class  $\{\beta\}$  is big, i.e., it contains a K\"ahler current.
\end{conjecture}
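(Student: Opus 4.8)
The plan is to produce a K\"ahler current in $\{\beta\}$, which amounts to finding a closed positive $(1,1)$-current $T=\beta+dd^c\varphi$ with $T\geq\delta\omega$ for some $\delta>0$. The strategy has two ingredients: Theorem~\ref{thm: main 1} supplies, almost for free, enough non-pluripolar Monge--Amp\`ere mass so that the ``volume'' of $\{\beta\}$ is positive, and Demailly's mass-concentration technique \cite{Dem93} --- adapted to the Hermitian setting with Theorem~\ref{thm: main 1} replacing the K\"ahler solvability theorems --- upgrades positivity of the volume to the existence of a K\"ahler current, as in the K\"ahler argument of Demailly--P\u aun. Since the whole scheme uses a bounded potential, what this yields is the conjecture for nef classes $\{\beta\}$ that moreover carry some $\rho\in\mbox{PSH}(X,\beta)\cap L^\infty(X)$ (this already contains Nguyen's hypothesis $\beta\geq0$ and more), so the answer is only partial; the fully general nef case, where no bounded potential need exist, lies beyond the method, as remarked after Theorem~\ref{thm: main 1}. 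As a preliminary I would apply Demailly's regularization theorem to $\rho$ to get smooth $\rho_j$ with $\beta+dd^c\rho_j\geq-\tfrac1j\omega$ and bounded backgrounds $\beta_j=\beta+dd^c\rho_j+\tfrac1j\omega$, the objects already used in the proof of Theorem~\ref{thm: main 1}.

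For the first ingredient, Theorem~\ref{thm: main 1} applied to the constant density $c_0:=\int_X\beta^n/\int_X\omega^n>0$ gives a bounded $\varphi_0\in\mbox{PSH}(X,\beta)$ with $(\beta+dd^c\varphi_0)^n=c_0\,\omega^n$; as $\varphi_0$ is bounded this Monge--Amp\`ere measure puts no mass on pluripolar sets, so $\beta+dd^c\varphi_0$ has non-pluripolar mass $\int_X\beta^n$, and hence $\mbox{vol}(\{\beta\})=\int_X\beta^n>0$. For the second ingredient I would concentrate mass: fix $x_0\in X$ and a coordinate ball $B(x_0,1)$, and for $t\in(0,1)$ choose $0\leq f_t\in L^p(X,\omega^n)$, $p>1$, with $\int_X f_t\,\omega^n=\int_X\beta^n$, supported in $B(x_0,t)$ and with a mild singularity at $x_0$ (say of truncated type $|z|^{-2n+2}$, which keeps $f_t$ in $L^p$ for $p$ near $1$ while forcing the mass into $B(x_0,t)$). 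Theorem~\ref{thm: main 1} gives $\varphi_t\in\mbox{PSH}(X,\beta)\cap L^\infty(X)$, $\sup_X\varphi_t=0$, with $(\beta+dd^c\varphi_t)^n=f_t\,\omega^n$. Letting $t\downarrow0$ along a subsequence, $\varphi_t\to\varphi$ in $L^1(X)$ with $\varphi\in\mbox{PSH}(X,\beta)$ (now possibly unbounded) and $f_t\,\omega^n\to(\int_X\beta^n)\,\delta_{x_0}$ weakly, and Demailly's comparison between Monge--Amp\`ere mass and Lelong numbers yields $\nu(\beta+dd^c\varphi,\,x_0)\geq(\int_X\beta^n)^{1/n}>0$. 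The delicate point, exactly as in Demailly--P\u aun, is that no Monge--Amp\`ere mass leaks away in the limit; I would control this with the capacity estimates and the monotonicity and plurifine properties of the non-pluripolar product of \cite{BEGZ10} already used for Theorems~\ref{thm: main 1} and \ref{thm: main 2}.

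It remains to convert the positive Lelong number of $T:=\beta+dd^c\varphi$ into genuine uniform positivity. Here one runs the Demailly--P\u aun machinery: regularize $T$ into a current $T'\in\{\beta\}$ with analytic singularities, $T'\geq-\varepsilon\omega$, with Lelong numbers essentially preserved (so $\nu(T',x_0)>0$); pass to a modification $\pi:\widetilde X\to X$ making the singularity of $T'$ divisorial and, using that such a modification carries a $\pi$-relatively positive exceptional class together with the ``orthogonality'' estimate keeping $\int_X\beta^n>0$ untouched by the divisorial part, trade a little of that divisorial part for strict positivity; then push forward (bigness being a bimeromorphic invariant) to obtain a closed positive current in $\{\beta\}$ dominating $\delta\omega$ --- a K\"ahler current. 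Equivalently, this is the step where $\mbox{vol}(\{\beta\})>0$ is shown to force $\{\beta\}$ to contain a K\"ahler current, which over a K\"ahler base is Boucksom's theorem. This last step is also the main obstacle: transplanting it to a compact Hermitian $X$ requires Hermitian versions of Demailly's regularization of closed positive currents with analytic singularities and of the orthogonality/volume arguments, and throughout one must remember that $\omega$ is not closed, so ``$\beta-\varepsilon\omega$'' is not a cohomology class and all positivity must be formulated for forms and currents rather than $(1,1)$-classes. By contrast, the no-mass-loss point in the concentration limit is routine once the capacity machinery behind Theorems~\ref{thm: main 1} and \ref{thm: main 2} is in hand.
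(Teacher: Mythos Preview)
Your proposal takes a fundamentally different route from the paper, and the final step contains a genuine gap that you yourself flag but do not close.

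The paper does not prove the conjecture; it proves the partial result Theorem~\ref{thm: main 5} under two extra hypotheses: the existence of a bounded $\rho\in\mbox{PSH}(X,\beta)$ \emph{and} the pluriclosed condition $dd^c\omega=0$. Its method is not mass concentration at all. It argues by contradiction via Lamari's duality (Lemma~\ref{lem: lamari}): if $\{\beta\}$ contained no K\"ahler current there would exist Gauduchon metrics $g_j$ with $\int_X(\beta+dd^c\rho)\wedge g_j^{n-1}\leq\delta_j\int_X\omega\wedge g_j^{n-1}$. One then solves $(\beta+dd^c\rho+dd^cv_j)^n=c_j\,\omega\wedge g_j^{n-1}$ using Theorem~\ref{thm: main 1}, proves the Chiose--Popovici type inequality $\bigl(\int_X\beta_j\wedge G_j\bigr)\bigl(\int_X\beta_j^{n-1}\wedge\omega\bigr)\geq\tfrac{c_j}{n}\bigl(\int_X\omega\wedge G_j\bigr)^2$ (Lemma~\ref{lem: dp conj key lem}), and uses $dd^c\omega=0$ to conclude $\int_X\beta_j^{n-1}\wedge\omega=\int_X(\beta+dd^c\rho)^{n-1}\wedge\omega$ is bounded, forcing $\int_X\beta^n\leq n\delta_j\cdot\text{const}\to0$, a contradiction. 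The pluriclosed hypothesis is used precisely at this last equality and nowhere else.

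Your approach, by contrast, is the Demailly--P\u aun concentration scheme: produce a current in $\{\beta\}$ with positive Lelong number (this part is essentially Theorem~\ref{thm: main4}), then upgrade to a K\"ahler current via regularization with analytic singularities, modification, and an orthogonality/volume argument. The gap is exactly where you say it is. On a compact Hermitian manifold the step ``positive Lelong number $\Rightarrow$ K\"ahler current'' is not available: Boucksom's theorem that $\mbox{vol}(\{\beta\})>0$ implies bigness is a K\"ahler result, and the orthogonality estimate and the ``trade divisorial part for strict positivity'' move both rely on intersection-theoretic identities that fail when $\omega$ is not closed (as you note, $\beta-\varepsilon\omega$ is not even a class). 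Nothing in the capacity machinery behind Theorems~\ref{thm: main 1} and~\ref{thm: main 2} supplies a substitute for this. So your outline does not yield even the partial result the paper proves, because you have no replacement for the pluriclosed hypothesis that the paper uses to close the argument. If you want to recover the paper's theorem, you must either import $dd^c\omega=0$ and switch to the Lamari/Chiose--Popovici strategy, or supply a genuinely new Hermitian argument for the last step---which would in fact be a major advance beyond what the paper achieves.
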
  

\begin{rem}
	When $n=2$, and $\beta$ is semi-postive, Conjecture \ref{conj: dp conj} is true  by the work of N. Buchdahl \cite{Buc99, Buc00} and Lamari \cite{Lam1,Lam2}. When $n=3$,  Chiose \cite{Ch16} proved the Conjecture \ref{conj: dp conj} under the assumption that there exists a pluriclosed Hermitian metric. For  $n\geq 4$,   Nguyen \cite{Ngu16} proved the Conjecture \ref{conj: dp conj} is true under the assumption that $\beta$ is semi-positive and there is a pluriclosed Hermitian metric. Actually, all the previous results mentioned here  can also be seen as   partial answers to a conjecture of Boucksom \cite{Bou}, which asks for the existence of a K\"ahler currents in a pseudo-effective real $(1,1)$-class $\{\beta\}\in H^{1,1}(X,\mathbb R)$ on a compact Hermitian manifold $(X,\omega)$, under the assumption that $vol(\{\beta\})>0$ (for definition, see \cite{Bou}). Boucksom \cite{Bou} proved his conjecture when $(X,\omega)$ is assume to be compact K\"ahler. For compact Hermitian manifolds and general pseudo-effective class, the second author has partial results \cite{Wan16,Wan19}.
	
\end{rem}

As just mentioned, Nguyen proved the Conjecture \ref{conj: tw conj} under the assumption that the real $(1,1)$-class $\{\beta\}$ is semi-positive, and proved the Conjecture \ref{conj: dp conj} under the assumption that $
\beta$ is semi-positive and there is a pluriclosed Hermitian metric on $X$. The main purpose of this paper, is to relax the semi-positivity assumption of the class $\{\beta\}$, to the assumption that there is a bounded quasi-plurisubharmonic function $\rho$, such that $\beta+dd^c\rho\geq 0$ in the weak sense of currents. To this purpose, we need to solve the  degenerate CMA equations (\ref{equ: CMA}).

Our first application is to give a partial answer to the Tosatti-Weinkove conjecture (Conjecture \ref{conj: tw conj}).

\begin{thm}\label{thm: main4}	
	Let 	X be a compact n-dimensional complex manifold. Suppose $\{\beta\} \in H^{1,1}(X,\mathbb{R})$, $\beta+dd^c \rho \geq 0$ for bounded $\beta$-\mbox{PSH} function $\rho$,  $\int_X \beta^n \textgreater0$. Let $x_1,...,x_N \in X$ be fixed points and let $\tau_1,...,\tau_N$ be positive real numbers so that 
	$$\sum_{i=1}^{N}\tau_i^n < \int_X \beta^n.$$
	Then there exists a $\beta$-\mbox{PSH} function $\varphi$ with logarithmic poles at $x_1,...,x_N$:
	$$\varphi(z)\leq \tau_j \log|z|+O(1),$$
	in a coordinate neighbourhood $(z_1,...,z_n)$ centered at $x_i$, where $|z|^2=|z_1|^2+...+|z_n|^2.$	
\end{thm}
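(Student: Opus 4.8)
The plan is to run the mass–concentration scheme of Demailly \cite{Dem93} and Demailly--P\u aun \cite{DP04} — in the form used by Nguyen \cite{Ngu16} for semipositive $\{\beta\}$ — but with Theorem~\ref{thm: main 1} playing the role of Yau's theorem, so that semipositivity of $\beta$ is replaced by the existence of the bounded potential $\rho$. Fix a Hermitian metric $\omega$ on $X$ and pairwise disjoint coordinate balls $B_i=B(x_i,r)$ with holomorphic coordinates centred at $x_i$; shrinking $r$, write $\beta=dd^ch_i$ on $B_i$ with $h_i$ smooth. For $0<\varepsilon<r$ set $\phi_{i,\varepsilon}(z)=\tfrac{\tau_i}{2}\log(|z|^2+\varepsilon^2)$, a smooth $\mbox{PSH}$ function on $B_i$ with $(dd^c\phi_{i,\varepsilon})^n=\tau_i^n\big(dd^c\tfrac12\log(|z|^2+\varepsilon^2)\big)^n$ a smooth measure of total mass $\tau_i^n$ concentrating at $x_i$, and with $\phi_{i,\varepsilon}\downarrow\tau_i\log|z|$ as $\varepsilon\downarrow 0$. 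Pick cut-offs $\chi_i$ equal to $1$ on $\Omega_i:=B(x_i,r/2)$ and supported in $B_i$, and a smooth $g_0\ge 0$ supported in $X\setminus\bigcup_iB_i$ with $\int_Xg_0\omega^n>0$; put
\[
f_\varepsilon\,\omega^n:=\sum_{i=1}^N\chi_i\,(dd^c\phi_{i,\varepsilon})^n+c_\varepsilon\,g_0\,\omega^n ,
\]
with $c_\varepsilon>0$ chosen so that $\int_Xf_\varepsilon\omega^n=\int_X\beta^n$. This is possible for all small $\varepsilon$ since $\int_X\chi_i(dd^c\phi_{i,\varepsilon})^n\to\tau_i^n$ and $\sum_i\tau_i^n<\int_X\beta^n$, and then $c_\varepsilon$ stays bounded away from $0$. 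Each $f_\varepsilon$ is smooth and $\ge 0$, hence lies in every $L^p$, and $f_\varepsilon\omega^n=(dd^c\phi_{i,\varepsilon})^n$ on $\Omega_i$.

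By Theorem~\ref{thm: main 1} (its hypotheses hold: $\int_X\beta^n>0$ and $\rho$ is the bounded potential) there is, for each such $\varepsilon$, a function $\varphi_\varepsilon\in\mbox{PSH}(X,\beta)\cap L^\infty(X)$ with $(\beta+dd^c\varphi_\varepsilon)^n=f_\varepsilon\omega^n$; since the right-hand side carries no exponential factor, $\varphi_\varepsilon$ is determined only up to an additive constant, which we normalise by $\sup_X\varphi_\varepsilon=0$. As $\beta\le A\omega$ for some $A>0$, the $\varphi_\varepsilon$ lie in the $L^1$-compact family $\{u\in\mbox{PSH}(X,A\omega):\sup_Xu=0\}$, so along a sequence $\varepsilon_k\downarrow 0$ we get $\varphi_{\varepsilon_k}\to\varphi$ in $L^1(X)$ with $\varphi\in\mbox{PSH}(X,\beta)$ and $\sup_X\varphi=0$; in particular $\varphi\not\equiv-\infty$. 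I claim this $\varphi$ is the required function, and it remains only to check the logarithmic pole at each $x_i$.

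On $\Omega_i$ set $u_\varepsilon:=h_i+\varphi_\varepsilon$, a bounded $\mbox{PSH}$ function with $(dd^cu_\varepsilon)^n=(\beta+dd^c\varphi_\varepsilon)^n=f_\varepsilon\omega^n=(dd^c\phi_{i,\varepsilon})^n$ on $\Omega_i$ in the Bedford--Taylor sense. Since $\varphi_\varepsilon\le 0$, one has $u_\varepsilon\le\sup_{\overline{\Omega_i}}h_i$ on $\Omega_i$, while $\phi_{i,\varepsilon}=\tfrac{\tau_i}{2}\log((r/2)^2+\varepsilon^2)\ge\tau_i\log(r/2)$ on $\partial\Omega_i$; hence $u_\varepsilon\le\phi_{i,\varepsilon}+D_i$ on $\partial\Omega_i$ with $D_i:=\sup_{\overline{\Omega_i}}h_i-\tau_i\log(r/2)$ independent of $\varepsilon$, and the comparison principle for the complex Monge--Amp\`ere operator (equal Monge--Amp\`ere measures on $\Omega_i$, ordered boundary values) gives $u_\varepsilon\le\phi_{i,\varepsilon}+D_i$ on all of $\Omega_i$. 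Thus $\varphi_\varepsilon\le\phi_{i,\varepsilon}+D_i'$ on $\Omega_i$ with $D_i':=D_i-\inf_{\overline{\Omega_i}}h_i$ uniform in $\varepsilon$. Letting $k\to\infty$ and using $\varphi_{\varepsilon_k}\to\varphi$ and $\phi_{i,\varepsilon_k}\to\tau_i\log|z|$ in $L^1(\Omega_i)$, we obtain $\varphi\le\tau_i\log|z|+D_i'$ a.e.\ on $\Omega_i$; since $\varphi$ is $\mbox{PSH}$ (hence equal to its upper-semicontinuous regularisation) and the right-hand side is upper-semicontinuous, this holds pointwise on $\Omega_i$, which is exactly $\varphi(z)\le\tau_i\log|z|+O(1)$ near $x_i$.

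The only genuinely new input is Theorem~\ref{thm: main 1}, which relaxes the semipositivity hypothesis of \cite{Ngu16} to the existence of a bounded potential; the rest is the classical argument. I expect the step that needs the most care to be the uniform-in-$\varepsilon$ barrier estimate of the third paragraph: one must build $f_\varepsilon$ so that it coincides \emph{exactly} with the model measure $(dd^c\phi_{i,\varepsilon})^n$ on a fixed neighbourhood of each $x_i$ (so the comparison principle applies with matching right-hand sides), and one must keep the boundary comparison constant $D_i$ independent of $\varepsilon$ — both arranged in the construction above. Note that the $L^\infty$-bound in Theorem~\ref{thm: main 1} degenerates as $\varepsilon\to 0$ (since $\|f_\varepsilon\|_p\to\infty$), but this is immaterial: only the $L^1$-compactness of $\{\varphi_\varepsilon\}$, which uses solely the normalisation $\sup_X\varphi_\varepsilon=0$, is needed to extract $\varphi$.
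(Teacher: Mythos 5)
Your proposal is correct and follows essentially the same route as the paper: Demailly's mass-concentration scheme, with Theorem~\ref{thm: main 1} supplying the solution $\varphi_\varepsilon$ of the degenerate equation, $L^1$-compactness of the normalized family to extract $\varphi$, and the Bedford--Taylor comparison principle on a fixed neighbourhood of each $x_i$ to get the uniform logarithmic barrier. The only (immaterial) differences are your choice of the regularized model $\tfrac{\tau_i}{2}\log(|z|^2+\varepsilon^2)$ with cut-offs and a normalizing bump $c_\varepsilon g_0$, versus the paper's $\tau_j\,\chi(\log(|z|/\varepsilon))$ whose Monge--Amp\`ere measure has total mass exactly $\tau_j^n$ plus a global term $\delta\,\omega^n/\int_X\omega^n$ (which also makes the local comparison strict rather than an equality of measures).
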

Our second application is to give a partial answer to the Demailly-P\u aun conjecture (Conjecture \ref{conj: dp conj}).
\begin{thm}\label{thm: main 5}
	Let 	$(X,\omega)$ is a compact Hermitian manifold, with  $\omega$ a  pluriclosed Hermitian metric, i.e. $dd^c\omega=0$. Let  $\{\beta\} \in H^{1,1}(X,\mathbb{R})$ be a real $(1,1)$-class with smooth representative $\beta$, such that  $\beta+dd^c \rho \geq 0$ for some bounded $\beta$-psh function $\rho$, and $\int_X\beta^n>0$. Then $\{\beta\}$ contains a K\"ahler current.
\end{thm}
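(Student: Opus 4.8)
The plan is to argue by contradiction, adapting the strategy of Chiose \cite{Ch16} and Nguyen \cite{Ngu16} for the semipositive case; the one genuinely new input is that Theorem~\ref{thm: main 1} now supplies solutions of the degenerate complex Monge--Amp\`ere equation under the weaker hypothesis that a bounded potential $\rho$ exists. Suppose, then, that $\{\beta\}$ contains no K\"ahler current. Since $\beta+dd^c\rho\ge0$ with $\rho$ bounded, $\beta+dd^c\rho$ is a closed positive current in $\{\beta\}$, so $\{\beta\}$ is pseudo-effective. By a Lamari/Hahn--Banach type duality \cite{Lam1,Lam2} (pairing pseudo-effective $(1,1)$-classes against the cone of Gauduchon metrics), the assumption ``pseudo-effective but not big'' forces
\[
\inf_{\gamma\ \mathrm{Gauduchon}}\ \frac{\int_X\beta\wedge\gamma^{n-1}}{\int_X\omega\wedge\gamma^{n-1}}=0 ,
\]
so there are Gauduchon metrics $\gamma_j$ (i.e. $dd^c\gamma_j^{n-1}=0$) with, after rescaling, $\int_X\omega\wedge\gamma_j^{n-1}=1$ and $\int_X\beta\wedge\gamma_j^{n-1}=:\delta_j\to0$.

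Next I would set up a one-parameter family of Monge--Amp\`ere equations. For $\varepsilon\in(0,1]$ the form $\beta+\varepsilon\omega$ still has $\rho$ as a bounded potential (since $\beta+\varepsilon\omega+dd^c\rho\ge\varepsilon\omega$), and $\int_X(\beta+\varepsilon\omega)^n>0$ for $\varepsilon$ small (it tends to $\int_X\beta^n>0$). Theorem~\ref{thm: main 1}, applied with the constant density $f\equiv c_\varepsilon:=\int_X(\beta+\varepsilon\omega)^n/\int_X\omega^n$, gives $\varphi_\varepsilon\in\mbox{PSH}(X,\beta+\varepsilon\omega)\cap L^\infty(X)$ with $T_\varepsilon:=\beta+\varepsilon\omega+dd^c\varphi_\varepsilon\ge0$, $T_\varepsilon^n=c_\varepsilon\,\omega^n$, and $\|\varphi_\varepsilon\|_{L^\infty}$ bounded uniformly in $\varepsilon$: the constant in Theorem~\ref{thm: main 1} depends only on $\|\rho\|_{L^\infty}$, on a uniform $C^0$-bound for $\beta+\varepsilon\omega$, and on $\|f\|_p=c_\varepsilon(\int_X\omega^n)^{1/p}$, all of which stay bounded as $\varepsilon\to0$ (while $c_\varepsilon\to\int_X\beta^n/\int_X\omega^n>0$).

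The contradiction should then be extracted from mixed-mass identities and inequalities. From $d\beta=0$ and $dd^c\gamma_j^{n-1}=0$ one gets that $\int_X T_\varepsilon\wedge\gamma_j^{n-1}$ does not depend on $\varphi_\varepsilon$ and equals $\delta_j+\varepsilon$. From the pluriclosed condition $dd^c\omega=0$ together with the uniform bound on $\|\varphi_\varepsilon\|_{L^\infty}$ one controls the remaining mixed masses of $T_\varepsilon$ against $\omega$ up to bounded error terms of the form $\int_X\varphi_\varepsilon\,dd^c(\omega^k)\wedge(\cdots)$. Feeding the pointwise arithmetic--geometric-mean inequality coming from $T_\varepsilon^n=c_\varepsilon\omega^n$ into the mixed Khovanskii--Teissier inequalities — exactly as in Chiose's and Nguyen's arguments — should squeeze out an estimate of the form $\int_X\beta^n\le\Phi(\varepsilon,\delta_j)$ with $\Phi(\varepsilon,\delta_j)\to0$ upon letting $j\to\infty$ and then $\varepsilon\to0$, contradicting $\int_X\beta^n>0$. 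Hence $\{\beta\}$ is big, i.e. it contains a K\"ahler current.

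The main obstacle is precisely this last step, and it is where the hypothesis $dd^c\omega=0$ is genuinely used: since neither $\omega$ nor $\beta+\varepsilon\omega$ is closed and the $\gamma_j$ are pluriclosed only at the level of $\gamma_j^{n-1}$, all the bookkeeping must be done by integration by parts with no cohomological shortcuts, and one must in addition choose the normalization of the $\gamma_j$ so that their total volumes $\int_X\gamma_j^n$ do not degenerate while the mixed masses remain controlled. This delicate balance is the heart of the Chiose--Nguyen estimate, which one transplants to the present setting once Theorem~\ref{thm: main 1} has replaced the semipositivity of $\beta$ by the existence of the bounded potential $\rho$. A secondary, routine, point is to justify the Bedford--Taylor calculus (continuity of mixed Monge--Amp\`ere products under decreasing approximations, the pointwise AM--GM inequality) for the merely bounded potentials $\varphi_\varepsilon$.
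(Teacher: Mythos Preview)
Your overall architecture---contradiction via Lamari duality, then a Chiose--Nguyen type mixed-mass inequality---matches the paper, but the Monge--Amp\`ere equation you set up is the wrong one, and this is a genuine gap, not a technicality. In the Chiose--Nguyen scheme the right-hand side must be $c_j\,\omega\wedge G_j$ with $G_j=\gamma_j^{n-1}$, \emph{not} a constant multiple of $\omega^n$. The reason is that the key pointwise Cauchy--Schwarz inequality (the ``Claim 4.8'' of \cite{Ngu16}) reads, for a positive $(1,1)$-form $T$ with $T^n=c\,\omega\wedge G$,
\[
\Bigl(\int_X T\wedge G\Bigr)\Bigl(\int_X T^{n-1}\wedge\omega\Bigr)\ \ge\ \frac{c}{n}\Bigl(\int_X\omega\wedge G\Bigr)^2,
\]
and this is exactly what links the small quantity $\int_X T\wedge G_j\le\delta_j\int_X\omega\wedge G_j$ to the fixed volume $\int_X\beta^n=c_j\int_X\omega\wedge G_j$. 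If instead $T_\varepsilon^n=c_\varepsilon\omega^n$, the analogous inequality involves only $\omega$ and never sees $G_j$, so the Gauduchon metrics produced by Lamari's lemma drop out and there is nothing to send to zero. Concretely, the paper solves $(\beta+dd^c\rho+dd^c v_j)^n=c_j\,\omega\wedge G_j$ via Theorem~\ref{thm: main 1} (the density $\omega\wedge G_j/\omega^n$ is smooth and positive), and then the contradiction is $\tfrac1n\int_X\beta^n\le\delta_j\int_X(\beta+dd^c\rho)^{n-1}\wedge\omega$, where the right-hand integral is \emph{independent of $j$} precisely because $dd^c\omega=0$.

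A second point you underestimate is how to justify the displayed inequality when $T=\beta+dd^c\rho+dd^c v_j$ is only a bounded-potential current rather than a smooth positive form. The paper does \emph{not} do this by routine Bedford--Taylor continuity: it first proves the inequality for solutions of the twisted equation $(\beta+dd^c\rho+dd^c v_\varepsilon)^n=e^{\varepsilon v_\varepsilon}\omega\wedge G$ (this is where Theorem~\ref{thm: lambda=1} and the auxiliary Theorem~6.6 enter, producing smooth approximants on the Hermitian metrics $\frac1s\omega+\beta+dd^c\rho_k$), and then lets $\varepsilon\to0$ using the uniqueness and stability results of Sections~3--5. Your sketch does not provide a substitute for this two-layer approximation, and the pointwise AM--GM step is not available directly for the non-smooth $T_\varepsilon$.
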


\begin{rem}
	By the  Demailly's  regularization theorem, we know that the class $\beta$  with bounded $\beta$-psh potential  is nef,  thus Theorem \ref{thm: main4} confirms the   Conjecture \ref{conj: dp conj}, and Theorem \ref{thm: main 5}  confirms the Conjecture \ref{conj: dp conj}, under the assumption that  there is a bounded $\beta$-psh function $\rho$, and the Hermitian metric $\omega$ is pluriclosed.
\end{rem}

\subsection*{Acknowledgements} The second author would like to thank Ngoc Cuong Nguyen for many helpful discussions on the degenerate CMA equations.
This research is supported by National Key R\&D Program of China (No. 2021YFA1002600).  The authors are partially supported respectively by NSFC grants (12071035, 11688101).

\section{Preliminaries}

Let (X, $\omega$) be a compact Hermitian manifold of complex  dimension $n$, with a Hermitian metric $\omega$.
Let $\beta$ be a smooth real closed $(1,1)$ form. 
%
Suppose that  there exist a function  $0\leq \rho\in \mbox{PSH}(X,\beta)\cap L^{\infty}(X)$.

By Demailly's celebrated regularization theorem (see \cite{Dem12}), there exists a smooth decreasing sequence quasi-psh functions $\{\rho_j\}$ satisfying
\[\beta_j:=\beta +dd^c \rho_j +\frac{1}{j} \omega \geq  \frac{1}{2j^2}\omega,~~~\lim \rho_j=\rho.\]
It is easy to see that the sequence $\{\rho_j\}$ is uniformly bounded, and we fix  throughout the constant $$M:= \sup_j\|\rho_j\|_{L^\infty(X)}+\|\rho\|_{L^\infty(X)}.$$

We shall denote throughout, by $B$ the   "curvature" constant  $B >0$ satisfying
\begin{align}
	-B\omega^2 \leq dd^c \omega \leq B\omega^2, -B\omega^3\leq d\omega \wedge d^c \omega \leq B\omega^3.\label{equ:B cond}
	\end{align}

Set  $B_j=8Bj^4$. Direct computation implies that 
\[-B_j\beta_j^2 \leq dd^c \beta_j \leq B_j\beta_j^2, ~~~-B_j\beta_j^3 \leq d\beta_j \wedge d^c \beta_j \leq B_j\beta_j^3.\]
\subsection{Chern-Levine-Nirenberg inequalities}

\begin{prop}[CLN inequality]\label{prop:CLN ineq}
Let $u\in \mbox{PSH}(X,\beta_j)$ and $v_1,\cdots,v_n \in \mbox{PSH}(X,\beta_j)\cap L^{\infty}(X)$. If $u\leq -1$, then there exists a uniform constant $C=C(X,\omega,\beta,M)$ depending only on $(X,\omega,\beta,M)$, such that:
\begin{align*}\int_X-u(\beta_j+dd^cv_1)\wedge\cdots\wedge(\beta_j+dd^cv_n)\leq C||u||_{L^1(X)}(\|v_1\|_{L^\infty(X)}+1)\cdots(\|v_n||_{L^\infty(X)}+1).\end{align*}
\end{prop}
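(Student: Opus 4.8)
The plan is to prove the inequality by induction on the number $k$ of "variable" potentials $v_1,\dots,v_k$ that appear, reducing $\int_X (-u)\, T \wedge (\beta_j + dd^c v_k)$ to the case with one fewer variable potential by moving the $dd^c v_k$ onto $u$ via integration by parts, and then absorbing the resulting non-closed error terms using the curvature constant $B_j$. First I would fix notation: write $\theta_i := \beta_j + dd^c v_i \geq 0$ and, for $0 \le k \le n$, set $T_k := \theta_1 \wedge \cdots \wedge \theta_k \wedge \beta_j^{\,n-k}$, so that $T_n$ is the product in the statement. The claim to prove by downward/upward induction is that for each $k$,
\begin{align*}
\int_X (-u)\, T_k \;\le\; C_k \,\|u\|_{L^1(X)} \prod_{i=1}^{k}\bigl(\|v_i\|_{L^\infty(X)}+1\bigr),
\end{align*}
with $C_k = C_k(X,\omega,\beta,M)$. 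The base case $k=0$ reads $\int_X (-u)\,\beta_j^{\,n} \le C_0 \|u\|_{L^1}$; since $\beta_j \le (\beta + \frac1j\omega) + dd^c\rho_j$ and $\rho_j$ is uniformly bounded by $M$, this follows (after a further integration by parts argument against the closed part and a control of $dd^c\rho_j$ by $B_j\beta_j^2$, or more cheaply by comparing $\beta_j^n$ to $C\omega^n$ on the compact $X$ and using $\int_X(-u)\omega^n \le C\|u\|_{L^1}$, which itself needs the sub-mean-value/compactness bound relating the $L^1$ and $L^\infty$-against-$\omega^n$ norms of $\beta_j$-psh functions — this is where the dependence on $(X,\omega,\beta,M)$ enters).

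For the inductive step, assuming the bound for $T_{k-1}$, I would estimate
\begin{align*}
\int_X (-u)\, T_k &= \int_X (-u)\, \beta_j \wedge T_{k-1}' + \int_X (-u)\, dd^c v_k \wedge T_{k-1}',
\end{align*}
where $T_{k-1}' = \theta_1 \wedge \cdots \wedge \theta_{k-1} \wedge \beta_j^{\,n-k}$. The first term is bounded by the induction hypothesis (it is $\int_X(-u)\,T_{k-1}$ with one $\beta_j$ factor relabelled). For the second term, integrate by parts twice to transfer $dd^c$ from $v_k$ to $u$; because $\beta_j$ and the currents $\theta_i$ are \emph{not} closed on a Hermitian manifold, each integration by parts produces error terms involving $d$ and $d^c$ of the forms in $T_{k-1}'$. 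These errors are controlled by the structural bounds $-B_j \beta_j^2 \le dd^c\beta_j \le B_j\beta_j^2$ and $-B_j\beta_j^3 \le d\beta_j\wedge d^c\beta_j \le B_j\beta_j^3$ recorded just before the proposition, together with Cauchy–Schwarz for positive currents to handle the mixed terms $du \wedge d^c(\cdots)$; after applying Cauchy–Schwarz one is left with quantities of the form $\int_X (-u)\,(\text{positive combination of }\theta_i\text{'s and }\beta_j\text{'s of total degree }n)$ and $\int_X |v_k|\,(\cdots)$, i.e. again terms of type $T_{k-1}$ or $T_{k-2}$ with at worst an extra factor $\|v_k\|_{L^\infty}+1$, which close the induction. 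Since $B_j$ depends on $j$, but the final constant is only claimed to depend on $(X,\omega,\beta,M)$, I would be careful to arrange the integrations by parts so that the $B_j$ factors are always paired against $\beta_j^2 \le C j^4 \omega^2$-type bounds that cancel the powers of $j$ — alternatively, note $\beta_j \le \beta + \omega + dd^c\rho_j$ and do all integrations by parts with respect to the fixed form $\beta + \omega$, so that the error terms only ever involve $d,d^c$ of $\beta,\omega$ and hence the $j$-independent constant $B$ of \eqref{equ:B cond}, plus terms in $dd^c\rho_j$ and $d\rho_j\wedge d^c\rho_j$ that get absorbed using $\rho_j$-boundedness and Cauchy–Schwarz.

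The main obstacle I anticipate is precisely the bookkeeping of the non-closedness error terms while keeping the constant uniform in $j$: on a Kähler manifold this proposition is classical and the integration by parts is clean, but here every "$d$ of a positive $(1,1)$-form" that appears must be routed through the torsion bounds without letting the $j$-dependent constant $B_j$ survive into the final estimate. The trick is to integrate by parts against the \emph{$j$-independent} background $\hat\beta := \beta + \omega$ (for which $\hat\beta + dd^c(\rho_j) \ge \beta_j - \frac{1}{j}\omega + \dots$ stays comparable) rather than against $\beta_j$ itself, and to peel off $dd^c\rho_j$ and $d\rho_j\wedge d^c\rho_j$ as lower-order contributions bounded via $\|\rho_j\|_{L^\infty}\le M$ and Cauchy–Schwarz. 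A secondary technical point is the base-case comparison between $\int_X(-u)\omega^n$ and $\|u\|_{L^1(X)}$ for $u\in\mathrm{PSH}(X,\beta_j)$ with $u\le -1$: this is uniform in $j$ because the family $\{\beta_j\}$ is dominated by the fixed Hermitian form $\beta+\omega$, so all these functions are $(\beta+\omega)$-psh and the standard compactness estimate for quasi-psh functions applies with a constant depending only on $(X,\omega,\beta)$.
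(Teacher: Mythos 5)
Your overall strategy --- a global induction in which $dd^c v_k$ is moved onto $u$ by integration by parts and the non-closedness errors are absorbed via the torsion bounds --- is not the paper's route, and as written it has a genuine gap in both the base case and the inductive step. For the base case, the claim that one can ``compare $\beta_j^n$ to $C\omega^n$'' fails: $\beta_j=\beta+\tfrac1j\omega+dd^c\rho_j$ contains $dd^c\rho_j$, and $\|\rho_j\|_{L^\infty}\le M$ gives no pointwise control of $dd^c\rho_j$, so $\beta_j^n$ is not dominated by a fixed multiple of $\omega^n$ uniformly in $j$; in fact $\int_X(-u)\beta_j^n\le C\|u\|_{L^1}$ is already the full statement (with all $v_i=0$) and is not cheaper than the general case. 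For the inductive step, the essential obstruction is that $u$ is only assumed to be $\beta_j$-psh with $u\le-1$, not bounded: after integrating by parts, the torsion errors contain terms of the form $du\wedge d^c(\cdot)$, and the Cauchy--Schwarz and $du\wedge d^cu=\tfrac12 dd^cu^2-u\,dd^cu$ manipulations you invoke inevitably produce integrals of $u^2$ against positive currents (or require $\|u\|_{L^\infty}$), which cannot be bounded by $\|u\|_{L^1(X)}$ for an unbounded quasi-psh function. This is precisely why the paper reserves this global integration-by-parts machinery for Proposition 3.1, where $u$ is assumed bounded and the error terms are estimated by $\|u\|_{\infty}^{2n}$ rather than $\|u\|_{L^1}$.

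The paper's actual proof sidesteps all torsion issues by localizing: cover $X$ by coordinate balls $U_i\Subset U_i'$, choose $N=N(X,\omega,\beta)$ with $\beta+\tfrac1j\omega\le N\,dd^c|z_i|^2$ on $U_i'$, so that each factor satisfies $\beta_j+dd^cv_k\le dd^c\left(N|z_i|^2+\rho_j+v_k\right)$, a closed positive current whose local potential is bounded in terms of $N$, $M$ and $\|v_k\|_{L^\infty}$. Writing $-u=-(u+\rho_j+N|z_i|^2)+(\rho_j+N|z_i|^2)$ and applying the classical local Chern--Levine--Nirenberg inequality of Demailly on each chart then yields the estimate with a constant depending only on the cover, $N$ and $M$, hence only on $(X,\omega,\beta,M)$; no integration by parts on the manifold and no torsion terms ever appear. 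If you want to rescue a global argument you would at minimum have to truncate $u$ first and prove the resulting constants are independent of the truncation level, which essentially amounts to re-deriving the local inequality.
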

\begin{proof} The proof is essentially the same as \cite[Proposition 3.3, Remark 3.4]{Dem} up to minor modifications. 
For the sake of integrity, we include the proof here.
	
We cover X by finitely many  coordinate balls $\{U_i \subseteq \subseteq U_i'\}$, i=1,...,M. 
Denote by  $z_i=(z_{i,1},\cdots,z_{i,n})$ the  coordinates on $U_i'$, and  $|z_{i}|^2=|z_{i,1}|^2+\cdots+|z_{i,n}|^2$. 
Let $N=N(X,\omega,\beta)>0$ be a large constant such that $\beta_j=\beta+\frac{1}{j}\omega+dd^c\rho_j \leq Ndd^c|z_i|^2+dd^c\rho_j$ on each $U_i'$. Then 
\begin{align*}
&\int_{U_i}-u(\beta_j+dd^cv_1)\wedge\cdots\wedge(\beta_j+dd^cv_n)\\
&\leq \int_{U_i}-u(Ndd^c|z_i|^2+dd^c\rho_j+dd^cv_1)\wedge\cdots\wedge(Ndd^c|z_i|^2+dd^c\rho_j+dd^cv_n)\\
&=\int_{U_i}-(u+\rho_j+N|z_i|^2)(Ndd^c|z_i|^2+dd^c\rho_j+dd^cv_1)\wedge\cdots\wedge(Ndd^c|z_i|^2+dd^c\rho_j+dd^cv_n)\\
&+\int_{U_i}(\rho_j+N|z_i|^2)(Ndd^c|z_i|^2+dd^c\rho_j+dd^cv_1)\wedge\cdots\wedge(Ndd^c|z_i|^2+dd^c\rho_j+dd^cv_n).
	\end{align*}
By \cite[Proposition 3.3, Remark 3.4]{Dem}, the  first term is bounded by 
	$$C||u+\rho_j+N|z_i|^2||_{L^1(U_i')}||v_1+\rho_j+N|z_i|^2||_{L^{\infty}(U_i')}\cdots||v_n+\rho_j+N|z_i|^2||_{L^{\infty}(U_i')},$$ 
 and the second term is bounded  by 
\begin{align*}
C||v_1+\rho_j+N|z_i|^2||_{L^{\infty}(U_i')}\cdots|v_n+\rho_j+N|z_i|^2||_{L^\infty(U_i')},
\end{align*}
where $C$ is a uniform constant depending only on $(U_j, U_j')$. 

Since $\{\rho_j\}$ is uniformly bounded and $u\leq -1$,
\begin{align*}
&\int_X-u(\beta_j+dd^cv_1)\wedge...\wedge(\beta_j+dd^cv_n)\\
&\leq \sum_{i=1}^{M}\int_{U_i}-u(\beta_j+dd^cv_1)\wedge\cdots\wedge(\beta_j+dd^cv_n)\\
&\leq \sum_{i=1}^{M}C||u+\rho_j+N|z_i|^2||_{L^1(U_i')}||v_1+\rho_j+N|z_i|^2||_{L^{\infty}(U_i')}\cdots||v_n+\rho_j+N|z_i|^2||_{L^{\infty}(U_i')}\\
&\leq C(X,\omega,\beta,M)||u||_{L^1(X)}(\|v_1\|_{L^\infty(X)}+1)\cdots(\|v_n||_{L^\infty(X)}+1).
	\end{align*}
The proof of the Proposition \ref{prop:CLN ineq} is complete.
\end{proof}
\begin{rem}\label{rem: v<-1 cln in}
	If $v_j\leq -1$ for $j=1,\cdots,n$, it is easy to see from the proof that, the above estimate can be modified to 
	\begin{align*}\int_X-u(\beta_j+dd^cv_1)\wedge\cdots\wedge(\beta_j+dd^cv_n)\leq C||u||_{L^1(X)}\|v_1\|_{L^\infty(X)}\cdots\|v_n||_{L^\infty(X)}.\end{align*}
\end{rem}
\begin{prop}\label{prop:uniform est}
	There exist constants $\alpha,C\textgreater 0$, depending only on  $(X,\omega,\beta, M)$, such that for any $v\in \mbox{PSH}(X,\beta+dd^c \rho_j), \sup_Xv=0,$
	$$\int_X -v \omega^n \leq C, \int_X e^{-\alpha v} \omega^n \leq C.$$
\end{prop}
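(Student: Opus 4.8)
The plan is to reduce everything to a classical fact about the \emph{fixed} class $\mbox{PSH}(X,\beta)$, so that uniformity in $j$ becomes automatic. If $v\in\mbox{PSH}(X,\beta+dd^c\rho_j)$ satisfies $\sup_X v=0$, set $w:=v+\rho_j$; since $\beta+dd^c w=(\beta+dd^c\rho_j)+dd^c v\ge 0$ and $v$ is quasi-psh, we have $w\in\mbox{PSH}(X,\beta)$. Because the $\rho_j$ from Demailly's regularization decrease to $\rho\ge 0$ and satisfy $\|\rho_j\|_{L^\infty}\le M$, the pointwise bounds $v\le 0\le\rho_j\le M$ give $0=\sup_X v\le\sup_X w\le M$. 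Thus $w$ ranges over the set $\mathcal G:=\{w\in\mbox{PSH}(X,\beta):\ 0\le\sup_X w\le M\}$, which depends only on $(X,\omega,\beta,M)$, and the two desired bounds will follow from properties of $\mathcal G$ alone.

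For the first inequality I would invoke the standard $L^1$-versus-$\sup$ estimate: there is $C_1=C_1(X,\omega,\beta)$ such that $-\int_X u\,\omega^n\le C_1$ for every $u\in\mbox{PSH}(X,\beta)$ with $\sup_X u=0$ (cover $X$ by coordinate balls, write $u$ locally as a psh function plus a smooth one, and use the local $L^1$/$\sup$ comparison for psh functions together with a partition of unity — Kählerness plays no role). Applying this to $w-\sup_X w$ and using $\sup_X w\ge 0$ gives $-\int_X w\,\omega^n\le C_1-(\sup_X w)\int_X\omega^n\le C_1$, hence $-\int_X v\,\omega^n=-\int_X w\,\omega^n+\int_X\rho_j\,\omega^n\le C_1+M\int_X\omega^n=:C$.

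For the exponential bound I would use the uniform version of Skoda's integrability theorem: the set $\{u\in\mbox{PSH}(X,\beta):\sup_X u=0\}$ is compact in $L^1(X,\omega^n)$ (again purely local potential theory, valid on the Hermitian manifold), and on such an $L^1$-compact family there exist $\alpha,C_2>0$, depending only on $(X,\omega,\beta)$, with $\int_X e^{-\alpha u}\,\omega^n\le C_2$ for all members $u$. (If this failed along exponents $\alpha_k\downarrow 0$, one extracts an $L^1$-limit $u_\infty$ in the family, applies Skoda's theorem to obtain $\int_X e^{-\alpha_0 u_\infty}<\infty$ for some fixed $\alpha_0>0$, and then a Hartogs/Brezis--Lieb-type argument forces $\sup_k\int_X e^{-(\alpha_0/2)u_k}<\infty$, a contradiction.) Applying this to $u=w-\sup_X w$ and using $0\le\sup_X w\le M$ and $0\le\rho_j\le M$,
\[\int_X e^{-\alpha v}\,\omega^n=\int_X e^{\alpha\rho_j}e^{-\alpha w}\,\omega^n\le e^{\alpha M}\int_X e^{-\alpha(w-\sup_X w)}\,\omega^n\le e^{\alpha M}C_2,\]
which again depends only on $(X,\omega,\beta,M)$.

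The main obstacle is the \emph{uniformity} in the exponential estimate, i.e.\ that a single integrability exponent $\alpha$ and bound $C_2$ work for the entire family $\mathcal G$; the rest is bookkeeping. In writing this out one should also check — routinely — that all the ingredients invoked (the $L^1$/$\sup$ comparison, the $L^1$-compactness of normalized $\beta$-psh functions, and Skoda's integrability theorem with its uniform refinement) are local in nature and hence carry over verbatim to the compact Hermitian setting, since $\omega$ is not assumed closed.
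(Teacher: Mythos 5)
Your proposal is correct and follows essentially the same route as the paper: the paper's one-line proof observes that $\{\rho_j\}$ is uniformly bounded and then cites \cite[Lemma 2.3]{DP10}, which is precisely the uniform $L^1$ and Skoda-type exponential estimate for normalized quasi-psh functions in a fixed class that you invoke after the shift $w=v+\rho_j$. The only difference is that you sketch a compactness/contradiction proof of the uniform integrability instead of citing it, but the statement you use is the standard one and the reduction is identical.
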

\begin{proof}
Since  $\{\rho_j\}$ is uniformly bounded, the lemma follows from  {\cite[Lemma 2.3]{DP10}}.
	\end{proof}

\subsection{Extremal functions}

For a Borel subset  $K $  of $X$,  the extremal function $V_{K,j}$  of $K$ with respect to $\beta+dd^c\rho_j$ is defined as 
\begin{center}
	$V_{K,j}:=\sup\{v\in\mbox{PSH}(X,\beta+dd^c\rho_j):v\leq0$ on $ K \}$.
\end{center}

Denote by $V_{K,j}^*$   the upper semi-continuous regularization of $V_{K,j}$. We say $K$ is a $(\beta+dd^c\rho_j)$-pluripolar set, if there exists $u\in \mbox{PSH}(X,\beta+dd^c\rho_j)$, such that $K\subset \{u=-\infty\}$.

\begin{lem}\label{lem: non pluripolar char}
	$K$ is a $(\beta+dd^c\rho_j)$-pluripolar set
if  and only if  $\sup_XV_{K,j}=+\infty$.
	
\end{lem}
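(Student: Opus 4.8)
The plan is to prove both implications directly from the definition of the extremal function $V_{K,j}$. For the easy direction, suppose $K$ is \emph{not} $(\beta+dd^c\rho_j)$-pluripolar, and show $\sup_X V_{K,j}<+\infty$. I would argue by contradiction: if $\sup_X V_{K,j}=+\infty$, pick $v_m\in\mbox{PSH}(X,\beta+dd^c\rho_j)$ with $v_m\le 0$ on $K$ and $\sup_X v_m\to+\infty$; after normalizing (subtract $\sup_X v_m$, so $\sup_X$ equals $0$) and using Proposition \ref{prop:uniform est} together with the uniform boundedness of $\{\rho_j\}$, the family $\{v_m-\sup_X v_m\}$ has $L^1$-norms bounded uniformly, hence a subsequence converges in $L^1_{loc}$ to some $w\in\mbox{PSH}(X,\beta+dd^c\rho_j)$, and by the standard Hartogs-type argument $\limsup_m(v_m-\sup_X v_m)\le w$ on $X$. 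On $K$ one gets $v_m-\sup_X v_m\le -\sup_X v_m\to-\infty$, so $w=-\infty$ on $K$ (strictly, one shows $w=-\infty$ on $K$ off a pluripolar set, but $w\equiv-\infty$ would contradict $w\in\mbox{PSH}$, so one instead concludes $K$ lies in the $-\infty$ locus of a genuine quasi-psh function), making $K$ pluripolar — a contradiction.

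For the converse, suppose $K$ is $(\beta+dd^c\rho_j)$-pluripolar, so there is $u\in\mbox{PSH}(X,\beta+dd^c\rho_j)$ with $K\subset\{u=-\infty\}$; after subtracting a constant I may assume $\sup_X u=0$, hence $u\le 0$ everywhere and in particular on $K$. Then for every constant $C>0$ the function $C u$ lies in $\mbox{PSH}(X,\beta+dd^c\rho_j)$ (the class is a cone up to the fixed form, and $dd^c(Cu)\ge -C(\beta+dd^c\rho_j)$; here one must be slightly careful — the right statement is that $u$ itself already satisfies $u\le 0$ on $K$, so $Cu$ competes in the sup defining $V_{K,j}$ only if $C(\beta+dd^c\rho_j)+dd^c(Cu)\ge 0$, which holds, wait — one needs $\beta+dd^c\rho_j + dd^c(Cu)\ge 0$, and since $u$ is $(\beta+dd^c\rho_j)$-psh, $Cu$ is $C(\beta+dd^c\rho_j)$-psh, not $(\beta+dd^c\rho_j)$-psh for $C>1$). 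The cleaner route: since $u+\rho_j\in\mbox{PSH}(X,\beta)$ and $\{\beta\}$ is big with bounded potential, one knows $\sup_X V_{K,j}=+\infty$ iff $K$ is pluripolar by the following observation — use instead that $\varepsilon u$ for small $\varepsilon$, plus a bounded correction, or more simply invoke that $V_{K,j}\ge$ (on $X\setminus K$) functions $c\cdot\chi$ unbounded above near $\{u=-\infty\}$. Concretely, I would take $w_C:=\max\{u,-C\}+C\cdot(\text{something})$... the standard trick is: $u$ pluripolar $\Rightarrow$ for each $m$, the function $u/m$ is $(\beta+dd^c\rho_j)$-psh only if we also absorb the loss; since $\mbox{PSH}(X,\beta+dd^c\rho_j)$ with bounded $\rho_j$ contains functions unbounded near any pluripolar set, one directly constructs $v\in\mbox{PSH}(X,\beta+dd^c\rho_j)$, $v\le 0$ on $K$, $\sup_X v$ arbitrarily large, by taking $v=u - \sup_K u$ scaled appropriately and noting $\sup_X$ of it is $+\infty$ because $u\not\equiv$ const while $u=-\infty$ on $K$.

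The main obstacle, and the point I would write most carefully, is the converse direction: getting a competitor $v$ in the defining supremum of $V_{K,j}$ with $\sup_X v=+\infty$ from a pluripolar $K$. The issue is that $\mbox{PSH}(X,\beta+dd^c\rho_j)$ is not a cone, so one cannot simply scale $u$. The fix is to exploit that $\rho_j$ is bounded and that $\beta_j=\beta+dd^c\rho_j+\tfrac1j\omega\ge\tfrac1{2j^2}\omega>0$ is a genuine Kähler form; thus by standard local theory (Josefson/negligible-sets results, as in Guedj–Zeriahi), a set pluripolar with respect to one such form is pluripolar with respect to the Kähler form $\beta_j$, and for a genuine Kähler class one has classical unbounded extremal/global pluripolar functions. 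Then subtracting the bounded quantity $\tfrac1j\rho_j$ and adjusting by a bounded smooth function converts such a $\beta_j$-psh function into a $(\beta+dd^c\rho_j)$-psh one, unbounded above, vanishing on $K$ up to a controlled constant, which forces $\sup_X V_{K,j}=+\infty$. I expect the bookkeeping of these bounded corrections, and citing the correct form of the comparison between pluripolarity for $\beta_j$ and for $\beta+dd^c\rho_j$, to be where the real work lies; everything else is a routine normal-families argument.
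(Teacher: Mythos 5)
There are two genuine problems, and they sit in opposite directions of the equivalence. First, you have turned the easy implication into the hard one and then not proved it. If $K$ is $(\beta+dd^c\rho_j)$-pluripolar, take $u\in \mbox{PSH}(X,\beta+dd^c\rho_j)$ with $K\subset\{u=-\infty\}$ and consider $u+k$ for $k=1,2,\dots$: adding a constant does not change $dd^cu$, so $u+k$ stays in the class, and $u+k=-\infty\le 0$ on $K$, so each $u+k$ is a competitor in the supremum defining $V_{K,j}$, whence $\sup_XV_{K,j}\ge \sup_Xu+k\to+\infty$. You do not need a single competitor that is unbounded above, only competitors with arbitrarily large suprema, so the entire discussion of rescaling $u$ by $C$, the non-cone issue, Josefson-type theorems and the K\"ahler form $\beta_j$ is beside the point --- and as written it never terminates in an argument (the expression $v=u-\sup_Ku$ is meaningless because $\sup_Ku=-\infty$, and ``$w_C:=\max\{u,-C\}+C\cdot(\text{something})$'' is a placeholder). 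This direction is one line in the paper.

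Second, in the implication ``$\sup_XV_{K,j}=+\infty\Rightarrow K$ pluripolar'' your Hartogs step does not close. From $v_m-\sup_Xv_m\to w$ in $L^1$ one gets $\limsup_m(v_m-\sup_Xv_m)\le w$ on $X$; on $K$ the left-hand side is $-\infty$, so this inequality is vacuous there and does not give $w=-\infty$ on $K$. To rescue the argument you would need the Bedford--Taylor negligible-sets theorem (the set where $\limsup$ differs from its usc regularization is pluripolar), plus a further step upgrading ``$K$ is contained in a union of two locally pluripolar sets'' to the global single-potential notion of $(\beta+dd^c\rho_j)$-pluripolarity used here; your parenthetical gestures at this but does not carry it out. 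The paper sidesteps all of it with an explicit construction: choosing $v_k$ with $N_k:=\sup_Xv_k\ge 2^k$ and $v_k\le 0$ on $K$, the convex combinations $u_l:=2^{-l}(\rho-\rho_j+M)+\sum_{k=1}^{l}2^{-k}(v_k-N_k)$ lie in $\mbox{PSH}(X,\beta+dd^c\rho_j)$ and decrease to $u=\sum_k 2^{-k}(v_k-N_k)$, which is not identically $-\infty$ (by the uniform $L^1$ bound of Proposition \ref{prop:uniform est}) yet equals $-\infty$ on $K$ since each summand is $\le -1$ there. That single series replaces both your normal-families argument and the negligible-sets machinery, and I would recommend adopting it.
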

\begin{proof} The proof follows \cite[Theorem 5.2]{GZ05}. By considering $u+k$ for $k=1,2,\cdots$, one can see that $\sup_X V_{K,j}=+\infty$.
Conversely, let $v_k\in \mbox{PSH}(X,\beta+dd^c\rho_j)$ be a sequence such that $N_k:=\sup_Xv_k\geq2^k$ and $v_k\leq 0$ on K. Since  $\sup_X(v_k-N_k)=0$,  the sequence $\{v_k-N_k\}$ is compact in $L^1(X,\omega^n)$. 
Then the function   
$$u:=\sum_{k=1}^{\infty}\frac{v_k-N_k}{2^k}$$
as a  decreasing limit of
	$$u_l:=\frac{\rho-\rho_j+M}{2^l}+\sum_{k=1}^{l}\frac{v_k-N_k}{2^k},$$
	is $(\beta+dd^c\rho_j)$-psh, such that  $u\neq-\infty$, and $u=-\infty$ on $K$.
\end{proof}

\begin{lem}\label{thm: extrem no mass}
If $K$ is  a compact subset of $X$ and  not  $(\beta+dd^c\rho_j)$-pluripolar, then $(\beta+dd^c\rho_j+dd^cV_{K,j}^*)^n$ puts no mass outside $K$.
\end{lem}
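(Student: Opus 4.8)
\textbf{Overview.} The plan is to adapt the classical Bedford–Taylor / Guedj–Zeriahi argument showing that the Monge–Ampère measure of an extremal function is supported on the relevant compact set. The only subtlety here is the Hermitian (non-Kähler) setting, so I must avoid invoking Stokes-type global arguments on $(\beta+dd^c\rho_j)$-psh functions and instead work locally, combined with the upper-semicontinuous regularization lemma (Lemma \ref{lem: non pluripolar char}) which is already available. Throughout write $\gamma_j:=\beta+dd^c\rho_j$ for brevity, so $\gamma_j\geq\frac{1}{2j^2}\omega$ is itself a genuine (strictly positive) smooth form and $V_{K,j}^*\in\mbox{PSH}(X,\gamma_j)\cap L^\infty(X)$ (bounded because $K$ is non-pluripolar, by Lemma \ref{lem: non pluripolar char}), so the Monge–Ampère measure $(\gamma_j+dd^cV_{K,j}^*)^n$ is well defined by Bedford–Taylor theory.

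\textbf{Step 1: $V_{K,j}^*=0$ on the interior-type part and the negligible set.} First I record the standard facts: $V_{K,j}^*\geq 0$ everywhere, $V_{K,j}^*=V_{K,j}$ outside a $(\gamma_j)$-pluripolar (hence negligible) set, and on $K$ one has $V_{K,j}=0$ except on a pluripolar subset of $K$; in particular $V_{K,j}^*=0$ on $K$ off a set that is negligible for the Monge–Ampère measure. This uses the Bedford–Taylor theorem that negligible sets carry no Monge–Ampère mass for bounded psh functions, which is a local statement and so applies verbatim here.

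\textbf{Step 2: Locating the support on $X\setminus K$.} Fix a point $x_0\in X\setminus K$; I want to show $(\gamma_j+dd^cV_{K,j}^*)^n$ vanishes near $x_0$. Choose a small coordinate ball $B=B(x_0,r)$ with $\overline B\cap K=\varnothing$. The idea is the familiar one: if $V_{K,j}^*(x_0)>0$ then $V_{K,j}^*$ is not maximal at $x_0$, whereas if the Monge–Ampère measure had positive mass on $B$ we could perturb $V_{K,j}^*$ upward on $B$ while staying $\leq 0$ on $K$ and staying $\gamma_j$-psh, contradicting the definition of the extremal function. Concretely, suppose for contradiction $\int_B(\gamma_j+dd^cV_{K,j}^*)^n>0$. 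Using that $\gamma_j\geq\frac{1}{2j^2}\omega>0$ on $B$, solve a local Dirichlet problem (Bedford–Taylor): find $w$ psh on $B$, continuous on $\overline B$, with $w=V_{K,j}^*+\rho_j+N|z|^2$ on $\partial B$ (where $N$ is chosen so that $\gamma_j\leq dd^c(N|z|^2)$ on $B$, as in the proof of Proposition \ref{prop:CLN ineq}) and $(dd^cw)^n=0$ in $B$; equivalently, replace $V_{K,j}^*$ inside $B$ by the local $\gamma_j$-maximal extension $\widetilde V$ agreeing with $V_{K,j}^*$ on $\partial B$. Then $\widetilde V\geq V_{K,j}^*$ with strict inequality somewhere (precisely because the Monge–Ampère mass was positive, by the comparison principle applied on $B$), $\widetilde V$ is $\gamma_j$-psh on all of $X$ (gluing: it equals $V_{K,j}^*$ near $\partial B$ and outside), and $\widetilde V\leq 0$ on $K$ since $K\cap\overline B=\varnothing$ and $\widetilde V=V_{K,j}^*$ there. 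Hence $\widetilde V$ is a competitor in the sup defining $V_{K,j}$, so $\widetilde V\leq V_{K,j}\leq V_{K,j}^*$, contradicting $\widetilde V>V_{K,j}^*$ somewhere. Therefore $(\gamma_j+dd^cV_{K,j}^*)^n\equiv 0$ on $B$, and since $x_0\in X\setminus K$ was arbitrary, the measure puts no mass on the open set $X\setminus K$.

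\textbf{Step 3: Conclusion.} Combining Steps 1–2: the Monge–Ampère measure is carried by $K$, which is what the lemma asserts. (If one wants the sharper statement that it is carried by $K$ off the pluripolar part, Step 1 already gives it, but the stated conclusion only needs "no mass outside $K$".)

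\textbf{Main obstacle.} The delicate point is Step 2: making the local perturbation/gluing argument rigorous in the Hermitian setting, i.e. checking that replacing $V_{K,j}^*$ by its local $\gamma_j$-maximal extension on $B$ genuinely produces a globally $\gamma_j$-psh function and that the comparison principle on $B$ forces a strict increase exactly when the Monge–Ampère mass is positive. Both are local facts about $(dd^c u)$ once one absorbs $\gamma_j$ into $dd^c(\rho_j+N|z|^2)$, so the Bedford–Taylor local theory suffices; the Hermitian curvature terms encoded in $B$ and $B_j$ play no role here because $\gamma_j$ is already closed is not needed — only that $\gamma_j$ is a fixed smooth $(1,1)$-form dominated locally by $dd^c$ of a smooth function. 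I would also double-check the edge case where $B$ meets $\partial(X\setminus K)$, handled by shrinking $r$.
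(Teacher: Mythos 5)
Your overall strategy --- balayage on a small ball $B$ disjoint from $K$, followed by maximality of the extremal envelope --- is the same as the paper's, but there is a genuine gap at the decisive step of Step 2. You conclude that the modified function $\widetilde V$ is a competitor in the envelope defining $V_{K,j}$ because ``$\widetilde V\leq 0$ on $K$ since $K\cap\overline B=\varnothing$ and $\widetilde V=V_{K,j}^*$ there.'' But it is only $V_{K,j}$, not its upper semicontinuous regularization $V_{K,j}^*$, that is known to be $\leq 0$ on $K$; the set $\{V_{K,j}<V_{K,j}^*\}$ is merely negligible (hence pluripolar) and it can meet $K$ (think of a compact set with a pluripolar ``hair'': at such points $V_{K,j}^*>0$). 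So $\widetilde V$ need not be $\leq 0$ on all of $K$, and the key inequality $\widetilde V\leq V_{K,j}$ does not follow from the definition of the envelope. This is precisely the point the paper's proof is organized around: it takes a Choquet sequence $v_k$, increasing, with $(\sup_k v_k)^*=V_{K,j}^*$ and $v_k\leq 0$ on $K$ (so each $v_k$ is an honest competitor), performs the Bedford--Taylor balayage on each $v_k$ to get $\tilde v_k$ with $(\beta+dd^c\rho_j+dd^c\tilde v_k)^n=0$ on $B$ and $\tilde v_k=v_k\leq 0$ on $K$, deduces $\tilde v_k\leq V_{K,j}$ and hence $\tilde v_k\nearrow V_{K,j}^*$ almost everywhere, and concludes by the monotone convergence theorem for Monge--Amp\`ere measures. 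Your argument can be repaired either this way, or by the standard trick of replacing $\widetilde V$ with $(1-\varepsilon)\widetilde V+\varepsilon w$ for a $(\beta+dd^c\rho_j)$-psh $w\leq 0$ equal to $-\infty$ on the exceptional pluripolar subset of $K$, and letting $\varepsilon\to 0$; but as written the step is incomplete.

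Two smaller inaccuracies, neither fatal: $\gamma_j:=\beta+dd^c\rho_j$ is not known to satisfy $\gamma_j\geq\frac{1}{2j^2}\omega$ (that bound holds for $\beta_j=\gamma_j+\frac1j\omega$, and $\gamma_j$ need not be positive at all), and the claim $V_{K,j}^*\geq 0$ is unjustified since $0$ need not be $\gamma_j$-psh; what one actually has is $V_{K,j}\geq\rho-\rho_j\geq -M$. As you note yourself, only the local domination of $\gamma_j$ by $dd^c$ of a smooth function is needed for the balayage, so these slips do not affect the structure of the argument.
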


\begin{proof}
	The proof follows \cite[Theorem 5.2]{GZ05}.
Fix an arbitrary small ball $B\subseteq X-K$. Since  $K$  is not $(\beta+dd^c\rho_j)$-pluripolar, Lemma \ref{lem: non pluripolar char} implies that  $\sup_XV_{K_j}\textless+\infty$. 
By  Choquet's lemma, there is a   sequence $v_k\in \mbox{PSH}(X,\beta+dd^c\rho_j)$, $v_k\leq 0 $ on $K$, such that $(\sup v_k)^*=V_{K,j}^*$. 
We may assume the sequence  $\{v_k\}$ is increasing with respect to $k$ and $v_k\geq \rho-\rho_j$ (in particular, $v_k\in L^\infty(X)$). 
Applying \cite[Proposition 9.1]{BT82} to $v_k$ on $B$,  we can get $\tilde{v}_k\in \mbox{PSH}(X,\beta+dd^c\rho_j)$ such that 
	\begin{center}
		$\tilde{v}_k=v_k$ on $X\setminus B$,  $\tilde{v}_k\geq v_k$ on X, and $(\beta+dd^c\rho_j+dd^c\tilde{v}_k)^n=0$ on B.
	\end{center} 
Further,  $v_{k+1}\geq v_k$ on $X$ implies $\tilde{v}_{k+1}\geq\tilde{v}_k$ on X (see \cite[Corollary 12.5]{Dem95}). 
Since  $\tilde{v}_k=v_k\leq0$ on $K$, by the  definition of $V_{K,j}$, we have $\tilde{v}_k\leq V_{K,j}$. Hence   $\tilde{v}_k \nearrow V_{K,j}^*$  almost everywhere on $X$ (in Lebesgue measure).  By \cite[Proposition 5.2]{BT82},
\begin{align*} 
	(\beta+dd^c\rho_j+dd^c\tilde v_k)^n\rightarrow (\beta+dd^c\rho_j+dd^cV^*_{K,j})^n
	\end{align*}
weakly as measures on $B$, so  
$$(\beta+dd^c\rho_j+dd^cV^*_{K,j})^n=0\mbox{ on } B.$$ 
Since $B$ is arbitrary, the proof of Lemma \ref{thm: extrem no mass}	is complete.
\end{proof}

\begin{rem}\label{rem: extrem rhoj---rho}  In Lemma \ref{lem: non pluripolar char} and Lemma \ref{thm: extrem no mass}, we replace $\rho_j$ by $0$, and $V_{K,j}$ by $V_K:=\sup\{v\in\mbox{PSH}(X,\beta):v\leq0$ on $K \}$, the same conclusions also hold, i.e. 	$K\subset X$ is a $\beta$-pluripolar set
	if  and only if  $\sup_XV_{K}=+\infty$, and if $K$ is  a compact subset of $X$ and  not  $\beta$-pluripolar, then $(\beta+dd^cV_{K}^*)^n$ puts no mass outside $K$.
	\end{rem}
\subsection{Capacity estimates}
For any Borel set $E\subset X$, the capacity of $E$ with respect to $\beta_j$ is defined as
\[Cap_{\beta_j}(E):=\sup \left\{ \int_E (\beta_j+dd^c v)^n :v\in \mbox{PSH}(X,\beta_j), 0\leq v \leq 1\right \}.\]

\begin{prop}\label{prop:vol-cap est}
	There exist constants $\alpha,C\textgreater 0$, depending only on $(X,\omega,\beta,M)$, such that for any Borel set $E$, 
	$$vol_\omega(E) \leq C\exp(-\alpha Cap_{\beta_j}(E)^{-\frac{1}{n}}).$$
	
\end{prop}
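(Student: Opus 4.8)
The plan is to follow the classical volume–capacity comparison argument of Kołodziej and Guedj–Zeriahi, adapted to the Hermitian setting via the already-established tools. The key point is that the capacity $\mathrm{Cap}_{\beta_j}(E)$ controls the relative extremal function $V_{E,j}^*$, which in turn controls $\mathrm{vol}_\omega(E)$ through the uniform Skoda-type integrability estimate in Proposition \ref{prop:uniform est}. So the proof has two halves: (i) if $E$ is $(\beta+dd^c\rho_j)$-pluripolar then $\mathrm{vol}_\omega(E)=0$ and there is nothing to prove, so assume $E$ is non-pluripolar and (by inner regularity of the capacity and of Lebesgue measure) that $E=K$ is compact and non-pluripolar; (ii) for such $K$, relate $\sup_X V_{K,j}^*$ to $\mathrm{Cap}_{\beta_j}(K)$ and then to $\mathrm{vol}_\omega(K)$.

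First I would record the elementary but crucial inequality relating the $\beta_j$-capacity to the extremal function: if $M_{K,j}:=\sup_X V_{K,j}^*$ (finite by Lemma \ref{lem: non pluripolar char} and Remark \ref{rem: extrem rhoj---rho}, applied with $\rho_j$), then the function $w:=V_{K,j}^*/M_{K,j}$ satisfies $0\le w\le 1$ on $X$ and is $\beta_j$-psh (since $\beta+dd^c\rho_j\le\beta_j$, so $V_{K,j}^*\in\mathrm{PSH}(X,\beta_j)$ after rescaling — here one uses $\beta_j\ge\beta+dd^c\rho_j+\tfrac1{2j^2}\omega\ge\beta+dd^c\rho_j$). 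Hence, testing the capacity against $w$ and using that $V_{K,j}^*=0$ on $K$ minus a pluripolar set,
\[
\mathrm{Cap}_{\beta_j}(K)\ \ge\ \int_K(\beta_j+dd^c w)^n\ \ge\ \frac{1}{M_{K,j}^n}\int_K(\beta_j+dd^c V_{K,j}^*)^n.
\]
By Lemma \ref{thm: extrem no mass} the measure $(\beta+dd^c\rho_j+dd^c V_{K,j}^*)^n$ is carried by $K$, and since $\beta_j\ge\beta+dd^c\rho_j$ this forces $\int_K(\beta_j+dd^c V_{K,j}^*)^n\ge\int_X(\beta+dd^c\rho_j+dd^c V_{K,j}^*)^n$; the latter total mass is bounded below by a uniform positive constant $c_0=c_0(X,\omega,\beta,M)>0$, for instance by $\int_X\beta_j^n$-type considerations or, more robustly, by a Stokes/mixed-mass argument on the Hermitian manifold showing the total mass of a bounded $\beta_j$-psh Monge–Ampère measure is comparable to $\int_X\beta^n$ up to the curvature constant $B$. (If one prefers to avoid sharp mass identities, one may instead compare with $V_{K}^*$ built from $\beta$ directly, via Remark \ref{rem: extrem rhoj---rho}.) This yields $M_{K,j}\le \big(c_0/\mathrm{Cap}_{\beta_j}(K)\big)^{1/n}$, i.e. a lower bound $\sup_X V_{K,j}^*\le C\,\mathrm{Cap}_{\beta_j}(K)^{-1/n}$ — rather, the bound goes the other way and I would phrase it as $\mathrm{Cap}_{\beta_j}(K)\cdot M_{K,j}^n\ge c_0$.

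Next I convert this into the volume estimate. Set $u:=V_{K,j}^*-M_{K,j}$, which is $(\beta+dd^c\rho_j)$-psh (hence $\beta_j$-psh) with $\sup_X u=0$ and $u=-M_{K,j}$ on $K$ off a pluripolar set. By Proposition \ref{prop:uniform est} there are uniform $\alpha,C>0$ with $\int_X e^{-\alpha u}\,\omega^n\le C$. Restricting the integral to $K$,
\[
C\ \ge\ \int_K e^{-\alpha u}\,\omega^n\ =\ e^{\alpha M_{K,j}}\,\mathrm{vol}_\omega(K),
\]
so $\mathrm{vol}_\omega(K)\le C\,e^{-\alpha M_{K,j}}\le C\exp\!\big(-\alpha\, c_0^{1/n}\,\mathrm{Cap}_{\beta_j}(K)^{-1/n}\big)$, which is the desired inequality after renaming constants (and noting one may shrink $\alpha$ so the statement holds verbatim). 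Passing back from compact $K$ to an arbitrary Borel set $E$ uses interior regularity: $\mathrm{vol}_\omega(E)=\sup_{K\subset E\text{ compact}}\mathrm{vol}_\omega(K)$, while $\mathrm{Cap}_{\beta_j}(K)\le\mathrm{Cap}_{\beta_j}(E)$ makes the right-hand side only larger, so the bound survives the supremum; and if every compact subset of $E$ is pluripolar then $\mathrm{vol}_\omega(E)=0$ trivially.

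The main obstacle I anticipate is the uniform lower bound $c_0>0$ on the total Monge–Ampère mass $\int_X(\beta+dd^c\rho_j+dd^c V_{K,j}^*)^n$, independent of $j$ and $K$: in the Hermitian (non-closed) setting the mass is not a cohomological invariant, so one cannot simply write it as $\int_X\beta^n$. I would handle this by a Stokes-type expansion of $(\beta_j+dd^c V_{K,j}^*)^n$ against $\beta_j^{n}$, controlling the error terms by the curvature constants $B_j$ of $\beta_j$ together with the uniform $L^1$-bound on $V_{K,j}^*$ coming from Proposition \ref{prop:uniform est} and the Chern–Levine–Nirenberg inequality (Proposition \ref{prop:CLN ineq}); alternatively, and more cleanly, I would run the whole argument with the $\beta$-extremal function $V_K^*$ of Remark \ref{rem: extrem rhoj---rho} in place of $V_{K,j}^*$, since $\mathrm{PSH}(X,\beta)\subset\mathrm{PSH}(X,\beta_j)$ gives $\mathrm{Cap}_{\beta_j}\ge$ (a fixed multiple of) the $\beta$-capacity on the relevant test class, sidestepping the $j$-dependence of the mass entirely. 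Everything else is a routine assembly of the cited lemmas.
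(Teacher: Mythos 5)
Your strategy is the one the paper uses: reduce by inner regularity to a compact non-pluripolar $K$, combine Lemma \ref{thm: extrem no mass} (the Monge--Amp\`ere measure of $V_{K,j}^*$ is carried by $K$) with a rescaling of $V_{K,j}^*$ into the test class of $Cap_{\beta_j}$ to get a lower bound on $\sup_X V_{K,j}^*$ in terms of $Cap_{\beta_j}(K)^{-1/n}$, and then convert this into a volume bound via the uniform exponential integrability of Proposition \ref{prop:uniform est}. However, the normalization step as you wrote it is broken. The function $w:=V_{K,j}^*/M_{K,j}$ does not satisfy $0\le w\le 1$: the competitors defining $V_{K,j}$ are only required to be $\le 0$ on $K$, and since $\beta+dd^c\rho_j$ need not be semi-positive the constant $0$ is not itself a competitor; all one knows is $V_{K,j}^*\ge \rho-\rho_j\ge -M$. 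Worse, $M_{K,j}=\sup_X V_{K,j}^*$ may be $\le 1$ or even $\le 0$, in which case $w$ is not $\beta_j$-psh and the inequality $(\beta_j+dd^c(V_{K,j}^*/M_{K,j}))^n\ge M_{K,j}^{-n}(\beta_j+dd^cV_{K,j}^*)^n$ fails (it requires $\beta_j/M_{K,j}\le\beta_j$). The paper repairs exactly this by rescaling with $(V_{K,j}^*-\inf_XV_{K,j}^*)/(A_{K,j}+M)$, which genuinely lies in $[0,1]$, and by splitting into the cases $A_{K,j}+M>1$ and $A_{K,j}+M\le 1$; in the second case one gets $Cap_{\beta_j}(K)\ge\int_X\beta^n$ directly, so the claimed inequality is immediate after enlarging $C$. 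You need this case analysis (or an equivalent device) for the argument to close.

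Separately, the ``main obstacle'' you anticipate is not one: $\beta$ is assumed to be a \emph{closed} smooth $(1,1)$-form, so $\beta+dd^c\rho_j+dd^cV_{K,j}^*$ is a closed positive current with bounded potential and Stokes' theorem gives $\int_X(\beta+dd^c\rho_j+dd^cV_{K,j}^*)^n=\int_X\beta^n>0$ exactly --- this is the one-line identity the paper invokes, and no curvature-error expansion is needed. (Only $\omega$ fails to be closed here, and $\omega$ does not enter the Monge--Amp\`ere mass being estimated.) Your fallback of working with the $\beta$-capacity instead would also require a rescaling of the same kind, since the test classes for $Cap_\beta$ and $Cap_{\beta_j}$ are normalized over different intervals.
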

\begin{proof}
	Because the volume  $vol_\omega(\cdot)$ and the capacity $Cap_{\beta_j}(\cdot)$ are inner regular, we may assume that  $E=K$ is a compact subset of X.
	We assume that $vol_\omega(K)> 0$, otherwise, it is trivial.
	Since $\rho-\rho_j \leq 0 $ on X, and $\beta+dd^c\rho_j+dd^c(\rho-\rho_j)\geq 0$, we see that 
	\[V_{K,j}\geq \rho-\rho_j  \geq-M,\]
	i.e., $V_{K,j}$ is uniformly bounded below.  
By Lemma \ref{lem: non pluripolar char}, $A_{K,j}:=\sup_X V_{K,j}<\infty$.
%
%
%
By Lemma \ref{thm: extrem no mass},   $(\beta+dd^c \rho_j+dd^c V_{K,j}^*)^n =0$ on  $X\setminus K$.  Moreover,   $V_{K,j}^*\leq 0$  almost everywhere on $K$ (in Lebesgue measure). Thus it follows that 
	\begin{align*}
		vol_\omega(K)&\leq \int_K e^{-\alpha V_{K,j}^*}\omega^n=e^{-\alpha A_{K,j}}\int_K e^{-\alpha (V_{K,j}^*-A_{K,j})}\omega^n\\
		&\leq Ce^{-\alpha A_{K,j}},
	\end{align*}
	where the second inequality follows from Proposition  \ref{prop:uniform est}.

Since  $A_{K,j}\geq \sup_X (\rho-\rho_j) \geq -M$, then $A_{K,j}+M\geq 0$. 

If  $A_{K,j}+M > 1$, we have 
\begin{align*}(A_{K,j}+M)^{-n}\int_X \beta^n&=(A_{K,j}+M)^{-n}\int_X (\beta+dd^c \rho_j+dd^c V_{K,j}^*)^n\\
		&=\int_K \left(\frac{\beta+dd^c \rho_j+dd^c V_{K,j}^*}{A_{K,j}+M}\right)^n\\
		&\leq\int_K \left(\frac{\beta_j+dd^c V_{K,j}^*}{A_{K,j}+M}\right)^n\\
		&	\leq \int_K \left(\beta_j+dd^c\left(\frac {V_{K,j}^*}{A_{K,j}+M}\right)\right)^n\\
		&= \int_K \left(\beta_j+dd^c\left(\frac {V_{K,j}^*-\inf_X V^{*}_{K,j}}{A_{K,j}+M}\right)\right)^n.
	\end{align*}
Then by the definition of  $Cap_{\beta_j}(\cdot)$, and the fact that 
	\[0\leq\frac {V_{K,j}^*-\inf_X V^{*}_{K,j}}{A_{K,j}+M}\leq 1, \]
	we get  $$(A_{K,j}+M)^{-n}\int_X \beta^n\leq Cap_{\beta_j}(K),$$
and thus  $$vol_\omega(K)\leq Ce^{\alpha M}\exp(-\alpha Cap_{\beta_j}(E)^{-\frac{1}{n}}).$$
	
If $A_{K,j}+M \leq 1$, we have 
	\begin{align*}
		\int_X \beta^n&=\int_X (\beta+dd^c \rho_j+dd^c V_{K,j}^*)^n\\
		&=\int_K (\beta+dd^c \rho_j+dd^c V_{K,j}^*)^n\\
		&\leq \int_K(\beta+dd^c \rho_j+\frac{1}{j}\omega+dd^c (V_{K,j}^*-\inf_XV_{K,j}^*))^n\\
		&\leq Cap_{\beta_j}(K).
	\end{align*}
	Thus $Cap_{\beta_j}(K)^{-\frac{1}{n}}\leq 1$. As a result,  
	\begin{align*}
		vol_\omega(K)\leq Ce^{\alpha M} \leq Ce^{\alpha (M+1)}\exp(-\alpha Cap_{\beta_j}(K)^{-\frac{1}{n}}).
	\end{align*}
	The proof of Proposition \ref{prop:vol-cap est} is complete.
\end{proof}

\begin{prop} \label{prop: cap est}Let $u\in \mbox{PSH}(X,\beta_j)$ and $\sup_X u=0.$ Then there exists a constant $C>0$, depending  only  on $(X,\omega,\beta,M)$, such that for any $t\geq  0$,
	\[Cap_{\beta_j}(\{u\textless -t\})\leq \frac{C}{t}.\]
\end{prop}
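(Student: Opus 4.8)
The plan is to follow the classical Bedford--Taylor / Ko\l odziej argument for the decay of capacity along sublevel sets, adapted to the Hermitian setting where $\beta_j \geq \frac{1}{2j^2}\omega > 0$. Fix $u \in \mathrm{PSH}(X,\beta_j)$ with $\sup_X u = 0$, and fix $t \geq 0$. Let $v \in \mathrm{PSH}(X,\beta_j)$ with $0 \leq v \leq 1$ be an arbitrary competitor for $Cap_{\beta_j}(\{u < -t\})$. The key observation is that on the open set $\{u < -t\}$ one has the inclusion of sublevel sets
\[
\{u < -t\} \subset \{u < -t + tv - t\} \quad\text{is false; rather}\quad \{u < -t\} \subset \{u - tv < -t\} \subset \{u < (t+1)v - (t+1)\}\cdot\tfrac{1}{t+1}\text{-scaling},
\]
so more carefully: since $0 \le v \le 1$, on $\{u<-t\}$ we have $\frac{u}{t+1} < \frac{-t}{t+1} = v\cdot\frac{-t}{t+1}\big|_{v\equiv 1}$, and in general $\frac{u}{t+1} < v - 1$ there, hence $w := \max\!\left(\frac{u}{t+1},\, v-1\right)$ equals $v-1$ on a neighborhood of $\{u<-t\}$ while $w \in \mathrm{PSH}(X,\beta_j)$ (here one uses that $\beta_j$ is a fixed smooth form and the max of two $\beta_j$-psh functions is $\beta_j$-psh, together with convexity: $\frac{u}{t+1}$ is $\beta_j$-psh because $\beta_j + \frac{dd^c u}{t+1} \geq \frac{t}{t+1}\beta_j \geq 0$). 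Then
\[
\int_{\{u<-t\}} (\beta_j + dd^c v)^n = \int_{\{u<-t\}} (\beta_j + dd^c w)^n \leq \int_{\{w < 0\}} (\beta_j + dd^c w)^n,
\]
and on $\{w<0\}$, comparing $w$ with the (bounded, $\beta_j$-psh) function $0$ via the comparison principle for the Hermitian Monge--Amp\`ere operator gives $\int_{\{w<0\}} (\beta_j+dd^c w)^n \leq \int_{\{w<0\}} (\beta_j + dd^c 0)^n + (\text{error terms from }dd^c\beta_j,\, d\beta_j\wedge d^c\beta_j)$.

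Carrying this out, the leading term is $\int_{\{w<0\}} \beta_j^n$. Now $w = v - 1$ outside $\{\frac{u}{t+1} > v-1\} \subset \{u > -(t+1)\}$, but more usefully one bounds $\int_X \beta_j^n$ directly: by expanding $\beta_j^n = \beta^n + (\text{lower order in }\tfrac1j)$ and using $\int_X \beta^n$ fixed, together with the Chern--Levine--Nirenberg-type control (Proposition~\ref{prop:CLN ineq}) and Proposition~\ref{prop:uniform est}, one shows $\int_X \beta_j^n \leq C(X,\omega,\beta,M)$ uniformly in $j$ — this is where the hypothesis that all the $\rho_j$ are uniformly bounded by $M$ is essential. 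The subtlety is that in the Hermitian case the total mass $\int_X \beta_j^n$ is not a cohomological invariant and the correction terms $dd^c\beta_j$, $d\beta_j \wedge d^c\beta_j$ a priori blow up like $j^4$; one must instead run the comparison-principle estimate of Ko\l odziej--Nguyen (the ``$\int_{\{u<v\}}(\beta_j+dd^cv)^n \le (1+\text{something})\int_{\{u<v\}}(\beta_j+dd^cu)^n$'' inequality, proved using the $B_j$-bounds and integration by parts) on the rescaled functions $\frac{u}{t+1}$ and $v-1$, where the rescaling by $\frac{1}{t+1} \leq 1$ keeps all the Hermitian error terms under control uniformly in $t$.

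Putting the pieces together: one obtains, for every admissible competitor $v$,
\[
\int_{\{u<-t\}} (\beta_j + dd^c v)^n \leq \frac{1}{(t+1)^n}\int_X (\beta_j + dd^c u)^n \cdot (1 + C) \leq \frac{C'}{(t+1)^n} \leq \frac{C'}{t^n} \leq \frac{C'}{t}
\]
for $t \geq 1$ (and the bound is trivial for $0 \le t \le 1$ since the capacity of the whole space is bounded by a uniform constant, again via $\int_X\beta_j^n \le C$). Here $\int_X(\beta_j + dd^c u)^n$ is controlled by $\int_X \beta_j^n \leq C$ modulo Hermitian error terms that the comparison-principle inequality absorbs. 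Taking the supremum over $v$ yields $Cap_{\beta_j}(\{u < -t\}) \leq C/t$ with $C = C(X,\omega,\beta,M)$, as claimed.

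\textbf{Main obstacle.} The genuine difficulty is \emph{uniformity in $j$}: the natural Hermitian correction constant is $B_j = 8Bj^4 \to \infty$, so one cannot simply quote the compact-Hermitian comparison principle with its constants and hope for a $j$-independent bound. The resolution is that the comparison/mass estimates must be applied to functions of the form $\frac{u}{t+1}$ and $v-1$ whose $dd^c$ is \emph{dominated by a fixed fraction of $\beta_j$} (because $\beta_j + \frac{dd^c u}{t+1} \geq \frac{t}{t+1}\beta_j$), which lets one trade the bad $j^4$ factors against the good lower bound $\beta_j \geq \frac{1}{2j^2}\omega$ and, crucially, against the uniform $L^\infty$ bound $M$ on the $\rho_j$ — exactly the quantities that Propositions~\ref{prop:CLN ineq}--\ref{prop:vol-cap est} were set up to control uniformly. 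I expect the write-up to route through the integration-by-parts identity $\int_X (\beta_j + dd^c w_1)^n - \int_X(\beta_j+dd^c w_2)^n = \int_X dd^c(w_1-w_2)\wedge(\sum \cdots)$ plus the $B_j$-estimates on $dd^c\beta_j$ and $d\beta_j\wedge d^c\beta_j$, all evaluated on the rescaled functions so the $(t+1)^{-1}$ factors neutralize the growth.
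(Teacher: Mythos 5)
Your proposal goes down a much harder road than is needed, and as written it has genuine gaps. The paper's proof is a two-line Chebyshev argument: since $-u>t$ on $\{u<-t\}$ and $-u\geq 0$ everywhere,
\[
\int_{\{u<-t\}}(\beta_j+dd^cv)^n\leq \frac{1}{t}\int_X -u\,(\beta_j+dd^cv)^n\leq \frac{1}{t}\int_X-(u-1)(\beta_j+dd^c(v-2))^n,
\]
and the last integral is bounded by $C(X,\omega,\beta,M)$ via the Chern--Levine--Nirenberg inequality (Proposition \ref{prop:CLN ineq}, applied with $u-1\leq-1$, $v-2\leq-1$, $\|v-2\|_\infty\leq 2$) together with the uniform $L^1$ bound of Proposition \ref{prop:uniform est}. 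No comparison principle, no Monge--Amp\`ere measure of $u$, no rescaling. This is why the estimate is only $C/t$ and not $C/t^n$: the weight $-u$ is used linearly.

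The concrete problems with your route are the following. First, the inclusion you need is false as stated: on $\{u<-t\}$ you only know $\frac{u}{t+1}<\frac{-t}{t+1}$, and since $v-1$ can equal $-1$ (take $v\equiv 0$), the inequality $\frac{u}{t+1}<v-1$ requires $u<-(t+1)$, not $u<-t$; this is a fixable off-by-one, but it signals that you are reconstructing the Ko\l odziej $t^{-n}$ estimate, which is a different (and stronger) statement requiring more hypotheses. Second, and more seriously, your argument runs the comparison principle against $(\beta_j+dd^cu)^n$ and invokes $\int_X(\beta_j+dd^cu)^n$, but the proposition assumes only $u\in\mbox{PSH}(X,\beta_j)$ with $\sup_Xu=0$: $u$ may be unbounded, its Monge--Amp\`ere measure is not defined in the Bedford--Taylor sense, and no total-mass identity is available. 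The paper's proof deliberately places the (possibly unbounded) $u$ as an $L^1$ weight against the measure $(\beta_j+dd^cv)^n$ of the \emph{bounded} competitor $v$, which is exactly what CLN is designed for. Third, your proposed mechanism for uniformity in $j$ does not work as described: rescaling by $\frac{1}{t+1}$ controls the $t$-dependence, not the $j$-dependence, and the lower bound $\beta_j\geq\frac{1}{2j^2}\omega$ is small, so it cannot ``absorb'' the $B_j=8Bj^4$ constants. (The reason the Hermitian error terms are harmless here is that $dd^c\beta_j=\frac1j dd^c\omega$ and $d\beta_j\wedge d^c\beta_j=\frac{1}{j^2}d\omega\wedge d^c\omega$ are small \emph{when measured against $\omega$}, which is how Proposition \ref{prop: compar energ} handles them --- but none of that machinery is needed for the present capacity bound.) I would recommend abandoning the comparison-principle approach for this statement and using the weight-plus-CLN argument directly.
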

\begin{proof}
	For any $v\in \mbox{PSH}(X,\beta_j),$ $0\leq v \leq 1$,  
	\begin{align*}\int_{\{u \textless -t\}}(\beta_j+dd^cv)^n
	&	\leq \frac{1}{t} \int_X -u(\beta_j+dd^cv)^n \\
	&\leq \frac{1}{t} \int_X -(u-1)(\beta_j+dd^c(v-2))^n.
	\end{align*}
An application of  Proposition \ref{prop:CLN ineq} and Proposition \ref{prop:uniform est},  completes the proof of Proposition \ref{prop: cap est}.
%
\end{proof}

\begin{rem}\label{rem: est  cap uj--u}
For any Borel subset $E\subset X$, the capacity of $E$ with respect to $\beta$ is defined as 
	\[Cap_{\beta}(E):=\sup\left\{\int_E(\beta+dd^cv)^n:v\in \mbox{PSH}(X,\beta),\rho \leq v \leq \rho+1 \right\}.\]
The capacity of $E$ with respect to $\beta+dd^c\rho$ is defined as 
\[Cap_{\beta+dd^c \rho}(E):=\sup\{\int_E(\beta+dd^c\rho+dd^cv)^n: v\in \mbox{PSH}(X,\beta+dd^c\rho):0\leq v \leq1\}.\]
It is easy to see that they are coincide, i.e. 
\[Cap_{\beta}(E)=Cap_{\beta+dd^c \rho}(E).\]
	\end{rem}
By Remark \ref{rem: extrem rhoj---rho}, and  almost the same proof of Proposition \ref{prop:vol-cap est},
we can get the following
\begin{prop}\label{lem: vol-cap beta}
	There exist constants $\alpha,C\textgreater 0$, depending only on $(X,\omega,\beta,M)$, such that for any Borel set $E\subset X$, 
	$$vol_\omega(E) \leq C\exp(-\alpha Cap_{\beta}(E)^{-\frac{1}{n}}).$$
\end{prop}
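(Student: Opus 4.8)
The plan is to mimic the proof of Proposition \ref{prop:vol-cap est}, replacing the role of $\beta_j$ and $\rho_j$ by $\beta$ and $0$ respectively, using the analogous statements recorded in Remark \ref{rem: extrem rhoj---rho} and the identification of capacities in Remark \ref{rem: est cap uj--u}. By inner regularity of both $vol_\omega(\cdot)$ and $Cap_\beta(\cdot)$ it suffices to treat the case $E=K$ a compact subset of $X$, and we may assume $vol_\omega(K)>0$. First I would consider the $\beta$-extremal function $V_K=\sup\{v\in\mbox{PSH}(X,\beta):v\leq 0\text{ on }K\}$. Since $\rho\leq M$ and $\rho-M\in\mbox{PSH}(X,\beta)$ with $\rho-M\leq 0\leq$ (after subtracting $\sup\rho$) on $K$ — more precisely $\rho-\sup_X\rho\leq 0$ everywhere, hence on $K$ — we get $V_K\geq \rho-\sup_X\rho\geq -M$, so $V_K$ is uniformly bounded below by $-M$; and by Remark \ref{rem: extrem rhoj---rho} (the analogue of Lemma \ref{lem: non pluripolar char}), since $K$ is not $\beta$-pluripolar, $A_K:=\sup_X V_K<\infty$.

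Next I would run the volume estimate exactly as before: by the analogue of Lemma \ref{thm: extrem no mass} in Remark \ref{rem: extrem rhoj---rho}, $(\beta+dd^cV_K^*)^n=0$ on $X\setminus K$, and $V_K^*\leq 0$ a.e.\ on $K$, so using Proposition \ref{prop:uniform est} applied to $V_K^*-A_K$ (which is $(\beta+dd^c\rho_j)$-psh after adding the bounded difference $\rho-\rho_j$, or one applies the version of Proposition \ref{prop:uniform est} directly to $\beta$-psh functions — the uniform boundedness of $\{\rho_j\}$ makes these interchangeable) one obtains
\begin{align*}
vol_\omega(K)\leq \int_K e^{-\alpha V_K^*}\omega^n=e^{-\alpha A_K}\int_K e^{-\alpha(V_K^*-A_K)}\omega^n\leq Ce^{-\alpha A_K}.
\end{align*}
Then I would split into the two cases $A_K+M>1$ and $A_K+M\leq 1$. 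In the first case, using $\int_X\beta^n=\int_X(\beta+dd^cV_K^*)^n$, dividing by $(A_K+M)^n$, and observing that $(V_K^*-\inf_X V_K^*)/(A_K+M)$ lies between $0$ and $1$, the definition of $Cap_\beta(\cdot)$ from Remark \ref{rem: est cap uj--u} (note the competitors there range over $\rho\leq v\leq \rho+1$, equivalently $0\leq v-\rho\leq 1$, and one checks $\beta+dd^c((\rho$-shifted normalized extremal function$))$ is admissible — here is where the identification $Cap_\beta=Cap_{\beta+dd^c\rho}$ is used) gives $(A_K+M)^{-n}\int_X\beta^n\leq Cap_\beta(K)$, hence $vol_\omega(K)\leq Ce^{\alpha M}\exp(-\alpha Cap_\beta(K)^{-1/n})$. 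In the second case $A_K+M\leq 1$, one gets $\int_X\beta^n\leq Cap_\beta(K)$ directly, so $Cap_\beta(K)^{-1/n}\leq C$ and the desired bound holds trivially after enlarging the constant.

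The one point requiring care — and the main (mild) obstacle — is the bookkeeping around the two slightly different normalizations of capacity: $Cap_{\beta_j}$ uses $0\leq v\leq 1$ while $Cap_\beta$ in Remark \ref{rem: est cap uj--u} uses $\rho\leq v\leq \rho+1$, and one must verify that the normalized extremal function (suitably translated by $\rho$) is an admissible competitor, invoking the equality $Cap_\beta(E)=Cap_{\beta+dd^c\rho}(E)$ and the fact that $V_K^*+\rho$-type combinations remain $\beta$-psh. Apart from this, the argument is a verbatim transcription of Proposition \ref{prop:vol-cap est} with $(\beta_j,\rho_j)\rightsquigarrow(\beta,0)$, which is precisely why the paper says "almost the same proof."
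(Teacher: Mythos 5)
Your proposal is correct and is essentially the paper's own argument: the paper proves this proposition precisely by the transcription of Proposition \ref{prop:vol-cap est} under $(\beta_j,\rho_j)\rightsquigarrow(\beta,0)$ via Remark \ref{rem: extrem rhoj---rho} and Remark \ref{rem: est  cap uj--u} that you describe. The only detail worth writing out is the one you already flag as needing care: the admissible competitor for $Cap_{\beta+dd^c\rho}$ is the $(\beta+dd^c\rho)$-psh function $\bigl(V_K^*-\rho-\inf_X(V_K^*-\rho)\bigr)/(A_K+2M)$ rather than $\bigl(V_K^*-\inf_XV_K^*\bigr)/(A_K+M)$, since the denominator must also absorb the oscillation of $\rho$ — this only changes the constants $\alpha, C$.
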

\begin{rem} Demailly-Pali \cite[Lemma 2.9]{DP10} proved the same estimate as Proposition \ref{lem: vol-cap beta} under the assumption that there exists a continuous $\rho\in \mbox{PSH}(X,\beta)$.
	\end{rem}
\subsection{Non-pluripolar products and the classes  $\mathcal E^1(X,\beta)$ and $\mathcal E(X,\beta)$ }
Let $(X,\omega)$ be a compact Hermitian manifold of complex dimension $n$. Let $\{\beta\}\in H^{1,1}(X,\mathbb R)$ be a real $(1,1)$-class with smooth representative. Assume that there is bounded function $\rho\in \mbox{PSH}(X,\beta)$.

\begin{defn}[{\cite{BEGZ10}}]\label{defn: nn product}
Let  $\varphi \in \mbox{PSH}(X,\beta)$, the non-pluripolar product $ \left\langle(\beta+dd^c \varphi)^n \right\rangle$ is defined as 
$$\left\langle(\beta+dd^c \varphi)^n \right\rangle:=\lim_{k\rightarrow \infty} \mathds{1}_{\{\varphi \textgreater \rho-k\}}(\beta+dd^c\varphi^{(k)})^n,$$
where $\varphi^{(k)}:=\max\{\varphi,\rho-k\}$. 
\end{defn}
\begin{rem}
It is easy to see that  $O_k:=\{\varphi>\rho-k\}$ is a plurifine open subset, and for any compact subset  $K$ of $X$, 
\begin{align*}
	\sup_k\int_{K\cap O_k}(\beta+dd^c\varphi^{(k)})^n&\leq \sup_k\int_X(\beta+dd^c\varphi^{(k)})^n\\
	&=\int_X\beta^n<+\infty.
	\end{align*}
Thus the Definition \ref{defn: nn product} of non-pluripolar product is well defined due to \cite[Definition 1.1]{BEGZ10}, has basic properties in \cite[Proposition 1.4]{BEGZ10}.
	\end{rem}
\begin{defn}\label{defn: full ma mass}
Let  $\varphi \in \mbox{PSH}(X,\beta)$. 	We say $\varphi$ has full Monge-Amp\` ere mass if
	$$\int_X \left\langle(\beta+dd^c\varphi)^n\right\rangle=\int_X \beta^n.$$
	\end{defn}
\begin{rem}\label{rem: nonpp}
 For bounded $\varphi\in\mbox{PSH}(X,\beta)$, one can see that $\left\langle(\beta+dd^c \varphi)^n \right\rangle= (\beta+dd^c\varphi)^n $, thus has full Monge-Amp\`ere mass.
	\end{rem}

\begin{defn}
	We define the class $\mathcal{E}  ^1(X,\beta)$ to be the set of all  $\varphi \in \mbox{PSH}(X,\beta)$ such that
	$$\sup_k \int_X-\varphi^{(k)}(\beta+dd^c \varphi^{(k)})^n< \infty.$$
	 We define the class $\mathcal E(X,\beta)$ to be the set of all $\varphi\in\mbox{PSH}(X,\beta)$ with    full Monge-Amp\` ere mass.
\end{defn}
\begin{rem}\label{rem: e1 subset e}
It is easy to see that and bounded $\beta$-psh function is in $\mathcal E^1(X,\beta)$. Moreover, in Lemma \ref{lem: e1 energy est phi}, it will be proved  that  if $\varphi\in \mathcal E^1(X,\beta)$, then we have the following weak convergence
$$(\beta+dd^c\varphi^{(k)})^n\rightarrow\left\langle(\beta+dd^c\varphi)^n\right\rangle.$$
Thus $\mathcal{E}^1(X,\beta)\subseteq \mathcal{E}(X,\beta)$.

\end{rem}

%
%
%
\section{Weak solutions to  the degenerate CMA equations: the case of $\lambda=0$}

Let  $f\geq0,f\in L^p(X,\omega^n)$ for some $p>1$, and  $\int_X \beta^n=\int_X f \omega^n=1$. In this section, we will solve  the following degenerate CMA equations:
\begin{align*}
	(\beta+dd^c\varphi)^n=f\omega^n, \varphi\in \mbox{PSH}(X,\beta)\cap L^\infty(X).
	\end{align*}
By   \cite[Theorem 0.1]{KN15}, for every $j>1$, we can solve the following CMA  equations:
\begin{align}
	(\beta+dd^c \rho_j+\frac{1}{j}\omega+dd^cu_j)^n=c_jf\omega^n,\label{equ: ma equa}
\end{align}
where $u_j\in  \mbox{PSH}(X,\beta_j)\cap C(X)$,  $c_j > 0$, and  $\sup_X(u_j+\rho_j)=0.$

\subsection{Comparison of the Monge-Amp\`ere energy of $\beta_j$-psh functions}
In this section, we derive a  result on  the comparison of the Monge-Amp\`ere energy of $\beta_j$-psh functions.  For similar results, we refer to\cite[Lemma 2.3]{GZ07},  \cite[Proposition 1.6]{Ngu16}.
\begin{prop}\label{prop: compar energ} 
	Let  $u,v\in \mbox{PSH}(X,\beta_j)\cap L^{\infty}(X),$ and $u\leq v\leq-1$. Then there is a uniform constant   $C>0$, such that 
	\begin{align*}
		\int_X-v(\beta_j+dd^cv)^n\leq 2^k \int_X-u(\beta_j+dd^cu)^k\wedge (\beta_j+dd^cv)^{n-k}+\frac{C}{j}||u||_{\infty}^{2n}||v||_{\infty}^n.
	\end{align*} 
	In particular, 
	\begin{align*}
		\int_X-v(\beta_j+dd^cv)^n\leq 2^n \int_X-u(\beta_j+dd^cu)^n+\frac{C}{j}||u||_{\infty}^{2n}||v||_{\infty}^n.
	\end{align*}
\end{prop}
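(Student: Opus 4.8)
The plan is to prove the first inequality by induction on $k$ (the ``in particular'' statement is just the case $k=n$), using integration by parts in the Hermitian setting where the non-closedness of $\beta_j$ produces error terms controlled by the curvature constant $B_j$, hence by $1/j$ after bookkeeping. I would start from the obvious base case $k=0$, which is trivial since the right-hand side equals $\int_X -u(\beta_j+dd^cv)^n \geq \int_X -v(\beta_j+dd^cv)^n$ because $u\le v$. For the inductive step, assume the estimate holds for $k-1$ and consider the telescoping identity
\begin{align*}
\int_X -u(\beta_j+dd^cu)^{k-1}\wedge(\beta_j+dd^cv)^{n-k+1}
&=\int_X -u(\beta_j+dd^cu)^{k}\wedge(\beta_j+dd^cv)^{n-k}\\
&\quad+\int_X -u(\beta_j+dd^cu)^{k-1}\wedge dd^c(v-u)\wedge(\beta_j+dd^cv)^{n-k}.
\end{align*}
The last term is where the work lies: I would integrate by parts to move $dd^c$ onto $-u$, picking up $\int_X -(v-u)\, dd^c u\wedge(\ldots)$ (a favorable term, since $v-u\le 0$ and $dd^cu = (\beta_j+dd^cu)-\beta_j$, so it contributes $+\int(v-u)\beta_j\wedge(\ldots)$ type terms plus a manifestly negative piece) together with error terms involving $d\beta_j$ and $dd^c\beta_j$. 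Using the structural bounds $-B_j\beta_j^2\le dd^c\beta_j\le B_j\beta_j^2$ and $-B_j\beta_j^3\le d\beta_j\wedge d^c\beta_j\le B_j\beta_j^3$ from the Preliminaries, together with the Cauchy–Schwarz inequality for mixed currents to handle cross terms like $du\wedge d^c\beta_j$, each error integral is bounded by $C\,B_j$ times an integral of the form $\int_X(-u)(\text{products of }\beta_j,dd^cu,dd^cv)$; the Chern–Levine–Nirenberg inequality (Proposition \ref{prop:CLN ineq}, in the form of Remark \ref{rem: v<-1 cln in} since $u\le v\le -1$) bounds each such integral by $C\|u\|_\infty^{2n}\|v\|_\infty^n$ or a similar monomial in the sup-norms. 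Since $B_j=8Bj^4$ would a priori give a factor $j^4$, the delicate point is that one must extract enough powers of $\frac{1}{2j^2}\omega\le\beta_j$ hidden in the wedge products to convert $B_j\cdot(\text{CLN bound})$ into $\frac{C}{j}\|u\|_\infty^{2n}\|v\|_\infty^n$; this is the main obstacle, and I expect it is handled exactly as in \cite[Proposition 1.6]{Ngu16} by carefully tracking which factors of $\omega$ are available. Finally, combining the inductive hypothesis (with its factor $2^{k-1}$ and its $\frac{C}{j}$ error) with the telescoping identity and absorbing the sign-favorable terms, one gets the factor $2^k$ and an aggregate error still of the form $\frac{C}{j}\|u\|_\infty^{2n}\|v\|_\infty^n$, completing the induction. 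Setting $k=n$ gives the last display.
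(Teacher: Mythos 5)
Your overall architecture --- induction on $k$, the telescoping identity coming from $\beta_j+dd^cv=(\beta_j+dd^cu)+dd^c(v-u)$, integration by parts to move $dd^c(v-u)$ onto $u$, and absorption of the sign-favorable piece into the factor-$2$ main term --- matches the paper's proof. (Minor slip: $u\le v$ gives $v-u\ge 0$, not $v-u\le 0$; with the correct sign your decomposition $-(v-u)\,dd^cu=-(v-u)(\beta_j+dd^cu)+(v-u)\beta_j$ still has the intended structure, the first piece being $\le 0$ and the second controlled via $v-u\le -u$ by the main term $\int_X-u(\beta_j+dd^cu)^{k}\wedge(\beta_j+dd^cv)^{n-k}$ up to an error of the allowed size.)

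There is, however, a genuine gap at the quantitative heart of the statement: the origin of the factor $\frac{1}{j}$ in the error. You propose to bound each error integral by $C\,B_j$ times a CLN integral, with $B_j=8Bj^4$, and then to recover $\frac{C}{j}$ by ``extracting powers of $\frac{1}{2j^2}\omega\le\beta_j$ hidden in the wedge products''. This cannot work as stated: $\beta_j\ge\frac{1}{2j^2}\omega$ is a \emph{lower} bound on $\beta_j$, so it cannot be used to shrink an \emph{upper} bound on a positive integral; followed literally, this route makes the errors grow like $j^4$ rather than decay like $j^{-1}$. The paper never introduces $B_j$ in this proof. It uses instead the exact identities $d\beta_j=\frac{1}{j}\,d\omega$ and $d^c\beta_j=\frac{1}{j}\,d^c\omega$, hence $dd^c\beta_j=\frac{1}{j}\,dd^c\omega$ (the summands $\beta$ and $dd^c\rho_j$ of $\beta_j$ are closed, so only $\frac{1}{j}\omega$ contributes), combined with the $j$-independent curvature bounds (\ref{equ:B cond}) on $\omega$ itself; every error term then carries an explicit factor $\frac{1}{j}$ or $\frac{1}{j^2}$, and the remaining integrals are bounded by $C\|u\|_\infty^{2n}\|v\|_\infty^n$ via Proposition \ref{prop:CLN ineq} together with a Cauchy--Schwarz argument for the cross terms of type $du\wedge d^c\omega$. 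Since you explicitly defer this point to \cite{Ngu16} and the mechanism you propose for it is a dead end, the proof is incomplete precisely where the proposition has content.
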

\begin{rem}If we  do not assume  $u,v\leq -1$, Proposition \ref{prop: compar energ} also holds by replacing $||u||_{\infty}$ and $||v||_{\infty}$ by $\max\{1,||u||_{\infty}\}$ and  $\max\{1,||v||_{\infty}\}$ respectively.
\end{rem}

\begin{proof} 
	The Proposition  is trivial  for $k=0$, so we proceed by induction on $k$.
	Assume the result holds for values up to $k>0$, we will prove that 
	\begin{align*}
		\int_X-v(\beta_j+dd^cv)^n\leq 2^{k+1} \int_X-u(\beta_j+dd^cu)^{k+1}\wedge (\beta_j+dd^cv)^{n-k-1}+\frac{C}{j}||u||_{\infty}^{2n}||v||_{\infty}^n.
	\end{align*}
 In the above and in the following equations, we use the convention that if $t<0$ is a negative integer, then the integral of $(\beta_j+dd^cu)^s\wedge(\beta_j+dd^cv)^t$ on $X$ equals $0$.

	It suffices to prove
	\begin{align*}
		\int_X-u(\beta_j+dd^cu)^k\wedge (\beta_j+dd^cv)^{n-k}\leq 2\int_X-u(\beta_j+dd^cu)^{k+1}\wedge (\beta_j+dd^cv)^{n-k-1}+\frac{C}{j}||u||_{\infty}^{2n}||v||_{\infty}^n.
	\end{align*}
	Write $\beta_j+dd^cv=\beta_j+dd^cu+dd^c(v-u)$. Then 
	\begin{align}
		\int_X-u(\beta_j+dd^cu)^k\wedge (\beta_j+dd^cv)^{n-k}&=\int_X-u(\beta_j+dd^cu)^{k+1}\wedge (\beta_j+dd^cv)^{n-k-1}\label{equ: ener ine 1}\\
		&+\int_Xudd^cu \wedge(\beta_j+dd^cu)^k\wedge (\beta_j+dd^cv)^{n-k-1}\notag\\
		&+\int_X-udd^cv \wedge (\beta_j+dd^cu)^k\wedge (\beta_j+dd^cv)^{n-k-1}\notag\\
		&=I+II+III.\notag
	\end{align}
	We need to estimate $II$ and $III$.
	
	\noindent \textbf{Claim 1. }$II\leq\frac{C}{j}||u||_{\infty}^{2n}||v||_{\infty}^n$.\\
		Note that $d^c(\beta_j+dd^cu)=d^c(\beta_j+dd^cv)=\frac{1}{j}d^c\omega$ and 	  $d (\beta_j+dd^cu)=d (\beta_j+dd^cv)=\frac{1}{j}d \omega$. From integration by parts, we have 
	\begin{align*}
		II&=-\int_Xdu \wedge d^cu \wedge(\beta_j+dd^cu)^k\wedge (\beta_j+dd^cv)^{n-k-1}\\
		&-\frac{k}{j}\int_X udu \wedge d^c\omega \wedge (\beta_j+dd^cu)^{k-1} \wedge (\beta_j+dd^cv)^{n-k-1}\\
		&-\frac{n-k-1}{j}\int_X udu \wedge d^c\omega \wedge (\beta_j+dd^cv)^{k} \wedge (\beta_j+dd^cv)^{n-k-2}.
	\end{align*}
	Since $du\wedge d^c u \geq0$,  the first term $\leq 0.$
	
	\begin{align*}
		II&\leq-\frac{k}{j}\int_X udu \wedge d^c\omega \wedge (\beta_j+dd^cu)^{k-1} \wedge (\beta_j+dd^cv)^{n-k-1}\\
		&-\frac{n-k-1}{j}\int_X udu \wedge d^c\omega \wedge (\beta_j+dd^cu)^{k} \wedge (\beta_j+dd^cv)^{n-k-2}\\
		&=-\frac{k}{2j}\int_X du^2 \wedge d^c\omega \wedge (\beta_j+dd^cu)^{k-1} \wedge (\beta_j+dd^cv)^{n-k-1}\\
		&-\frac{n-k-1}{2j}\int_X du^2 \wedge d^c\omega \wedge (\beta_j+dd^cu)^{k} \wedge (\beta_j+dd^cv)^{n-k-2}.
	\end{align*}
	The estimate of the term
	\[\int_X du^2 \wedge d^c\omega \wedge (\beta_j+dd^cu)^{k-1} \wedge (\beta_j+dd^cv)^{n-k-1}\]
	and the term 
	\[\int_X du^2 \wedge d^c\omega \wedge (\beta_j+dd^cu)^{k} \wedge (\beta_j+dd^cv)^{n-k-2}\] 
	are similar, thus in the following, we  give the detailed estimate of 
	\[\int_X du^2 \wedge d^c\omega \wedge (\beta_j+dd^cu)^{k-1} \wedge (\beta_j+dd^cv)^{n-k-1}.\]
	Through  integration by parts, we have 
	\begin{align*}&\int_X du^2 \wedge d^c\omega \wedge (\beta_j+dd^cu)^{k-1} \wedge (\beta_j+dd^cv)^{n-k-1}\\
		&=\int_X -u^2 dd^c\omega \wedge (\beta_j+dd^cu)^{k-1} \wedge (\beta_j+dd^cv)^{n-k-1}\\
		&-\frac{k-1}{j}\int_Xu^2 d\omega \wedge d^c\omega \wedge (\beta_j+dd^cu)^{k-2} \wedge (\beta_j+dd^cv)^{n-k-1}\\
		&-\frac{n-k-1}{j}\int_Xu^2 d\omega \wedge d^c\omega \wedge (\beta_j+dd^cu)^{k-1} \wedge (\beta_j+dd^cv)^{n-k-2}.
	\end{align*}
	From (\ref{equ:B cond}) and Proposition  \ref{prop:CLN ineq}, we have that the first term is controlled by $$C(X,\omega,\beta,M)||u||_{\infty}^{k+1}||v||_{\infty}^{n-k-1}.$$
	Similarly, the second term is controlled by $$\frac{C(X,\omega,\beta,M)}{j}||u||_{\infty}^{k}||v||_{\infty}^{n-k-1},$$ and the third term is controlled by $$\frac{C(X,\omega,\beta,M)}{j}||u||_{\infty}^{k+1}||v||_{\infty}^{n-k-2}.$$
	
	Continuing in this manner, we obtain that  $$\int_X du^2 \wedge d^c\omega \wedge (\beta_j+dd^cu)^{k} \wedge (\beta_j+dd^cv)^{n-k-2}$$
	is dominated by $$\frac{C(X,\omega,\beta,M)}{j}||u||_{\infty}^{2n}||v||_{\infty}^n,$$
	which completes the proof of \textbf{ Claim 1}.
	
	Now  it suffices to prove the following
	
	\noindent \textbf{ Claim 2.} $III\leq \int_X-u(\beta_j+dd^cu)^{k+1}\wedge (\beta_j+dd^cv)^{n-k-1}+\frac{C}{j}||u||_{\infty}^{2n}||v||_{\infty}^n.$\\
	An integration by parts yields:
	\begin{align*}&\int_X-udd^cv \wedge (\beta_j+dd^cu)^k\wedge (\beta_j+dd^cv)^{n-k-1}\\
		&=\int_X-vdd^cu \wedge (\beta_j+dd^cu)^k\wedge (\beta_j+dd^cv)^{n-k-1}\\
		&-2k\int_Xvdu \wedge d^c(\beta_j+dd^cu)\wedge (\beta_j+dd^cu)^{k-1}\wedge (\beta_j+dd^cv)^{n-k-1}\\
		&-2(n-k-1)\int_Xvdu \wedge d^c(\beta_j+dd^cv)\wedge (\beta_j+dd^cu)^{k}\wedge (\beta_j+dd^cv)^{n-k-2}\\
		&-k\int_X uvdd^c(\beta_j+dd^cu)\wedge (\beta_j+dd^cu)^{k-1}\wedge (\beta_j+dd^cv)^{n-k-1}\\
		&-(n-k-1)\int_X uvdd^c(\beta_j+dd^cv)\wedge (\beta_j+dd^cu)^{k}\wedge (\beta_j+dd^cv)^{n-k-2}\\
		&-k(k-1)\int_Xuv d(\beta_j+dd^cu)\wedge d^c(\beta_j+dd^cu)\wedge (\beta_j+dd^cu)^{k-2}\wedge(\beta_j+dd^cv)^{n-k-1}\\
		&-(n-k-1)(n-k-2)\int_Xuv d(\beta_j+dd^cv)\wedge d^c(\beta_j+dd^cv)\wedge (\beta_j+dd^cu)^{k}\wedge(\beta_j+dd^cv)^{n-k-3}\\
		&-2k(n-k-1)\int_Xuvd(\beta_j+dd^cu)\wedge d^c(\beta_j+dd^cv)\wedge(\beta_j+dd^cu)^{k-1}\wedge(\beta_j+dd^cv)^{n-k-2}.
	\end{align*}

	Firstly, we  compute that 
	\begin{align*}
		&\int_X-vdd^cu \wedge (\beta_j+dd^cu)^k\wedge (\beta_j+dd^cv)^{n-k-1}\\
		&=\int_X-v(\beta_j+dd^cu-\beta_j) \wedge (\beta_j+dd^cu)^k\wedge (\beta_j+dd^cv)^{n-k-1}\\
		&\leq \int_X-v(\beta_j+dd^cu) \wedge (\beta_j+dd^cu)^k\wedge (\beta_j+dd^cv)^{n-k-1}\\
		&\leq\int_X-u(\beta_j+dd^cu)^{k+1}\wedge (\beta_j+dd^cv)^{n-k-1}.
	\end{align*}
	In the following, we only need to prove that the other terms are dominated by $\frac{C}{j}||u||_{\infty}^{2n}||v||_{\infty}^n$.\\

	The other three terms are of the following types:
	\begin{itemize}
		\item [(T1):]$\int_X -uvd(\frac{1}{j}\omega)\wedge d^c(\frac{1}{j}\omega) \wedge (\beta_j+dd^cu)^{k-2}\wedge (\beta_j+dd^cv)^{n-k-1}$,
		with other similar terms is replacing  $\{k-2,n-k-1\}$   by $\{k-1,n-k-2\}$ and $\{k,n-k-3\}.$
		\item[(T2):] 	$\int_X -uvdd^c(\frac{1}{j}\omega) \wedge (\beta_j+dd^cu)^{k-1}\wedge (\beta_j+dd^cv)^{n-k-1}$, with other similar terms replacing $\{k-1,n-k-1\}$   by $\{k, n-k-2\}$.
		\item[(T3):] 	$\int_X-vdu\wedge d^c(\beta_j+dd^cv)\wedge (\beta_j+dd^cu)^k \wedge (\beta_j+dd^cv)^{n-k-2}$ and 	$\int_X-vdu\wedge d^c(\beta_j+dd^cu)\wedge (\beta_j+dd^cu)^{k-1} \wedge (\beta_j+dd^cv)^{n-k-1}.$
	\end{itemize}
	For (T1)  and (T2) terms, by Proposition \ref{prop:CLN ineq}, they are  controled by $\frac{C}{j }||u||_{\infty}^{2n}||v||_{\infty}^{n}.$\\
	%
	We next deal with (T3) terms.	
	Applying Cauchy-Schwarz type inequality (see \cite[Lemma 1.7]{Ngu16}),  
	\begin{align*}
		&	\int_X-vdu\wedge d^c\omega\wedge (\beta_j+dd^cu)^k \wedge (\beta_j+dd^cv)^{n-k-2}\\
		&\leq C(X,\omega,\beta,M)\int_X-vdu\wedge d^c u\wedge \omega \wedge (\beta_j+dd^cu)^k \wedge (\beta_j+dd^cv)^{n-k-2}\\
		&+C(X,\omega,\beta,M)\int_X-v \omega^2 \wedge (\beta_j+dd^cu)^k \wedge (\beta_j+dd^cv)^{n-k-2}\\
		&\leq C(X,\omega,\beta,M)||v||_{\infty}\int_Xdu\wedge d^c u \wedge (\beta_j+dd^cu)^k \wedge (\beta_j+dd^cv)^{n-k-2}\\
		&+C(X,\omega,\beta,M)||v||_{\infty}\int_X\omega^2 \wedge (\beta_j+dd^cu)^k \wedge (\beta_j+dd^cv)^{n-k-2}.
	\end{align*}
	Plugging $du\wedge d^cu= \frac{1}{2}dd^cu^2-udd^cu$ into the above inequality, and by Proposition \ref{prop:CLN ineq}, we can conclude that (T3) terms are also dominated by $\frac{C}{j }||u||_{\infty}^{2n}||v||_{\infty}^{n}.$ Then  \textbf{Claim 2} follows, and the proof of Proposition \ref{prop: compar energ} is complete.
	
	%
	%
	%
	
\end{proof}

\subsection{$L^\infty$-estimate  of $u_j$} \label{sect: Linfy est of uj}

\begin{lem}\label{lem: upper bound c_j}
	$c_j$ is uniformly bounded from above.
\end{lem}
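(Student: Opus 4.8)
The plan is to bound $c_j$ from above by testing the Monge--Amp\`ere equation \eqref{equ: ma equa} against a suitable quantity and using the volume--capacity estimate of Proposition \ref{prop:vol-cap est} together with the $L^p$-integrability of $f$. Concretely, I would argue as follows. Integrating \eqref{equ: ma equa} over the whole manifold gives only $c_j\int_X f\omega^n = \int_X\beta_j^n$, which is not uniformly bounded a priori because $\int_X\beta_j^n$ could in principle blow up; however $\int_X\beta_j^n = \int_X(\beta+\tfrac1j\omega)^n$ as $\rho_j$ is smooth, so in fact $\int_X\beta_j^n\to\int_X\beta^n=1$ and is uniformly bounded. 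This already gives $c_j\le C/\int_X f\omega^n$ — but the subtlety is that we have normalized $\int_X f\omega^n = 1$ in this section, so $c_j = \int_X\beta_j^n$ directly, and the claim is immediate from the uniform bound on $\int_X(\beta+\tfrac1j\omega)^n$. Let me instead present the argument that does not rely on this normalization shortcut, since that is presumably the intended robust proof.

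First I would use the comparison/capacity machinery: set $v_j := u_j + \rho_j$, so $v_j\in\mbox{PSH}(X,\beta)$ with $\sup_X v_j = 0$, and rewrite \eqref{equ: ma equa} as $(\beta + dd^c v_j)^n \le c_j f\omega^n$ in the weak sense (using $\beta_j = \beta + dd^c\rho_j + \tfrac1j\omega \ge \beta + dd^c\rho_j$, so $\beta_j + dd^c u_j \ge \beta + dd^c v_j$ and hence the left side dominates $(\beta+dd^cv_j)^n$, giving the reversed inequality after taking $n$-th powers — so actually $(\beta+dd^cv_j)^n \le (\beta_j + dd^c u_j)^n = c_j f\omega^n$). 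Then for any Borel set $E$, $\int_E (\beta+dd^cv_j)^n \le c_j\int_E f\omega^n \le c_j\|f\|_p\,\mathrm{vol}_\omega(E)^{1-1/p}$ by H\"older. Taking sup over competitors in the definition of $Cap_\beta$ and using Proposition \ref{lem: vol-cap beta}, together with Proposition \ref{prop: cap est} applied to $v_j$ (after adding $\rho$ to land in the right class, cf.\ Remark \ref{rem: est  cap uj--u}), would yield a self-improving inequality of the form $Cap_\beta(\{v_j < -t\}) \le C c_j^{?} \exp(-\alpha' Cap_\beta(\{v_j<-t\})^{-1/n}\cdot(1-1/p))$, forcing an upper bound on $c_j$.

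Alternatively — and this is likely the cleanest route — I would test against the total mass directly but more carefully: integrate \eqref{equ: ma equa}, getting $c_j\int_X f\omega^n = \int_X \beta_j^n$; since $\rho_j$ is smooth, $dd^c\rho_j$ is $d$-exact-ish in the appropriate sense and Stokes' theorem (in the Hermitian setting one must track the non-closedness of $\omega$, but $\beta$ is closed and $\tfrac1j\omega$ contributes $O(1/j)$ corrections controlled by the constant $B$ from \eqref{equ:B cond}) gives $\int_X\beta_j^n = \int_X\beta^n + O(1/j) \le C$. Hence $c_j \le C/\int_X f\omega^n$, uniformly in $j$. The main obstacle I anticipate is precisely the Stokes'-theorem step: on a non-K\"ahler Hermitian manifold $\int_X\beta_j^n$ is genuinely only equal to $\int_X(\beta + \tfrac1j\omega)^n$ after invoking that $dd^c\rho_j$ integrates to zero against a smooth closed form, and one must expand $(\beta+\tfrac1j\omega)^n$ and bound the mixed terms $\int_X \beta^k\wedge\omega^{n-k}$ uniformly, which is fine since these are fixed cohomological/geometric quantities. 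So the bound $c_j \le C(X,\omega,\beta)$ follows, and this is what I would write.
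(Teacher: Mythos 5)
Your main route rests on the identity $c_j\int_Xf\omega^n=\int_X\beta_j^n$, and this is exactly where the argument breaks. Integrating the equation gives $c_j\int_Xf\omega^n=\int_X(\beta_j+dd^cu_j)^n$, and on a non-K\"ahler Hermitian manifold this is \emph{not} $\int_X\beta_j^n$: the form $\beta_j=\beta+dd^c\rho_j+\tfrac1j\omega$ contains the non-closed piece $\tfrac1j\omega$, so the cross terms $\int_X\beta_j^{n-k}\wedge(dd^cu_j)^k$ do not vanish under integration by parts. These cross terms involve $u_j$ itself, and controlling them (as the paper does later in Corollary \ref{cor: limit c_j}, via the CLN inequality) requires the uniform $L^\infty$ estimate $\|u_j\|_\infty\leq C(\log j)^n$ of Proposition \ref{prop: linfty est uj}. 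But that estimate is proved \emph{after} this lemma and depends on it: the admissibility bound of Lemma \ref{lem: ui admissibel} explicitly uses the uniform upper bound on $c_j$. So your "cleanest route" is circular in the paper's logical order, and your assertion that the corrections are "$O(1/j)$ controlled by the constant $B$" and reduce to "fixed cohomological quantities $\int_X\beta^k\wedge\omega^{n-k}$" is not correct — they are potential-dependent quantities. Your first alternative has a similar problem: the inequality $(\beta+dd^cv_j)^n\leq(\beta_j+dd^cu_j)^n$ presupposes that $\beta+dd^cv_j$ is a positive current, which is not known ($v_j=\rho_j+u_j$ is only $(\beta+\tfrac1j\omega)$-psh), and the concluding "self-improving inequality forcing an upper bound on $c_j$" is not actually derived.

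The paper's proof avoids total-mass bookkeeping entirely. It dominates $c_jf\omega^n=(\beta_j+dd^cu_j)^n\leq(A\omega+dd^c(\rho_j+u_j))^n$ with $A\omega\geq\beta+\tfrac1j\omega$, then applies Dinew's mixed Monge--Amp\`ere inequality to drop from degree $(n,n)$ to degree $(1,1)$:
\begin{equation*}
\int_X\bigl(A\omega+dd^c(\rho_j+u_j)\bigr)\wedge\omega^{n-1}\;\geq\;\int_X(c_jf)^{1/n}\omega^n .
\end{equation*}
The left-hand side is bounded uniformly because $\bigl|\int_Xdd^c(\rho_j+u_j)\wedge\omega^{n-1}\bigr|\leq C\int_X-(\rho_j+u_j)\omega^n$ and $\rho_j+u_j$ is a normalized $A\omega$-psh function, whose $L^1$ norm is uniformly controlled. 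Since $\int_Xf^{1/n}\omega^n>0$, this gives $c_j^{1/n}\leq C$. If you want to salvage your approach, you would need to either import this mixed-type inequality or find an independent, non-circular control of the cross terms; as written, the proposal does not close.
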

\begin{proof}
Take a constant $A:=A(X,\omega,\beta)>0$ such that $\beta+\frac{1}{j}\omega\leq A\omega$ for all $j$. Then
$$(A\omega+dd^c(\rho_j+u_j))^n\geq (\beta_j+dd^cu_j)^n= c_jf\omega^n.$$ 
By  the mixed type inequality in the Hermitian setting (see \cite{Din09}, also \cite[Lemma 1.9]{Ngu16}),  
\[\int_X(A\omega+dd^c(\rho_j+u_j))\wedge \omega^{n-1}\geq \int_X(c_jf)^{1/n}\omega^n.\]
An integration by parts yields that 
\[\left|\int_Xdd^c(\rho_j+u_j)\wedge \omega^{n-1}\right|\leq C(X,\omega)\int_{X}-(\rho_j+u_j)\omega^n.\]
The last term is uniformly bounded above, by  \cite[Proposition 1.1]{Ngu16}.
Hence 
 \[\int_C(c_jf)^{1/n}\omega^n\leq C(X,\omega,\beta).\]
Since $\int_Xf\omega^n>0$, then $\int_Xf^{1/n}\omega^n>0$, and thus $c_j$ is uniformly bounded from above. 
\end{proof}

Let h: $\mathbb{R}^+\rightarrow(0,+\infty)$ be an increasing function which is called an admissible function, such that
$$\int_1^{+\infty}\frac{1}{x[h(x)]^{\frac{1}{n}}}\textless +\infty.$$
Define $F_h:\mathbb{R}^+\rightarrow (0,+\infty)$
	$$F_h(x):=\frac{x}{h(x^{-\frac{1}{n}})}.$$

\begin{lem}\label{lem: ui admissibel}
	Let $u_j$ be the solutions in (\ref{equ: ma equa}). Then there exists an admissible function $h$, such that  for any Borel subset $E\subset X$, 
\begin{align}\int_E(\beta_j+dd^cu)^n\leq F_h(Cap_{\beta_j}(E)).\label{equ: uj admis}
	\end{align}
	\end{lem}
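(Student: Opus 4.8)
The plan is to produce the admissible function $h$ from the volume--capacity estimate of Proposition~\ref{prop:vol-cap est} together with the fact that $f\in L^p(X,\omega^n)$ for some $p>1$. First I would write, for a Borel set $E\subset X$, using H\"older's inequality with exponents $p$ and $q=p/(p-1)$,
\begin{align*}
\int_E(\beta_j+dd^cu_j)^n=c_j\int_Ef\,\omega^n\le c_j\|f\|_{L^p(X,\omega^n)}\bigl(\mathrm{vol}_\omega(E)\bigr)^{1/q}.
\end{align*}
By Lemma~\ref{lem: upper bound c_j} the constant $c_j$ is uniformly bounded, so the right-hand side is $\le C\bigl(\mathrm{vol}_\omega(E)\bigr)^{1/q}$ with $C=C(X,\omega,\beta,M,\|f\|_p)$. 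Now I would feed in Proposition~\ref{prop:vol-cap est}, namely $\mathrm{vol}_\omega(E)\le C\exp(-\alpha\,\mathrm{Cap}_{\beta_j}(E)^{-1/n})$, to get
\begin{align*}
\int_E(\beta_j+dd^cu_j)^n\le C\exp\!\Bigl(-\tfrac{\alpha}{q}\,\mathrm{Cap}_{\beta_j}(E)^{-1/n}\Bigr).
\end{align*}

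The remaining step is to massage this exponential bound into the shape $F_h(\mathrm{Cap}_{\beta_j}(E))$ for an admissible $h$. Writing $t=\mathrm{Cap}_{\beta_j}(E)$, I want an increasing $h:\mathbb R^+\to(0,\infty)$ with $\int_1^\infty \frac{dx}{x\,h(x)^{1/n}}<\infty$ and $\frac{t}{h(t^{-1/n})}\ge C\exp(-\frac{\alpha}{q}t^{-1/n})$; equivalently, setting $s=t^{-1/n}$, I need $h(s)\le \frac{1}{C}\,s^{-n}\exp(\frac{\alpha}{q}s)$ while keeping $h$ increasing and the integral convergent. The natural choice is something like $h(x)=c\,(1+x)^{n+2}$ or, to match the exponential decay optimally, $h(x)=c\exp(\delta x)$ for a small $\delta>0$; in either case $\int_1^\infty \frac{dx}{x\,h(x)^{1/n}}<\infty$ is clear, and one checks the pointwise inequality $F_h(t)\ge C\exp(-\frac{\alpha}{q}t^{-1/n})$ holds for all $t>0$ after possibly enlarging the constant $c$ (both sides tend to $0$ as $t\to 0^+$, and the exponential on the left dominates any polynomial). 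I would also need $F_h$ to be increasing, which holds for these choices; and the inequality \eqref{equ: uj admis} should be read for all $j$ simultaneously, so it is essential that $\alpha$, $C$, $c_j$ are all uniform in $j$ — which they are.

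I expect the only real subtlety to be the bookkeeping in the last paragraph: choosing $h$ so that it is simultaneously (i) admissible, (ii) increasing, (iii) large enough that the pointwise inequality $\int_E(\beta_j+dd^cu_j)^n\le F_h(\mathrm{Cap}_{\beta_j}(E))$ holds for \emph{every} Borel $E$ and every $j$, including the degenerate regimes $\mathrm{Cap}_{\beta_j}(E)\to 0$ and $\mathrm{Cap}_{\beta_j}(E)$ large (where one uses that the total mass $\int_X(\beta_j+dd^cu_j)^n=c_j\int_Xf\,\omega^n$ is uniformly bounded, so the estimate is trivial once $\mathrm{Cap}_{\beta_j}(E)$ is bounded below). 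None of this is deep, but it must be arranged carefully; the genuine analytic input — the $L^p$ assumption on $f$ and the volume--capacity comparison — has already been established, so the proof is essentially a matter of packaging.
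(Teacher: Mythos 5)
Your proposal is correct and takes essentially the same route as the paper: H\"older's inequality with the uniform bound on $c_j$, then Proposition~\ref{prop:vol-cap est}, and finally the choice of an exponential admissible function $h(x)=c\exp(\alpha_0 x)$ with $0<\alpha_0<\alpha/q$ — exactly one of the two options you name. The bookkeeping you flag is handled in the paper by the elementary inequality $x^n\le C_3\exp\bigl(-(\alpha_0-\tfrac{\alpha}{q})x\bigr)$ for all $x>0$, which is the same check you describe.
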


%
\begin{proof}	
By the  H\"older's inequality, for any Borel subset $E\subset X$,
$$\int_E(\beta_j+dd^cu_j)^n=\int_E c_jf\omega^n \leq C_1||f||_pvol_{\omega}(E)^{\frac{1}{q}},$$
where $\frac{1}{p}+\frac{1}{q}=1$, $C_1$ is a uniform upper bound of \{$c_j\}$ in Lemma \ref{lem: ui admissibel}.
By the volume-capacity inequality (Proposition \ref{prop:vol-cap est}),	
	$$vol_{\omega}(E)^{\frac{1}{q}}\leq C_2\exp\left(-\frac{\alpha}{q} Cap_{\beta_j}(E)^{-\frac{1}{n}}\right),$$
	where $\alpha$ and $C_2$ depending only on $(X,\omega,\beta,M)$.
Hence 
$$\int_E(\beta_j+dd^cu_j)^n\leq C_1C_2||f||_p\exp\left(-\frac{\alpha}{q} Cap_{\beta_j}(E)^{-\frac{1}{n}}\right).$$

	Take $C_3\textgreater0$, $\frac{\alpha}{q} \textgreater \alpha_0\textgreater0$,  such that 
	$$x^n\leq C_3\exp\left(-(\alpha_0-\frac{\alpha}{q})x\right),\ x\textgreater0.$$
Then	$(\beta_j+dd^cu_j)^n$  satisifies (\ref{equ: uj admis}) for  the admissible function  
	$$h(x)=\frac{1}{C_1C_2C_3||f||_p}\exp(\alpha_0x).$$
The proof of Lemma \ref{lem: ui admissibel} is complete.

	\end{proof}
We need the  following theorem.
	\begin{thm}[{\cite[Theorem 5.3]{KN15}}]\label{thm: apriori kn}
Fix $0\textless \varepsilon \textless1$. Let $u,v\in \mbox{PSH}(X,\beta_j)\cap L^{\infty}(X)$ be such that $u\leq0$ and $-1\leq v \leq0$. Set $m(\varepsilon):=\inf_X[u-(1-\varepsilon) v]$, and $\varepsilon_0:=\frac{1}{3}\min\{\varepsilon^n,\frac{\varepsilon^3}{16B_j},4(1-\varepsilon)\varepsilon^n,4(1-\varepsilon)\frac{\varepsilon^3}{16B_j}   \}$. Suppose that $(\beta_j+dd^cu)^n$ satisfies  (\ref{equ: uj admis}) for an admissible function $h$. Then for $0\textless D \textless \varepsilon_0$,
		$$D\leq \kappa[Cap_{\beta_j}(U(\varepsilon,D))].$$
		where $U(\varepsilon,D):=\{u\textless (1-\varepsilon)v+m(\varepsilon)+D \}$, and the function $\kappa$ is defined on the integral $(0,cap_{\beta_j}(X))$ by the fomula
		$$\kappa(s^{-n}):=4C_n\left\{\frac{1}{[h(s)]^{\frac{1}{n}}}+\int_s^{+\infty}\frac{dx}{x[h(x)]^{\frac{1}{n}}}\right\},$$
		with a dimensional constant $C_n$.
		\begin{rem}\label{rem: adm apriori}
		In our situation, $\{cap_{\beta_j}(X)\}$  has a uniform lower bound, since $Cap_{\beta_j}(X)\geq \int_X(\beta+dd^c\rho_j+\frac{1}{j}\omega)^n\rightarrow \int_X(\beta+dd^c\rho)^n=1$.  Thus the open interval $(0,cap_{\beta_j}(X))$ contains a small interval $(0,\delta)$ for all $j$, and some $0<\delta<1$.
		\end{rem}
	\end{thm}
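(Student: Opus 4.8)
The statement is \cite[Theorem 5.3]{KN15} applied with the smooth positive $(1,1)$-form $\beta_j$ in the role of the Hermitian metric there, so the plan is to reduce to that result by verifying that all of its hypotheses hold in the present normalization. Since $\beta_j\geq\frac{1}{2j^2}\omega>0$, the form $\beta_j$ is a genuine smooth, positive-definite, non-closed Hermitian metric, and the r\^ole of the ``curvature'' constant of \cite{KN15} is played by $B_j=8Bj^4$, for which the preliminaries already record $-B_j\beta_j^2\leq dd^c\beta_j\leq B_j\beta_j^2$ and $-B_j\beta_j^3\leq d\beta_j\wedge d^c\beta_j\leq B_j\beta_j^3$; this is the only place where the non-closedness of $\beta_j$ enters, and it is what dictates the precise shape of $\varepsilon_0$. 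The normalizations $u\leq0$, $-1\leq v\leq0$, the quantity $m(\varepsilon)=\inf_X[u-(1-\varepsilon)v]$, and the set $U(\varepsilon,D)=\{u<(1-\varepsilon)v+m(\varepsilon)+D\}$ are taken verbatim from \cite{KN15}. The measure hypothesis is exactly inequality (\ref{equ: uj admis}): by Lemma \ref{lem: ui admissibel} there is an admissible $h$, i.e. $\int_1^{+\infty}\frac{dx}{x[h(x)]^{1/n}}<+\infty$, with $\int_E(\beta_j+dd^cu)^n\leq F_h(Cap_{\beta_j}(E))$ for every Borel set $E$; and by Remark \ref{rem: adm apriori} the interval $(0,Cap_{\beta_j}(X))$ contains a fixed subinterval $(0,\delta)$, so $\kappa$ is well defined there uniformly in $j$. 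With these identifications the conclusion is literally that of \cite[Theorem 5.3]{KN15}.

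For completeness I would also recall the mechanism behind the inequality. Put $w_D:=(1-\varepsilon)v+m(\varepsilon)+D$, so that $U(\varepsilon,D)=\{u<w_D\}$ and $\beta_j+dd^cw_D=\varepsilon\beta_j+(1-\varepsilon)(\beta_j+dd^cv)\geq\varepsilon\beta_j$; in particular $(\beta_j+dd^cw_D)^n$ admits, on any sublevel set, a lower bound in terms of $Cap_{\beta_j}$ of that set. For $0<D<D'<\varepsilon_0$ one applies the Hermitian comparison principle --- the version carrying $B_j$-dependent error terms, whose validity on this range is exactly the reason for the restriction $D<\varepsilon_0$ --- to the pair $u$ and $w_{D'}$ over $\{u<w_{D'}\}$, obtaining a De Giorgi type inequality roughly of the form $(D'-D)^n\,Cap_{\beta_j}(U(\varepsilon,D'))\leq C\int_{U(\varepsilon,D)}(\beta_j+dd^cu)^n$. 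Feeding in the admissibility bound $\int_{U(\varepsilon,D)}(\beta_j+dd^cu)^n\leq F_h(Cap_{\beta_j}(U(\varepsilon,D)))$ and running Kolodziej's iteration lemma on the decreasing function $b(s):=Cap_{\beta_j}(U(\varepsilon,s))^{1/n}$ --- convergence of the iteration being guaranteed by admissibility of $h$ --- yields precisely $D\leq\kappa[Cap_{\beta_j}(U(\varepsilon,D))]$, with $\kappa$ as in the statement.

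The only genuinely non-formal point, and the one I expect to be the main obstacle, is the passage through the comparison principle: because $\beta_j$ is not closed, comparing $(\beta_j+dd^cw_{D'})^n$ with $(\beta_j+dd^cu)^n$ on $\{u<w_{D'}\}$ produces correction terms assembled from $dd^c\beta_j$ and $d\beta_j\wedge d^c\beta_j$, hence controlled by $B_j$ times the jump $D'-D$ times suitable mixed Monge--Amp\`ere masses. Absorbing these is what forces $D<\varepsilon_0=\frac{1}{3}\min\{\varepsilon^n,\frac{\varepsilon^3}{16B_j},4(1-\varepsilon)\varepsilon^n,4(1-\varepsilon)\frac{\varepsilon^3}{16B_j}\}$ and is why the conclusion is asserted only for small $D$. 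Since $B_j\to\infty$ with $j$, this $j$-dependence will have to be tracked carefully in the subsequent application to the uniform $L^\infty$ bound for $u_j$ (where the conclusion is combined with $-1\leq v\leq0$ to bound $m(\varepsilon)$); but for the statement itself, once one sets up the dictionary --- replace $\omega$ by $\beta_j$, replace $B$ by $B_j$, and take $h$ from Lemma \ref{lem: ui admissibel} --- the reduction to \cite[Theorem 5.3]{KN15} is immediate.
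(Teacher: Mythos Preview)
Your proposal is correct and aligned with the paper: the paper does not give a proof of this statement at all, simply citing it as \cite[Theorem 5.3]{KN15} applied with $\beta_j$ as the Hermitian metric and $B_j$ as the curvature constant. Your verification of the dictionary and your sketch of the underlying mechanism go well beyond what the paper does, but the reduction itself is exactly what is intended.
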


\begin{prop}\label{prop: linfty est uj}
	There exists $C=C(X,\omega,\beta,M,||f||_p)$ such that
	$$||u_j||_{L^\infty(X)}\leq C(\log j)^n.$$
\end{prop}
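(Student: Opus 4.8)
The plan is to run the Kołodziej-style stability/a priori argument (Theorem \ref{thm: apriori kn}) with a carefully chosen test pair, and then track how the various constants degenerate in $j$. I would first record the lower bound for $u_j$, which is the easier half. Since $\rho-\rho_j\le 0$ and $\beta_j+dd^c(\rho-\rho_j)=\beta+dd^c\rho+\frac1j\omega\ge 0$, the function $\rho-\rho_j$ lies in $\mbox{PSH}(X,\beta_j)$, so comparison (or the normalization $\sup_X(u_j+\rho_j)=0$ together with the uniform bound $\int_X -u_j\,\omega^n\le C$ from Proposition \ref{prop:uniform est} and a standard sub-mean-value argument) gives $\inf_X u_j\ge -C$, in fact a bound independent of $j$ for the lower bound. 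So the whole difficulty is the upper bound on $\sup_X u_j$; note $\sup_X(u_j+\rho_j)=0$ already gives $\sup_X u_j\le M$, so really one wants $u_j$ bounded \emph{below} by $-C(\log j)^n$ — wait, more precisely, since $u_j\le M$ trivially, the content is the lower estimate $u_j\ge -C(\log j)^n$, equivalently $\|u_j\|_{L^\infty}\le C(\log j)^n$.

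The main step is to apply Theorem \ref{thm: apriori kn} with $u=u_j$ (after subtracting $\sup_X u_j$ to make it $\le 0$, harmless up to the uniform constant $M$) and with $v$ a fixed normalized $\beta_j$-psh function with $-1\le v\le 0$ — e.g. $v\equiv 0$, or $v$ built from $\rho-\rho_j$ rescaled. With $\varepsilon=\frac12$ one gets $m(\varepsilon)=\inf_X[u_j-(1-\varepsilon)v]\le \inf_X u_j$, and the theorem yields $D\le \kappa[Cap_{\beta_j}(U(\varepsilon,D))]$ for $D<\varepsilon_0$. Here $\varepsilon_0=\frac13\min\{\ldots,\frac{\varepsilon^3}{16B_j},\ldots\}$ and $B_j=8Bj^4$, so $\varepsilon_0\sim c/j^4$ — this is the source of the $j$-dependence. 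Combining with the capacity decay $Cap_{\beta_j}(\{u_j<-t\})\le C/t$ (Proposition \ref{prop: cap est}) and the explicit admissible function $h(x)=\frac{1}{C_1C_2C_3\|f\|_p}\exp(\alpha_0 x)$ from Lemma \ref{lem: ui admissibel} — for which $\kappa(s^{-n})\sim C_n\exp(-\alpha_0 s/n)$, decaying like $\exp(-c\,t^{1/n})$ in terms of $t=Cap^{-1/n}$ — one obtains an inequality of the form: if $U(\varepsilon,D)\ne\emptyset$ then $D\le C\exp(-\alpha_0 (Cap_{\beta_j}(U(\varepsilon,D)))^{-1/n}/\text{(const)})$, and choosing $D$ just below $\varepsilon_0\sim c/j^4$ forces $Cap_{\beta_j}(U(\varepsilon,D))$ to be at least $c/(\log j)^n$, which via Proposition \ref{prop: cap est} bounds $-\inf_X u_j$ from above by $C(\log j)^n$.

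The technical heart — and the step I expect to be the main obstacle — is the bookkeeping that turns "$D<\varepsilon_0\approx c/j^4$" into "$\inf_X u_j \ge -C(\log j)^n$": one needs to invert $\kappa$, whose decay is exponential in $Cap^{-1/n}$, against the polynomial-in-$j$ smallness of $\varepsilon_0$ and the $1/t$ decay of $Cap_{\beta_j}(\{u_j<-t\})$, checking that all constants $C_1,C_2,C_3,\alpha_0,\alpha$ entering $h$ and the volume–capacity inequality depend only on $(X,\omega,\beta,M,\|f\|_p)$ and not on $j$ (which is guaranteed by Propositions \ref{prop:vol-cap est}, \ref{prop: cap est}, Lemma \ref{lem: upper bound c_j} and the uniform boundedness of $\{\rho_j\}$). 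One also must use Remark \ref{rem: adm apriori}, i.e. that $(0,cap_{\beta_j}(X))$ contains a fixed interval $(0,\delta)$ for all $j$, so that $\kappa$ is defined on a $j$-independent range. Finally, assembling: $\sup_X u_j\le M$ and $\inf_X u_j\ge -C(\log j)^n$ give the claimed bound.
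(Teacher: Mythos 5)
Your proposal follows essentially the same route as the paper: apply the Kolodziej--Nguyen a priori estimate (Theorem \ref{thm: apriori kn}) with $u=u_j$, $v=0$, $\varepsilon=\tfrac12$ and $D\sim 1/B_j\sim j^{-4}$, use the explicit admissible function $h$ from Lemma \ref{lem: ui admissibel} so that $\kappa(x)\le C\exp(-\tilde a\, x^{-1/n})$, and invert this against the capacity decay of Proposition \ref{prop: cap est} to conclude $-\inf_X u_j\le C(\log j)^n$; this is exactly the paper's argument, including the check that all constants entering $h$, $\kappa$ and the volume--capacity inequality are $j$-independent. One caveat: your opening claim that $\rho-\rho_j\in\mbox{PSH}(X,\beta_j)$ gives, "by comparison," a $j$-independent bound $\inf_X u_j\ge -C$ is unjustified (membership of $\rho-\rho_j$ in $\mbox{PSH}(X,\beta_j)$ says nothing pointwise about the different function $u_j$, and such a uniform bound would trivialize the proposition); you rightly discard it in the next sentence, and the actual content is the $(\log j)^n$ lower estimate that your main argument then establishes.
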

\begin{proof}
To apply Theorem \ref{thm: apriori kn}, we set $u=u_j$, $v=0$, $\varepsilon=\frac{1}{2}$, $D=\frac{\varepsilon_0}{2}=\frac{1}{768B_j}$, $h(s)=\frac{1}{C_1C_2C_3\|f\|_p}\exp(\alpha_0s)$ (see the proof of Lemma \ref{lem: ui admissibel}).  It is computed in \cite[Page 270]{Ngu16},  that there are uniform constants $C,\tilde a>0$ such that 
	$$\kappa(x)\leq C \exp(-\tilde{a}x^{-\frac{1}{n}}),$$ 
and the inverse function $\tilde h$ of $\kappa$ satisfies
	$$\tilde{h}(x)\geq\left (\frac{1}{\tilde{a}}\log\frac{C}{x}\right)^{-n}.$$
Now $U(\varepsilon,D)=U(\frac{1}{2},\frac{1}{768B_j})=\{u_j\textless \inf_Xu_j+\frac{1}{768B_j}\}$, by Lemma \ref{lem: ui admissibel}, we can apply   Theorem \ref{thm: apriori kn} to get 
	$$\frac{1}{768B_j}\leq\kappa\left[Cap_{\beta_j}\left(U\left(\frac{1}{2},\frac{1}{768B_j}\right)\right)\right],$$
hence
	$$\tilde{h}\left(\frac{1}{768B_j}\right)\leq Cap_{\beta_j}\left(U\left(\frac{1}{2},\frac{1}{768B_j}\right)\right).$$
Then 
\begin{align}\left(\frac{1}{\tilde{a}}\log(768CB_j)\right)^{-n}\leq Cap_{\beta_j}\left(U\left(\frac{1}{2},\frac{1}{768B_j}\right)\right).\label{equ: est of inf uj}
	\end{align}
	Since $\sup_X(\rho_j+u_j)=0$, $-M\leq \sup_Xu_j\leq M$. 
	We may assume $\sup_Xu_j-\inf_Xu_j\textgreater1$, otherwise there is nothing to prove.  
	Then  $\sup_Xu_j-\inf_Xu_j-\frac{1}{768B_j}>0$.
	By Proposition \ref{prop: cap est}, there exists a uniform constant $C'=C'(X,\omega,\beta,M)>0$, such that 
\begin{align*}
	Cap_{\beta_j}\left(U\left(\frac{1}{2},\frac{1}{768B_j}\right)\right)&=Cap_{\beta_j}\left(\left\{u_j\textless \inf_Xu_j+\frac{1}{768B_j}\right\}\right)\\
&=Cap_{\beta_j}\left(\left\{u_j-\sup_Xu_j \textless -\sup_Xu_j+\inf_Xu_j+\frac{1}{768B_j}\right\}\right)\\
&\leq C'\frac{1}{\sup_Xu_j-\inf_X u_j-\frac{1}{768B_j}}.
	\end{align*}
	Combining with (\ref{equ: est of inf uj}), we get that
	$$\left(\frac{1}{\tilde{\alpha}}\log(768CB_j)\right)^{-n}\leq C'\frac{1}{\sup_Xu_j-\inf_Xu_j-\frac{1}{768B_j}}.$$
Hence
\begin{align*}-\inf_Xu_j &\leq -\sup_Xu_j+\frac{1}{768B_j}+C'\left(\frac{1}{\tilde{a}}\log\left(768CB_j\right)\right)^n\\
&\leq M+1+C'\left(\frac{1}{\tilde{a}}\log(6144CBj^4)\right)^n.
	\end{align*}
The proof of Proposition \ref{prop: linfty est uj} is complete.
\end{proof}

\begin{cor}\label{cor: limit c_j}
	$c_j \rightarrow 1$ as $j \rightarrow \infty.$
\end{cor}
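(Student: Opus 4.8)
The plan is to integrate the Monge-Amp\`ere equation (\ref{equ: ma equa}) over $X$. Since $\int_X f\omega^n=1$, this gives $c_j=\int_X(\beta_j+dd^cu_j)^n$, so the assertion $c_j\to1$ amounts to showing that this total mass converges to $\int_X\beta^n=1$. The subtlety is that $\beta_j$ is \emph{not} closed, so $\int_X(\beta_j+dd^cu_j)^n$ differs from $\int_X\beta_j^n$; the discrepancy is a finite sum of ``Hermitian error terms,'' each carrying at least one factor $1/j$, and the core of the argument is to show that these errors grow at most polynomially in $\log j$ and hence vanish as $j\to\infty$.

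To organize the computation I would set $\theta_j:=\beta+dd^c\rho_j$ (which is closed) and $\psi_j:=\rho_j+u_j$; by Proposition \ref{prop: linfty est uj} and the definition of $M$, $\psi_j$ is bounded with $\|\psi_j\|_{L^\infty(X)}\le M+C(\log j)^n$, and $\sup_X\psi_j=0$. Then $\beta_j+dd^cu_j=\alpha_j+\tfrac1j\omega$, where $\alpha_j:=\beta+dd^c\psi_j$ is a \emph{closed} current with bounded potential. Expanding by multilinearity of the Bedford--Taylor product,
\[
c_j\;=\;\sum_{k=0}^{n}\binom{n}{k}\frac{1}{j^{k}}\int_X\omega^{k}\wedge\alpha_j^{\,n-k}.
\]
The term $k=0$ equals $\int_X\alpha_j^n=\int_X\beta^n=1$, by Stokes' theorem (the total Monge-Amp\`ere mass of a bounded potential of the closed smooth form $\beta$). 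For $k\ge1$, writing $\alpha_j=(\beta_j+dd^cu_j)-\tfrac1j\omega$ with $\beta_j+dd^cu_j\ge0$ and expanding once more, everything reduces to bounding the genuinely positive mixed masses $I_{a,b}:=\int_X\omega^{a}\wedge(\beta_j+dd^cu_j)^{b}$ with $a\ge1$, $a+b=n$.

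The key estimate is the claim that $I_{a,b}\le C(X,\omega,\beta,M)\,(1+\|u_j\|_{L^\infty(X)})^{b}$ whenever $a\ge1$ and $a+b=n$, which I would prove by induction on $b$; the base case $b=0$ is trivial since $I_{n,0}=\int_X\omega^n$. For the inductive step, peel off one factor to write $I_{a,b}=\int_X\omega^{a}\wedge(\beta_j+dd^cu_j)^{b-1}\wedge\beta_j+\int_X\omega^{a}\wedge(\beta_j+dd^cu_j)^{b-1}\wedge dd^cu_j$; in the first integral substitute $\beta_j=\beta+\tfrac1j\omega+dd^c\rho_j$ and integrate by parts on the $dd^c\rho_j$ piece (using $\|\rho_j\|_{L^\infty(X)}\le M$ and $-A_0\omega\le\beta\le A_0\omega$ for a suitable $A_0=A_0(X,\omega,\beta)$), and in the second integrate by parts to transfer $dd^c$ onto $u_j$. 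Applying the Leibniz rule for $dd^c$ to $\omega^{a}\wedge(\beta_j+dd^cu_j)^{b-1}$ and using the curvature bounds (\ref{equ:B cond}) together with $d\beta_j=\tfrac1j d\omega$ and $dd^c\beta_j=\tfrac1j dd^c\omega$, each resulting term is dominated as a measure by $C(1+\|u_j\|_{L^\infty(X)})$ times some $I_{a',b'}$ with $a'\ge1$ and $b'<b$, so the induction closes. Putting the pieces together yields $|c_j-1|\le\tfrac{C}{j}(1+\|u_j\|_{L^\infty(X)})^n\le\tfrac{C}{j}\bigl(1+C(\log j)^n\bigr)^n\to0$.

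The step I expect to be the main obstacle is precisely this inductive bound on $I_{a,b}$: one must carefully track the Leibniz expansion of $dd^c$ on a wedge product of the smooth form $\omega^{a}$ with the \emph{non-closed} current $(\beta_j+dd^cu_j)^{b-1}$, verifying that the Hermitian error terms (those involving $d\omega$, $d^c\omega$, $dd^c\omega$, $d\beta_j$, $d^c\beta_j$, $dd^c\beta_j$) only appear in the paired forms $d\omega\wedge d^c\omega$ and $dd^c\omega$ that are controlled by (\ref{equ:B cond}), and that they always carry the $1/j$ factors needed for uniformity in $j$. This bookkeeping is routine and entirely parallel to the integration-by-parts computations already carried out in the proofs of Proposition \ref{prop: compar energ} and Lemma \ref{lem: upper bound c_j}; once $c_j\to1$ is established, nothing further is required.
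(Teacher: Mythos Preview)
Your plan is correct and structurally identical to the paper's: both write $\beta_j+dd^cu_j=(\beta+dd^c\psi_j)+\tfrac1j\omega$ with $\psi_j:=\rho_j+u_j$, expand binomially, identify the $k=0$ term as $\int_X\beta^n=1$ (by closedness of $\beta$ and boundedness of $\psi_j$), and then show that all remaining mixed masses carry a factor $1/j$ against something growing only like a power of $\|\psi_j\|_\infty\le C(\log j)^n$.

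The one difference is in how the mixed masses $I_{a,b}=\int_X\omega^{a}\wedge(\beta_j+dd^cu_j)^{b}$ are bounded. You propose an induction on $b$ via integration by parts, carefully tracking the Leibniz terms and the curvature bounds (\ref{equ:B cond}); this is valid but is precisely the content already packaged in Proposition~\ref{prop:CLN ineq}. The paper bypasses your induction entirely by the one-line observation that $\beta+\tfrac1j\omega\le A\omega$ for a fixed constant $A=A(X,\omega,\beta)$, so that $0\le\beta_j+dd^cu_j\le A\omega+dd^c\psi_j$ as positive currents, whence $I_{a,b}\le\int_X\omega^{a}\wedge(A\omega+dd^c\psi_j)^{b}\le C(\|\psi_j\|_\infty+1)^{b}$ by the CLN inequality applied to the fixed Hermitian form $A\omega$. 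Your route is more self-contained (it does not quote CLN as a black box), while the paper's makes clear that Proposition~\ref{prop:CLN ineq} is exactly the tool designed for this step and spares you the bookkeeping you flagged as the main obstacle.
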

\begin{proof}Take a constant $A:=A(X,\omega,\beta)>0$ such that $\beta+\frac{1}{j}\omega\leq A\omega$ for all $j$. Then 
	\begin{align*}
		\int_X(\beta_j+dd^cu_j)^n&=\int_X(\beta+dd^c \rho_j+dd^cu_j)^n+\sum_{r=1}^{n} \tbinom{n}{r}\int_X(\beta+dd^c \rho_j+dd^cu_j)^{n-r}\wedge\left (\frac{1}{j}\omega\right)^r\\
		&=\int_X \beta^n+\sum_{r=1}^{n} \tbinom{n}{r}\int_X(\beta+dd^c \rho_j+dd^cu_j)^{n-r}\wedge \left(\frac{1}{j}\omega\right)^r\\
		&\leq \int_X \beta^n+\sum_{r=1}^{n} \tbinom{n}{r}\int_X(A\omega+dd^c \rho_j+dd^cu_j)^{n-r}\wedge \left(\frac{1}{j}\omega\right)^r
	\end{align*}
%
Since  $(\beta_j+dd^cu_j)^n=c_jf\omega^n$,  to estimate $c_j$, we only need to estimate the following integrals
	\begin{align*}
		\int_X(A\omega+dd^c \rho_j+dd^cu_j)^{n-r}\wedge \left(\frac{1}{j}\omega\right)^r, 1\leq  r\leq n.
	\end{align*}
By CLN inequality (see Proposition \ref{prop:CLN ineq}) and Proposition  \ref{prop: linfty est uj},  
\begin{align*}
	\int_X(A\omega+dd^c \rho_j+dd^cu_j)^{n-r}\wedge \left(\frac{1}{j}\omega\right)^r&\leq C(X,\omega,\beta,M)(\|\rho_j+u_j\|_{L^\infty(X)}+1)^{n-r}\left(\frac{1}{j}\right)^r\\
	&\leq C(X,\omega,\beta,M,\|f\|_p)\frac{(\log j)^{n^2}}{j}\rightarrow 0.
	\end{align*}
%
%
Hence 
	\begin{align*}
	\int_Xc_jf\omega^n=	\int_X(\beta_j+dd^cu_j)^n\xrightarrow{j\rightarrow \infty} \int_X \beta^n=\int_X f\omega^n.
	\end{align*}
As a result,  we get $c_j\rightarrow 1$ as $j\rightarrow\infty$.
\end{proof}

\subsection{$\mathcal{E}^1$-energy estimates} 
Since $\varphi_j:=\rho_j+u_j\in \mbox{PSH}(X,A\omega)$ for some large constant $A>0$,  and $\sup_X \varphi_j=0$, by Hartogs lemma, upto chosing  a  subsequence, we may assume $\varphi_j\rightarrow \varphi$ in $L^1(X,\omega^n)$ as $j\rightarrow \infty$, and $\varphi_j\rightarrow \varphi $ almost everywhere on $X$ (in Lebesgue measure) for some $\varphi\in \mbox{PSH}(X,A\omega)$ such that   $\sup_X \varphi =0$.

In this section, we estimate the $\mathcal E^1$-energy of $\varphi$ and prove that $\varphi$ is the  solution to the non-pluripolar Monge-Amp\`ere equation
 $$\left\langle(\beta+dd^c \varphi)^n\right\rangle=f\omega^n.$$

Set $$\Phi_j:=\left(\sup_{k\geq j}\varphi_k\right)^*.$$
Since for   $k\geq j$, $\varphi_k\in \mbox{PSH}(X,\beta+\frac{1}{k}\omega)\subseteq \mbox{PSH}(X,\beta+\frac{1}{j}\omega)$,
we have that
\begin{itemize} 
	\item $\varphi_j\leq \Phi_j$, $\sup_X \Phi_j=0$ and $\Phi_j \in \mbox{PSH}(X,\beta+\frac{1}{j}\omega)$.
	\item  $||\Phi_j||_{L^{\infty}(X)}\leq ||\varphi_j||_{L^{\infty}(X)}\leq C(\log j)^n$.
	\item  $\Phi_j\searrow \varphi$ on $X$, and thus $\varphi\in \mbox{PSH}(X,\beta)$.
\end{itemize}
Without of loss generality, we  assume $\rho_j\geq \rho \geq 1$. Set 
\begin{itemize} 
	\item $v_j:=\Phi_j-\rho_j\leq -1$ , then $v_j\in \mbox{PSH}(X,\beta_j)$,
	\item $u_j=\varphi_j-\rho_j \leq -1$, then $u_j\in \mbox{PSH}(X,\beta_j)$ and $u_j\leq v_j$,
	\item  $v_j^{(k)}:=\max\{v_j,-k\} \leq -1$, then  $v_j^{(k)}\in \mbox{PSH}(X,\beta_j)$ and $u_j\leq v_j^{(k)}$,
	\item $\Phi_j^{(k)}:=\max\{\Phi_j,\rho_j-k\}=\rho_j+v_j^{(k)} \in \mbox{PSH}(X,\beta+\frac{1}{j}\omega)$.
\end{itemize}


Applying Proposition  \ref{prop: compar energ} to $u_j\leq v_j^{(k)} \leq -1$, and by Proposition \ref{prop: linfty est uj} , we know that :
\begin{align*}
\int_X -\Phi_j^{(k)}\left(\beta+\frac{1}{j}\omega +dd^c \varphi_j^{(k)}\right)^n&\leq \int_X -v_j^{(k)}\left(\beta+\frac{1}{j}\omega+dd^c \Phi_j^{(k)}\right)^n\\
&=\int_X -v_j^{(k)}\left(\beta_j+dd^c v_j^{(k)}\right)^n\\
&\leq 2^n \int_X -u_j(\beta_j+dd^c u_j)^n+C\frac{(logj)^{3n^2}}{j}
\end{align*}
Since $(\beta_j+dd^cu_j)^n=c_jf\omega^n\leq 2f\omega^n$, for $j$ large,  $\varphi_j \in \mbox{PSH}(X,A\omega)$  and $\sup_X \varphi_j=0$, by \cite[Corollary 1.3]{Ngu16},  $$\int_X -\varphi_j f\omega^n\leq C(X,\omega, \beta, \|f\|_p).$$ 
Noting that  $||\rho_j||_{\infty}$ is uniformly bounded, then 
$$\int_X -\Phi^{(k)}_j\left(\beta+\frac{1}{j}\omega +dd^c \Phi_j^{(k)}\right)^n\leq C(X,\omega,\beta,M,\|f\|_p).$$ 
Similarly, one can get that 
$$\int_X -\Phi_j\left(\beta+\frac{1}{j}\omega +dd^c \Phi_j\right)^n\leq C(X,\omega,\beta,M,\|f\|_p).$$
We are on the way to prove the following
\begin{lem} \label{lem: e1 energy est phi}
	$\varphi\in \mathcal E^1(X,\beta)$, and $$\lim_{k\rightarrow \infty} (\beta+dd^c\varphi^{(k)})^n = \left\langle(\beta+dd^c\varphi)^n\right\rangle.$$
	In particular, 
	$$\int_X\left\langle(\beta+dd^c\varphi)^n\right\rangle=\int_X\beta^n=1.$$
	\end{lem}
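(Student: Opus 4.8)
The goal is to show $\varphi \in \mathcal{E}^1(X,\beta)$, that $(\beta+dd^c\varphi^{(k)})^n \to \langle(\beta+dd^c\varphi)^n\rangle$ weakly, and that the total mass equals $\int_X\beta^n = 1$. The plan is to run a two-level limiting argument: first pass $j \to \infty$ (the $\beta_j$-approximation) to produce good estimates for $\varphi$ and its truncations $\varphi^{(k)} = \max\{\varphi,\rho-k\}$, then pass $k \to \infty$ to identify the non-pluripolar product and compute its mass.

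First I would fix $k$ and study $\varphi_j^{(k)} := \max\{\Phi_j,\rho_j-k\}$. Since $\Phi_j \searrow \varphi$ and $\rho_j \searrow \rho$, one has $\Phi_j^{(k)} \searrow \varphi^{(k)}$, and all these functions are uniformly bounded (in $k$, not $j$). By the Bedford–Taylor convergence theorem for decreasing sequences of bounded quasi-psh functions, $(\beta+\tfrac1j\omega+dd^c\Phi_j^{(k)})^n \to (\beta+dd^c\varphi^{(k)})^n$ weakly as $j\to\infty$; combined with the energy bound already established just before the lemma,
\[
\int_X -\Phi_j^{(k)}\Big(\beta+\tfrac1j\omega+dd^c\Phi_j^{(k)}\Big)^n \le C(X,\omega,\beta,M,\|f\|_p),
\]
together with lower semicontinuity of $\varphi \mapsto \int -\varphi\,(\cdot)$ under such limits (or a Fatou-type argument using that $-\Phi_j^{(k)}$ is uniformly bounded and the measures converge weakly on the plurifine-open set where everything is continuous), I would extract
\[
\int_X -\varphi^{(k)}(\beta+dd^c\varphi^{(k)})^n \le C
\]
with $C$ independent of $k$. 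This is exactly the defining inequality for $\varphi \in \mathcal{E}^1(X,\beta)$.

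Next, with $\varphi \in \mathcal{E}^1(X,\beta)$ in hand, I would prove the weak convergence $(\beta+dd^c\varphi^{(k)})^n \to \langle(\beta+dd^c\varphi)^n\rangle$. The point is that $\mathds{1}_{\{\varphi>\rho-k\}}(\beta+dd^c\varphi^{(k)})^n$ already converges to the non-pluripolar product by Definition \ref{defn: nn product}, so it suffices to show the "error" $\mathds{1}_{\{\varphi\le\rho-k\}}(\beta+dd^c\varphi^{(k)})^n$ tends to $0$ weakly, i.e. its total mass tends to $0$. On $\{\varphi \le \rho-k\} = \{\varphi^{(k)} = \rho-k\}$ one estimates, using $\varphi^{(k)} \ge \rho - k$ and the uniform energy bound,
\[
(k-1)\int_{\{\varphi\le\rho-k\}}(\beta+dd^c\varphi^{(k)})^n \le \int_X (\rho-\varphi^{(k)})(\beta+dd^c\varphi^{(k)})^n \le C + (\sup|\rho|)\!\int_X\beta^n,
\]
so the tail mass is $O(1/k) \to 0$. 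Since $\int_X(\beta+dd^c\varphi^{(k)})^n = \int_X\beta^n = 1$ for every $k$ (these are bounded potentials), passing to the limit gives $\int_X\langle(\beta+dd^c\varphi)^n\rangle = 1$, i.e. $\varphi$ has full Monge–Ampère mass.

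The main obstacle I anticipate is the first step: justifying that the uniform-in-$j$ energy bound for $\Phi_j^{(k)}$ actually passes to the limit to give a uniform-in-$k$ energy bound for $\varphi^{(k)}$. One must be careful because the reference forms $\beta+\tfrac1j\omega$ vary with $j$ and the manifold is only Hermitian, so closedness is not available; the convergence of the Monge–Ampère measures and the semicontinuity of the energy functional have to be invoked in the correct order, possibly by first integrating against a fixed test function and using the Bedford–Taylor quasi-continuity, then taking a supremum. Once the uniform $\mathcal{E}^1$-bound for $\varphi^{(k)}$ is secured, the remaining two assertions follow from the tail-mass estimate above and are essentially bookkeeping.
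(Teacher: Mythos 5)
Your proposal is correct and follows essentially the same route as the paper: Bedford--Taylor convergence for the decreasing sequence $\Phi_j^{(k)}\searrow\varphi^{(k)}$ plus the semicontinuity of the energy functional (the paper cites \cite[Lemma 2.3]{BT82}) to transfer the uniform-in-$j$ bound into the uniform-in-$k$ bound $\int_X-\varphi^{(k)}(\beta+dd^c\varphi^{(k)})^n\leq C$, and then the same tail-mass estimate $\int_{\{\varphi\leq\rho-k\}}(\beta+dd^c\varphi^{(k)})^n=O(1/k)$ via the uniform energy $\sup_k\int_X-(\varphi^{(k)}-\rho)(\beta+dd^c\varphi^{(k)})^n<\infty$ to identify the limit with the non-pluripolar product and compute its total mass. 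The step you flag as the main obstacle is resolved exactly as you suggest, so there is no gap.
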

\begin{proof}
Since $\Phi_j^{(k)}\searrow \varphi^{(k)}$ as $j\rightarrow  \infty$,   by Bedford-Taylor convergence theorem \cite[Theorem 2.4]{BT82},  
$$\left(\beta+\frac{1}{j}\omega+dd^c \Phi_j^{(k)}\right)^n \rightarrow(\beta+dd^c\varphi^{(k)})^n$$ in the weak sense of currents   as $j\rightarrow \infty$.
By \cite[Lemma 2.3]{BT82}, 
\begin{align*}
\limsup_j \int_X \Phi_j^{(k)}\left(\beta+\frac{1}{j}\omega +dd^c \Phi_j^{(k)}\right)^n	\leq \int_X \varphi^{(k)}(\beta+dd^c \varphi^{(k)})^n.
\end{align*}
Thus 
\begin{align*}
\int_X -\varphi^{(k)}(\beta+dd^c \varphi^{(k)})^n&\leq -	\limsup_j \int_X \Phi_j^{(k)}\left(\beta+\frac{1}{j}\omega +dd^c \Phi_j^{(k)}\right)^n	\\
&\leq \sup_j \int_X -\Phi_j^{(k)}\left(\beta+\frac{1}{j}\omega +dd^c \Phi_j^{(k)}\right)^n\\
&\leq C(X,\omega,\beta,M,\|f\|_p).
\end{align*}
This shows that $\varphi\in \mathcal E^1(X,\beta)$.


For any test function $g\in \mathcal C^\infty(X)$, we are going to show that $$\lim_{k\rightarrow \infty}<(\beta+dd^c\varphi^{(k)})^n,g>=<\left\langle(\beta+dd^c\varphi)^n\right\rangle,g>.$$
	By definition, $$\int_Xg\left\langle(\beta+dd^c\varphi)^n\right\rangle=\lim_{k\rightarrow \infty}\int_{\{\varphi \textgreater \rho-k\}}g(\beta+dd^c\varphi^{(k)})^n.$$
	So it suffices to show that $$\lim_{k\rightarrow \infty}\int_{\{\varphi \leq \rho-k\}}g(\beta+dd^c\varphi^{(k)})^n=0.$$ 
	Since 
	$$E_1(\varphi):=\sup_k\int_X-(\varphi^{(k)}-\rho)(\beta+dd^c\varphi^{(k)})^n \textless +\infty,$$
	we have 
	$$\int_{\{\varphi \leq \rho-k\}}(\beta+dd^c\varphi^{(k)})^n=\frac{1}{k}\int_{\{\varphi \leq \rho-k \}}k(\beta+dd^c\varphi^{(k)})^n$$
	$$= \frac{1}{k}\int_{\{\varphi \leq \rho-k\}}-(\varphi^{(k)}-\rho)(\beta+dd^c\varphi^{(k)})^n\leq \frac{1}{k}E_1(\varphi).$$
	Therefore 
	$$\left|\int_{\{\varphi \leq \rho-k\}}g(\beta+dd^c\varphi^{(k)})^n \right|
	$$$$\leq \|g\|_{L^\infty(X)}\int_{\{\varphi \leq \rho-k\}}(\beta+dd^c\varphi^{(k)})^n\leq \frac{\|g\|_{L^\infty(X)}}{k}E_1(\varphi).$$
	This finishes the proof.
\end{proof}

\begin{thm}\label{lem: conv phi}
$\left(\beta+\frac{1}{j}\omega+dd^c\Phi_j\right)^n\rightarrow \left\langle(\beta+dd^c \varphi)^n\right\rangle=f\omega^n$. 
\end{thm}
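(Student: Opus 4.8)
The goal is to upgrade the energy estimates of the previous subsection into the statement that $\left(\beta+\frac{1}{j}\omega+dd^c\Phi_j\right)^n$ converges weakly to $\left\langle(\beta+dd^c\varphi)^n\right\rangle$ and that this limit equals $f\omega^n$. The plan is to test against an arbitrary $g\in\mathcal C^\infty(X)$ and split the mass of $\left(\beta+\frac1j\omega+dd^c\Phi_j\right)^n$ across the plurifine sets $\{\varphi>\rho-k\}$ and $\{\varphi\le\rho-k\}$, exactly in the spirit of the proof of Lemma \ref{lem: e1 energy est phi}, but now with the extra parameter $j$ to track. First I would fix $k$ and use that $\Phi_j^{(k)}=\max\{\Phi_j,\rho_j-k\}\searrow\varphi^{(k)}$ as $j\to\infty$, so by the Bedford--Taylor convergence theorem \cite[Theorem 2.4]{BT82} one gets $\left(\beta+\frac1j\omega+dd^c\Phi_j^{(k)}\right)^n\to(\beta+dd^c\varphi^{(k)})^n$ weakly; on the plurifine open set $\{\Phi_j>\rho_j-k\}$ these Monge--Amp\`ere measures agree with $\left(\beta+\frac1j\omega+dd^c\Phi_j\right)^n$ by locality of the (non-pluripolar) product, and likewise $(\beta+dd^c\varphi^{(k)})^n=\langle(\beta+dd^c\varphi)^n\rangle$ on $\{\varphi>\rho-k\}$ up to a set of small mass.

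The key technical point is a \emph{uniform-in-$j$} smallness bound for the mass carried on $\{\Phi_j\le\rho_j-k\}$. For this I would imitate the Chebyshev-type argument in Lemma \ref{lem: e1 energy est phi}: the $\mathcal E^1$-energy bound $\int_X-\Phi_j^{(k)}\big(\beta+\frac1j\omega+dd^c\Phi_j^{(k)}\big)^n\le C(X,\omega,\beta,M,\|f\|_p)$ obtained just before the lemma is uniform in both $j$ and $k$ (after subtracting $\rho_j$ it becomes an $E_1$-type quantity), so
\[
\int_{\{\Phi_j\le\rho_j-k\}}\Big(\beta+\tfrac1j\omega+dd^c\Phi_j^{(k)}\Big)^n\le\frac{1}{k}\int_X-(v_j^{(k)}+1)\Big(\beta_j+dd^cv_j^{(k)}\Big)^n\le\frac{C}{k},
\]
with $C$ independent of $j$. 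Hence for any $g$ and any $\varepsilon>0$ one can choose $k$ large so that both the tail $\{\Phi_j\le\rho_j-k\}$ and the tail $\{\varphi\le\rho-k\}$ contribute at most $\varepsilon\|g\|_{L^\infty}$, uniformly in $j$; on the complementary sets one passes to the limit in $j$ by Bedford--Taylor and then lets $k\to\infty$. A standard $\varepsilon$--$\delta$ (diagonal) argument then yields $\left(\beta+\frac1j\omega+dd^c\Phi_j\right)^n\to\langle(\beta+dd^c\varphi)^n\rangle$ weakly.

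It remains to identify the limit with $f\omega^n$. Here I would compare $\Phi_j$ with $\varphi_j=\rho_j+u_j$: since $\varphi_j\le\Phi_j$ and both lie in $\mbox{PSH}(X,\beta+\frac1j\omega)$ with $\sup_X=0$, a comparison/monotonicity estimate for Monge--Amp\`ere measures (of the type used in Proposition \ref{prop: compar energ}, together with $(\beta_j+dd^cu_j)^n=c_jf\omega^n$ and Corollary \ref{cor: limit c_j} giving $c_j\to1$) forces $\left(\beta+\frac1j\omega+dd^c\Phi_j\right)^n$ and $\left(\beta+\frac1j\omega+dd^c\varphi_j\right)^n=c_jf\omega^n$ to have the same weak limit; since $c_jf\omega^n\to f\omega^n$ in $L^1$ and hence weakly, the limit is $f\omega^n$. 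Alternatively, and perhaps more cleanly, one notes that $\Phi_j\searrow\varphi$ and $\varphi_j\to\varphi$ in $L^1$ both capture the full mass $\int_X\beta^n=1$ in the limit (by Lemma \ref{lem: e1 energy est phi}), so no mass escapes to pluripolar sets, and the weak limit of the $\Phi_j$-measures must coincide with $\langle(\beta+dd^c\varphi)^n\rangle$, which by the defining equation \eqref{equ: ma equa} and $c_j\to1$ equals $f\omega^n$. The main obstacle I anticipate is making the tail estimate genuinely uniform in $j$ — i.e. checking that the constant $C$ in the $\mathcal E^1$-bound does not degenerate as $j\to\infty$ (it does not, because $\|\rho_j\|_\infty\le M$ and the constants in Propositions \ref{prop:CLN ineq}, \ref{prop:vol-cap est}, \ref{prop: cap est} depend only on $(X,\omega,\beta,M)$) — and in handling the mismatch between the cutoff level $\rho_j-k$ (which moves with $j$) and the fixed level $\rho-k$, which is absorbed using $\|\rho_j-\rho\|_\infty\to0$ and monotonicity.
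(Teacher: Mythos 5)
Your first part --- the weak convergence $\left(\beta+\frac1j\omega+dd^c\Phi_j\right)^n\to\left\langle(\beta+dd^c\varphi)^n\right\rangle$ --- follows essentially the same route as the paper: a Chebyshev-type tail estimate on $\{\Phi_j\le\rho_j-k\}$ that is uniform in $j$ thanks to the uniform $\mathcal E^1$-energy bounds, combined with Bedford--Taylor convergence for $\Phi_j^{(k)}\searrow\varphi^{(k)}$ at fixed $k$, and then $k\to\infty$ via Lemma \ref{lem: e1 energy est phi}. That part is fine.

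The identification of the limit with $f\omega^n$ is where your argument has a genuine gap. Neither of your two suggested routes works as stated. First, the inequality $\varphi_j\le\Phi_j$ gives no control relating $\left(\beta+\frac1j\omega+dd^c\Phi_j\right)^n$ to $\left(\beta+\frac1j\omega+dd^c\varphi_j\right)^n=c_jf\omega^n$: Proposition \ref{prop: compar energ} is a comparison of \emph{energies}, not of measures, and in general $u\le v$ says nothing about the weak limits of the respective Monge--Amp\`ere measures (the measure of the larger potential can live on a completely different set). Second, the ``cleaner'' alternative is circular: knowing that the limit measure has full mass $\int_X\beta^n$ and charges no pluripolar set does not identify it, and the phrase ``which by the defining equation \eqref{equ: ma equa} and $c_j\to1$ equals $f\omega^n$'' is exactly the assertion to be proved --- the defining equation concerns $\varphi_j$, which converges to $\varphi$ only in $L^1$ and a.e., and that is not enough to pass the equation to the limit. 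The missing ingredient is the one-sided minimum principle of Bedford--Taylor \cite[Proposition 2.8]{BT76}: since $\beta+\frac1j\omega+dd^c\varphi_k\ge\beta+\frac1k\omega+dd^c\varphi_k$ for $k\ge j$, one has
\begin{equation*}
\Big(\beta+\tfrac1j\omega+dd^c\max\{\varphi_j,\dots,\varphi_k\}\Big)^n\;\ge\;\min\{c_j,\dots,c_k\}\,f\omega^n,
\end{equation*}
and since $\max\{\varphi_j,\dots,\varphi_k\}$ increases a.e.\ to $\Phi_j$, the monotone convergence theorem \cite[Theorem 7.4]{BT82} yields $\left(\beta+\frac1j\omega+dd^c\Phi_j\right)^n\ge\inf_{k\ge j}c_k\,f\omega^n$. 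Passing to the limit with your Part 1 gives $\left\langle(\beta+dd^c\varphi)^n\right\rangle\ge f\omega^n$ (using $c_j\to1$), and only then does the equality of total masses, $\int_X\left\langle(\beta+dd^c\varphi)^n\right\rangle=\int_Xf\omega^n=1$ from Lemma \ref{lem: e1 energy est phi}, upgrade the inequality to an equality of measures. You need this min--max step; without it the conclusion does not follow.
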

\begin{proof}
First, we prove that 
\begin{align*}
	\lim_{k\rightarrow \infty}\left(\beta+\frac{1}{j}\omega+dd^c\Phi_j^{(k)}\right)^n=\left(\beta+\frac{1}{j}\omega+dd^c\Phi_j\right)^n.
	\end{align*}
In fact, for any  test function $g\in \mathcal C^{\infty}(X)$ and $k>M$,  

\begin{align*}
&	\left|\int_Xg\left(\beta+\frac{1}{j}\omega+dd^c\Phi_j^{(k)}\right)^n-g\left(\beta+\frac{1}{j}\omega+dd^c\Phi_j\right)^n\right|\\
&=\left|\int_{\{\Phi_j \leq \rho_j-k\}}g\left(\beta+\frac{1}{j}\omega+dd^c\Phi_j^{(k)}\right)^n-g\left(\beta+\frac{1}{j}\omega+dd^c\Phi_j\right)^n\right|\\
&\leq \frac{||g||_{L^\infty(X)}}{k-M}  \int_{\{\Phi_j \leq \rho_j-k\}}(-\Phi_j^{(k)})\left(\beta+\frac{1}{j}\omega+dd^c\Phi_j^{(k)}\right)^n+(-\Phi_j)\left(\beta+\frac{1}{j}\omega+dd^c\Phi_j\right)^n\\
&\leq \frac{||g||_{L^\infty(X)}}{k-M}  \int_X(-\Phi_j^{(k)})\left(\beta+\frac{1}{j}\omega+dd^c\Phi_j^{(k)}\right)^n+(-\Phi_j)\left(\beta+\frac{1}{j}\omega+dd^c\Phi_j\right)^n\\
&\leq \frac{C(X,\omega,\beta,M,\|f\|_p)||g||_{L^\infty(X)}}{k-M}.
\end{align*}

Since  $\left(\beta+\frac{1}{j}\omega+dd^c \Phi_j^{(k)}\right)^n\rightarrow  (\beta+dd^c \varphi^{(k)})^n$ as $j \rightarrow  \infty$,
$$\int_Xg\left(\beta+\frac{1}{j}\omega+dd^c \Phi_j^{(k)}\right)^n \rightarrow  \int_Xg(\beta+dd^c\varphi^{(k)})^n,$$
thus 
\begin{align*}
	\limsup_{j\rightarrow \infty} \left|\int_Xg(\beta+dd^c\varphi^{(k)})^n-g\left(\beta+\frac{1}{j}\omega+dd^c\Phi_j\right)^n\right| \leq \frac{C}{k-M}.
	\end{align*}
Combining with  Lemma \ref{lem: e1 energy est phi}, we have that 
\begin{align*}
	\limsup_{j\rightarrow \infty} \left|\int_Xg \left\langle(\beta+dd^c\varphi)^n \right\rangle  -g(\beta+\frac{1}{j}\omega+dd^c\Phi_j)^n\right| \leq 0.
	\end{align*}
This proves that 
\begin{align}\left(\beta+\frac{1}{j}\omega+dd^c\Phi_j\right)^n\rightarrow \left\langle(\beta+dd^c \varphi)^n\right\rangle.\label{equ: nonpluri solu}
	\end{align}

From  $\beta+\frac{1}{j}\omega+dd^c \varphi_k\geq \beta+\frac{1}{k}\omega +dd^c \varphi_k,$ for $k\geq j$, we know that 
\begin{align*}
	\left(\beta+\frac{1}{j}\omega+dd^c \varphi_k\right)^n\geq \left(\beta+\frac{1}{k}\omega+ dd^c \varphi_k\right)^n=c_kf\omega^n.
\end{align*}
By \cite[Proposition 2.8]{BT76},
\begin{align*}
	\left(\beta+\frac{1}{j}\omega+dd^c \max\{\varphi_j,...,\varphi_k\}\right)^n \geq \min\{c_j,...,c_k\}f\omega^n  \mbox{~for~} k\geq j.
\end{align*}
Since $\max\{\varphi_j,...,\varphi_k\}$ increases to $\varphi_j$ almost everywhere, the Bedford-Taylor monotone convergence theorem   \cite[Theorem 7.4]{BT82} implies that 
\begin{align*}
	\left(\beta+\frac{1}{j}\omega+dd^c \Phi_j\right)^n\geq \inf_{k\geq j}\{c_k\}f\omega^n.
\end{align*}
Then  from (\ref{equ: nonpluri solu}),
we obtain
 $$\left\langle(\beta+dd^c \varphi)^n \right\rangle \geq \lim_{j\rightarrow\infty}\inf_{k\geq j}c_kf\omega^n=f\omega^n.$$
However, from Lemma \ref{lem: e1 energy est phi}, $\int_X \left\langle(\beta+dd^c \varphi)^n \right\rangle =\int_X f \omega^n=1$,  thus  $\left\langle(\beta+dd^c \varphi)^n\right\rangle=f\omega^n$.
\end{proof}

\subsection{$L^\infty$-estimate of $\varphi$}
In this section, we prove that $\varphi$ is a bounded $\beta$-psh function, and then the non-pluripolar product $\langle(\beta+dd^c \varphi)^n\rangle$ coincides with the Bedford-Taylor product $(\beta+dd^c\varphi)^n$, thus we finish the proof of the existence part of the degenerate CMA equation
\begin{align*}
	(\beta+dd^c\varphi)^n=f\omega^n, \varphi\in \mbox{PSH}(X,\beta)\cap L^\infty(X).
\end{align*}

Since $\varphi\in \mathcal E^1(X,\beta)$ (see Lemma \ref{lem: e1 energy est phi}), and $\mathcal E^1(X,\beta)\subset \mathcal E(X,\beta)$ (see Remark \ref{rem: e1 subset e}), 
 it suffices to prove $L^{\infty}$-estimate in  $\mathcal{E}(X,\beta)$. 

The following comparison principle is an important tool in the sequel.
\begin{prop}\label{prop: np comparison principle}
Let $\varphi,\psi \in \mbox{PSH}(X,\beta)$, then we have following inequality:
	$$\int_X\beta^n-\int_X\left\langle(\beta+dd^c\psi)^n\right\rangle+\int_{\{\psi \textless \varphi\}}\left\langle(\beta+dd^c\psi)^n\right\rangle \geq\int_{\{\psi \textless \varphi\}}\left\langle(\beta+dd^c\varphi)^n\right\rangle.$$
	In particular, if $\varphi,\psi \in \mathcal{E}(X,\beta)$ , then we have
	$$\int_{\{\varphi\textless \psi\}}\left\langle(\beta+dd^c\psi)^n\right\rangle\leq \int_{\{\varphi \textless \psi\}}\left\langle(\beta+dd^c\varphi)^n\right\rangle.$$
\end{prop}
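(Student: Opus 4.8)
The plan is to follow the now-standard argument of Boucksom–Eyssidieux–Guedj–Zeriahi (\cite{BEGZ10}), adapting it to the Hermitian setting, where the only essential novelty is that the total mass $\int_X(\beta+\tfrac1j\omega+dd^c u)^n$ is \emph{not} cohomological and must be controlled by hand. First I would reduce to the bounded case: set $\varphi_k:=\max\{\varphi,\rho-k\}$ and $\psi_k:=\max\{\psi,\rho-k\}$, both bounded $\beta$-psh functions. For bounded potentials the non-pluripolar product coincides with the Bedford–Taylor product (Remark \ref{rem: nonpp}), and the classical comparison principle on plurifine open sets applies. On the plurifine open set $\{\psi_k<\varphi_k\}$ one has $\psi_k=\max\{\psi_k,\varphi_k\}$ locally fails, so instead I use the elementary inequality $\mathds{1}_{\{\psi_k<\varphi_k\}}(\beta+dd^c\max\{\psi_k,\varphi_k\})^n=\mathds{1}_{\{\psi_k<\varphi_k\}}(\beta+dd^c\varphi_k)^n$, valid by locality of the Monge–Ampère operator on plurifine opens, together with $\int_X(\beta+dd^c\max\{\psi_k,\varphi_k\})^n\le\int_X(\beta+dd^c?)^n$. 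Here is the point where the Hermitian setting requires care: $\int_X(\beta+dd^c w)^n$ depends on $w$ through the non-closed part of $\beta$. I would bound it using the mixed-type / integration-by-parts estimates already invoked in the proof of Corollary \ref{cor: limit c_j} and Lemma \ref{lem: upper bound c_j}, getting $\int_X(\beta+dd^c\max\{\psi_k,\varphi_k\})^n\le\int_X\beta^n+o(1)$ as the error terms involve $\tfrac1j$-type corrections — but since $\beta$ itself is fixed (not $\beta_j$), one actually gets the cleaner statement that $\int_X(\beta+dd^c w)^n=\int_X\beta^n$ for all bounded $w$, because $\beta$ is \emph{closed}; the paper assumes $\beta$ closed, so no error term survives and the mass is genuinely constant.

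With constant mass in hand, the bounded comparison principle reads
\[
\int_X\beta^n-\int_X(\beta+dd^c\psi_k)^n+\int_{\{\psi_k<\varphi_k\}}(\beta+dd^c\psi_k)^n\ \ge\ \int_{\{\psi_k<\varphi_k\}}(\beta+dd^c\varphi_k)^n,
\]
where the first two terms in fact cancel when $\psi_k$ is bounded; I keep them in this form because after passing to the limit they will no longer cancel. Next I pass $k\to\infty$. On the left, $\int_X(\beta+dd^c\psi_k)^n\to\int_X\langle(\beta+dd^c\psi)^n\rangle$ by monotone convergence of the Monge–Ampère measures of the canonical truncations (this is exactly how full mass / non-pluripolar mass is computed). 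For the integrals over $\{\psi_k<\varphi_k\}$ I use that $\{\psi<\varphi\}\subseteq\liminf_k\{\psi_k<\varphi_k\}$ up to a pluripolar set (where $\psi=\varphi=-\infty$), that the non-pluripolar measures $\mathds{1}_{\{\psi>\rho-k\}}(\beta+dd^c\psi_k)^n$ increase to $\langle(\beta+dd^c\psi)^n\rangle$, and a Fatou-type argument to handle the right-hand side from below: $\liminf_k\int_{\{\psi_k<\varphi_k\}}(\beta+dd^c\varphi_k)^n\ge\int_{\{\psi<\varphi\}}\langle(\beta+dd^c\varphi)^n\rangle$. Combining the limits yields the asserted inequality.

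For the "in particular" statement, assume $\varphi,\psi\in\mathcal E(X,\beta)$, i.e.\ both have full mass $\int_X\langle(\beta+dd^c\psi)^n\rangle=\int_X\langle(\beta+dd^c\varphi)^n\rangle=\int_X\beta^n$. Then the first two terms on the left of the general inequality cancel, giving $\int_{\{\psi<\varphi\}}\langle(\beta+dd^c\psi)^n\rangle\ge\int_{\{\psi<\varphi\}}\langle(\beta+dd^c\varphi)^n\rangle$; swapping the roles of $\varphi$ and $\psi$ (or replacing $\varphi$ by $\varphi+\varepsilon$, letting $\varepsilon\downarrow0$, and using that $\{\varphi<\psi\}=\bigcup_\varepsilon\{\varphi+\varepsilon<\psi\}$ modulo a set of measure zero for the non-pluripolar product) gives the displayed form $\int_{\{\varphi<\psi\}}\langle(\beta+dd^c\psi)^n\rangle\le\int_{\{\varphi<\psi\}}\langle(\beta+dd^c\varphi)^n\rangle$. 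The main obstacle I anticipate is the careful bookkeeping in the limit $k\to\infty$: one must be sure that no non-pluripolar mass escapes on the boundary $\{\psi=\varphi\}$ and that the truncated measures genuinely converge to the non-pluripolar products — this is where the full-mass hypothesis in the "in particular" clause, and the finiteness $\sup_k\int_X(\beta+dd^c\psi_k)^n<\infty$ noted after Definition \ref{defn: nn product}, are used crucially. Everything else is a transcription of \cite[Corollary 2.3]{BEGZ10} (or \cite{Din09-2}) with "$\beta$ closed, total mass $=\int_X\beta^n$" replacing "cohomology class".
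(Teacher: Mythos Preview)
Your truncation approach has a genuine gap at the limiting step. You assert that ``$\int_X(\beta+dd^c\psi_k)^n\to\int_X\langle(\beta+dd^c\psi)^n\rangle$ by monotone convergence of the Monge--Amp\`ere measures of the canonical truncations.'' This is false: since $\beta$ is closed and each $\psi_k$ is bounded, $\int_X(\beta+dd^c\psi_k)^n=\int_X\beta^n$ for \emph{every} $k$, whereas $\int_X\langle(\beta+dd^c\psi)^n\rangle$ may be strictly smaller. What increases to $\langle(\beta+dd^c\psi)^n\rangle$ is $\mathds 1_{\{\psi>\rho-k\}}(\beta+dd^c\psi_k)^n$, not the full measure $(\beta+dd^c\psi_k)^n$. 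Consequently, at each finite level your first two terms cancel exactly and your bounded inequality reads only
\[
\int_{\{\psi_k<\varphi_k\}}(\beta+dd^c\psi_k)^n\ \ge\ \int_{\{\psi_k<\varphi_k\}}(\beta+dd^c\varphi_k)^n,
\]
from which the defect term $\int_X\beta^n-\int_X\langle(\beta+dd^c\psi)^n\rangle$ cannot appear by passing to the limit as you describe. The truncation route is salvageable, but only by splitting the left-hand integral over $\{\psi>\rho-k\}$ and $\{\psi\le\rho-k\}$ separately; the mass on $\{\psi\le\rho-k\}$ is bounded by $\int_X\beta^n-\int_{\{\psi>\rho-k\}}(\beta+dd^c\psi_k)^n$, and \emph{this} is where the defect term emerges in the limit.

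The paper's proof bypasses truncation entirely and is considerably shorter. It works directly with the non-pluripolar products: start from the mass inequality $\int_X\beta^n\ge\int_X\langle(\beta+dd^c\max\{\varphi,\psi\})^n\rangle$, then drop the portion over $\{\varphi=\psi\}$ and use locality of the non-pluripolar product on the plurifine open sets $\{\varphi<\psi\}$ and $\{\psi<\varphi\}$ (where $\max\{\varphi,\psi\}$ coincides with $\psi$, respectively $\varphi$) to obtain
\[
\int_X\beta^n\ \ge\ \int_{\{\varphi<\psi\}}\langle(\beta+dd^c\psi)^n\rangle+\int_{\{\psi<\varphi\}}\langle(\beta+dd^c\varphi)^n\rangle.
\]
Rewriting the first term as $\int_X\langle(\beta+dd^c\psi)^n\rangle-\int_{\{\psi\le\varphi\}}\langle(\beta+dd^c\psi)^n\rangle$ gives the inequality with $\{\psi\le\varphi\}$ on the left; a final shift $\psi\mapsto\psi+\varepsilon$, $\varepsilon\searrow0$, converts $\{\psi\le\varphi\}$ to $\{\psi<\varphi\}$ (the residual set $\{\psi=\varphi=-\infty\}$ is pluripolar and carries no non-pluripolar mass). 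No Fatou-type bookkeeping is needed.
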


\begin{proof}Since
\begin{align*} 
	\int_X\beta^n& \geq \int_X\left\langle(\beta+dd^c\max\{\varphi,\psi\})^n\right\rangle\\
	&\geq \int_{\{\varphi \textless  \psi\}}\left\langle(\beta+dd^c\max\{\varphi,\psi\})^n\right\rangle+\int_{\{\psi \textless \varphi\}}\left\langle(\beta+dd^c\max\{\varphi,\psi\})^n\right\rangle\\
&=\int_{\{\varphi \textless  \psi\}}\left\langle(\beta+dd^c\psi)^n\right\rangle+\int_{\{\psi \textless \varphi\}}\left\langle(\beta+dd^c\varphi)^n\right\rangle\\
&=\int_X\left\langle(\beta+dd^c\psi)^n\right\rangle-\int_{\{\psi \leq \varphi\}}\left\langle(\beta+dd^c\psi)^n\right\rangle+\int_{\{\psi \textless \varphi\}} \left\langle(\beta+dd^c\varphi)^n\right\rangle,
\end{align*}
then
	$$\int_X\beta^n-\int_X\left\langle(\beta+dd^c\psi)^n\right\rangle+\int_{\{\psi \leq \varphi\}}\left\langle(\beta+dd^c\psi)^n\right\rangle \geq \int_{\{\psi \textless \varphi\}}\left\langle(\beta+dd^c\varphi)^n\right\rangle.$$
	Replace   $\psi$ by $\psi+\varepsilon$, $\varepsilon\textgreater0$.  Since $\{\psi+\varepsilon \textless \varphi\}$ increases to $\{\psi \textless \varphi\}$, $\{\psi+\varepsilon \leq \varphi\}$ increases to $\{\psi \textless \varphi\}\cup\{\psi=\varphi=-\infty\}$, and  non-pluripolar product   puts  no mass on pluripolar set $\{\varphi=\psi=-\infty\}$, when $\varepsilon\searrow 0$, we have 
	$$\int_X\beta^n-\int_X\left\langle(\beta+dd^c\psi)^n\right\rangle+\int_{\{\psi\textless\varphi\}}\left\langle(\beta+dd^c\psi)^n\right\rangle \geq \int_{\{\psi\textless\varphi\}}\left\langle(\beta+dd^c\varphi)^n\right\rangle.$$

\end{proof}
\begin{lem}\label{lem: bounded solution cap est level}
	Let $\varphi,\psi \in \mathcal{E}(X,\beta)$, 
	 $\psi\leq \rho$. Then for any $0\leq t \leq 1,$ $s\in \mathbb{R}$, 
	$$t^nCap_{\beta}(\{\varphi \textless \psi -t-s\})\leq \int_{\{\varphi \textless (1-t)\psi+t\rho-s\}}\langle (\beta+dd^c\varphi)^n\rangle.$$ 
\end{lem}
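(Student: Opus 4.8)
The plan is to mimic the standard "partial comparison of capacities" argument (as in Guedj–Zeriahi, or Boucksom–Eyssidieux–Guedj–Zeriahi) adapted to the non-pluripolar product and the Hermitian setting, where the essential tool is the comparison principle of Proposition \ref{prop: np comparison principle}. First I would fix $t\in[0,1]$ and $s\in\mathbb R$, and pick an arbitrary $w\in\mbox{PSH}(X,\beta)$ with $\rho\le w\le\rho+1$ (these are the competitors in the definition of $Cap_\beta$ from Remark \ref{rem: est cap uj--u}). The idea is to compare $\varphi$ with the auxiliary function
\[
\Psi:=(1-t)\psi+t w-s,
\]
which lies in $\mbox{PSH}(X,\beta)$ since $\psi,w$ are $\beta$-psh and the class is convex; note $\Psi$ need not be bounded, but it does lie in $\mathcal E(X,\beta)$ because it dominates $(1-t)\psi + t\rho - s\in\mathcal E(X,\beta)$ (an element of $\mathcal E$ dominated by another element of the same class with the same mass stays in $\mathcal E$ — this is a basic property from \cite{BEGZ10} that I would invoke).

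Next I would run the comparison principle on $\varphi$ and $\Psi$. On the set $\{\varphi<\Psi\}$ we have, using $w\ge\rho$ hence $\Psi\ge(1-t)\psi+t\rho-s$, the inclusion
\[
\{\varphi<(1-t)\psi+t\rho-s\}\ \supseteq\ \{\varphi<\Psi\},
\]
so it suffices to bound $t^n Cap_\beta(\{\varphi<\psi-t-s\})$ from above by $\int_{\{\varphi<\Psi\}}\langle(\beta+dd^c\varphi)^n\rangle$, and the stated estimate follows by taking the supremum over $w$ and using monotonicity of the integral of the non-pluripolar measure under enlarging the domain. The key computation: on the sublevel set $\{\varphi<\psi-t-s\}$, since $w\le\rho+1\le\psi+1$ there (using $\psi\le\rho$ — wait, one needs $\rho\le\psi$ here; more carefully, on $\{\varphi<\psi-t-s\}$ one checks $\Psi-\varphi=(1-t)(\psi-\varphi)+t(w-s)-(1-t)s> (1-t)(t+s)+t(w-s)-(1-t)s$, and with $w\ge\rho$, after simplification one gets $\Psi>\varphi$ there, i.e. $\{\varphi<\psi-t-s\}\subseteq\{\varphi<\Psi\}$). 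On this set the non-pluripolar measure of $\Psi$ satisfies, by the scaling/convexity behaviour of Monge–Ampère measures,
\[
\langle(\beta+dd^c\Psi)^n\rangle\ \ge\ t^n\,\langle(\beta+dd^c w)^n\rangle
\]
as measures (expanding $(\beta+dd^c\Psi)^n$ and keeping only the $t^n(\beta+dd^cw)^n$ term; all cross terms are non-pluripolar products of positive currents, hence non-negative). Feeding this into the comparison principle $\int_{\{\varphi<\Psi\}}\langle(\beta+dd^c\Psi)^n\rangle\le\int_{\{\varphi<\Psi\}}\langle(\beta+dd^c\varphi)^n\rangle$ yields
\[
t^n\int_{\{\varphi<\psi-t-s\}}\langle(\beta+dd^c w)^n\rangle\ \le\ \int_{\{\varphi<\Psi\}}\langle(\beta+dd^c\varphi)^n\rangle\ \le\ \int_{\{\varphi<(1-t)\psi+t\rho-s\}}\langle(\beta+dd^c\varphi)^n\rangle,
\]
and taking $\sup$ over admissible $w$ on the left gives exactly $t^n Cap_\beta(\{\varphi<\psi-t-s\})$ on the left-hand side.

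The main obstacle I anticipate is bookkeeping with unbounded potentials: the functions $\psi,\varphi,\Psi$ are only in $\mathcal E(X,\beta)$, not bounded, so one must be careful that the comparison principle (Proposition \ref{prop: np comparison principle}) applies — which it does, as it is stated for general $\beta$-psh functions — and that the inclusion of sublevel sets and the inequality $\langle(\beta+dd^c\Psi)^n\rangle\ge t^n\langle(\beta+dd^cw)^n\rangle$ are valid for non-pluripolar products, using the locality of these products in the plurifine topology and the fact that $w$ is bounded there. A secondary technical point is to confirm that $\Psi\in\mathcal E(X,\beta)$ so that the "in particular" form of the comparison principle is available; this follows from the monotonicity property of the class $\mathcal E$ together with $(1-t)\psi+t\rho-s\in\mathcal E(X,\beta)$, which itself holds because finite convex combinations (shifted) of full-mass potentials have full mass. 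Once these points are secured, the rest is the short chain of inequalities above.
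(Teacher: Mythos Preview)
Your overall strategy matches the paper's: pick a competitor $w$ for $Cap_\beta$, form an auxiliary convex combination, sandwich the relevant sublevel sets, use the multilinearity lower bound $\langle(\beta+dd^c\Psi)^n\rangle\ge t^n\langle(\beta+dd^cw)^n\rangle$, and apply the comparison principle. That is exactly what the paper does (citing \cite[Proposition 1.4(c)]{BEGZ10} for the multilinearity bound).

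However, one of your two inclusions is wrong as written. You define $\Psi=(1-t)\psi+tw-s$ and then claim, ``using $w\ge\rho$ hence $\Psi\ge(1-t)\psi+t\rho-s$,'' that $\{\varphi<\Psi\}\subseteq\{\varphi<(1-t)\psi+t\rho-s\}$. But $\Psi\ge(1-t)\psi+t\rho-s$ gives the \emph{reverse} containment: knowing $\varphi$ is below the larger function $\Psi$ does not force it below the smaller one. What you need here is the \emph{upper} bound $w\le\rho+1$, and for that to produce the right inequality you must shift your auxiliary function by $-t$: set instead
\[
\Psi:=(1-t)\psi+tw-s-t.
\]
Then $w\le\rho+1$ gives $\Psi\le(1-t)\psi+t(\rho+1)-s-t=(1-t)\psi+t\rho-s$, hence $\{\varphi<\Psi\}\subseteq\{\varphi<(1-t)\psi+t\rho-s\}$ as desired; and on the other side $w\ge\rho\ge\psi$ gives $\Psi\ge\psi-s-t$, so $\{\varphi<\psi-s-t\}\subseteq\{\varphi<\Psi\}$. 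This extra $-t$ is precisely what appears in the paper's auxiliary function $(1-t)\psi+t(\rho+v)-s-t$. With this correction (which does not affect $dd^c\Psi$, hence not the Monge--Amp\`ere measure), the rest of your argument goes through verbatim and coincides with the paper's proof.
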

\begin{proof}
	For any  $v \in \mbox{PSH}(X,\beta+dd^c \rho)$, $0\leq v \leq 1$, we have $(1-t)\psi+t(\rho+v) -s-t\in \mathcal E(X,\beta)$ and 
	$$\{\varphi \textless \psi-s-t\} \subseteq \{\varphi \textless (1-t)\psi+t(\rho+v) -s-t \} \subseteq\{\varphi \textless (1-t)\psi+t\rho-s\}.$$ 
	By \cite[Proposition 1.4(c)]{BEGZ10}, 
	$$t^n\langle(\beta+dd^c\rho+dd^cv)^n\rangle \leq \langle(t(\beta+dd^c\rho+dd^cv)+(1-t)(\beta+dd^c\psi))^n\rangle.$$ 
Applying Proposition \ref{prop: np comparison principle}, we get 
\begin{align*}
	&	t^n\int_{\{\varphi \textless \psi -t-s\}}(\beta+dd^c\rho+dd^cv)^n\\
&=	t^n\int_{\{\varphi \textless \psi -t-s\}}\langle(\beta+dd^c\rho+dd^cv)^n\rangle \\
&\leq 	t^n\int_{ \{\varphi \textless (1-t)\psi+t(\rho+v) -s-t \} }\langle(\beta+dd^c\rho+dd^cv)^n\rangle \\
&\leq 	t^n\int_{ \{\varphi \textless (1-t)\psi+t(\rho+v) -s-t \} }	 \langle(t(\beta+dd^c\rho+dd^cv)+(1-t)(\beta+dd^c\psi))^n\rangle\\
&\leq 	\int_{ \{\varphi \textless (1-t)\psi+t(\rho+v) -s-t \} }	 \langle(\beta+dd^c \varphi)^n\rangle\\
&\leq 	\int_{\{\varphi \textless (1-t)\psi+t\rho-s\}}\langle(\beta+dd^c \varphi)^n\rangle.
	\end{align*}
The proof  of Lemma \ref{lem: bounded solution cap est level}  is complete.
\end{proof}

\begin{lem}\label{lem: bounded solution cap est level t}
	Let $\varphi \in  \mbox{PSH}(X,\beta)$ and $\sup_X\varphi=0$. Then there exists a uniform constant $C=C(X,\omega,\beta,M)>0$, such that for 
	any $t\textgreater0$
	$$Cap_{\beta}(\{\varphi \textless \rho-t\})\leq\frac{C}{t}.$$
\end{lem}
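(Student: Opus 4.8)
The plan is to reduce this capacity estimate to the Chern--Levine--Nirenberg (CLN) inequality of Proposition \ref{prop:CLN ineq} combined with the uniform integrability bounds of Proposition \ref{prop:uniform est}, mimicking the argument used in Proposition \ref{prop: cap est} but now working with $\beta$ (equivalently $\beta + dd^c\rho$) rather than $\beta_j$. First I would use Remark \ref{rem: est cap uj--u} to identify $Cap_\beta$ with $Cap_{\beta + dd^c\rho}$, so that the competitors $v$ in the supremum defining the capacity satisfy $0 \le v \le 1$ with $\beta + dd^c\rho + dd^c v \ge 0$. Then for such $v$ and for $t>0$ I would estimate, on the sublevel set $\{\varphi < \rho - t\}$,
\[
\int_{\{\varphi < \rho - t\}} (\beta + dd^c\rho + dd^c v)^n \le \frac{1}{t}\int_X (\rho - \varphi)\,(\beta + dd^c\rho + dd^c v)^n,
\]
using that $\rho - \varphi > t$ on the sublevel set and that the integrand is a non-negative measure.

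Next I would bound the right-hand side. Writing $w := \varphi - \rho$, which is a $(\beta + dd^c\rho)$-psh function (so $\rho + w = \varphi \in \mathrm{PSH}(X,\beta)$) with $\sup_X w \le -\inf_X \rho \le$ some uniform constant, I can normalize by subtracting a constant to arrange $w \le -1$, at the cost of a uniform additive constant depending only on $M$. The measure $(\beta + dd^c\rho + dd^c v)^n$ expands as a sum of mixed terms in $\beta_j$-type forms after passing through the regularization, or more directly one applies the CLN inequality (Proposition \ref{prop:CLN ineq}, in the form of Remark \ref{rem: v<-1 cln in}) to get
\[
\int_X (-w)\,(\beta + dd^c\rho + dd^c v)^n \le C(X,\omega,\beta,M)\,\|w\|_{L^1(X)} \le C(X,\omega,\beta,M),
\]
where the last bound uses Proposition \ref{prop:uniform est} applied to $w = \varphi - \rho$ (after normalizing $\sup$ to $0$), since $\|v\|_{L^\infty} \le 1$. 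Actually a cleaner route, matching the bookkeeping in Proposition \ref{prop: cap est}, is to work with $\beta_j$: one shows $\{\varphi < \rho - t\}$ has $\beta_j$-capacity at most $C/t$ uniformly in $j$ by the same CLN plus $L^1$-bound argument, and then lets $j \to \infty$ using that $Cap_{\beta_j} \ge Cap_{\beta + dd^c\rho_j} \ge$ a limit comparable to $Cap_\beta$; but since Remark \ref{rem: est cap uj--u} already gives $Cap_\beta = Cap_{\beta+dd^c\rho}$, I prefer to argue directly with $\beta + dd^c\rho$.

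Taking the supremum over all admissible $v$ then yields $Cap_\beta(\{\varphi < \rho - t\}) \le C/t$ with $C = C(X,\omega,\beta,M)$, as claimed. The main obstacle I anticipate is justifying the CLN-type estimate for the non-normalized potential and for the measure $(\beta + dd^c\rho + dd^c v)^n$ when $\rho$ is merely bounded (not smooth): one must be careful that the $L^1$-norm bound on $\varphi - \rho$ is indeed uniform over $\varphi$ with $\sup_X \varphi = 0$, which is exactly the content of Proposition \ref{prop:uniform est} (together with the uniform bound $M$ on $\|\rho\|_{L^\infty}$), and that the mixed Monge--Amp\`ere masses appearing after integration by parts are controlled — here the hypothesis $0 \le v \le 1$ and boundedness of $\rho$ make all the intermediate quantities finite and the integration by parts legitimate, so this is a technical rather than conceptual difficulty.
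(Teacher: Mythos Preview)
Your proposal is correct and follows essentially the same approach as the paper: identify $Cap_\beta$ with $Cap_{\beta+dd^c\rho}$ via Remark~\ref{rem: est cap uj--u}, take a competitor $v$ with $0\le v\le 1$, use the Chebyshev-type bound
\[
\int_{\{\varphi<\rho-t\}}(\beta+dd^c\rho+dd^cv)^n\le \frac{1}{t}\int_X(\rho-\varphi)(\beta+dd^c\rho+dd^cv)^n,
\]
and then control the right-hand side by CLN plus the uniform $L^1$-bound of Proposition~\ref{prop:uniform est}.

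The one point where the paper is more explicit than your sketch is in how CLN is applied: Proposition~\ref{prop:CLN ineq} is stated for $\beta_j$-psh functions, not for $(\beta+dd^c\rho)$-psh functions, so applying it ``directly'' as you suggest requires a word of justification. The paper handles this by choosing $A>0$ with $\beta\le A\omega$, so that $\varphi$ and $\rho+v$ both lie in $\mbox{PSH}(X,A\omega)$, and then bounds
\[
\int_X(-\varphi)(\beta+dd^c\rho+dd^cv)^n\le \int_X(-\varphi)(A\omega+dd^c(\rho+v))^n,
\]
after which the local CLN argument (the proof of Proposition~\ref{prop:CLN ineq}, with $A\omega$ in place of $\beta_j$) applies cleanly since $\rho+v$ is bounded with $\|\rho+v\|_{L^\infty}\le M+1$. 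This is exactly the ``technical rather than conceptual difficulty'' you anticipated; making the $A\omega$-embedding explicit resolves it.
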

\begin{proof}
	Take $v\in  \mbox{PSH}(X,\beta+dd^c\rho)$, $0\leq v \leq1$, 
	$$\int_{\{\varphi \textless \rho-t\}}(\beta+dd^c\rho+dd^cv)^n\leq \frac{1}{t}\int_X-(\varphi-\rho)(\beta+dd^c\rho+dd^cv)^n.$$
	Since $\rho$ is bounded on X, we only need to give $\int_X-\varphi(\beta+dd^c\rho+dd^cv)^n$ a uniform bound. But note that $\beta\leq A\omega$ for some $A$ large enough. Therefore $\varphi,(\rho+v) \in  \mbox{PSH}(X,A\omega)$, So we have
	$$\int_X-\varphi(\beta+dd^c\rho+dd^cv)^n\leq \int_X-\varphi(A\omega+dd^c\rho+dd^cv)^n.$$
	By   Proposition \ref{prop:uniform est}   and Proposition \ref{prop:CLN ineq} (actually the proof), there is a uniform constant  $C=C(X,\omega,\beta,M)>0$, such that
	$$\int_X-\varphi(A\omega+dd^c\rho+dd^cv)^n\leq C.$$
	Thus the proof of Lemma \ref{lem: bounded solution cap est level} is complete.

\end{proof}

\begin{lem}[cf. {\cite[Lemma 2.4]{EGZ09}}]\label{lem: egz func}
	Let $f:\mathbb{R}^+ \rightarrow \mathbb{R}^+$ be a decreasing right-continuos function such that $\lim_{s\rightarrow +\infty}f(s)=0$. Assume that there exists $a, B$ such that $f$ satisfies:
	$$tf(s+t)\leq B[f(s)]^{1+a},\forall s\textgreater0,\forall  0\leq t\leq1.$$
	Then there exists $S=S(a,B)\in \mathbb{R}^+$, such that $f(x)=0$ for all $x\geq S$. 
\end{lem}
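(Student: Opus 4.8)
The plan is to prove this by a De Giorgi-type iteration argument, which is the standard technique for such lemmas in pluripotential theory (see \cite[Lemma 2.4]{EGZ09}). The goal is to show that the hypothesis $tf(s+t)\leq B[f(s)]^{1+a}$ forces $f$ to vanish identically past some threshold $S$. First I would reduce to an estimate at dyadic scales: set $s_\infty$ to be the infimum of all $s$ with $f(s)=0$ (which may a priori be $+\infty$), and pick a starting point $s_0>0$ where $f(s_0)>0$ is already small — small enough that $(2^{1+1/a}B^{1/a}[f(s_0)]^a)$ is less than $1$; this is possible because $f$ is decreasing with limit $0$. Then I would define an increasing sequence $s_k = s_0 + \sum_{i=1}^{k}t_i$ where the increments $t_i\in(0,1]$ are to be chosen, and aim to prove $f(s_k)\to 0$ geometrically fast while $\sum t_i$ stays bounded.

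The key recursive step: applying the hypothesis with $s=s_k$ and $t=t_{k+1}$ gives $t_{k+1}f(s_{k+1})\leq B[f(s_k)]^{1+a}$, hence $f(s_{k+1})\leq \frac{B}{t_{k+1}}[f(s_k)]^{1+a}$. The trick is to choose $t_{k+1}$ proportional to $[f(s_k)]^{a}$ — something like $t_{k+1} = 2B[f(s_k)]^a / [f(s_0)]^{?}$, adjusting constants so that on the one hand $f(s_{k+1})\leq \frac12 f(s_k)$ (so $f(s_k)\leq 2^{-k}f(s_0)$), and on the other hand $t_{k+1}\leq 1$ for all $k$. Granting $f(s_k)\leq 2^{-k}f(s_0)$, the increments satisfy $t_{k+1}\leq C\cdot 2^{-ak}$, which is summable; write $S := s_0 + \sum_{i\geq 1}t_i<\infty$. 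Then $s_k \nearrow S' \leq S$, and since $f(s_k)\to 0$ while $f$ is decreasing and right-continuous, we get $f(S')=0$, hence $f\equiv 0$ on $[S,\infty)$.

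I expect the main obstacle — really the only delicate point — to be the bookkeeping of constants in the choice of $t_{k+1}$: one must simultaneously guarantee (i) $t_{k+1}\in(0,1]$, which requires $f(s_0)$ to have been chosen small enough at the outset (this is where the "there exists $S=S(a,B)$" with $S$ depending only on $a,B$ must be extracted — $S$ depends on $a,B$ and on how small a starting value of $f$ one needs, but the latter is itself controlled by $a,B$ once we track that $f$ is decreasing to $0$), and (ii) the geometric decay $f(s_{k+1})\leq \frac12 f(s_k)$. A clean way to package this is to set $x_k := f(s_k)$ and look for the invariant $x_k \leq \delta\, 2^{-k}$ for a suitable small $\delta=\delta(a,B)$; one verifies by induction that the choice $t_{k+1} := 2B\delta^{a}2^{-ak}$ works provided $2B\delta^a \leq 1$ and $\delta$ is at most the value of $f$ at the chosen $s_0$. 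Then $S = s_0 + 2B\delta^a\sum_{k\geq 0}2^{-ak} = s_0 + \frac{2B\delta^a}{1-2^{-a}}$, and one notes $s_0$ can be taken to be any point with $f(s_0)\leq\delta$, whose existence (but not location) is guaranteed; taking $S$ slightly larger absorbs this. The remaining steps — that a decreasing right-continuous function vanishing along a sequence $s_k\nearrow S'$ vanishes at $S'$ and hence on $[S',\infty)\supseteq[S,\infty)$ after enlarging $S$ — are immediate from monotonicity.
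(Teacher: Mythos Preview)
Your De Giorgi-type iteration is exactly the standard argument from \cite[Lemma 2.4]{EGZ09}, and since the paper does not supply its own proof but simply cites that reference, your approach coincides with what is intended. The recursive choice of increments $t_{k+1}$ proportional to $[f(s_k)]^a$, yielding geometric decay $f(s_k)\leq 2^{-k}f(s_0)$ and a summable sequence of steps, is correct.

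One point deserves a sharper treatment. You correctly sense that the claim $S=S(a,B)$ is delicate, but your resolution (``taking $S$ slightly larger absorbs this'') does not close the gap. In fact the statement is imprecise as written: the function $f(s)=B^{-1/a}$ for $s\in[0,N)$ and $f(s)=0$ for $s\geq N$ satisfies all the hypotheses for arbitrary $N$, so $S$ cannot depend on $(a,B)$ alone. The correct conclusion---and what the paper itself invokes later in the proof of Lemma~\ref{lem: evo sup cap est}, citing \cite[Remarks 2.5]{EGZ09}---is that once $f(s_0)<(2B)^{-1/a}$ for some $s_0$, then $f$ vanishes on $[s_0+\tfrac{2B}{1-2^{-a}}f(s_0)^a,\infty)$. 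In every application in this paper an independent bound of the form $f(s)\leq (C/s)^{1/n}$ (coming from Proposition~\ref{prop: cap est} or Lemma~\ref{lem: bounded solution cap est level t}) pins down $s_0$ in terms of the ambient data, which is why the final $L^\infty$ estimate is uniform. Your writeup should make this dependence explicit rather than try to absorb it.
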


No we are on the way to   get the $L^\infty$ estimate of $\varphi$.
\begin{thm}\label{thm: main 2 in sect}
Let $0\leq f\in L^p(X,\omega^n)$, $\int_Xf\omega^n>0$. Let  $\varphi\in \mathcal{E}(X,\beta)$,  $\sup_X\varphi=0$, such that 
	$$\langle(\beta+dd^c\varphi)^n\rangle= f\omega^n.$$
Write $\varphi=\rho+u$, then    $||u||_{\infty}\leq S=S(X,\omega,\beta,||f||_p)>0$, thus  $\varphi\in L^\infty(X)$. In particular, 
	$$ (\beta+dd^c\varphi)^n= f\omega^n.$$
\end{thm}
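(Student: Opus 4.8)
The plan is to run the classical Kołodziej-type $L^\infty$ iteration, adapted to the Hermitian / non-pluripolar setting via the capacity comparison of Lemma \ref{lem: bounded solution cap est level}. Write $\varphi = \rho + u$ with $\sup_X \varphi = 0$; since $\rho$ is bounded, controlling $\|u\|_\infty$ is the same as getting a lower bound for $\varphi$. Define the decreasing right-continuous function
\[
g(s) := \left(Cap_\beta(\{\varphi < \rho - s\})\right)^{1/n}, \qquad s > 0,
\]
which by Lemma \ref{lem: bounded solution cap est level t} satisfies $g(s) \to 0$ as $s \to \infty$ and $g(s)^n \le C/s$. The goal is to show $g$ satisfies the functional inequality $t\, g(s+t) \le B\, g(s)^{1+a}$ of Lemma \ref{lem: egz func} for suitable $a, B$; then $g$ vanishes for $s \ge S = S(a,B)$, i.e. $Cap_\beta(\{\varphi < \rho - S\}) = 0$, which forces $\varphi \ge \rho - S$ everywhere and gives the claimed bound $\|u\|_\infty \le S$.

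The key estimate is to apply Lemma \ref{lem: bounded solution cap est level} with $\psi = \rho$ (allowed since $\rho$ is bounded, hence in $\mathcal E(X,\beta)$, and $\psi \le \rho$ trivially): for $0 \le t \le 1$ and $s \in \mathbb R$,
\[
t^n Cap_\beta(\{\varphi < \rho - t - s\}) \le \int_{\{\varphi < \rho - s\}} \langle (\beta + dd^c\varphi)^n\rangle = \int_{\{\varphi < \rho - s\}} f\,\omega^n.
\]
Now I bound the right-hand side using Hölder and the volume–capacity inequality (Proposition \ref{lem: vol-cap beta}): with $\tfrac1p + \tfrac1q = 1$,
\[
\int_{\{\varphi < \rho - s\}} f\,\omega^n \le \|f\|_p \, \mathrm{vol}_\omega(\{\varphi < \rho - s\})^{1/q} \le \|f\|_p\, C \exp\!\left(-\tfrac{\alpha}{q} Cap_\beta(\{\varphi<\rho-s\})^{-1/n}\right).
\]
The exponential decays faster than any power of the capacity, so there is a constant $C' = C'(X,\omega,\beta,M,\|f\|_p)$ with $\int_{\{\varphi<\rho-s\}} f\omega^n \le C'\, Cap_\beta(\{\varphi < \rho - s\})^{1 + a}$ for some (any) fixed $a > 0$, say $a = 1$. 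Combining, $t^n Cap_\beta(\{\varphi < \rho - t - s\}) \le C' Cap_\beta(\{\varphi < \rho - s\})^{1+a}$; taking $n$-th roots yields $t\, g(s + t) \le B\, g(s)^{(1+a)/n \cdot n}$ — more precisely, rescaling $s \mapsto s$ and using $g(s)^n = Cap_\beta(\cdots)$ one gets exactly the hypothesis $t\, g(s+t) \le B [g(s)]^{1+a'}$ of Lemma \ref{lem: egz func} with $a' = a/n > 0$ and $B = (C')^{1/n}$. Apply Lemma \ref{lem: egz func} to conclude.

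Finally, once $\|u\|_\infty \le S$ is established, $\varphi$ is a bounded $\beta$-psh function, so by Remark \ref{rem: nonpp} the non-pluripolar product coincides with the Bedford–Taylor product, $\langle(\beta+dd^c\varphi)^n\rangle = (\beta+dd^c\varphi)^n$, and therefore $(\beta + dd^c\varphi)^n = f\omega^n$ as claimed. The main obstacle is purely bookkeeping: making sure the power of the capacity extracted from the exponential volume bound lines up with the exponent $1 + a'$ required by Lemma \ref{lem: egz func} after taking $n$-th roots, and checking that the measure appearing on the right of Lemma \ref{lem: bounded solution cap est level} is genuinely $f\omega^n$ (which uses the hypothesis $\langle(\beta+dd^c\varphi)^n\rangle = f\omega^n$ and the fact that $f \in L^p$ with $p > 1$ so that Hölder applies with $q < \infty$). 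No delicate new analysis is needed beyond what has already been assembled in the preceding sections.
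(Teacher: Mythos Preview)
Your proposal is correct and follows essentially the same route as the paper: apply Lemma~\ref{lem: bounded solution cap est level} with $\psi=\rho$, bound the resulting mass $\int_{\{\varphi<\rho-s\}} f\omega^n$ via H\"older and the volume--capacity inequality (Proposition~\ref{lem: vol-cap beta}) to get $\mu(E)\le C\,Cap_\beta(E)^{1+a}$, feed this into the iteration function $g(s)=Cap_\beta(\{\varphi<\rho-s\})^{1/n}$, and invoke Lemma~\ref{lem: egz func}. One small arithmetic slip: after taking $n$-th roots of $t^n g(s+t)^n\le C'\,g(s)^{n(1+a)}$ you get $t\,g(s+t)\le (C')^{1/n} g(s)^{1+a}$, so the exponent is $1+a$, not $1+a/n$; this does not affect the argument.
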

\begin{proof}
We follow  the method in \cite{EGZ09}.
	Set $\mu=f\omega^n$. For any Borel set $E\subset X$, by H\"older's inequality and Lemma \ref{lem: vol-cap beta},  
	$$\mu(E)\leq||f||_pvol(E)^{\frac{1}{q}}\leq||f||_p(C\exp(-\alpha Cap_\beta(E)^{-\frac{1}{n}}))^{\frac{1}{q}}.$$
	Easy to see that $e^{-\frac{\alpha}{q}x^{-\frac{1}{n}}}\leq Ax^{1+a}$ for some constant $A,a$ depending only on $\alpha, q, n$. Hence for some  constant $C=C(X,\omega,\beta,||f||_p)>0$, we have 
	$$\mu(E)\leq CCap_{\beta}(E)^{1+a}.$$
	Now we set
	$$f(s)=[Cap_{\beta}(\{\varphi\textless\rho -s\})]^{\frac{1}{n}}=[Cap_{\beta}(\{u\textless -s\})]^{\frac{1}{n}}.$$
	By Lemma \ref{lem: bounded solution cap est level},  for all $s\textgreater0,$ $0\leq t\leq 1$, we have 
	$$tf(s+t)\leq \left[\int_{\{\varphi\textless \rho-s\}}(\beta+dd^c\rho+dd^cu)^n\right]^{\frac{1}{n}}\leq \left[\int_{\{u\textless -s\}}d\mu\right]^{\frac{1}{n}} $$
	$$\leq C[Cap_{\beta}(\{u\textless-s\})]^{\frac{1+a}{n}}=Cf(s)^{1+a}.$$
	Since $f$ is right-continous, and from Lemma \ref{lem: bounded solution cap est level t}, we know that $\lim_{s\rightarrow+\infty}f(s)=0$. Now, we use Lemma \ref{lem: egz func}  to conclude that there exists $S=S(X,\omega,\beta,||f||_p)>0$ such that 
	$Cap_{\beta}(\{u\textless-S\})=0$ for all $s\geq S_\infty$.
	This means that the sets $\{u<-s\}$ are empty if $s\geq S_\infty$, hence 
	$$||u||_{\infty}\leq S=S(X,\omega,\beta,||f||_p).$$
\end{proof}
To sum up, we get the following existence theorem for degenerate CMA equations. 
\begin{thm}\label{thm: main 1'}Let $(X,\omega)$ be a compact Hermitian manifold  of complex dimension $n$.  Let $\beta$ be a smooth real $(1,1)$-form on $X$ such that there exists  $\rho\in \mbox{PSH}(X,\beta)\cap L^\infty(X)$. Let $0\leq f\in L^p(X,\omega^n)$, $p>1$, be such that $\int_Xf\omega^n=\int_X\beta^n>0$. Then there exists a  bounded real-valued function $\varphi\in \mbox{PSH}(X,\beta )\cap L^\infty(X)$, satisfying
	\[(\beta+dd^c\varphi)^n=f\omega^n\]
	in the weak sense of currents, and $\|\varphi\|_{L^\infty(X)}\leq C(X,\omega,\beta, M,\|f\|_p)$.
\end{thm}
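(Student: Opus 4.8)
The plan is to assemble the pieces developed throughout Section 3. First I would set up the normalization: by homogeneity we may assume $\int_X\beta^n=\int_Xf\omega^n=1$, since rescaling $\beta$ and $f$ by a positive constant (and $\varphi$ accordingly) reduces to this case. Next I would invoke Demailly's regularization to produce the decreasing sequence $\rho_j$ with $\beta_j=\beta+dd^c\rho_j+\frac1j\omega\geq\frac{1}{2j^2}\omega$, and apply the Kolodziej--Nguyen theorem \cite[Theorem 0.1]{KN15} to solve the nondegenerate equations $(\beta_j+dd^cu_j)^n=c_jf\omega^n$ with $u_j\in\mathrm{PSH}(X,\beta_j)\cap C(X)$, $\sup_X(u_j+\rho_j)=0$, and $c_j>0$. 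This is the scaffolding on which everything else rests.

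The core of the argument is then a three-step limiting procedure. \emph{Step 1 (uniform estimates):} By Lemma~\ref{lem: upper bound c_j} and Corollary~\ref{cor: limit c_j}, $c_j$ is bounded and $c_j\to1$; by Proposition~\ref{prop: linfty est uj}, $\|u_j\|_{L^\infty(X)}\leq C(\log j)^n$. \emph{Step 2 (passing to a non-pluripolar solution):} Setting $\varphi_j=\rho_j+u_j\in\mathrm{PSH}(X,A\omega)$ with $\sup_X\varphi_j=0$ and extracting an $L^1$-convergent subsequence $\varphi_j\to\varphi$, I form the regularized envelopes $\Phi_j=(\sup_{k\geq j}\varphi_k)^*\searrow\varphi$. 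The $\mathcal E^1$-energy estimates (using Proposition~\ref{prop: compar energ} together with the $L^\infty$ bound on $u_j$, and Nguyen's bound $\int_X-\varphi_jf\omega^n\leq C$) show $\varphi\in\mathcal E^1(X,\beta)\subset\mathcal E(X,\beta)$, and Lemma~\ref{lem: e1 energy est phi} plus Theorem~\ref{lem: conv phi} give $\langle(\beta+dd^c\varphi)^n\rangle=f\omega^n$ with $\int_X\langle(\beta+dd^c\varphi)^n\rangle=1$. \emph{Step 3 ($L^\infty$ bound on $\varphi$):} Apply Theorem~\ref{thm: main 2 in sect}: writing $\varphi=\rho+u$, the combination of the volume--capacity estimate (Lemma~\ref{lem: vol-cap beta}), the capacity estimates at super-level sets (Lemmas~\ref{lem: bounded solution cap est level} and \ref{lem: bounded solution cap est level t}), and the Eyssidieux--Guedj--Zeriahi iteration lemma (Lemma~\ref{lem: egz func}) forces $\{u<-s\}$ empty for $s$ large, so $\|u\|_\infty\leq S(X,\omega,\beta,\|f\|_p)$. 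Once $\varphi$ is bounded, Remark~\ref{rem: nonpp} identifies $\langle(\beta+dd^c\varphi)^n\rangle$ with the Bedford--Taylor product $(\beta+dd^c\varphi)^n$, so $(\beta+dd^c\varphi)^n=f\omega^n$ in the weak sense of currents, with the asserted uniform bound. This proves existence; undoing the normalization gives the general case.

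I expect the main obstacle to be honestly \emph{Step 2}, specifically controlling the $\mathcal E^1$-energy uniformly in $j$ in the Hermitian (non-K\"ahler) setting: the integration-by-parts identities that are clean in the K\"ahler case pick up boundary-type terms involving $d\omega$, $d^c\omega$, and $dd^c\omega$, and one must show these are absorbed by the $\frac1j$ factors against the $(\log j)^n$ growth of $\|u_j\|_\infty$. This is exactly the content of Proposition~\ref{prop: compar energ}, where the Cauchy--Schwarz-type inequality of \cite[Lemma 1.7]{Ngu16} is used to tame the mixed terms; the delicate bookkeeping there — tracking powers of $\|u\|_\infty$, $\|v\|_\infty$, and the gain $\frac1j$ through repeated integrations by parts — is the technical heart. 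The $L^\infty$-estimate in Step 3, while intricate, is by now a fairly standard adaptation of the EGZ method once the volume--capacity inequality is in hand, and the uniqueness (not part of this statement, but part of Theorem~\ref{thm: main 1}) follows separately from the comparison principle of Proposition~\ref{prop: np comparison principle}.
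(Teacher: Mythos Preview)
Your proposal is correct and follows essentially the same approach as the paper: the theorem is stated there explicitly as a summary (``To sum up, we get the following existence theorem\ldots'') of the preceding subsections, and your three-step outline (uniform estimates on $c_j$ and $u_j$; passage to a non-pluripolar solution via the $\mathcal E^1$-energy bounds and the envelopes $\Phi_j$; the EGZ-type $L^\infty$ estimate on $\varphi$) matches the paper's organization and cites the right ingredients. Your identification of Proposition~\ref{prop: compar energ} as the technical heart --- where the Hermitian torsion terms are absorbed by the $\frac1j$ factors against the $(\log j)^n$ growth --- is exactly what the paper does.
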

\subsection{Uniqueness of the solutions }
In this section, we prove the uniqueness of the  solutions of the   degenerate CMA equations. The proof follows the method introduced  by Dinew in \cite{Din09-2} and by  Boucksom-Eyssidieux-Guedj-Zeriahi in \cite{BEGZ10}.
\begin{thm}\label{thm: uniq of lambda=0}
Let Let $0\leq f\in L^p(X,\omega^n)$, $p>1$, be such that  $\int_Xf\omega^n=\int_X\beta^n=1.$ Then there exists at most one bounded solution to the following degenerate complex Monge-Amp\`ere equation
	$$(\beta+dd^c\varphi)^n=f\omega^n,	\sup_X\varphi=0.$$
\end{thm}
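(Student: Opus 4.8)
The plan is to compare two bounded solutions $\varphi$ and $\psi$ using the comparison principle (Proposition~\ref{prop: np comparison principle}) together with a Dinew-type argument: if $\langle(\beta+dd^c\varphi)^n\rangle = \langle(\beta+dd^c\psi)^n\rangle = f\omega^n$ with the same normalization $\sup_X\varphi = \sup_X\psi = 0$, then $\varphi = \psi$. First I would record that both $\varphi,\psi\in\mathcal E(X,\beta)$ (indeed they are bounded, hence in $\mathcal E^1(X,\beta)\subset\mathcal E(X,\beta)$ by Remark~\ref{rem: e1 subset e}), so the second form of the comparison principle applies: for the set $\{\varphi<\psi\}$ one gets $\int_{\{\varphi<\psi\}}\langle(\beta+dd^c\psi)^n\rangle \le \int_{\{\varphi<\psi\}}\langle(\beta+dd^c\varphi)^n\rangle$, and symmetrically with the roles reversed. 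Since the right-hand sides are integrals of the common measure $f\omega^n$ over the same set, this already shows the measures $\langle(\beta+dd^c\varphi)^n\rangle$ and $\langle(\beta+dd^c\psi)^n\rangle$ give equal mass to $\{\varphi<\psi\}$ and to $\{\psi<\varphi\}$; but this alone does not force $\varphi=\psi$, so a quantitative refinement is needed.

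The key refinement, following Dinew~\cite{Din09-2} and \cite[Corollary~2.3 and its proof]{BEGZ10}, is to show that the measure $f\omega^n$, which does not charge pluripolar sets and which both $\varphi$ and $\psi$ solve, forces the maximum principle to become an equality. Concretely, set $\vartheta := \max\{\varphi,\psi\}\in\mathcal E(X,\beta)$ and show $\langle(\beta+dd^c\vartheta)^n\rangle = f\omega^n$ as well: on $\{\varphi>\psi\}$ one has $\vartheta=\varphi$ and the non-pluripolar product is local in the plurifine topology, so $\mathds 1_{\{\varphi>\psi\}}\langle(\beta+dd^c\vartheta)^n\rangle = \mathds 1_{\{\varphi>\psi\}}\langle(\beta+dd^c\varphi)^n\rangle = \mathds 1_{\{\varphi>\psi\}}f\omega^n$, similarly on $\{\psi>\varphi\}$, and $\{\varphi=\psi\}$ meets the issue only on a set where we use that $f\omega^n$ charges no pluripolar set together with a mass count: $\int_X\langle(\beta+dd^c\vartheta)^n\rangle=\int_X\beta^n=1=\int_Xf\omega^n$ pins down the remaining mass. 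Thus $\varphi$, $\psi$ and $\vartheta$ all solve the same equation with full Monge-Amp\`ere mass. Then one applies the uniqueness result for solutions in $\mathcal E(X,\beta)$ driven by a non-pluripolar measure: replacing $\varphi$ by $\vartheta$ we may assume $\psi\le\varphi$, and then a contradiction argument using the comparison principle applied to the functions $(1-\varepsilon)\varphi + \varepsilon\rho$ versus $\psi$ (as in Lemma~\ref{lem: bounded solution cap est level}) together with the strict positivity of $f\omega^n$ where $\varphi>\psi$ shows $\varphi=\psi$ a.e., hence everywhere by upper semicontinuity and the normalization.

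A cleaner route for the $\mathcal E(X,\beta)$ uniqueness step, which I would actually carry out, is the one in \cite[Theorem~2.1]{BEGZ10}: let $\varphi,\psi$ be two normalized solutions, and suppose $\sup_X(\varphi-\psi)>0$. Choose $\varepsilon>0$ small so that $U:=\{\varphi>(1-\varepsilon)\psi+\varepsilon\rho+\sup_X(\varphi-\psi)-\delta\}$ is nonempty for small $\delta$; on $U$ the comparison principle combined with the convexity inequality $\langle((1-\varepsilon)(\beta+dd^c\psi)+\varepsilon(\beta+dd^c\rho))^n\rangle \ge (1-\varepsilon)^n\langle(\beta+dd^c\psi)^n\rangle = (1-\varepsilon)^n f\omega^n$ gives $(1-\varepsilon)^n\int_U f\omega^n \le \int_U\langle(\beta+dd^c\varphi)^n\rangle = \int_U f\omega^n$, while a separate argument shows $\int_U f\omega^n>0$ (since $f\omega^n$ does not charge pluripolar sets and $U$ is a nonempty plurifine-open set); letting $\varepsilon\to 0$ forces $\sup_X(\varphi-\psi)\le 0$, and by symmetry $\varphi=\psi$. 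The main obstacle I anticipate is the careful handling of the non-pluripolar product under the plurifine-local restriction and the mass-conservation bookkeeping for $\max\{\varphi,\psi\}$ — i.e. verifying that all the relevant cut-off products $(\beta+dd^c\varphi^{(k)})^n$ converge appropriately and that no mass is lost on the contact set $\{\varphi=\psi\}$ — since unlike the K\"ahler case we only have a bounded potential $\rho$ and the class $\{\beta\}$ may be merely nef; here the estimates from Lemma~\ref{lem: e1 energy est phi}, Proposition~\ref{prop: np comparison principle} and Proposition~\ref{prop:vol-cap est} must be invoked in place of the K\"ahler-specific tools used in \cite{BEGZ10,Din09-2}.
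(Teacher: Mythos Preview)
Your ``cleaner route'' has a genuine gap: the inequality you derive, $(1-\varepsilon)^n\int_U f\omega^n \le \int_U f\omega^n$, is trivially true since $(1-\varepsilon)^n<1$, so it yields no contradiction and cannot force $\sup_X(\varphi-\psi)\le 0$. The point is that on $U=\{\varphi>(1-\varepsilon)\psi+\varepsilon\rho+c\}$ the comparison principle (Proposition~\ref{prop: np comparison principle}) gives $\int_U(\beta+dd^c\varphi)^n\le\int_U(\beta+dd^c((1-\varepsilon)\psi+\varepsilon\rho))^n$, not the reverse; your convexity lower bound on the latter then sits on the wrong side to produce anything. The reduction to $\psi\le\varphi$ via $\vartheta=\max\{\varphi,\psi\}$ is fine, but after it you still need a genuine domination argument, and the one you sketch does not work.

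The paper's argument supplies the missing idea, which is Dinew's trick: rather than bounding the convex combination from below by $(1-\varepsilon)^n f\omega^n$, one manufactures a factor \emph{strictly greater than $1$}. Assuming $0<\mu(\{\varphi_1<\varphi_2+t\})<\mu(X)$ with $\mu=f\omega^n$, one builds a perturbed density $\hat\mu=(1+\varepsilon)^n\mathds 1_{\{\varphi_1<\varphi_2+t\}}\mu+A^{-1}\mathds 1_{\{\varphi_1\ge\varphi_2+t\}}\mu$ of total mass $\mu(X)$, invokes the \emph{existence} theorem (Theorem~\ref{thm: main 1'}) to solve $(\beta+dd^c\varphi)^n=\hat\mu$ with $\varphi\le\varphi_1$, and then applies the comparison principle together with Dinew's mixed Monge--Amp\`ere inequality \cite{Din09} on $O_\delta=\{\varphi_1<(1-\delta)(\varphi_2+t)+\delta\varphi\}\subset\{\varphi_1<\varphi_2+t\}$ to obtain $\mu(O_\delta)\ge(1+\delta\varepsilon)^n\mu(O_\delta)$, a real contradiction once $\mu(O_\delta)>0$. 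A second step, also absent from your outline, upgrades $\varphi_1=\varphi_2+t_0$ from $\mu$-a.e.\ to everywhere by solving yet another auxiliary equation $(\beta+dd^c\varphi)^n=dV$ with a smooth volume form $dV$ and repeating the comparison on $\{\varphi_1<\varphi_2+t_0\}$; this is needed because $\mu=f\omega^n$ may vanish on open sets. Both auxiliary solves rely essentially on the existence result just proved, which is why uniqueness comes after existence here.
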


\begin{proof}
	Let $\varphi_1,\varphi_2$ be two such solutions.  Let $d\mu:=f\omega^n$. 
	Since $d\mu$ puts no mass on pluripolar subsets, the set $t\in \mathbb R$ such that 
	$$\mu(\{\varphi_1=\varphi_2+t\})>0$$
coincides with the discontinuity locus of the non-decreasing function 
$$t\mapsto \mu(\{\varphi_1<\varphi_2+t\})$$
 and is thus at most countable.

\noindent\textbf{Caim.}	For any $t\in\mathbb R$, either $\mu(\{\varphi_1<\varphi_2+t\})=0$ or $\mu(\{\varphi_1<\varphi_2+t\})=\mu(X)$. In particular, there exists $t_0\in \mathbb R$, such that $\mu(\{\varphi_1=\varphi_2+t_0\})=\mu(X)$, i.e. $\varphi_1=\varphi_2+t_0$ $\mu$-almost everywhere.

To prove the \textbf{Claim}, we argue by contradiction. Suppose to the contrary, $\mu(\{\varphi_1<\varphi_2+t\})\in (0,\mu(X))$ for some $t\in \mathbb R$. We consider the measure 
$$\hat \mu=(1+\varepsilon)^n\mathds 1_{\{\varphi_1<\varphi_2+t\}}\mu+\frac{1}{A}\mathds 1_{\{\varphi_1\geq \varphi_2+t\}}\mu$$
such that $\hat\mu(X)=\mu(X)$.	Let $F:=\left((1+\varepsilon)^n\mathds 1_{\{\varphi_1<\varphi_2+t\}}+\frac{1}{A}\mathds 1_{\{\varphi_1\geq \varphi_2+t\}}\right)f$. Then $0\leq F\in L^p(X,\omega^n)$,  $d\hat\mu=\left((1+\varepsilon)^n\mathds 1_{\{\varphi_1<\varphi_2+t\}}+\frac{1}{A}\mathds 1_{\{\varphi_1\geq \varphi_2+t\}}\right)d\mu=F\omega^n$, and $\int_XF\omega^n=\hat\mu(X)=\mu(X)=1$.
By Theorme \ref{thm: main 1'}, we can solve the equation $(\beta+dd^c\rho)^n=\hat\mu$, $\varphi\in \mbox{PSH}(X,\beta)\cap L^\infty(X)$, such that $\varphi\leq \varphi_1$. For $0<\delta<1$, we set 
\[O_\delta:=\{\varphi_1<(1-\delta)(\varphi_2+t)+\delta\varphi\}\mbox{~and~} O:=\{\varphi_1<\varphi_2+t\}.\]
Hence  $O_\delta\subset \{\varphi_1-\varphi_2-t<\frac{\delta}{1-\delta}(\varphi-\varphi_1)\}\subset O$, and $\mu(O_\delta)\nearrow \mu(O)>0$ as $\delta\searrow 0$. In particular, $\mu(O_\delta)>0$ for small $\delta>0$.
By the comparision principle (see Proposition \ref{prop: np comparison principle}),
$$\int_{O_\delta}(\beta+dd^c((1-\delta)(\varphi_2+t)+\delta\varphi))^n\leq \int_{O_\delta}(\beta+dd^c\varphi_1)^n.$$
Since $d\mu$ and $ d\hat\mu=\left((1+\varepsilon)^n\mathds 1_{\{\varphi_1<\varphi_2+t\}}+\frac{1}{A}\mathds 1_{\{\varphi_1\geq \varphi_2+t\}}\right)d\mu$ are positive measures on $X$ and vanish on all pluripolar sets,
by \cite[Theorem 1.3]{Din09}, on $O$,
$$(\beta+dd^c((1-\delta)(\varphi_2+t)+\delta\varphi))^n\geq (1+\delta\varepsilon)^nd\mu.$$
Then we obtain 
\begin{align*}
 \mu(O_\delta)&=\int_{O_\delta}(\beta+dd^c\varphi_1)^n\\
 &\geq \int_{O_\delta}(\beta+dd^c((1-\delta)(\varphi_2+t)+\delta\varphi))^n\\
 &\geq (1+\delta\varepsilon)^n\mu(O_\delta)>\mu(O_\delta).
\end{align*}
This is a contradiction, thus we complete the proof of the \textbf{Claim}. 

Next we prove that $\{\varphi_1=\varphi_2+t_0\}=X$. Otherwise, without loss of generality, we may assume $\{\varphi_1<\varphi_2+t_0\}\neq \emptyset$. Let $dV$ be any smooth volume form such that $vol(X)=\mu(X)$. By Theorem \ref{thm: main 1'}, we can solve the equation $(\beta+dd^c\varphi)=dV$ such that $\varphi\leq \varphi_1$. Set
\[O'_\delta:=\{\varphi_1<(1-\delta)(\varphi_2+t_0)+\delta\varphi\},\mbox{ and }O':=\{\varphi_1<\varphi_2+t_0\}.\]
Observe that $O'$ has positive measure (in Lebesgue measure) and $\mu(O'_\delta)=0$ for $0<\delta<1$.  Since  $O'_\delta\nearrow O'$ as $\delta\searrow 0$, then $O'_\delta$ has positive measure (in Lebesgue measure) for small $0<\delta<1$. Applying comparison principle (see Proposition \ref{prop: np comparison principle}), we have 
\begin{align*}
	0<\delta^n\int_{O'_\delta}dV&\leq \int_{O'_\delta}(\beta+dd^c(1-\delta)(\varphi_2+t_0)+\delta\varphi))^n\\
&\leq \int_{O'_\delta}(\beta+dd^c\varphi_1)^n=\int_{O'_\delta}d\mu=\mu(O'_\delta)=0.	
\end{align*}
This contradiction shows that $\{\varphi_1<\varphi_2+t_0\}=\emptyset$. Similarly, one can prove that $\{\varphi_1>\varphi_2+t_0\}=\emptyset$. As a result, we get $\varphi_1=\varphi_2+t_0$ on $X$. Since $\sup_X\varphi_1=\sup_X\varphi_2=0$, then $t_0=0$, i.e. $\varphi_1=\varphi_2$ on $X$.  Thus the proof  of Theorem \ref{thm: uniq of lambda=0} is complete.
\end{proof}
To conclude, we finish the proof of our first main theorem.
\begin{thm}[=Theorem \ref{thm: main 1}]\label{thm: main 1-2}Let $(X,\omega)$ be a compact Hermitian manifold  of complex dimension $n$.  Let $\beta$ be a smooth real $(1,1)$-form on $X$ such that there exists  $\rho\in \mbox{PSH}(X,\beta)\cap L^\infty(X)$. Let $0\leq f\in L^p(X,\omega^n)$, $p>1$, be such that $\int_Xf\omega^n=\int_X\beta^n>0$. Then there exists a unique  real-valued function $\varphi\in \mbox{PSH}(X,\beta )\cap L^\infty(X)$, satisfying
	\[(\beta+dd^c\varphi)^n=f\omega^n\]
	in the weak sense of currents, and $\|\varphi\|_{L^\infty(X)}\leq C(X,\omega,\beta, M,\|f\|_p)$.
\end{thm}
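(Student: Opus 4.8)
The theorem is the conjunction of the existence assertion of Theorem \ref{thm: main 1'} and the uniqueness assertion of Theorem \ref{thm: uniq of lambda=0}, so my plan is simply to quote these two results, the only genuinely new ingredient being a trivial normalization. For existence I would invoke Theorem \ref{thm: main 1'}, whose proof runs as follows: apply Demailly's regularization to obtain $\rho_j\searrow\rho$ with $\beta_j:=\beta+dd^c\rho_j+\frac{1}{j}\omega\geq\frac{1}{2j^2}\omega$, solve the non-degenerate equations $(\beta_j+dd^cu_j)^n=c_jf\omega^n$ by \cite[Theorem 0.1]{KN15}, and then run the quantitative machinery of Section 3 --- the Chern-Levine-Nirenberg and volume-capacity bounds (Propositions \ref{prop:CLN ineq}, \ref{prop:vol-cap est}), the energy comparison Proposition \ref{prop: compar energ}, the estimate $\|u_j\|_\infty\leq C(\log j)^n$ of Proposition \ref{prop: linfty est uj}, and $c_j\to1$ of Corollary \ref{cor: limit c_j} --- to extract a limit $\varphi\in\mathcal E^1(X,\beta)\subset\mathcal E(X,\beta)$ with $\langle(\beta+dd^c\varphi)^n\rangle=f\omega^n$ (Lemma \ref{lem: e1 energy est phi}, Theorem \ref{lem: conv phi}); the Eyssidieux-Guedj-Zeriahi-type iteration of Theorem \ref{thm: main 2 in sect} then forces $\varphi$ to be uniformly bounded, with $\|\varphi\|_\infty\leq C(X,\omega,\beta,M,\|f\|_p)$, so that $\langle(\beta+dd^c\varphi)^n\rangle=(\beta+dd^c\varphi)^n=f\omega^n$ in the weak sense of currents.

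For uniqueness I would first observe that the $\lambda=0$ equation is invariant under $\varphi\mapsto\varphi+a$, so it is enough to show that two bounded solutions normalized by $\sup_X\varphi=0$ coincide. To match the normalization under which Theorem \ref{thm: uniq of lambda=0} is stated, put $c:=\int_X\beta^n=\int_Xf\omega^n>0$ and replace $\beta,f,\rho,\varphi$ by $c^{-1/n}\beta$, $c^{-1}f$, $c^{-1/n}\rho$, $c^{-1/n}\varphi$ respectively: this is a solution-preserving substitution, with $\int_X(c^{-1/n}\beta)^n=\int_X(c^{-1}f)\omega^n=1$ and still a bounded potential $c^{-1/n}\rho$, which only scales the $L^\infty$-bound by the harmless factor $c^{1/n}$. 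Then Theorem \ref{thm: uniq of lambda=0} applies to the two normalized solutions $\varphi_1,\varphi_2$: writing $\mu:=f\omega^n$ and using the non-pluripolar comparison principle (Proposition \ref{prop: np comparison principle}) together with Dinew's mixed Monge-Amp\`ere inequality \cite[Theorem 1.3]{Din09}, one shows that $t\mapsto\mu(\{\varphi_1<\varphi_2+t\})$ takes only the values $0$ and $\mu(X)$, hence $\varphi_1=\varphi_2+t_0$ $\mu$-almost everywhere for some $t_0$; a second comparison against a solution with a smooth density then forces $\varphi_1\equiv\varphi_2+t_0$ on $X$, and $t_0=0$ by the normalization. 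Undoing the scaling completes the proof.

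The genuinely difficult point is not this assembly but the step invoked in the existence half. Because Proposition \ref{prop: linfty est uj} delivers only the logarithmically growing bound $\|u_j\|_\infty\leq C(\log j)^n$, the limit $\varphi$ is a priori no better than a finite-energy non-pluripolar solution; upgrading it to a uniformly bounded solution (Theorem \ref{thm: main 2 in sect}) is the heart of the matter, and it rests on the volume-capacity estimate of Proposition \ref{lem: vol-cap beta} --- available precisely because a bounded $\beta$-psh potential $\rho$ exists --- together with the capacity-comparison Lemma \ref{lem: bounded solution cap est level} and the Kolodziej-type iteration Lemma \ref{lem: egz func}. A further, Hermitian-specific subtlety with no K\"ahler analogue is that the error terms carrying the factor $\frac{1}{j}$ in Proposition \ref{prop: compar energ}, handled by repeated integration by parts and the curvature bound (\ref{equ:B cond}), must be shown to vanish as $j\to\infty$.
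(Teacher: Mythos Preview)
Your proposal is correct and follows exactly the paper's approach: Theorem \ref{thm: main 1-2} is simply the summary statement obtained by combining the existence result Theorem \ref{thm: main 1'} with the uniqueness result Theorem \ref{thm: uniq of lambda=0}, and you have accurately traced the chain of auxiliary results feeding into each. Your observation that uniqueness must be read up to an additive constant (or under the normalization $\sup_X\varphi=0$), together with the harmless rescaling to reduce to $\int_X\beta^n=1$, is the only bookkeeping needed and is handled correctly.
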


 \section{Weak solutions to  the degenerate CMA equations: the case of $\lambda>0$}
 Let $(X,\omega)$ be a compact Hermitian manifold  of complex dimension $n$.  Let $\beta$ be a smooth real $(1,1)$-form on $X$ such that there exists  $\rho\in \mbox{PSH}(X,\beta)\cap L^\infty(X)$. Let $0\leq f\in L^p(X,\omega^n)$, $p>1$, be such that $\int_Xf\omega^n >0$. Let $\lambda>0$ be a positive number.  In this section, we study the weak solutions to the degenerate CMA equations:
\[ (\beta+dd^c\varphi)=e^{\lambda\varphi}f\omega^n, \varphi\in\mbox{PSH}(X,\beta)\cap L^\infty(X). \]

After a rescaling, we only need to consider $\lambda=1$, i.e. the equation 
\[(\beta+dd^c\varphi)=e^{\varphi}f\omega^n, \varphi\in\mbox{PSH}(X,\beta)\cap L^\infty(X). \]
Our main theorem in this section is the following 
\begin{thm}[=Theorem \ref{thm: main 2}]\label{thm: lambda=1}
Let $(X,\omega)$ be a compact Hermitian manifold  of complex dimension $n$.  Let $\beta$ be a smooth real $(1,1)$-form on $X$ such that there exists  $\rho\in \mbox{PSH}(X,\beta)\cap L^\infty(X)$. Let $0\leq f\in L^p(X,\omega^n)$, $p>1$, be such that $\int_Xf\omega^n>0$. Let $\lambda>0$ be a positive number.  Then there is a unique real-valued function $\varphi\in \mbox{PSH}(X,\beta )\cap L^\infty(X)$, satisfying
\[(\beta+dd^c\varphi)^n=e^\varphi f\omega^n\]
in the weak sense of currents, and $\|\varphi\|_{L^\infty(X)}\leq C(X,\omega,\beta, M,\|f\|_p)$.
\end{thm}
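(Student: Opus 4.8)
The plan is to reduce to the case $\lambda=1$ by rescaling $\beta$, and then to produce the solution by a fixed-point argument in the spirit of \cite{BEGZ10}. First I would set up the solution operator: given $\psi\in\mbox{PSH}(X,\beta)\cap L^\infty(X)$, the density $e^{\psi}f$ is non-negative, lies in $L^p(X,\omega^n)$ (since $\psi$ is bounded), and has positive total mass; after normalizing by a positive constant $c(\psi):=\int_X\beta^n\big/\int_X e^{\psi}f\,\omega^n$ so that $\int_X c(\psi)e^{\psi}f\,\omega^n=\int_X\beta^n$, Theorem \ref{thm: main 1-2} gives a unique $\Phi(\psi)\in\mbox{PSH}(X,\beta)\cap L^\infty(X)$ with $(\beta+dd^c\Phi(\psi))^n=c(\psi)e^{\psi}f\,\omega^n$ and $\sup_X\Phi(\psi)=0$ (normalize by subtracting $\sup_X$). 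The point is to find a fixed point not of $\Phi$ itself but of a modified map whose fixed point solves the genuine equation: one seeks $\varphi$ with $\varphi=\Phi(\varphi)+$ constant, the constant being forced by the normalization $c(\varphi)=e^{-\text{const}\cdot}$; concretely, if $\Phi(\varphi)=\varphi-b$ with $c(\varphi)=e^{b n}$... the cleanest route is to work with the operator $\psi\mapsto \Phi(\psi)$ on the convex set $\mathcal K:=\{\psi\in\mbox{PSH}(X,\beta): -C_0\le\psi\le 0\}$ for a suitable uniform $C_0$, and to track the normalizing constants.

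Second I would establish the a priori bounds that make $\mathcal K$ invariant. The key estimates are: (i) a uniform upper bound on $c(\psi)$ and a uniform lower bound $c(\psi)\ge \delta_0>0$ for $\psi\in\mathcal K$, both following from $e^{-C_0}f\le e^{\psi}f\le f$ and $\int_X f\omega^n>0$; (ii) the uniform $L^\infty$ estimate $\|\Phi(\psi)\|_{L^\infty(X)}\le C(X,\omega,\beta,M,\|f\|_p)$ coming directly from Theorem \ref{thm: main 2 in sect} / Theorem \ref{thm: main 1-2}, where one absorbs $c(\psi)$ into $\|f\|_p$ using the uniform bounds from (i). This yields an a priori bound $C_1$ independent of $\psi\in\mathcal K$; choosing $C_0:=C_1$ makes $\Phi:\mathcal K\to\mathcal K$ after the $\sup$-normalization, provided one checks the normalization is consistent, i.e. that the constant one adds to return to $\sup=0$ does not destroy the equation — this is why one should instead keep the equation $(\beta+dd^c\Phi(\psi))^n=c(\psi)e^\psi f\omega^n$ and only at the fixed point solve for the additive constant absorbing $c$.

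Third I would verify continuity and compactness of $\Phi$ to invoke Schauder's fixed point theorem. Equip $\mathcal K$ with the $L^1(X,\omega^n)$ topology: $\mathcal K$ is convex and, by the uniform $L^\infty$ bound together with compactness of uniformly bounded families of quasi-psh functions in $L^1$, it is compact (its closure in $L^1$ consists of $\beta$-psh functions with the same bounds). For continuity, if $\psi_k\to\psi$ in $L^1$ with all $\psi_k\in\mathcal K$, then $e^{\psi_k}\to e^\psi$ in $L^1$ (dominated convergence, since $0\le e^{\psi_k}\le 1$) hence $c(\psi_k)\to c(\psi)$; the uniform $L^\infty$ bound on $\Phi(\psi_k)$ gives, along any subsequence, $L^1$-convergence to some bounded $\beta$-psh $\Psi$, and a Bedford--Taylor type stability argument (as used in the proof of Theorem \ref{lem: conv phi}) shows $(\beta+dd^c\Psi)^n=c(\psi)e^\psi f\omega^n$; by the uniqueness in Theorem \ref{thm: uniq of lambda=0} (applied after normalizing both densities to mass $\int_X\beta^n$) together with matching normalizations, $\Psi=\Phi(\psi)$, so the whole sequence converges and $\Phi$ is continuous. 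Schauder then yields a fixed point $\varphi\in\mathcal K$, $(\beta+dd^c\varphi)^n=c(\varphi)e^\varphi f\omega^n$; replacing $\varphi$ by $\varphi+\log c(\varphi)$ (which stays bounded $\beta$-psh) absorbs $c(\varphi)$ and gives $(\beta+dd^c\varphi)^n=e^\varphi f\omega^n$. Finally, uniqueness: if $\varphi_1,\varphi_2$ are two bounded solutions, apply the comparison principle (Proposition \ref{prop: np comparison principle}) on $\{\varphi_1<\varphi_2\}$; there $(\beta+dd^c\varphi_2)^n=e^{\varphi_2}f\omega^n\ge e^{\varphi_1}f\omega^n=(\beta+dd^c\varphi_1)^n$ with strict inequality on a positive-measure set unless the set is empty, forcing $\int_{\{\varphi_1<\varphi_2\}}(\beta+dd^c\varphi_1)^n\ge\int_{\{\varphi_1<\varphi_2\}}(\beta+dd^c\varphi_2)^n\ge\int_{\{\varphi_1<\varphi_2\}}e^{\varphi_2}f\omega^n$, and the monotonicity of $t\mapsto e^t$ closes the argument to give $\{\varphi_1<\varphi_2\}=\emptyset$; by symmetry $\varphi_1=\varphi_2$.

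The main obstacle I anticipate is the continuity of the solution operator $\Phi$ in the $L^1$ topology: one must show that weak stability of the degenerate complex Monge--Amp\`ere operator under $L^1$-convergence of bounded potentials holds in the Hermitian, merely-quasi-positive setting, which is exactly where one has to feed in the uniqueness theorem (Theorem \ref{thm: uniq of lambda=0}) and the Bedford--Taylor/non-pluripolar convergence machinery rather than rely on a direct estimate. The bookkeeping of the normalizing constants $c(\psi)$ — ensuring the $L^\infty$ bound is genuinely uniform over $\mathcal K$ and that absorbing $c(\varphi)$ at the end keeps $\varphi$ bounded — is routine but must be done carefully so that the a priori constant $C_0$ defining $\mathcal K$ can be fixed before running Schauder.
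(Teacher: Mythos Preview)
Your overall strategy---rescale to $\lambda=1$, set up the normalized solution operator, apply Schauder's fixed point theorem, and prove uniqueness via the comparison principle---is exactly the paper's, and your continuity argument for $\Phi$ (pass to a subsequential $L^1$-limit, use the Bedford--Taylor max inequality to get $(\beta+dd^cu)^n\ge e^{v+c_v}f\omega^n$, then equal total mass plus uniqueness) is essentially Lemma~\ref{lem: psi continous}. There is, however, a genuine circularity in your step~(i)--(ii) that the paper avoids. You take $\mathcal K=\{-C_0\le\psi\le 0\}$ and bound $c(\psi)$ from above via $e^\psi f\ge e^{-C_0}f$, which gives $c(\psi)\le e^{C_0}\int_X\beta^n/\int_Xf\omega^n$; hence the $L^p$-norm fed into Theorem~\ref{thm: main 2 in sect} is $e^{C_0}$ times a fixed constant, and the resulting $L^\infty$-bound $C_1$ on $\Phi(\psi)$ depends on $C_0$. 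Setting $C_0:=C_1(C_0)$ is not ``routine bookkeeping'': it is a fixed-point condition on $C_0$ that need not have a solution, since the $L^\infty$-estimate in the degenerate setting is not, in general, sublinear in $\log\|f\|_p$. The paper breaks this loop by working on the \emph{full} set $\mathcal C$ of $\beta$-psh functions normalized only by $\sup_Xv=0$ (no lower bound imposed), and bounding $c_v$ from above via Jensen's inequality together with the uniform estimate $\int_X(-v)f\omega^n\le C(\|f\|_p)$ valid for any $v\in\mbox{PSH}(X,A\omega)$ with $\sup_Xv=0$ (\cite[Corollary~1.3]{Ngu16}); this yields $c_v\le C'(X,\omega,\beta,\|f\|_p)$ \emph{independent of any lower bound on $v$}, so $\|\Psi(v)\|_{L^\infty}$ is bounded by a constant depending only on $(X,\omega,\beta,M,\|f\|_p)$ and no circularity arises.

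There is also a gap at the end of your uniqueness argument. From the comparison principle you obtain $\int_{\{\varphi_1<\varphi_2\}}e^{\varphi_2}f\omega^n\le\int_{\{\varphi_1<\varphi_2\}}e^{\varphi_1}f\omega^n$, hence $f\omega^n(\{\varphi_1<\varphi_2\})=0$, i.e.\ $\varphi_1=\varphi_2$ only $f\omega^n$-almost everywhere. This does not force $\{\varphi_1<\varphi_2\}=\emptyset$ when $f$ vanishes on a set of positive Lebesgue measure. The paper (Theorem~\ref{thm: uniq of lambda>0}) closes this by invoking the Dinew-type step from Theorem~\ref{thm: uniq of lambda=0}: solve an auxiliary equation $(\beta+dd^c\varphi)^n=dV$ with a smooth volume form and apply the comparison principle again on $\{\varphi_1<(1-\delta)\varphi_2+\delta\varphi\}$ to reach a contradiction unless $\{\varphi_1<\varphi_2\}$ is actually empty.
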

The idea of the proof is essentially due to \cite{BEGZ10}. It is an application of the Schauder's fixed point theorem. 

\begin{lem}\label{lem: schauder}
	Assume $\Psi$ is a continous mapping from a non-empty convex closed subset of a Hausdorff topological vector space into a compact subset of itself,  then $\Psi$ must have fixed point.
\end{lem}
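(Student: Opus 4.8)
The plan is to deduce this from Brouwer's fixed point theorem by a finite-dimensional approximation argument (the Schauder projection method), so that the entire proof reduces to the classical finite-dimensional case. Throughout I will assume, as is implicit in the intended application, that the ambient space is locally convex and Hausdorff; denote it by $E$, let $K$ be the given non-empty closed convex set, and let $C$ be a compact subset of $K$ with $\Psi(K)\subseteq C$.

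First I would reduce to a compact convex self-map. Let $K_0:=\overline{\mathrm{conv}}(C)$ be the closed convex hull of $C$. Since $K$ is closed and convex with $C\subseteq K$, we have $K_0\subseteq K$; and in a quasi-complete locally convex Hausdorff space the closed convex hull of a compact set is again compact. Then $\Psi(K_0)\subseteq\Psi(K)\subseteq C\subseteq K_0$, so $\Psi$ restricts to a continuous self-map of the compact convex set $K_0$, and it suffices to produce a fixed point there.

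Next comes the core construction. Fix a convex, symmetric open neighborhood $U$ of $0$ in $E$. By compactness of $K_0$, choose finitely many points $x_1,\dots,x_m\in K_0$ with $K_0\subseteq\bigcup_i (x_i+U)$. Using the Minkowski functional of $U$, build continuous functions $\lambda_i\geq 0$ with $\sum_i\lambda_i\equiv 1$ on $K_0$ and $\lambda_i(x)=0$ whenever $x\notin x_i+U$, and define the Schauder projection $P_U(x):=\sum_{i=1}^m\lambda_i(x)\,x_i$, which maps $K_0$ into the finite-dimensional simplex $S:=\mathrm{conv}\{x_1,\dots,x_m\}\subseteq K_0$ and satisfies $P_U(x)-x\in U$ for every $x\in K_0$, by convexity of $U$. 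The composition $P_U\circ\Psi$ is then a continuous map of the compact convex finite-dimensional set $S$ into itself, so Brouwer's theorem yields a point $x_U\in S$ with $P_U(\Psi(x_U))=x_U$; consequently
\[ x_U-\Psi(x_U)=P_U(\Psi(x_U))-\Psi(x_U)\in U. \]

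Finally I would pass to the limit. As $U$ runs over a neighborhood basis of $0$ directed by reverse inclusion, the points $x_U$ form a net in the compact set $K_0$, hence admit a subnet converging to some $x_\ast\in K_0$. The displayed relation forces $x_U-\Psi(x_U)\to 0$ along this subnet, and continuity of $\Psi$ together with the Hausdorff property then gives $\Psi(x_\ast)=x_\ast$. The step I expect to be the main obstacle is the honest construction of the partition of unity $\{\lambda_i\}$ and the uniform estimate $P_U(x)-x\in U$, together with the net-convergence argument, which in a possibly non-metrizable locally convex space must be run with nets rather than sequences; the compactness of $\overline{\mathrm{conv}}(C)$ is the other place where local convexity and quasi-completeness are genuinely used.
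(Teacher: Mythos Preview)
The paper does not prove this lemma at all: it is simply recorded as the classical Schauder fixed point theorem and then applied. Your argument is the standard Schauder-projection proof and is correct under the additional hypotheses (local convexity, quasi-completeness) you explicitly flag; since the intended ambient space in the application is $L^1(X,\omega^n)$, these are harmless. One small remark: the preliminary reduction to $K_0=\overline{\mathrm{conv}}(C)$ being compact is not really needed. You can cover the compact set $C$ itself by finitely many translates $x_i+U$ with $x_i\in C$, define the Schauder projection $P_U$ only on $C$, and note that $P_U\circ\Psi$ still maps the finite-dimensional simplex $S=\mathrm{conv}\{x_1,\dots,x_m\}\subseteq K$ into itself (because $\Psi(S)\subseteq C$ and $P_U(C)\subseteq S$). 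For the limit, the net $\Psi(x_U)$ lies in the compact set $C$, so a convergent subnet exists without invoking compactness of $K_0$; then $x_U-\Psi(x_U)\in U$ forces $x_U$ to converge to the same limit. This bypasses the quasi-completeness assumption entirely, leaving only local convexity as the extra (and, for a proof via Brouwer, unavoidable) hypothesis beyond the lemma's literal statement.
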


Let $\mathcal{C}$ be the compact convex subset of $L^1(X,\omega^n)$, consisting of all $\beta$-psh functions $v$ normalized by $\int_Xv\omega^n=0$. Note that such $v$ is uniformly bounded from above.
Define the mapping $\Psi: \mathcal{C} \rightarrow  \mathcal{C}$ as follows. For any $v\in \mathcal C$, by Theorem \ref{thm: main 1-2}, there is a unique $\Psi(v)\in \mathcal C$ such that 
\[(\beta +dd^c\Psi(v))^n=e^{v+c_v}f\omega^n, \sup_X\Psi(v)=0,\]
where 
\[c_v=\log\frac{\int_X \beta^n}{\int_Xe^vf\omega^n}.\]
\begin{lem}\label{lem: est cv}
	There is  uniform constant $C>0$ such that $c_v\leq C$, for any $v\in \mathcal C$.
	\end{lem}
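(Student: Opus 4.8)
I want to bound $c_v = \log\frac{\int_X\beta^n}{\int_X e^v f\omega^n}$ from above uniformly over $v\in\mathcal C$, which amounts to a uniform lower bound $\int_X e^v f\omega^n \geq \frac{1}{C}\int_X\beta^n > 0$. The key point is that $\mathcal C$ is compact in $L^1(X,\omega^n)$, so the family $\{e^v f : v\in\mathcal C\}$ should have a uniform positive lower bound on its integral; the obstacle is that $e^v$ can be very small on large sets (where $v$ is very negative), so a naive $L^1$ bound is not enough — I need to exploit that the negativity of $v\in\mathcal C$ is controlled in a strong (exponential-integrability) sense.

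\medskip

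\textbf{Step 1: uniform integrability estimates on $\mathcal C$.} Every $v\in\mathcal C$ is $\beta$-psh with $\int_X v\,\omega^n=0$, hence (since $\beta\le A\omega$ for some large $A$, so $v\in\mathrm{PSH}(X,A\omega)$) $v$ is uniformly bounded above by some constant $C_0=C_0(X,\omega,\beta)$, and $\sup_X v \ge 0$. Applying the Hartogs-type compactness together with Proposition~\ref{prop:uniform est} (or rather its $A\omega$ analogue, valid since the relevant potentials are $A\omega$-psh), there are uniform constants $\alpha, C_1>0$ depending only on $(X,\omega,\beta)$ such that
\[
\int_X e^{-\alpha v}\,\omega^n \le C_1 \qquad\text{for all } v\in\mathcal C.
\]
In particular $\|v\|_{L^1(X,\omega^n)}$ is uniformly bounded and, more importantly, $e^{-\alpha v}\in L^1$ uniformly, which controls how much mass can escape to the region where $v$ is very negative.

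\medskip

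\textbf{Step 2: from the exponential bound to a lower bound on $\int e^v f\omega^n$.} Fix $t>0$ to be chosen. On the set $\{v\ge -t\}$ we have $e^v\ge e^{-t}$, so
\[
\int_X e^v f\,\omega^n \;\ge\; e^{-t}\int_{\{v\ge -t\}} f\,\omega^n \;=\; e^{-t}\Big(\int_X f\,\omega^n - \int_{\{v<-t\}} f\,\omega^n\Big).
\]
By Hölder's inequality and Chebyshev applied to $e^{-\alpha v}$,
\[
\int_{\{v<-t\}} f\,\omega^n \le \|f\|_p \,\mathrm{vol}_\omega(\{v<-t\})^{1/q}
\le \|f\|_p\big(e^{-\alpha t}C_1\big)^{1/q},
\]
where $\tfrac1p+\tfrac1q=1$. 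Since $\int_X f\,\omega^n>0$ is a fixed positive number, choosing $t=t(X,\omega,\beta,\|f\|_p,\int_X f\omega^n)$ large enough makes the subtracted term at most $\tfrac12\int_X f\,\omega^n$, giving
\[
\int_X e^v f\,\omega^n \;\ge\; \tfrac12 e^{-t}\int_X f\,\omega^n \;>\;0
\]
uniformly in $v\in\mathcal C$. Hence $c_v=\log\frac{\int_X\beta^n}{\int_X e^v f\omega^n}\le \log\frac{2e^{t}\int_X\beta^n}{\int_X f\omega^n}=:C$, which is the desired uniform bound.

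\medskip

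\textbf{Main obstacle.} The only delicate point is Step~1: one must know that the uniform exponential-integrability estimate (Proposition~\ref{prop:uniform est}, proved there for $(\beta+dd^c\rho_j)$-psh functions with $\sup=0$) transfers to the normalization $\int_X v\,\omega^n=0$ used to define $\mathcal C$ and to $\beta$-psh (rather than $\beta_j$-psh) functions. This follows because $\mathcal C$ embeds into a bounded subset of $\mathrm{PSH}(X,A\omega)$ with $\sup_X v$ controlled, and the $L\log^n L$-type estimate of \cite[Lemma 2.3]{DP10} (equivalently Proposition~\ref{prop:uniform est} applied with $A\omega$ in place of $\beta_j$) depends only on the ambient data; the shift between $\sup_X v = 0$ and $\int_X v\,\omega^n = 0$ costs only the uniform constant $C_0$ from boundedness above. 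Once this is in place, Steps~2 is a routine Hölder/Chebyshev computation.
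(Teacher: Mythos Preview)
Your proof is correct but follows a different route from the paper. The paper's argument is shorter: since $\beta\le A\omega$ for some $A>0$, every $v\in\mathcal C$ lies in $\mbox{PSH}(X,A\omega)$ with $\sup_X v$ uniformly bounded (as noted just before the lemma), so \cite[Corollary 1.3]{Ngu16} gives a uniform bound $\int_X(-v)f\omega^n\le C$. Jensen's inequality for the convex exponential with respect to the probability measure $f\omega^n/\int_X f\omega^n$ then yields directly
\[
\int_X e^v f\omega^n \;\ge\; \exp\!\Big(\frac{\int_X v f\omega^n}{\int_X f\omega^n}\Big)\int_X f\omega^n \;\ge\; e^{-C'},
\]
and the bound on $c_v$ follows. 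Your approach instead combines the uniform exponential integrability $\int_X e^{-\alpha v}\omega^n\le C_1$ with a Chebyshev volume estimate on $\{v<-t\}$ and H\"older to control $\int_{\{v<-t\}}f\omega^n$, then chooses $t$ large. Both arguments ultimately rest on the same compactness of $\mathcal C$ in $\mbox{PSH}(X,A\omega)$ (and you correctly flag the harmless normalization shift between $\int v\,\omega^n=0$ and $\sup v=0$). The Jensen route is more direct once the weighted $L^1$ bound is in hand; your level-set decomposition is more self-contained in that it avoids the external reference and only uses the paper's own Proposition~\ref{prop:uniform est}, at the price of a slightly longer computation.
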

\begin{proof}Now we are going to get a uniform upper bound for $c_v$.
There is a constant $A>0$, such that $\beta<A\omega$. For  any $v\in \mathcal C$,  we have $v\in \mbox{PSH}(X,A\omega)$. Then by \cite[Corollary 1.3]{Ngu16}, there exists a uniform constant $C=C(X,\omega,\beta,\|f\|_p)>0$, such that 
\begin{align*}
\int_X-vf\omega^n\leq C.
\end{align*}
That is
$$\int_Xvf\omega^n \geq-C.$$
Then by the convexity of the exponential function,  we have 
\[\int_Xe^vf\omega^n\geq e^{\int_Xvf\omega^n/\int_Xf\omega^n}\cdot \int_Xf\omega^n\geq e^{-C'(X,\omega,\beta,\|f\|_p)}.\]
Thus 
\[c_v=\log\frac{\int_X \beta^n}{\int_Xe^vf\omega^n}\leq C'.\]
\end{proof}
Hence  for any $v\in \mathcal{C}$,
\[||e^{v+c_v}f\|_p\leq e^{C'}||f||_p\]
is   uniformly bounded.
Then by Theorem \ref{thm: main 1},  we know that $\Psi(v)$ is uniformly bounded by a uniform constant $C(X,\omega,\beta,||f||_p)$. 

To prove Theorem \ref{thm: lambda=1}, by Lemma \ref{lem: schauder}, it suffices to prove the following
\begin{lem}\label{lem: psi continous}
	The mapping $\Psi$ is continous.
\end{lem}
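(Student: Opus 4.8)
The plan is to show that $\Psi:\mathcal{C}\to\mathcal{C}$ is continuous in the $L^1(X,\omega^n)$ topology, i.e.\ that if $v_k\to v$ in $L^1(X,\omega^n)$ with $v_k,v\in\mathcal{C}$, then $\Psi(v_k)\to\Psi(v)$ in $L^1(X,\omega^n)$. Since $\mathcal{C}$ is compact in $L^1$, it suffices to show that every subsequence of $\{\Psi(v_k)\}$ has a further subsequence converging to $\Psi(v)$; thus I may freely pass to subsequences throughout. First I would record the a priori control already available: by Lemma~\ref{lem: est cv} the constants $c_{v_k}$ are uniformly bounded above, and since $\int_X e^{v_k}f\omega^n\le \|f\|_p\,\mathrm{vol}_\omega(X)^{1/q}\sup_X e^{v_k}$ with $\sup_X v_k$ uniformly bounded (as elements of $\mathcal{C}$ are uniformly bounded above), the $c_{v_k}$ are also uniformly bounded below, hence $\{c_{v_k}\}$ is a bounded sequence of reals. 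After passing to a subsequence, $c_{v_k}\to c$ for some real $c$, and $v_k\to v$ almost everywhere. Then $e^{v_k+c_{v_k}}f\to e^{v+c}f$ almost everywhere, and I would upgrade this to $L^p$-convergence (or at least enough to run the stability machinery): the densities $e^{v_k+c_{v_k}}f$ are dominated in $L^p$ uniformly because $v_k$ is uniformly bounded above, so by dominated convergence $\|e^{v_k+c_{v_k}}f-e^{v+c}f\|_{L^1}\to 0$, and one also gets $\int_X e^{v_k}f\omega^n\to\int_X e^{v}f\omega^n$, whence in fact $c=c_v$ by the defining formula for $c_v$.

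Next I would use the uniform $L^\infty$ bound: by Theorem~\ref{thm: main 1} (equivalently Theorem~\ref{thm: main 1-2}) and the uniform bound $\|e^{v_k+c_{v_k}}f\|_p\le e^{C'}\|f\|_p$, the solutions $\Psi(v_k)$ satisfy $\|\Psi(v_k)\|_{L^\infty(X)}\le C(X,\omega,\beta,\|f\|_p)$ uniformly. Since each $\Psi(v_k)\in\mathrm{PSH}(X,\beta)\subset\mathrm{PSH}(X,A\omega)$ with $\sup_X\Psi(v_k)=0$, the family $\{\Psi(v_k)\}$ is relatively compact in $L^1(X,\omega^n)$; passing to a further subsequence, $\Psi(v_k)\to\psi$ in $L^1$ and almost everywhere, for some $\psi\in\mathrm{PSH}(X,\beta)\cap L^\infty(X)$ with $\sup_X\psi=0$ and the same uniform $L^\infty$ bound. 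Now I would pass to the limit in the equation $(\beta+dd^c\Psi(v_k))^n=e^{v_k+c_{v_k}}f\omega^n$: the right-hand side converges in $L^1$ (hence weakly as measures) to $e^{v+c_v}f\omega^n$, and for the left-hand side I would invoke the weak continuity of the Monge--Amp\`ere operator along uniformly bounded, $L^1$-convergent sequences of $\omega'$-psh functions (Bedford--Taylor, cf.\ \cite{BT82}, which holds in the local/Hermitian setting since positivity of $\beta+\frac1j\omega$ is not needed for uniformly bounded potentials — one works with $A\omega+dd^c(\cdot)$). This yields $(\beta+dd^c\psi)^n=e^{v+c_v}f\omega^n$ with $\sup_X\psi=0$.

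Finally, by the uniqueness statement in Theorem~\ref{thm: main 1-2} (which gives uniqueness of the bounded solution with prescribed $\sup_X$), the equation $(\beta+dd^c\psi)^n=e^{v+c_v}f\omega^n$, $\sup_X\psi=0$, has $\psi=\Psi(v)$ as its unique solution; hence $\psi=\Psi(v)$. Since every subsequence of $\{\Psi(v_k)\}$ has a further subsequence converging in $L^1$ to the same limit $\Psi(v)$, the whole sequence $\Psi(v_k)\to\Psi(v)$ in $L^1(X,\omega^n)$, proving continuity. Combined with Lemma~\ref{lem: schauder} (Schauder's fixed point theorem applied to $\Psi:\mathcal{C}\to\mathcal{C}$, noting $\Psi(\mathcal{C})$ lands in the compact subset of uniformly bounded $\beta$-psh functions normalized by $\int_X v\,\omega^n=0$), this produces a fixed point $\varphi_0\in\mathcal{C}$ with $(\beta+dd^c\varphi_0)^n=e^{\varphi_0+c_{\varphi_0}}f\omega^n$; then $\varphi:=\varphi_0+c_{\varphi_0}$ solves $(\beta+dd^c\varphi)^n=e^\varphi f\omega^n$ and lies in $\mathrm{PSH}(X,\beta)\cap L^\infty(X)$, with the asserted $L^\infty$ bound coming from the uniform estimate above. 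Uniqueness for $\lambda>0$ follows from the comparison principle: if $\varphi_1,\varphi_2$ are two bounded solutions, then at a point where $\varphi_1-\varphi_2$ attains its maximum the inequality $e^{\varphi_1}\le e^{\varphi_2}$ must hold on the relevant superlevel set, and the comparison principle (Proposition~\ref{prop: np comparison principle}) forces $\varphi_1\le\varphi_2$; symmetrically $\varphi_2\le\varphi_1$.

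The main obstacle I anticipate is justifying the weak convergence $(\beta+dd^c\Psi(v_k))^n\to(\beta+dd^c\psi)^n$: one needs that $\Psi(v_k)\to\psi$ in a mode strong enough (uniform boundedness plus $L^1$ convergence, via Bedford--Taylor's theorem on decreasing/monotone sequences after a further reduction to a monotone envelope, e.g.\ considering $(\sup_{k\ge j}\Psi(v_k))^*\searrow$ something) for the Monge--Amp\`ere operator to be continuous, and to rule out loss of mass — here the uniform $L^\infty$ bound is essential and is exactly what Lemma~\ref{lem: est cv} plus Theorem~\ref{thm: main 1} provide. A secondary technical point is confirming $c=c_v$ in the limit, which requires the denominator $\int_X e^{v_k}f\omega^n$ to pass to the limit and to stay bounded away from $0$; the uniform upper bound on $\sup_X v_k$ and Jensen's inequality (as in the proof of Lemma~\ref{lem: est cv}) handle both directions.
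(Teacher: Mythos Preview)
Your overall strategy---subsequence argument, uniform $L^\infty$ control from Lemma~\ref{lem: est cv} and Theorem~\ref{thm: main 1-2}, pass to the limit, then invoke uniqueness---matches the paper's. The point that needs correction is the step where you ``invoke the weak continuity of the Monge--Amp\`ere operator along uniformly bounded, $L^1$-convergent sequences of $\omega'$-psh functions (Bedford--Taylor, cf.\ \cite{BT82}).'' This is not a theorem in \cite{BT82}: the Monge--Amp\`ere operator is \emph{not} continuous along arbitrary $L^1$-convergent sequences, even with uniform $L^\infty$ bounds; the Bedford--Taylor convergence theorems require monotone (or capacity) convergence. So as written, the passage to the limit on the left-hand side is unjustified.

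You correctly identify the remedy in your ``main obstacle'' paragraph, and that is exactly what the paper does. Set $\Phi_j:=(\sup_{k\ge j}\Psi(v_k))^*$, which decreases to the a.e.\ limit $\psi$. By \cite[Proposition 2.8]{BT76},
\[
(\beta+dd^c\max\{\Psi(v_j),\dots,\Psi(v_k)\})^n\ge \min\{e^{v_j+c_{v_j}},\dots,e^{v_k+c_{v_k}}\}\,f\omega^n,
\]
and letting $k\to\infty$ (increasing convergence, \cite[Proposition 5.2]{BT82}) gives $(\beta+dd^c\Phi_j)^n\ge \inf_{k\ge j}e^{v_k+c_{v_k}}f\omega^n$. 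Now $\Phi_j\searrow\psi$ are uniformly bounded, so \cite[Theorem 2.1]{BT82} applies and yields only the \emph{inequality} $(\beta+dd^c\psi)^n\ge e^{v+c_v}f\omega^n$. The paper then closes the gap by the total-mass argument: both sides integrate to $\int_X\beta^n$, hence equality. This last step is essential and is missing from your sketch; without it the monotone-envelope route gives you only one inequality. Once you replace the direct continuity claim by this envelope-plus-mass argument, your proof coincides with the paper's.
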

\begin{proof}
	We only need to prove for any $v_j \rightarrow v$ in $L^1(X,\omega)\cap \mathcal C$, then any accumulative point $u$ of $\Psi(v_j)$ must be $\Psi(v)$.
	After subtracting a subsequence, we may assume $\Psi(v_j)\rightarrow u$ in $L^1(X,\omega)$ and almost everywhere (in Lebesgue measure). By dominated convergenve theorem, we have $c_{v_j}\rightarrow c_v$. We also assume that $v_j\rightarrow v$ almost everywhere on $X$ (in Lebesgue measure). 

If we can prove  $u$ satisfies the CMA equation:
\[(\beta+dd^cu)^n=e^{v+c_v}f\omega^n, u\in \mbox{PSH}(X,\beta)\cap L^\infty(X)\]
then by uniqueness in Theorem \ref{thm: main 1-2},  $u=\Psi(v)$. 

By Theorem \ref{thm: main 1-2}, $\Psi(v_j)$ is uniformly bounded, then  $u$ is bounded.

	We denote $\Phi_j:=( \sup_{k\geq j}\Psi(v_k))^*$, then we have $\Phi_j$ decreases to $u$ on X. By \cite[Proposition 2.8]{BT76},
	$$(\beta +dd^c\max\{ \Psi(v_j),...,\Psi(v_k) \})^n \geq \min\{e^{v_j+c_{v_j}}f\omega^n,...,e^{v_k+c_{v_k}}f\omega^n\}.$$
	Since  $\{ \max\{\Psi(v_j),...,\Psi(v_k)\}\}$ is a sequence in $\mbox{PSH}(X,\beta)\cap L^\infty(X)$ which increases to $\varphi_j$ almost everywhere  on X (in Lebesgue measure), then by \cite[Proposition 5.2]{BT82},  
\[(\beta+dd^c\rho+dd^c\max\{ \Psi(v_j),...,\Psi(v_k) \})^n \xrightarrow{k\rightarrow \infty}(\beta+dd^c\Phi_j)^n \geq \inf_{k\geq j}\{e^{v_k+c_{v_k}}f\omega^n\}\]
weakly as measures on $X$.
Since  $c_{v_j}\rightarrow c_v$ and $v_j \rightarrow v$ almost everywhere  on $X$, we get that 
	$$\lim_{j\rightarrow \infty}\inf_{k\geq j}\{e^{v_k+c_{v_k}}f\omega^n\}=\liminf_{j\rightarrow \infty}e^{v_j+c_{v_j}}f\omega^n=e^{v+c_v}f\omega^n \mbox{ almost everywhere}.$$
The above limit also holds as weak convergence of Radon measures.
By \cite[Theorem 2.1]{BT82},  
	$$\lim_{j\rightarrow \infty}(\beta+dd^c\Phi_j)^n=(\beta +dd^c u)^n.$$
Thus  $(\beta +dd^c u)^n \geq e^{v+c_v}f\omega^n.$ By dominated convergence theorem,  $\int_Xe^{v+c_v}f\omega^n=1$. Noting that  both positive measures $(\beta+dd^cu)^n$ and $e^{v+c_v}f\omega^n$ have the same total mass,  we conclude that $(\beta +dd^c u)^n= e^{v+c_v}f\omega^n.$
Thus the proof of Lemma \ref{lem: psi continous} is complete.

\end{proof}

To complete the proof of Theorem \ref{thm: lambda=1}, we are only left  to prove the uniqueness of the solutions.
\begin{thm}\label{thm: uniq of lambda>0}
Let  $0\leq f\in L^p(X,\omega^n)$, $p\textgreater1$ be such that  $\int_Xf\omega^n\textgreater0$. Then there exists at most one bounded solution to the following complex Monge-Amp\`ere equation:
	$$(\beta+dd^c\varphi)^n=e^{\varphi}f\omega^n.$$
\end{thm}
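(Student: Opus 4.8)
The plan is to prove uniqueness of bounded solutions to $(\beta+dd^c\varphi)^n=e^\varphi f\omega^n$ by a direct comparison argument, exploiting the strict monotonicity of $t\mapsto e^t$. Suppose $\varphi_1,\varphi_2\in\mbox{PSH}(X,\beta)\cap L^\infty(X)$ are two solutions. I would first observe that it suffices to show $\varphi_1\le\varphi_2$ everywhere (then by symmetry $\varphi_1=\varphi_2$). Equivalently, I want to show the open set $U:=\{\varphi_2<\varphi_1\}$ is empty. Suppose for contradiction $U\ne\emptyset$; since both functions are bounded, the comparison principle (Proposition \ref{prop: np comparison principle}, applied with the non-pluripolar products reducing to the Bedford--Taylor products because $\varphi_1,\varphi_2$ are bounded) is available.

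The key step is the following chain. On the set $U=\{\varphi_2<\varphi_1\}$ we have $e^{\varphi_2}<e^{\varphi_1}$ pointwise, hence
\[
\int_{\{\varphi_2<\varphi_1\}}(\beta+dd^c\varphi_1)^n=\int_{\{\varphi_2<\varphi_1\}}e^{\varphi_1}f\omega^n\ \ge\ \int_{\{\varphi_2<\varphi_1\}}e^{\varphi_2}f\omega^n=\int_{\{\varphi_2<\varphi_1\}}(\beta+dd^c\varphi_2)^n,
\]
while the comparison principle gives the reverse inequality
\[
\int_{\{\varphi_2<\varphi_1\}}(\beta+dd^c\varphi_1)^n\ \le\ \int_{\{\varphi_2<\varphi_1\}}(\beta+dd^c\varphi_2)^n.
\]
Therefore both are equal, and consequently $\int_{\{\varphi_2<\varphi_1\}}(e^{\varphi_1}-e^{\varphi_2})f\omega^n=0$. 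Since the integrand is strictly positive on $U$ wherever $f>0$, this forces $f\omega^n$ to put no mass on $U$, i.e. $(\beta+dd^c\varphi_2)^n(U)=\int_U e^{\varphi_2}f\omega^n=0$. To upgrade this to $U=\emptyset$ I would use the standard perturbation trick: for small $\delta>0$ replace $\varphi_2$ by $(1-\delta)\varphi_2+\delta\psi$ where $\psi$ is a bounded $\beta$-psh function solving $(\beta+dd^c\psi)^n=dV$ for a smooth strictly positive volume form $dV$ (obtained from Theorem \ref{thm: main 1-2}), normalized so that $\psi\le\varphi_2$; then the set $U_\delta:=\{(1-\delta)\varphi_2+\delta\psi<\varphi_1\}$ increases to $U$ (up to a pluripolar set) as $\delta\searrow 0$, so $U_\delta$ has positive Lebesgue measure for small $\delta$ once $U\ne\emptyset$. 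Applying the comparison principle on $U_\delta$ and using $(\beta+dd^c((1-\delta)\varphi_2+\delta\psi))^n\ge\delta^n dV$ gives $0<\delta^n\int_{U_\delta}dV\le\int_{U_\delta}(\beta+dd^c\varphi_1)^n=\int_{U_\delta}e^{\varphi_1}f\omega^n$; combined with the fact that on $U$ one has $\mu(U)=0$ (where $\mu:=f\omega^n$ restricted appropriately) this yields a contradiction. I would structure this exactly as in the $\lambda=0$ uniqueness proof (Theorem \ref{thm: uniq of lambda=0}), adapting the two-case analysis there, but now the extra input $e^{\varphi_1}>e^{\varphi_2}$ on $U$ replaces the constant-shift ambiguity and pins down $t_0=0$ automatically.

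Concretely, I expect the cleanest writeup to mirror \textbf{Theorem \ref{thm: uniq of lambda=0}}: first show by the strict-monotonicity computation above that $\mu:=f\omega^n$ assigns zero mass to $\{\varphi_2<\varphi_1\}$ (and by symmetry to $\{\varphi_1<\varphi_2\}$); then run the auxiliary-volume-form argument with the solution $\psi$ of $(\beta+dd^c\psi)^n=dV$, $\psi\le\varphi_1$ (resp. $\psi\le\varphi_2$), to conclude the strict-inequality sets are empty; hence $\varphi_1=\varphi_2$. No separate normalization $\sup_X\varphi=0$ is needed here because the exponential term breaks the translation invariance.

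The main obstacle is making sure the comparison principle is applied in the correct generality and with the correct inequality direction: Proposition \ref{prop: np comparison principle} is stated for non-pluripolar products of general $\beta$-psh functions, and I must check that for bounded $\varphi_1,\varphi_2$ the non-pluripolar products coincide with the ordinary Bedford--Taylor Monge--Amp\`ere measures (this is Remark \ref{rem: nonpp}), and that the strict inequality $e^{\varphi_1}>e^{\varphi_2}$ on the \emph{open} set $\{\varphi_2<\varphi_1\}$ propagates correctly through the integral identities without boundary issues. A secondary technical point is handling the possibility that $\{f=0\}$ has positive measure: the conclusion "$\mu$ puts no mass on $U$" only gives information where $f>0$, so the perturbation step with the strictly positive volume form $dV$ is essential precisely to rule out that $U$ hides inside $\{f=0\}$. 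Everything else is a routine transcription of the $\lambda=0$ argument.
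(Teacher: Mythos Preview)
Your overall approach coincides with the paper's: first use the comparison principle together with strict monotonicity of $e^t$ to show that $\mu=f\omega^n$ puts no mass on $\{\varphi_1\neq\varphi_2\}$, then invoke the auxiliary-volume-form perturbation from Theorem~\ref{thm: uniq of lambda=0} to conclude the inequality sets are empty. (The paper works on $\{\varphi_1<\varphi_2-\varepsilon\}$ and obtains $\int e^{\varphi_2}\,d\mu\le e^{-\varepsilon}\int e^{\varphi_2}\,d\mu$ directly; your squeeze on $\{\varphi_2<\varphi_1\}$ is an equally valid variant.)

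However, your perturbation step is set up backwards. With $U=\{\varphi_2<\varphi_1\}$ and $\psi\le\varphi_2$, the combination $(1-\delta)\varphi_2+\delta\psi$ is \emph{smaller} than $\varphi_2$, so $U_\delta=\{(1-\delta)\varphi_2+\delta\psi<\varphi_1\}$ \emph{contains} $U$ (it does not increase to $U$), and on $U_\delta$ the comparison principle gives
\[
\int_{U_\delta}(\beta+dd^c\varphi_1)^n\ \le\ \int_{U_\delta}\bigl(\beta+dd^c((1-\delta)\varphi_2+\delta\psi)\bigr)^n,
\]
the reverse of what you wrote; so neither the inequality $\delta^n\int_{U_\delta}dV\le\int_{U_\delta}(\beta+dd^c\varphi_1)^n$ nor the bound $\mu(U_\delta)\le\mu(U)=0$ is available. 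The correct move (exactly as in Theorem~\ref{thm: uniq of lambda=0}) is to perturb the \emph{larger} function: take $\psi\le\varphi_2$ and set $U_\delta:=\{\varphi_2<(1-\delta)\varphi_1+\delta\psi\}$. Then $U_\delta\subseteq U$, $U_\delta\nearrow U$ as $\delta\searrow 0$, and the comparison principle yields
\[
0<\delta^n\int_{U_\delta}dV\ \le\ \int_{U_\delta}\bigl(\beta+dd^c((1-\delta)\varphi_1+\delta\psi)\bigr)^n\ \le\ \int_{U_\delta}(\beta+dd^c\varphi_2)^n\ \le\ C\,\mu(U_\delta)\ \le\ C\,\mu(U)=0,
\]
giving the desired contradiction.
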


\begin{proof}
Let $\varphi_1$ and $\varphi_2$ be such solutions, and denote by $T_1$ (resp. $T_2$) the closed positive current $\beta+dd^c\varphi_1$ (resp. $\beta+dd^c\varphi_2$).
	Set $d\mu=f\omega^n$. By  the   comparision principle (see Proposition \ref{prop: np comparison principle}), 
	$$\int_{\{\varphi_1\textless\varphi_2-\varepsilon\}}(\beta+dd^c\varphi_2)^n\leq \int_{\{\varphi_1\textless\varphi_2-\varepsilon\}}(\beta+dd^c\varphi_1)^n.$$
	This is equivalent to
	$$\int_{\{\varphi_1\textless\varphi_2-\varepsilon\}}e^{\varphi_2}d\mu\leq\int_{\{\varphi_1\textless\varphi_2-\varepsilon\}}e^{\varphi_1}d\mu.$$
	So we get
	$$\int_{\{\varphi_1\textless\varphi_2-\varepsilon\}}e^{\varphi_2}d\mu\leq e^{-\varepsilon}\int_{\{\varphi_1\textless\varphi_2-\varepsilon\}}e^{\varphi_2}d\mu,$$
thus $\mu(\{\varphi_1\textless \varphi_2-\varepsilon\})=0$, since $\varphi_2$ is bounded on $X$. For the same reason, $\mu(\{\varphi_2\textless \varphi_1-\varepsilon\})=0$. Let $\varepsilon\searrow 0$, we get $\mu(\{\varphi_1=\varphi_2\})=\mu(X)$. Then by the same method  as in the proof of  Theorem \ref{thm: uniq of lambda=0}, we finish the proof of Theorem \ref{thm: uniq of lambda>0}.
\end{proof}



\section{Asymptotics  of solutions of  degenerate CMA equations }
Let $(X,\omega)$ be a compact Hermitian manifold of complex dimension $n$, $\{\beta\}\in H^{1,1}(X,\mathbb R)$ be a real $(1,1)$-class with smooth representative $\beta$. Let $\rho$ be a bounded $\beta$-PSH function. Let  $f\geq0$, $f \in L^p(X,\omega^n)$, $\int_X f\omega^n \textgreater 0$. For any $\lambda\geq 0$, we have proved that the following degenerate CMA equations can be solved:
\[(\beta+dd^c \varphi_{\lambda})^n=e^{\lambda \varphi_{\lambda}+M_{\lambda}}f\omega^n, \sup_X \varphi_{\lambda}=0.\]
In this section, we study the asymptotics of the solutions $\varphi_{\lambda}$  as $\lambda\rightarrow \lambda_0$, for some $\lambda_0\geq 0$. We first get a bound for $M_\lambda$.
\begin{lem}\label{prop: bound mlambda}
There is a constant  $C>0$  depending only  on $(X,\omega,||f||_p)$, such that 
	\begin{center}
		$\log\frac{\int_X \beta^n}{\int_X f\omega^n} \leq M_{\lambda} \leq \lambda C+\log\frac{\int_X \beta^n}{\int_X f\omega^n}.$
	\end{center}
	\end{lem}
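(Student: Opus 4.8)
The plan is to extract both inequalities directly from the equation $(\beta+dd^c\varphi_\lambda)^n=e^{\lambda\varphi_\lambda+M_\lambda}f\omega^n$ by integrating over $X$ and using the normalization $\sup_X\varphi_\lambda=0$ together with the uniform integrability estimates already available. First I would integrate the equation over $X$; since $\varphi_\lambda\in\mbox{PSH}(X,\beta)\cap L^\infty(X)$ is bounded, the non-pluripolar product coincides with the Bedford-Taylor product and $\int_X(\beta+dd^c\varphi_\lambda)^n=\int_X\beta^n$. Hence
\[
\int_X\beta^n=e^{M_\lambda}\int_X e^{\lambda\varphi_\lambda}f\omega^n,
\qquad\text{i.e.}\qquad
M_\lambda=\log\frac{\int_X\beta^n}{\int_X e^{\lambda\varphi_\lambda}f\omega^n}.
\]
For the upper bound on $M_\lambda$ I need a lower bound on $\int_X e^{\lambda\varphi_\lambda}f\omega^n$. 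By Jensen's inequality applied to the probability measure $\frac{1}{\int_Xf\omega^n}f\omega^n$ and convexity of $t\mapsto e^t$,
\[
\int_X e^{\lambda\varphi_\lambda}f\omega^n
\;\geq\; \Bigl(\int_X f\omega^n\Bigr)\exp\!\Bigl(\lambda\,\frac{\int_X\varphi_\lambda f\omega^n}{\int_X f\omega^n}\Bigr).
\]
Since $\beta\leq A\omega$ for some $A>0$, we have $\varphi_\lambda\in\mbox{PSH}(X,A\omega)$ with $\sup_X\varphi_\lambda=0$, so by \cite[Corollary 1.3]{Ngu16} there is a uniform constant $C_0=C_0(X,\omega,\|f\|_p)$ with $\int_X(-\varphi_\lambda)f\omega^n\leq C_0$, i.e. $\int_X\varphi_\lambda f\omega^n\geq -C_0$. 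Combining, $\int_X e^{\lambda\varphi_\lambda}f\omega^n\geq(\int_Xf\omega^n)e^{-\lambda C}$ with $C=C_0/\int_Xf\omega^n$, which yields $M_\lambda\leq \lambda C+\log\frac{\int_X\beta^n}{\int_Xf\omega^n}$.

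For the lower bound, I use $\varphi_\lambda\leq 0$ (since $\sup_X\varphi_\lambda=0$ and $\lambda\geq0$), so $e^{\lambda\varphi_\lambda}\leq 1$ and therefore $\int_X e^{\lambda\varphi_\lambda}f\omega^n\leq\int_Xf\omega^n$; plugging into the displayed formula for $M_\lambda$ gives $M_\lambda\geq\log\frac{\int_X\beta^n}{\int_Xf\omega^n}$.

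The only mild subtlety — and the step I would be most careful about — is justifying $\int_X(\beta+dd^c\varphi_\lambda)^n=\int_X\beta^n$ in the Hermitian (non-Kähler) setting, where $\beta$ need not be $d$-closed as a smooth form but is (it is stated to have a closed smooth representative, and here we only need that $\beta-\beta'=dd^c g$-type corrections integrate to the cohomological value); this is fine because $\varphi_\lambda$ is bounded, so Stokes' theorem applies to the Bedford-Taylor product and $\int_X(\beta+dd^c\varphi_\lambda)^n$ depends only on the class $\{\beta\}$, giving $\int_X\beta^n$. Everything else is an elementary application of Jensen's inequality and the uniform $L^1$-bound on $\varphi_\lambda$ against $f\omega^n$ from \cite[Corollary 1.3]{Ngu16}, so no genuine obstacle remains.
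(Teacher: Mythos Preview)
Your proof is correct and essentially identical to the paper's: both integrate the equation to relate $M_\lambda$ to $\int_X e^{\lambda\varphi_\lambda}f\omega^n$, use $\varphi_\lambda\le 0$ for the lower bound, and combine Jensen's inequality (the paper phrases it as concavity of $\log$, you as convexity of $\exp$) with \cite[Corollary~1.3]{Ngu16} for the upper bound. Your caution about $\int_X(\beta+dd^c\varphi_\lambda)^n=\int_X\beta^n$ is well-placed but unnecessary here, since the paper explicitly takes $\beta$ to be a smooth real \emph{closed} $(1,1)$-form, so Stokes applies directly.
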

\begin{proof}
	The LHS of inequality is simply because $\int_X(\beta+dd^c \varphi_{\lambda})^n\leq e^{M_{\lambda}}\int_Xf\omega^n$.
	From the concavity property of $\log$ function, we get 
	$$\log\frac{\int_X\beta^n}{\int_Xf\omega^n}=\log\frac{\int_Xe^{\lambda \varphi_{\lambda}+M_{\lambda}}f\omega^n}{\int_Xf\omega^n}\geq \frac{\int_X(\lambda\varphi_{\lambda}+M_{\lambda})f\omega^n }{\int_Xf\omega^n}= M_{\lambda}+\lambda \frac{\int_X \varphi_{\lambda}f\omega^n}{\int_X f\omega^n}.$$
	Since $\varphi_{\lambda} \in \mbox{PSH}(X,\beta) \subseteq \mbox{PSH}(X,C\omega)$ and $\sup_X \varphi_{\lambda}=0$, by \cite[Corollary 1.3]{Ngu16}, there exists a uniform constant  $C=C(||f||_p,X,\omega)\textgreater 0$,  such that
	$$\frac{\int_X \varphi_{\lambda}f\omega^n} {\int_Xf\omega^n} \geq -C.$$
	Therefore we get $$M_{\lambda}\leq \lambda C+\log\frac{\int_X \beta^n}{\int_X f\omega^n}.$$
	\end{proof}

\begin{lem}\label{lem: evo sup cap est}
	Assume $\varphi,\psi \in \mbox{PSH}(X,\beta)\cap L^{\infty}(X)$, $\varphi,\psi \leq0\leq \rho$. Let $a \textgreater0$ be a number, and $(\beta+dd^c\varphi)^n=f\omega^n$ for some non-negative $f\in L^p(\omega^n)$. Then there exists $C=C(a,||f||_p,||\rho||_{\infty}+||\psi||_{\infty})\textgreater 0$, for $\forall \varepsilon \textgreater 0$, 
	$$ \sup_X(\psi-\varphi)\leq \varepsilon +C[Cap_{\beta}(\{\varphi \textless \psi-\varepsilon\})]^{\frac{a}{n}}.$$
\end{lem}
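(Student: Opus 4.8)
The plan is to recast the assertion as a statement about the capacity of the superlevel sets of $\psi-\varphi$ and then run a De Giorgi-type iteration. Put $\mu:=f\omega^n=(\beta+dd^c\varphi)^n$ (the two agree since $\varphi$ is bounded) and, for $x\in\mathbb{R}$, set
$$g(x):=\bigl[Cap_\beta(\{\varphi<\psi-x\})\bigr]^{1/n}.$$
Because $\varphi,\psi$ are bounded, $g$ is nonincreasing, right-continuous (as $x'\searrow x$ the open sets $\{\varphi<\psi-x'\}$ increase to $\{\varphi<\psi-x\}$, and $Cap_\beta$ is continuous along increasing sequences of open sets) and $g(x)=0$ once $x>\|\varphi\|_\infty+\|\psi\|_\infty$. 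Moreover $g(x)=0$ forces the open set $\{\varphi<\psi-x\}$ to be empty, i.e. $\sup_X(\psi-\varphi)\le x$; hence it suffices to prove $g(x)=0$ for every $x\ge\varepsilon+C\,g(\varepsilon)^a$, noting that $g(\varepsilon)^a=Cap_\beta(\{\varphi<\psi-\varepsilon\})^{a/n}$.

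Two ingredients feed the iteration. First, from $f\in L^p(\omega^n)$, $p>1$: Hölder's inequality together with the volume--capacity comparison (Proposition \ref{lem: vol-cap beta}) and the elementary fact that $e^{-(\alpha/q)t^{-1/n}}\le C_a\,t^{1+a}$ holds on the bounded interval $(0,Cap_\beta(X)]$ for every $a>0$, give a bound $\mu(E)\le C_0\,Cap_\beta(E)^{1+a}$ for all Borel $E$, with $C_0$ depending only on $a$ and $\|f\|_p$ (the fixed data $X,\omega,\beta,M$ being suppressed). Second -- this is the crux -- since $\varphi,\psi$ are bounded $\beta$-psh they lie in $\mathcal{E}(X,\beta)$ and $\psi\le0\le\rho$, so Lemma \ref{lem: bounded solution cap est level} applies and yields, for $0<t\le1$ and any $s\in\mathbb{R}$,
$$t^n\,Cap_\beta(\{\varphi<\psi-t-s\})\le\int_{\{\varphi<(1-t)\psi+t\rho-s\}}(\beta+dd^c\varphi)^n.$$
Using $(1-t)\psi+t\rho=\psi+t(\rho-\psi)\le\psi+tR$, where $R:=\|\psi\|_\infty+\|\rho\|_\infty$, the integration set is contained in $\{\varphi<\psi-(s-tR)\}$, so inserting the first ingredient gives $t^n\,Cap_\beta(\{\varphi<\psi-(s+t)\})\le C_0\,Cap_\beta(\{\varphi<\psi-(s-tR)\})^{1+a}$. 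Setting $\sigma:=s-tR$ and then $\tau:=t(R+1)$ (so that $s+t=\sigma+\tau$), this becomes the self-improving inequality
$$\tau\,g(\sigma+\tau)\le B\,g(\sigma)^{1+a}\qquad\text{for all }\sigma\in\mathbb{R}\text{ and }\tau\in(0,1],$$
with $B:=(R+1)\,C_0^{1/n}=B(a,\|f\|_p,\|\rho\|_\infty+\|\psi\|_\infty)$.

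Now the iteration. With $\sigma_k:=\varepsilon+S(1-2^{-k})$ and $u_k:=g(\sigma_k)$, the last inequality gives $u_{k+1}\le\frac{2B}{S}\,2^k\,u_k^{1+a}$, and the standard recursion estimate forces $u_k\to0$ provided $u_0=g(\varepsilon)\le(S/2B)^{1/a}2^{-1/a^2}$. Choosing $S:=2^{1+1/a}B\,g(\varepsilon)^a$ makes this an equality; if in addition $S\le2$ (so that all increments $S2^{-k-1}$ stay in $(0,1]$), then $g\equiv0$ on $[\varepsilon+S,\infty)$, hence $\sup_X(\psi-\varphi)\le\varepsilon+2^{1+1/a}B\,g(\varepsilon)^a$. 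In the complementary regime $S>2$, equivalently $g(\varepsilon)^a>2^{-1/a}/B$, one instead applies Lemma \ref{lem: egz func} to $g$ directly (its hypothesis holds for all $s>0$) to get the crude uniform bound $\sup_X(\psi-\varphi)\le S_0:=S(a,B)$, and then enlarges the constant so that $C\cdot2^{-1/a}/B\ge S_0$; in both regimes $\sup_X(\psi-\varphi)\le\varepsilon+C\,Cap_\beta(\{\varphi<\psi-\varepsilon\})^{a/n}$ with $C=C(a,\|f\|_p,\|\rho\|_\infty+\|\psi\|_\infty)$.

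The main obstacle is engineering the self-improving inequality: one has to pick the interpolant and the scaling so that the $\rho$-correction built into Lemma \ref{lem: bounded solution cap est level} and the factor $t^n$ assemble into a clean inequality for $g$ alone, keeping careful track of how the level sets shift. After that the argument is the familiar De Giorgi machinery; the only subtlety is that we want the error term to scale like $Cap_\beta(\{\varphi<\psi-\varepsilon\})^{a/n}$ rather than being a fixed constant, which is why $g(\varepsilon)$ is carried explicitly and Lemma \ref{lem: egz func} is invoked only in the regime where $g(\varepsilon)$ is not small. The case $\sup_X(\psi-\varphi)\le\varepsilon$, where $Cap_\beta(\{\varphi<\psi-\varepsilon\})=0$, is of course trivial.
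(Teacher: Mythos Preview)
Your proof is correct and follows essentially the same route as the paper: both combine Lemma~\ref{lem: bounded solution cap est level} with the inclusion $\{\varphi<(1-t)\psi+t\rho-s\}\subset\{\varphi<\psi-s+tR\}$ and the same change of variables (your $\sigma=s-tR$, $\tau=t(R+1)$ is exactly the paper's substitution $(s,t)\mapsto(s+\tfrac{M}{1+M}t,\tfrac{t}{1+M})$) to obtain the self-improving inequality $\tau g(\sigma+\tau)\le B\,g(\sigma)^{1+a}$, then iterate and split into the small/large $g(\varepsilon)$ regimes. The only cosmetic differences are that you carry out the De~Giorgi recursion explicitly where the paper cites the refinement in \cite[Remarks~2.5]{EGZ09}, and in the large-capacity regime you invoke Lemma~\ref{lem: egz func} directly on $g$ whereas the paper quotes the $L^\infty$ bound of Theorem~\ref{thm: main 2 in sect}; both yield a uniform constant with the stated dependence.
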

\begin{proof}

Set $M=||\rho||_{\infty}+||\psi||_{\infty}$. Since  
\[ \{\varphi \textless (1-t)\psi+t\rho-s\} \subseteq \{\varphi \textless \psi-s+tM\}, \]
replacing  $(s,t)$ by $(s+\frac{M}{1+M}t,\frac{t}{1+M})$ in Lemma \ref{lem: bounded solution cap est level}, we get for all $s$ and $0\leq t\leq1$,
	$$t^nCap_{\beta}(\{\varphi \textless \psi-s-t\}) \leq (1+M)^n \int_{\{\varphi \textless \psi-s\}}(\beta+dd^c\varphi)^n= (1+M)^n \int_{\{\varphi \textless \psi-s\}}f\omega^n.$$
By the H\"older's inequality, 
\[\int_{\{\varphi \textless\psi-s\}}(\beta+dd^c\varphi)^n=\int_{\{\varphi \textless\psi-s\}}f\omega^n\leq||f||_p·vol(\{\varphi\textless \psi-s\})^{\frac{1}{q}},\]
where $q=p/(p-1)$ is the conjugate number of $p$.
By Lemma \ref{lem: vol-cap beta},  we have 
\[vol(\{\varphi\textless \psi-s\})\leq C\exp(-\alpha Cap_{\beta}(\{\varphi\textless \psi-s\})^{-\frac{1}{n}})\leq A[Cap_{\beta}(\{\varphi\textless \psi-s\})]^{q(1+a)/n}\]
where  $A>0$ is a constant depending  only   on $\alpha$. Set 
\[B=(1+M)^n\|f\|_pA^{1/q}.\]
We conclude that 
\[tCap_{\beta}(\{\varphi \textless \psi-s-t\})^{\frac{1}{n}}\leq B [Cap_{\beta}(\{\varphi \textless \psi-s\})^\frac{1}{n}]^{1+a}.\]
Set 
\[f(s):=[Cap_{\beta}(\{\varphi-\psi \textless -s-\varepsilon\})]^{\frac{1}{n}}.\]

Then $f$ satisfies 
\[tf(s+t)\leq B[f(s)]^{1+a}, \forall s\in \mathbb{R},0\leq t\leq1.\]

By Lemma \ref{lem: egz func} (see also \cite[Lemma 2.3, Remarks 2.5]{EGZ09}), there exists $S=S(a,B)$, such that $f(s)=0,\forall s\geq S$, and 
 if $f(0)\textless \frac{1}{(2B)^{1/a}}$, $S\leq \frac{2B}{1-2^{-a}}[f(0)]^{a}$.

Therefore $\{\varphi \textless \psi-s-\varepsilon\}$ is empty for $s\geq S=S(a,B)$, i.e. $\sup_X(\psi-\varphi) \leq \varepsilon +S$. 

If $f(0)=Cap_{\beta}(\{\varphi \textless \psi -\varepsilon\})^{\frac{1}{n}} \textless \frac{1}{(2B)^{1/a}}$,
we have that 
 $$S\leq \frac{2B}{1-2^{-a}}[f(0)]^{a}=C[Cap_{\beta}(\{\varphi \textless \psi -\varepsilon \})]^{\frac{a}{n}},$$ 
 where  $C= 2B/(1-2^{-a})$, and hence
 \[ \sup_X(\psi-\varphi)\leq \varepsilon +C[Cap_{\beta}(\{\varphi \textless \psi-\varepsilon\})]^{\frac{a}{n}}.\]

If 
$f(0)=Cap_{\beta}(\{\varphi \textless \psi -\varepsilon\})^{\frac{1}{n}} \geq \frac{1}{(2B)^{1/a}}$, 
i.e. 
$[Cap_{\beta}(\{\varphi \textless \psi -\varepsilon\})]^{\frac{a}{n}}\geq \frac{1}{2B}$,
  by Theorem \ref{thm: main 2 in sect}, we have  
	$$\sup_X(\psi-\varphi) \leq -\inf_X \varphi \leq C'=C'(X,\omega,\beta,||f||_p).$$
Thus we get 
$$\sup_X(\psi-\varphi) \leq \varepsilon +C[Cap_{\beta}(\{\varphi \textless \psi-\varepsilon\})]^{\frac{a}{n}},$$
by taking  $C=2BC'$. The proof of Lemma \ref{lem: evo sup cap est} is complete.
\end{proof}

\begin{prop}\label{prop: stabi}
	Assume $\varphi,\psi \in \mbox{PSH}(X,\beta)\cap L^{\infty}(X)$, $\varphi,\psi \leq0$, $(\beta+dd^c\varphi)^n \leq f\omega^n,(\beta+dd^c \psi)^n \leq g\omega^n$ for some non-negative $f,g\in L^p(\omega^n)$. Then there exists a uniform constant  $C=C(||\varphi||_{\infty},||\psi||_{\infty},||f||_p,||g||_p)\textgreater0$, such that  for all  $0 \textless \gamma \textless \frac{2}{2+nq}$
	$$||\varphi-\psi||_{\infty}\leq C||\varphi-\psi||_2^{\gamma},$$
	where $q=p/(p-1)$ is the conjugate number of $p$.
\end{prop}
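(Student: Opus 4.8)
The plan is to follow the classical stability argument of Eyssidieux-Guedj-Zeriahi, using Lemma \ref{lem: evo sup cap est} to convert an $L^2$-smallness of $\varphi-\psi$ into an $L^\infty$-smallness, via an intermediate capacity estimate. By symmetry it suffices to bound $\sup_X(\psi-\varphi)$, and a further symmetric argument bounds $\sup_X(\varphi-\psi)$; adding a harmless normalization one may also reduce to the situation where $(\beta+dd^c\varphi)^n = \tilde f \omega^n$ exactly (e.g. by solving an auxiliary Monge-Amp\`ere equation via Theorem \ref{thm: main 1-2} to replace the inequality $(\beta+dd^c\varphi)^n\le f\omega^n$ by an equality with an $L^p$ density of comparable norm), so that Lemma \ref{lem: evo sup cap est} applies. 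The first step is therefore: fix $\varepsilon>0$ and apply Lemma \ref{lem: evo sup cap est} to get
\[
\sup_X(\psi-\varphi)\le \varepsilon + C\big[Cap_\beta(\{\varphi<\psi-\varepsilon\})\big]^{a/n},
\]
for the admissible exponent $a$ coming from $\|f\|_p$ (with $a$ determined by $p$, hence by $q$).

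The second step is to estimate the capacity $Cap_\beta(\{\varphi<\psi-\varepsilon\})$ in terms of $\|\varphi-\psi\|_2$. For any competitor $v\in\mathrm{PSH}(X,\beta)$ with $\rho\le v\le\rho+1$, one writes
\[
\int_{\{\varphi<\psi-\varepsilon\}}(\beta+dd^c v)^n\le \frac{1}{\varepsilon}\int_{\{\varphi<\psi-\varepsilon\}}(\psi-\varphi)(\beta+dd^c v)^n\le \frac{1}{\varepsilon}\int_X(\psi-\varphi)^+ (\beta+dd^c v)^n,
\]
where $(\psi-\varphi)^+$ is the positive part; here one uses $0<\psi-\varphi$ on that set and $0\le \varphi,\psi$ bounded so that all integrals make sense and the Bedford-Taylor products are well-defined. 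Then one must control $\int_X (\psi-\varphi)^+(\beta+dd^c v)^n$ by a power of $\|\varphi-\psi\|_2$. This is done by the standard interpolation trick: split $(\psi-\varphi)^+ = (\psi-\varphi)^+\mathds{1}_{\{\psi-\varphi>\delta\}} + (\psi-\varphi)^+\mathds{1}_{\{\psi-\varphi\le\delta\}}$; on the first piece use Chebyshev to see that $\{\psi-\varphi>\delta\}$ has small volume and invoke a uniform bound $\int_X(\psi-\varphi)^q(\beta+dd^c v)^n\le C$ together with H\"older and the Chern-Levine-Nirenberg inequality (Proposition \ref{prop:CLN ineq}, applied after noting $\beta\le A\omega$ so $v,\varphi,\psi\in\mathrm{PSH}(X,A\omega)$) to bound its mass; on the second piece bound the integrand by $\delta$ times a total mass $\int_X(\beta+dd^c v)^n=\int_X\beta^n$. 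Optimizing in $\delta$ yields $\int_X(\psi-\varphi)^+(\beta+dd^c v)^n\le C\|\varphi-\psi\|_2^{\tau}$ for some explicit $\tau=\tau(n,q)>0$, and taking the supremum over $v$ gives $Cap_\beta(\{\varphi<\psi-\varepsilon\})\le C\varepsilon^{-1}\|\varphi-\psi\|_2^{\tau}$.

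Plugging this into the first step yields
\[
\sup_X(\psi-\varphi)\le \varepsilon + C\varepsilon^{-a/n}\|\varphi-\psi\|_2^{a\tau/n},
\]
and choosing $\varepsilon = \|\varphi-\psi\|_2^{a\tau/(n+a)}$ balances the two terms and produces a bound of the form $\sup_X(\psi-\varphi)\le C\|\varphi-\psi\|_2^{\gamma}$ for $\gamma$ as claimed; chasing the exponents through (with $a$ expressible via $q$ from the admissibility of $\|f\|_p$, $\|g\|_p$) gives precisely the range $0<\gamma<\frac{2}{2+nq}$. The symmetric estimate for $\sup_X(\varphi-\psi)$ follows identically with the roles of $f$ and $g$ exchanged, and combining the two gives $\|\varphi-\psi\|_\infty\le C\|\varphi-\psi\|_2^\gamma$.

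The main obstacle I anticipate is the second step: obtaining the uniform bound on $\int_X(\psi-\varphi)^q(\beta+dd^c v)^n$ uniformly over the competitors $v$ and then carrying out the volume/capacity interpolation carefully enough that the resulting exponent is exactly $\tau$ with $a\tau/(n+a)$ matching $\frac{2}{2+nq}$. This requires a clean use of the Chern-Levine-Nirenberg inequality in the Hermitian setting (Proposition \ref{prop:CLN ineq} and Remark \ref{rem: v<-1 cln in}) together with the volume-capacity comparison (Proposition \ref{lem: vol-cap beta}) and H\"older's inequality, and one has to be somewhat attentive to which $L^p$-norm ($\|f\|_p$ versus $\|g\|_p$) governs which half of the symmetric argument; everything else is bookkeeping with the EGZ-type lemmas already established in the excerpt.
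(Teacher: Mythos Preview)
Your first step (invoking Lemma \ref{lem: evo sup cap est}) and the overall shape of the argument are right, but there is a genuine gap in your second step. You cannot control $\int_X(\psi-\varphi)^+(\beta+dd^cv)^n$ by a power of $\|\varphi-\psi\|_2$ using only CLN and volume arguments. The CLN inequality (Proposition \ref{prop:CLN ineq}) bounds $\int(-u)(\beta+dd^cv)^n$ only for quasi-psh $u$, and $(\psi-\varphi)^+$ is not quasi-psh. Your splitting trick does not save this: Chebyshev tells you $\{\psi-\varphi>\delta\}$ has small \emph{Lebesgue} volume, but the Monge--Amp\`ere measure $(\beta+dd^cv)^n$ of that set is not dominated by its volume (Proposition \ref{lem: vol-cap beta} goes the other direction). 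Bounding $(\beta+dd^cv)^n(\{\psi-\varphi>\delta\})$ by the capacity of that set is circular, since that capacity is exactly what you are trying to estimate.

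The paper resolves this by using a different lemma for the capacity step. Instead of working with an arbitrary competitor $v$, it invokes the comparison-principle estimate Lemma \ref{lem: bounded solution cap est level}, which replaces $(\beta+dd^cv)^n$ by $(\beta+dd^c\varphi)^n$ itself:
\[
t^n\,Cap_\beta(\{\varphi<\psi-s-t\})\le C\int_{\{\varphi<\psi-s\}}(\beta+dd^c\varphi)^n\le C\int_{\{\varphi<\psi-s\}}f\,\omega^n.
\]
Now the right-hand side is an honest Lebesgue integral, and Chebyshev with exponent $2/q$ together with H\"older against $\|f\|_p$ give
\[
Cap_\beta(\{|\varphi-\psi|>\varepsilon\})\le \frac{C}{\varepsilon^{\,n+2/q}}\,\|\varphi-\psi\|_2^{2/q}.
\]
Choosing $\varepsilon=\|\varphi-\psi\|_2^\gamma$ and then taking the free parameter $a$ in Lemma \ref{lem: evo sup cap est} large enough forces the second term to have exponent $\ge\gamma$, yielding precisely the range $0<\gamma<\tfrac{2}{2+nq}$. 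Incidentally, the preliminary reduction you propose (solving an auxiliary equation to turn $(\beta+dd^c\varphi)^n\le f\omega^n$ into an equality) is unnecessary: both Lemma \ref{lem: evo sup cap est} and Lemma \ref{lem: bounded solution cap est level} only use the upper bound.
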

\begin{proof}
	The proof is the same as the proof of \cite[Proposition 3.3]{EGZ09}. Fix $\varepsilon>0$ and $a>0$ to be determined later. It follows from Lemma \ref{lem: evo sup cap est} that,
	\[\|\varphi-\psi\|_{L^\infty(X)}\leq \varepsilon+C_1[Cap_\beta(\{|\varphi-\psi|>\varepsilon\})]^{\alpha/n}.\]
	By Lemma \ref{lem: bounded solution cap est level}, we have 
	\[  Cap_\beta(\{|\varphi-\psi|>\varepsilon\}) \leq \frac{C_2}{\varepsilon^{n+2/q}}   \int_X|\varphi-\psi|^{2/q} (f+g)\omega^n,        \]
where $C_2$ is a uniform bound depending only on $\|\varphi\|_{L^\infty(X)}$ and $\|\psi\|_{L^\infty(X)}$.
Applying H\"older's inequaltiy, it follows that 
\[  Cap_\beta (\{|\varphi-\psi|>\varepsilon\}) \leq \frac{C_3\|f+g\|_{L^p}}{\varepsilon^{n+2/q}}[\|\varphi-\psi\|_{L^2}]^{2/q}.             \]
Choose $\varepsilon:=\|\varphi-\psi\|_{L^2}^\gamma$ with $0<\gamma<2/(2+nq)$. Then 
\[  Cap_\beta (\{|\varphi-\psi|>\varepsilon\}) \leq C_4     [\|\varphi-\psi\|_{L^2}]^{2/q-\gamma (n+2/q)}.         \]
We infer 
\[  \|\varphi-\psi\|_{L^\infty(X)}\leq   \|\varphi-\psi\|_{L^2}^\gamma+C_5\|\varphi-\psi\|_{L^2}^{\gamma'}, \mbox{~where~}    \gamma'=\frac{\alpha}{n}[2/q-\gamma (n+2/q)].  \]
Now choose $a>0$ large enough such that $\gamma\leq \gamma'$, we get the desired estimate.
	\end{proof}
\begin{thm}[=Theorem \ref{thm: main-3}]\label{thm: evo solution}Let $(X,\omega)$ be a compact Hermitian manifold of complex dimension $n$. Let $\{\beta\}\in H^{1,1}(X,\mathbb R)$ be a real $(1,1)$-class with smooth representative $\beta$. Assume that there is a bounded $\beta$-PSH function $\rho$. Let $f\geq 0$, $f\in L^p(X,\omega^n)$ such that $\int_Xf\omega^n>0$. Let $\lambda\geq 0$, and $\varphi_\lambda$ be the unique solution to the complex Monge-Amp\` ere equation 
\[	(\beta+dd^c \varphi_{\lambda})^n=e^{\lambda \varphi_{\lambda}+M_{\lambda}}f\omega^n,~ \varphi_\lambda\in \mbox{PSH}(X,\beta),~\sup_X \varphi_{\lambda}=0.	\]
If for some $\lambda_0\geq 0$,  $\|\varphi_\lambda-\varphi_{\lambda_0}\|_1\rightarrow 0$ as $\lambda\rightarrow \lambda_0$, then 
\[M_\lambda\rightarrow M_{\lambda_0}, \mbox{ and } \varphi_\lambda\rightarrow \varphi_{\lambda_0} \mbox{ uniformly on } X \mbox{ as } \lambda\rightarrow \lambda_0.\]
	\end{thm}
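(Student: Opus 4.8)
The plan is to deduce everything from the stability estimate in Proposition \ref{prop: stabi} together with the a priori bounds on $M_\lambda$ in Lemma \ref{prop: bound mlambda}. First I would show that $M_\lambda\to M_{\lambda_0}$. Fix a sequence $\lambda\to\lambda_0$. By Lemma \ref{prop: bound mlambda} the numbers $M_\lambda$ lie in a bounded interval, so it suffices to show every convergent subsequence has limit $M_{\lambda_0}$; pass to such a subsequence with $M_\lambda\to M_*$. Since $\|\varphi_\lambda-\varphi_{\lambda_0}\|_1\to 0$, after a further subsequence $\varphi_\lambda\to\varphi_{\lambda_0}$ almost everywhere, and (being normalized by $\sup_X\varphi_\lambda=0$ and lying in $\mathrm{PSH}(X,C\omega)$) the family $\{\varphi_\lambda\}$ is uniformly bounded in $L^1$ and the densities $e^{\lambda\varphi_\lambda+M_\lambda}f$ are dominated by $e^{M}\,f$ with $M$ a uniform bound; hence by dominated convergence $e^{\lambda\varphi_\lambda+M_\lambda}f\omega^n\to e^{\lambda_0\varphi_{\lambda_0}+M_*}f\omega^n$ in $L^1$, while $\varphi_\lambda\to\varphi_{\lambda_0}$ in $L^1$ forces (after extracting, using the Bedford--Taylor / $\max$ argument as in the proof of Lemma \ref{lem: psi continous}) $(\beta+dd^c\varphi_\lambda)^n\to(\beta+dd^c\varphi_{\lambda_0})^n$ weakly. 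Thus $\varphi_{\lambda_0}$ solves $(\beta+dd^c\varphi_{\lambda_0})^n=e^{\lambda_0\varphi_{\lambda_0}+M_*}f\omega^n$ with $\sup_X\varphi_{\lambda_0}=0$; comparing with the defining equation of $\varphi_{\lambda_0}$ and using that both sides must integrate to $\int_X\beta^n$ gives $M_*=M_{\lambda_0}$.

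Next I would upgrade $L^1$-convergence to uniform convergence. Write the two equations
\[
(\beta+dd^c\varphi_\lambda)^n=e^{\lambda\varphi_\lambda+M_\lambda}f\,\omega^n,\qquad
(\beta+dd^c\varphi_{\lambda_0})^n=e^{\lambda_0\varphi_{\lambda_0}+M_{\lambda_0}}f\,\omega^n .
\]
Both $\varphi_\lambda,\varphi_{\lambda_0}\le 0$, both have $L^\infty$-norm bounded by a uniform constant $C(X,\omega,\beta,\|f\|_p)$ (Theorem \ref{thm: main 1-2}, via Lemma \ref{prop: bound mlambda}), and both densities $e^{\lambda\varphi_\lambda+M_\lambda}f$, $e^{\lambda_0\varphi_{\lambda_0}+M_{\lambda_0}}f$ are bounded in $L^p(X,\omega^n)$ by a uniform constant. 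Hence Proposition \ref{prop: stabi} applies with a single uniform constant $C$ and a fixed exponent $\gamma\in(0,\tfrac{2}{2+nq})$, giving
\[
\|\varphi_\lambda-\varphi_{\lambda_0}\|_{L^\infty(X)}\le C\,\|\varphi_\lambda-\varphi_{\lambda_0}\|_{L^2(X)}^{\gamma}.
\]
It remains to interpolate the $L^2$-norm between $L^1$ and $L^\infty$: since $\|\varphi_\lambda-\varphi_{\lambda_0}\|_{L^\infty}\le 2C$ is uniformly bounded, $\|\varphi_\lambda-\varphi_{\lambda_0}\|_{L^2}^2\le \|\varphi_\lambda-\varphi_{\lambda_0}\|_{L^\infty}\|\varphi_\lambda-\varphi_{\lambda_0}\|_{L^1}\le 2C\,\|\varphi_\lambda-\varphi_{\lambda_0}\|_{L^1}\to 0$. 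Feeding this back yields $\|\varphi_\lambda-\varphi_{\lambda_0}\|_{L^\infty(X)}\to 0$, i.e. uniform convergence on $X$.

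The one genuinely delicate point is the identification $M_*=M_{\lambda_0}$, because a priori the subsequential limit $\varphi_{\lambda_0}$ obtained from $L^1$-convergence need only be a candidate solution and one must know the Monge--Amp\`ere operator is continuous along this particular sequence. This is handled exactly as in Lemma \ref{lem: psi continous}: set $\Phi_\lambda:=(\sup_{\mu\text{ close to }\lambda_0,\ \mu\text{ in the subsequence}}\varphi_\mu)^*$, use \cite[Proposition 2.8]{BT76} to get a lower bound on $(\beta+dd^c\max\{\cdots\})^n$ by the infimum of the densities, let the index run to obtain $(\beta+dd^c\Phi_\lambda)^n\ge \inf e^{\mu\varphi_\mu+M_\mu}f\omega^n$ by Bedford--Taylor monotone convergence, and then let $\lambda\to\lambda_0$ using \cite[Theorem 2.1]{BT82} to conclude $(\beta+dd^c\varphi_{\lambda_0})^n\ge e^{\lambda_0\varphi_{\lambda_0}+M_*}f\omega^n$; the reverse total-mass comparison forces equality and hence $M_*=M_{\lambda_0}$, after which uniqueness (Theorem \ref{thm: uniq of lambda=0} if $\lambda_0=0$, Theorem \ref{thm: uniq of lambda>0} if $\lambda_0>0$, or simply the normalization $\sup_X=0$) pins down the limit and shows the whole family converges, not just a subsequence.
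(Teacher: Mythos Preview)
Your proof is correct, but you have reversed the order of the two conclusions compared to the paper, and this makes your argument longer than necessary. The paper observes that Proposition~\ref{prop: stabi} only requires the densities $e^{\lambda\varphi_\lambda+M_\lambda}f$ to be \emph{bounded} in $L^p$, not convergent; since $\sup_X\varphi_\lambda=0$ and $M_\lambda$ is bounded by Lemma~\ref{prop: bound mlambda}, this is immediate. Hence the paper applies stability \emph{first} to get $\|\varphi_\lambda-\varphi_{\lambda_0}\|_{L^\infty}\to 0$ directly from the $L^1$ hypothesis (with the same $L^1$--$L^2$ interpolation you wrote down), and only \emph{then} deduces $M_\lambda\to M_{\lambda_0}$: uniform convergence makes Bedford--Taylor continuity of the Monge--Amp\`ere operator trivially applicable, and comparing the two sides of the equation forces $e^{M_\lambda}\to e^{M_{\lambda_0}}$. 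Your route---first extracting a subsequential limit $M_*$ and identifying it via the $\max$-trick from Lemma~\ref{lem: psi continous}---works, but the whole third paragraph becomes unnecessary once you notice that your second paragraph is already self-contained and does not use $M_\lambda\to M_{\lambda_0}$ anywhere.
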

\begin{proof}
	Without lose of generality, we may assume that $\lambda$ is uniformly bounded.
From Lemma \ref{prop: bound mlambda}, $M_\lambda$ is uniformly bounded.  Noting $\sup_X\varphi_\lambda=0$, $e^{\lambda \varphi_\lambda+M_\lambda}f\omega^n\leq Cf\omega^n$ for some uniform constant. Then by Theorem \ref{thm: main 2 in sect}, $\varphi_\lambda$ is uniformly bounded. If $\|\varphi_\lambda-\varphi_{\lambda_0}\|_1\rightarrow 0$ as $\lambda\rightarrow \lambda_0$, by Proposition \ref{prop: stabi}, 
$$\|\varphi_\lambda-\varphi_{\lambda_0}\|_{L^\infty(X)}\rightarrow 0, \mbox{ as } \lambda\rightarrow \lambda_0.$$
Applying Bedford-Taylor convergence theorem \cite{BT82}, we can conclude that $M_\lambda\rightarrow M_{\lambda_0}$ as $\lambda\rightarrow \lambda_0$.
	\end{proof}

\section{Applications}
 In this section, we apply our result to get two applications: the first one is a partial answer to the Tosatti-Weinkove conjecture \cite{TW12} and the second one is a partial answer to the  Demailly-P\u aun conjecture \cite{DP04}.
\subsection{The Tosatti-Weinkove  conjecture}  The main purpose of this section, is to give a partial answer to the Tosatti-Weinkove conjecture (Conjecture \ref{conj: tw conj}).

 More precisely, we  relax the condition that  $\beta$ is a smooth semi-positive $(1,1)$-form in Nguyen's result \cite[Theorem 4.1]{Ngu16}, to that there is a bounded $\rho\in \mbox{PSH}(X,\beta)$. 
\begin{thm}[=Theorem \ref{thm: main4}]\label{thm: TW conj}	
Let 	X be a compact n-dimensional complex manifold. Suppose $\{\beta\} \in H^{1,1}(X,\mathbb{R})$, $\beta+dd^c \rho \geq 0$ for bounded $\beta$-\mbox{PSH} function $\rho$, $\int_X \beta^n \textgreater0$. Let $x_1,...,x_N \in X$ be fixed points and let $\tau_1,...,\tau_N$ be positive real numbers so that 
	$$\sum_{i=1}^{N}\tau_i^n < \int_X \beta^n.$$
	Then there exists a $\beta$-\mbox{PSH} function $\varphi$ with logarithmic poles at $x_1,...,x_N$:
	$$\varphi(z)\leq \tau_j \log|z|+O(1),$$
	in a coordinate neighbourhood $(z_1,...,z_n)$ centered at $x_i$, where $|z|^2=|z_1|^2+...+|z_n|^2.$	
\end{thm}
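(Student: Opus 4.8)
The plan is to adapt the mass-concentration technique of Demailly and Demailly--P\u aun (see \cite{Dem93,DP04}) in the form used by Nguyen \cite{Ngu16}, the only new point being that the auxiliary degenerate Monge--Amp\`ere equations are now solved by Theorem \ref{thm: main 1} instead of by a semipositivity hypothesis on $\beta$.

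\emph{Construction of concentrating densities.} First I would fix pairwise disjoint coordinate balls $B_i=B(x_i,r_0)$ with coordinates $z^{(i)}$ centred at $x_i$, cutoffs $\chi_i\in C^\infty_c(B_i)$ with $\chi_i\equiv 1$ on $B_i':=B(x_i,r_0/2)$, and a fixed smooth positive density $g$ on $X$. For $\varepsilon>0$ put $\gamma_{\varepsilon,i}:=\frac{\tau_i}{2}\log(|z^{(i)}|^2+\varepsilon^2)$, a smooth psh function on $B_i$ decreasing to $\tau_i\log|z^{(i)}|$ as $\varepsilon\downarrow 0$, and define
\[
f_\varepsilon\,\omega^n:=\sum_{i=1}^N\chi_i\,(dd^c\gamma_{\varepsilon,i})^n+c_\varepsilon\,g\,\omega^n,\qquad
c_\varepsilon:=\frac{\int_X\beta^n-\sum_i\int_{B_i}\chi_i\,(dd^c\gamma_{\varepsilon,i})^n}{\int_X g\,\omega^n}.
\]
Since $(dd^c\gamma_{\varepsilon,i})^n\rightharpoonup\tau_i^n\,\delta_{x_i}$ as $\varepsilon\downarrow 0$ and $\sum_i\tau_i^n<\int_X\beta^n$, we have $c_\varepsilon>0$ for all small $\varepsilon$; hence $f_\varepsilon\ge 0$ is smooth, $\int_X f_\varepsilon\,\omega^n=\int_X\beta^n$, and in particular $f_\varepsilon\in L^p(X,\omega^n)$. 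By Theorem \ref{thm: main 1} there is a unique $\varphi_\varepsilon\in\mbox{PSH}(X,\beta)\cap L^\infty(X)$ with $(\beta+dd^c\varphi_\varepsilon)^n=f_\varepsilon\,\omega^n$, which I normalize by $\sup_X\varphi_\varepsilon=0$. As $\beta\le A\omega$ for some $A>0$, the family $\{\varphi_\varepsilon\}$ is relatively compact in $L^1(X,\omega^n)$, so along a sequence $\varepsilon_k\downarrow 0$ one has $\varphi_{\varepsilon_k}\to\varphi$ in $L^1$ and a.e., with $\varphi\in\mbox{PSH}(X,\beta)$ and $\sup_X\varphi=0$ (Hartogs lemma); in particular $\varphi\not\equiv-\infty$.

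\emph{Transferring the singularities to the limit.} This is the crux. Fix $i$ and let $h_i$ be a smooth local potential of $\beta$ on $B_i$, so that $u_\varepsilon:=h_i+\varphi_\varepsilon$ is a bounded psh function on $B_i$ with
\[
(dd^c u_\varepsilon)^n=(\beta+dd^c\varphi_\varepsilon)^n=f_\varepsilon\,\omega^n\ \ge\ (dd^c\gamma_{\varepsilon,i})^n\quad\text{on }B_i',
\]
since $\chi_i\equiv 1$ there. Using $\gamma_{\varepsilon,i}\ge\tau_i\log|z^{(i)}|$ and $\varphi_\varepsilon\le 0$, one may pick $A>0$ independent of small $\varepsilon$ so that $u_\varepsilon\le\gamma_{\varepsilon,i}+A$ on $\partial B_i'$; the Bedford--Taylor comparison principle on the ball $B_i'$ then gives $u_\varepsilon\le\gamma_{\varepsilon,i}+A$ throughout $B_i'$, i.e.\ $\varphi_\varepsilon\le\gamma_{\varepsilon,i}+A'$ on $B_i'$ with $A'=A-\inf_{B_i'}h_i$ independent of $\varepsilon$. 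Letting $\varepsilon=\varepsilon_k\downarrow 0$, using $\gamma_{\varepsilon_k,i}\downarrow\tau_i\log|z^{(i)}|$ pointwise on $B_i'\setminus\{x_i\}$ and $\varphi_{\varepsilon_k}\to\varphi$ a.e., we obtain $\varphi\le\tau_i\log|z^{(i)}|+A'$ a.e.\ on $B_i'$, hence everywhere on $B_i'$ since both sides are upper semicontinuous. This is exactly $\varphi(z)\le\tau_i\log|z|+O(1)$ in the coordinate chart centred at $x_i$, so $\varphi$ is the desired $\beta$-psh function.

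\emph{Main obstacle.} The delicate point is, as always in mass-concentration arguments, the last step: guaranteeing that the singularities artificially built into $f_\varepsilon$ survive in the limit $\varphi$. Here this reduces cleanly to a local comparison of $\varphi_\varepsilon$ with the model weights $\gamma_{\varepsilon,i}$ on small balls; the Hermitian nature of $\omega$ and the mere boundedness (not continuity) of $\varphi_\varepsilon$ cause no difficulty, because the comparison is carried out for the ordinary operator $(dd^c\cdot)^n$ on a ball, where the comparison principle holds for bounded psh functions. The routine verifications — the weak convergence $(dd^c\gamma_{\varepsilon,i})^n\rightharpoonup\tau_i^n\delta_{x_i}$, the $L^1$-compactness and the Hartogs statement for $\beta$-psh functions, and the uniform-in-$\varepsilon$ choice of the constant $A$ — I would carry out but expect no surprises.
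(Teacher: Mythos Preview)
Your proof is correct and follows essentially the same route as the paper: both construct smooth concentrating densities approximating $\sum_i\tau_i^n\delta_{x_i}$, solve the resulting degenerate Monge--Amp\`ere equations via Theorem~\ref{thm: main 1}, extract an $L^1$-limit $\varphi$, and transfer the logarithmic singularities by a local Bedford--Taylor comparison of $h_i+\varphi_\varepsilon$ against the model weight on a coordinate ball. The only cosmetic difference is the choice of regularization (the paper uses $\chi(\log(|z|/\varepsilon))$ with compactly supported $(dd^c)^n$, while you use $\tfrac{\tau_i}{2}\log(|z|^2+\varepsilon^2)$ together with a cutoff), which is immaterial.
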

\begin{rem}
The K\"ahler version is proved by Demailly-P\u aun by using Demailly's celebrated mass concentration technique in \cite{Dem93}. The conjecture for $n=2$ and $n=3$ and some partial results for general $n$ (if $X$ is Moishezon and $\{\beta\}$ is a rational class) was obtained by Tosatti-Weinkove. Nguyen \cite{Ngu16} proved the Tosatti-Weinkove conjecture under the assumption that $\beta\geq0$, i.e., $\beta$ is a smooth semi-positive $(1,1)$-form. 
\end{rem}
\begin{proof}
The main idea is to adapt Demailly's celebrated mass concentration technique to the setting of Theorem \ref{thm: TW conj} (cf. \cite{Ngu16}, \cite{TW10}). For the sake of completeness, we repeated here. Let $(z_1,\cdots, z_n)$ be a coordinate chart centered at $x_j$. Let $\chi:\mathbb R\rightarrow \mathbb R$ be a smooth increasing convex function, such that $\chi(t)=t$ for $t\geq 0$, and $\chi(t)=-\frac{1}{2}$ for $t\leq -1$. For $\varepsilon>0$, we define 
$$\gamma_{j,\varepsilon}=dd^c\left(\chi\left(\log\frac{|z|}{\varepsilon}\right)\right)$$
where $|z|^2=|z_1|^2+\cdots+|z_n|^2$. It is clear that  $\gamma_{j,\varepsilon}$ is a closed positive $(1,1)$ form on this coordinate chart, and $\gamma_{j,\varepsilon}=dd^c\log|z|$ outside $|z|<\varepsilon$. Then $\gamma_{j,\varepsilon}^n=0$ outside $|z|<\varepsilon$, thus can be trivially extended to a smooth non-negative $(n,n)$-form on $X$. Since 
$$\int_X\gamma^n_{j,\varepsilon}=1,$$
then $\gamma^n_{j,\varepsilon}\rightarrow \delta_{x_j}$ as $\varepsilon\rightarrow 0$. Set 
$$\delta=\int_X\beta^n-\sum_{j=1}^N\tau_j^n>0.$$
By Theorem \ref{thm: main 1}, we can solve the (degenerate) CMA equation
$$(\beta+dd^c\varphi_\varepsilon)^n=\sum_{j=1}^N\tau_j^n\gamma^n_{j,\varepsilon}+\delta\frac{\omega^n}{\int_X\omega^n},$$
with $\varphi_\varepsilon\in \mbox{PSH}(X,\beta)\cap L^\infty(X)$, $\sup_X\varphi_\varepsilon=0$. 
Since the family $\{ \varphi_\varepsilon:\sup_X\varphi_\varepsilon=0\}$ is compact in $L^1(X,\omega^n)$, then upto choosing  a subsequence, we may assume that $\varphi_\varepsilon$ converges to a $\varphi\in \mbox{PSH}(X,\beta)$ in $L^1(X,\omega^n)$. 
We claim that the function $\varphi$ has desired singularities.

Let $U$ be a neighborhood of $x_j$ and suppose that $\beta=dd^ch$ for some  $h\in C^\infty(\overline U)$ on $\overline U$. Set $v:=\psi_\varepsilon=h+\varphi_\varepsilon$. Since there is a uniform constant $C>0$ such that $v|_{\partial U}\leq C$. Set 
$$u=\tau_j\left(\chi(\log\frac{|z|}{\varepsilon})+\log\varepsilon\right)+C_1,$$
for a large constant $C_1$. Then for $\varepsilon>0$ small enough
\[u|_{\partial U}=\tau_j\log|z|+C_1,~~v|_{\partial U}\leq C,\]
\[(dd^cv)^n= \tau^n_j\gamma_\varepsilon^n +\frac{\delta}{\int_X\omega^n}\omega^n> (dd^cu)^n~~\mbox{on}~~U.\]
Choose $C_1$ sufficiently large, $u> v$ on $\partial U$,  then by the Bedford-Taylor comparison principle \cite[Theorem 4.1]{BT82}, we get that $u\geq v$ on $U$. 
Thus $$\psi_\varepsilon\leq \tau_j\log(|z|+\varepsilon)+C_2\mbox{~~on~~}U.$$
Then $\varphi\leq \tau_j\log|z|+O(1)$ in $U$.
	\end{proof}
\begin{rem} In Theorem \ref{thm: TW conj}, the condition	$$\sum_{i=1}^{N}\tau_i^n < \int_X \beta^n$$ can be weaken to $$\sum_{i=1}^{N}\tau_i^n \leq \int_X \beta^n.$$
	The proof is almost the same, with a slight modification during applying the Bedford-Taylor comparison principle \cite[Theorem 4.1]{BT82}.
	\end{rem}

		\subsection{The Demailly-P\u aun's conjecture}

The main purpose of this section is to give a partial answer to the Demailly-P\u aun's conjecture (Conjecture \ref{conj: dp conj}).
\begin{thm}[=Theorem \ref{thm: main 5}]\label{thm:dp conj}
Let 	$(X,\omega)$ is a compact Hermitian manifold, with  $\omega$ a  pluriclosed Hermitian metric, i.e. $dd^c\omega=0$. Let  $\{\beta\} \in H^{1,1}(X,\mathbb{R})$ be a real $(1,1)$-class with smooth representative $\beta$, such that  $\beta+dd^c \rho \geq 0$ for some bounded $\beta$-psh function $\rho$, and $\int_X\beta^n>0$. Then $\{\beta\}$ contains a K\"ahler current.
\end{thm}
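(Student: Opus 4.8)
The plan is to argue by contradiction, following the scheme of Chiose \cite{Ch16} and Nguyen \cite{Ngu16}; the only genuinely new input is that Theorem \ref{thm: main 1} lets us solve the relevant Monge-Amp\`ere equation assuming merely the existence of a bounded potential $\rho$, rather than semi-positivity of $\beta$. Assume $\{\beta\}$ contains no K\"ahler current. Since $\beta+dd^c\rho\ge0$, Demailly's regularization theorem shows $\{\beta\}$ is nef, and for each $\varepsilon>0$ the form $\beta+\varepsilon\omega$ still carries the bounded potential $\rho$, with $\beta+\varepsilon\omega+dd^c\rho\ge\varepsilon\omega>0$. Running the proof of Theorem \ref{thm: main 1} with $\beta_j+\varepsilon\omega$ in place of $\beta_j$ (closedness of $\beta$ entered there only to prescribe the total mass, which here we let the equation determine), we obtain for each $\varepsilon>0$ a bounded $\varphi_\varepsilon\in\mbox{PSH}(X,\beta+\varepsilon\omega)$ with $\sup_X\varphi_\varepsilon=0$ and a constant $c_\varepsilon>0$ such that
\[
\omega_\varepsilon:=\beta+\varepsilon\omega+dd^c\varphi_\varepsilon\ge0,\qquad \omega_\varepsilon^n=c_\varepsilon\,\omega^n .
\]
Here $\|\varphi_\varepsilon\|_{L^1(X,\omega^n)}\le C$ comes for free from $\sup_X\varphi_\varepsilon=0$ and Proposition \ref{prop:uniform est}, and, crucially, $dd^c\omega_\varepsilon=\varepsilon\,dd^c\omega=0$, so $\omega_\varepsilon$ is again pluriclosed and $d\omega_\varepsilon=\varepsilon\,d\omega$.

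The heart of the proof is a bound, uniform in $\varepsilon$, on the mixed masses
\[
\int_X\omega_\varepsilon^{\,n-k}\wedge\omega^{k}\le C,\qquad k=1,\dots,n-1 .
\]
I would prove this by descending induction on $k$: writing $\int_X\omega_\varepsilon^{n-k}\wedge\omega^k=\int_X\omega_\varepsilon^{n-k-1}\wedge(\beta+\varepsilon\omega+dd^c\varphi_\varepsilon)\wedge\omega^k$, the $\beta$-term is absorbed using nef-ness (replace $\beta$ by $\beta+\delta\omega+dd^ch_\delta\ge0$ minus an error), the $\varepsilon\omega$-term is $\varepsilon$ times a lower mixed mass, and the $dd^c\varphi_\varepsilon$-term is integrated by parts into $\int_X\varphi_\varepsilon\,dd^c(\omega_\varepsilon^{n-k-1}\wedge\omega^k)$. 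Since $dd^c\omega_\varepsilon=dd^c\omega=0$, expanding $dd^c$ of this product produces only torsion contributions of the shape $d\omega\wedge d^c\omega$ (respectively $d\omega_\varepsilon\wedge d^c\omega_\varepsilon=\varepsilon^2\,d\omega\wedge d^c\omega$) wedged with lower products of $\omega_\varepsilon$ and $\omega$, which by \eqref{equ:B cond} are dominated by $C\,\omega^2$ times those lower products; the resulting terms are then controlled by $\|\varphi_\varepsilon\|_{L^1}$, the Chern-Levine-Nirenberg inequality (Proposition \ref{prop:CLN ineq}) and the induction hypothesis. This is precisely where the hypothesis $dd^c\omega=0$ is indispensable.

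Granting the mixed-mass bounds, the identity $\int_X\beta^n=\int_X(\omega_\varepsilon-\varepsilon\omega)^n=\sum_{k=0}^n\binom nk(-\varepsilon)^k\int_X\omega_\varepsilon^{\,n-k}\wedge\omega^k$ (valid by Stokes, since $\omega_\varepsilon-\varepsilon\omega=\beta+dd^c\varphi_\varepsilon$ is cohomologous to $\beta$ and $\varphi_\varepsilon$ is bounded) shows $c_\varepsilon\int_X\omega^n=\int_X\omega_\varepsilon^n=\int_X\beta^n+O(\varepsilon)$, so $c_\varepsilon\to\int_X\beta^n/\int_X\omega^n=:c_0>0$; in particular $c_\varepsilon\ge c_0/2$ for $\varepsilon$ small, whence $\|\varphi_\varepsilon\|_{L^\infty(X)}\le C$ by Theorem \ref{thm: main 2 in sect}. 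Letting $\varepsilon\to0$ along a subsequence, $\varphi_\varepsilon\to\varphi$ in $L^1$ with $\varphi$ a bounded $\beta$-psh function, and the Bedford-Taylor convergence argument of Theorem \ref{lem: conv phi} gives $(\beta+dd^c\varphi)^n\ge c_0\,\omega^n$. Finally $T:=\beta+dd^c\varphi\ge0$ is a positive current in $\{\beta\}$ with strictly positive Monge-Amp\`ere mass density; a Demailly-P\u aun mass-concentration and regularization argument (using that $\{\beta\}$ is nef and $\omega$ pluriclosed, cf. \cite{Dem93}, \cite{Ch16}, \cite{Ngu16}) upgrades $T$ to a K\"ahler current in $\{\beta\}$, contradicting the assumption and finishing the proof.

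I expect the main obstacle to be the integration-by-parts step of the second paragraph: one must organize the computation so that pluriclosedness genuinely kills the top-order terms, leaving only torsion pieces that are uniformly $L^1$-controlled by $\varphi_\varepsilon$ — for general non-pluriclosed $\omega$ these terms are uncontrollable and the argument collapses. A secondary difficulty is the last step, upgrading positive Monge-Amp\`ere mass to a genuine K\"ahler current, which is global in nature and cannot be done by any purely local estimate.
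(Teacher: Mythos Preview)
Your proposal has a genuine gap, and it lies exactly where you suspected --- but not in the integration-by-parts step. The fatal problem is the last step: you produce a bounded $\varphi\in\mbox{PSH}(X,\beta)$ with $(\beta+dd^c\varphi)^n\ge c_0\,\omega^n$ and then assert that ``a Demailly--P\u aun mass-concentration and regularization argument upgrades $T$ to a K\"ahler current''. On a Hermitian manifold this upgrade is not a known black box; it is essentially the content of the Demailly--P\u aun conjecture itself. In the K\"ahler case the upgrade works because the mass-concentration potentials can be glued using an ambient K\"ahler form, and that gluing is precisely what fails in the Hermitian world. So your argument is circular. Note also that your entire $\varepsilon$-construction is unnecessary: Theorem \ref{thm: main 1} already solves $(\beta+dd^c\varphi)^n=c\,\omega^n$ directly with $\varepsilon=0$, so steps 1--4 of your plan produce nothing new --- all the difficulty has been pushed into the unjustified step 5. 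Finally, your ``argue by contradiction'' framing is illusory: the assumption that $\{\beta\}$ contains no K\"ahler current is never actually used in your argument.

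The paper's proof (following Chiose \cite{Ch16} and Nguyen \cite{Ngu16}) is genuinely by contradiction and proceeds quite differently. If $\{\beta\}$ contains no K\"ahler current, Lamari's duality lemma (Lemma \ref{lem: lamari}) produces, for each $\delta_j\searrow 0$, a Gauduchon metric $g_j$ with
\[
\int_X(\beta+dd^c\rho)\wedge g_j^{\,n-1}\le \delta_j\int_X\omega\wedge g_j^{\,n-1}.
\]
One then uses Theorem \ref{thm: main 1} to solve $(\beta+dd^c\rho+dd^cv_j)^n=c_j\,\omega\wedge g_j^{\,n-1}$, so that $c_j\int_X\omega\wedge g_j^{\,n-1}=\int_X\beta^n$. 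The key technical ingredient is a pointwise Cauchy--Schwarz inequality for Monge--Amp\`ere measures (Lemma \ref{lem: dp conj key lem}), proved by approximating through the strictly positive case via Theorem \ref{thm: twist cma dem-pau} and the $\lambda>0$ equations. Combining the key lemma with the Lamari inequality yields $\tfrac{1}{n}\int_X\beta^n\le\delta_j\int_X(\beta+dd^c\rho+dd^cv_j)^{n-1}\wedge\omega$, and pluriclosedness of $\omega$ makes the right-hand integral equal to the fixed number $\int_X(\beta+dd^c\rho)^{n-1}\wedge\omega$; letting $\delta_j\to 0$ contradicts $\int_X\beta^n>0$. The Gauduchon metric and Lamari's lemma are the missing ingredients in your approach --- they are what convert ``no K\"ahler current'' into a usable analytic inequality.
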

\begin{rem}
	By the  Demailly's  regualrization theorem, we know that the class $\beta$  with bounded $\beta$-psh potential  is nef,  thus Theorem \ref{thm:dp conj} confirms the   Conjecture \ref{conj: dp conj} under the assumption that  there is a bounded $\beta$-psh function $\rho$, and the Hermitian metric $\omega$ is pluriclosed. Moreover, by Chiose's result \cite[Theorem 0.2]{Ch14}, the manifold $X$ turns out to be K\"ahler.
	\end{rem}
\begin{rem}
When $n=2$, and $\beta$ is semipostive, Conjecture \ref{conj: dp conj} is true  by the work of N. Buchdahl \cite{Buc99, Buc00} and Lamari \cite{Lam1,Lam2}. When $n=3$,  Chiose \cite{Ch16} proved that the assumption of the existence of a bounded $\rho\in \mbox{PSH}(X,\beta)$ in Theorem \ref{thm:dp conj} can be weaken to that $\{\beta\}$ is nef. For  $n\geq 4$,   Nguyen proved the Conjecture \ref{conj: dp conj} is true under the assumption that $\beta$ is semi-positive and there is a pluriclosed Hermitian metric. Actually, all the previous results mentioned here and Theorem \ref{thm:dp conj} can also be seen as   partial answers to a conjecture of Boucksom \cite{Bou}, which asks for the existence of a K\"ahler currents in a pseudo-effective real $(1,1)$-class $\{\beta\}\in H^{1,1}(X,\mathbb R)$ on a compact Hermitian manifold $(X,\omega)$, under the assumption that $vol(\{\beta\})>0$ (for definition, see \cite{Bou}). Boucksom \cite{Bou} proved his conjecture when $(X,\omega)$ is assume to be compact K\"ahler. For compact Hermitian manifolds, and general pseudo-effective class, the second author has partial results \cite{Wan16,Wan19}.

\end{rem}
%

The proof of  Theorem \ref{thm:dp conj} is inspired by \cite{Ngu16}. We need to do some preparations.

		\begin{thm}\label{thm: twist cma dem-pau}
			$(X,\omega)$ is a compact Hermitian manifold of dimension $n$. Let $\{\beta\} \in H^{1,1}(X,\mathbb{R})$ be a real $(1,1)$-class with smooth representative $\beta$ such that there is a bounded $\beta$-psh function $\rho$ and  $\int_X \beta^n \textgreater 0$. Let  $0\leq f \in L^p(X, \omega^n), p>1$, be such that  $\int_X f\omega^n>0$. Let $\lambda>0$ be a constant. Then, there exists a   $\varphi_{\lambda}\in \mbox{PSH}(X,\omega+\beta+dd^c\rho)$, with $\rho+\varphi_{\lambda} \in C(X)$, satisfying: 
			$$(\omega+\beta+dd^c\rho+dd^c \varphi_{\lambda})^n=e^{\lambda \varphi_{\lambda}}f\omega^n,$$
	in the weak sense of currents.	
		\end{thm}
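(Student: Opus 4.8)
The plan is to produce $\varphi_{\lambda}$ as a uniform limit of solutions of the equation with $\rho$ replaced by Demailly's smooth approximants. Write $\theta:=\omega+\beta$; since $\rho$ is bounded and $\theta+dd^{c}\rho\geq\omega>0$, the form $\theta$ carries a bounded $\theta$-psh potential, and, setting $\psi:=\rho+\varphi_{\lambda}$, the equation to solve becomes
\[
(\theta+dd^{c}\psi)^{n}=e^{\lambda(\psi-\rho)}f\,\omega^{n},\qquad \psi\in\mbox{PSH}(X,\theta)\cap C(X).
\]
Take $\rho_{j}\searrow\rho$ and $\beta_{j}$ as in Section~2 and put $\hat\omega_{j}:=\theta+dd^{c}\rho_{j}=(1-\tfrac1j)\omega+\beta_{j}$. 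Because $\beta_{j}\geq\frac{1}{2j^{2}}\omega$ we get $\hat\omega_{j}\geq\frac12\omega$, so each $\hat\omega_{j}$ is a smooth Hermitian metric; since moreover $dd^{c}\hat\omega_{j}=dd^{c}\omega$ and $d\hat\omega_{j}\wedge d^{c}\hat\omega_{j}=d\omega\wedge d^{c}\omega$, the curvature constant of $\hat\omega_{j}$ in the sense of (\ref{equ:B cond}) is $\leq 8B$ uniformly in $j$, and a routine integration by parts shows $0<c\leq\int_{X}\hat\omega_{j}^{n}\leq C$ uniformly. For each $j$ I would invoke Kolodziej--Nguyen \cite{KN15} to solve
\[
(\hat\omega_{j}+dd^{c}\varphi_{j})^{n}=e^{\lambda\varphi_{j}}f\,\omega^{n},\qquad \varphi_{j}\in\mbox{PSH}(X,\hat\omega_{j})\cap C(X),
\]
the exponential factor on the right eliminating any normalization constant, exactly as in the proof of Theorem~\ref{thm: lambda=1}.

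The next step is uniform a priori control. As in Section~3 the volume--capacity inequality of Proposition~\ref{prop:vol-cap est} is available for the metrics $\hat\omega_{j}$ with constants depending only on $(X,\omega,\beta,M)$; together with the $L^{\infty}$ argument of Theorem~\ref{thm: main 2 in sect} and the fact that the factor $e^{\lambda\varphi_{j}}$ bounds $\sup_{X}\varphi_{j}$ from above once $\mathrm{osc}_{X}\varphi_{j}$ is controlled, this yields $\|\varphi_{j}\|_{L^{\infty}(X)}\leq C(X,\omega,\beta,M,\|f\|_{p},\lambda)$. Now set $\psi_{j}:=\rho_{j}+\varphi_{j}\in\mbox{PSH}(X,\theta)\cap C(X)$; these are uniformly bounded and satisfy $(\theta+dd^{c}\psi_{j})^{n}=e^{\lambda(\psi_{j}-\rho_{j})}f\,\omega^{n}$ with right-hand density uniformly bounded in $L^{p}(X,\omega^{n})$. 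The decisive point is a \emph{uniform modulus of continuity} for the family $\{\psi_{j}\}$: this is where the hypothesis that $\theta+dd^{c}\rho$ dominates the metric $\omega$ is used, and it should be obtained from Kolodziej's continuity scheme (iteration of the comparison principle fed by the uniform volume--capacity estimate), whose constants are governed only by the fixed geometry of $(X,\omega,\beta)$, by $M$, and by the uniform $L^{p}$ bound of the density, and not by the blowing-up higher derivatives of the $\rho_{j}$. Granting this, Arzel\`a--Ascoli gives a subsequence with $\psi_{j}\to\psi$ uniformly, $\psi\in\mbox{PSH}(X,\theta)\cap C(X)$.

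Finally I would pass to the limit. Uniform convergence implies convergence in capacity, so $(\theta+dd^{c}\psi_{j})^{n}\to(\theta+dd^{c}\psi)^{n}$ weakly by the Bedford--Taylor theorems; on the right-hand side $\psi_{j}-\rho_{j}\to\psi-\rho$ almost everywhere (using $\rho_{j}\searrow\rho$), while $e^{\lambda(\psi_{j}-\rho_{j})}f\leq e^{2\lambda M}f\in L^{1}$, so dominated convergence gives $e^{\lambda(\psi_{j}-\rho_{j})}f\,\omega^{n}\to e^{\lambda(\psi-\rho)}f\,\omega^{n}$ weakly. Hence $(\theta+dd^{c}\psi)^{n}=e^{\lambda(\psi-\rho)}f\,\omega^{n}$, and with $\varphi_{\lambda}:=\psi-\rho$ we obtain $\varphi_{\lambda}\in\mbox{PSH}(X,\omega+\beta+dd^{c}\rho)$, $\rho+\varphi_{\lambda}=\psi\in C(X)$, and $(\omega+\beta+dd^{c}\rho+dd^{c}\varphi_{\lambda})^{n}=e^{\lambda\varphi_{\lambda}}f\omega^{n}$, as required; note that since only $\rho+\varphi_{\lambda}$, not $\varphi_{\lambda}$ itself, is asserted to be continuous, it is essential to run the estimates on $\psi_{j}=\rho_{j}+\varphi_{j}$ rather than on $\varphi_{j}$. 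The main obstacle is the uniform continuity estimate highlighted above --- in effect a Kolodziej-type continuity bound for a degenerate complex Monge--Amp\`ere equation whose reference form is merely a K\"ahler current with bounded potential; the remaining steps are routine adaptations of Sections~2--4 and of \cite{KN15,Ngu16}.
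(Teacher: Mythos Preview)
Your overall plan --- regularize $\rho$ by Demailly's $\rho_{j}$, solve on each level, pass to a uniform limit --- is exactly the paper's. The substantive differences are in the two steps you flag as delicate, and in both places your sketch has a genuine gap. For the uniform $L^{\infty}$ bound, your phrasing is circular: invoking Theorem~\ref{thm: main 2 in sect} to control $\mathrm{osc}_{X}\varphi_{j}$ requires the right-hand density $e^{\lambda\varphi_{j}}f$ to have a uniform $L^{p}$ bound, which already needs $\sup_{X}\varphi_{j}$ bounded above; your appeal to ``the factor $e^{\lambda\varphi_{j}}$'' does not break this loop. The paper first smooths $f$ to $0<f_{j}\in C^{\infty}$ and solves via Cherrier~\cite{Che87}, then bounds $M_{j}:=\sup_{X}\varphi_{j}$ directly: the \emph{upper} bound comes from wedging with $e^{G}\omega^{n-1}$ for a Gauduchon factor $G$, the mixed-type inequality \cite[Lemma~1.9]{Ngu16}, and Jensen, reducing to a uniform bound on $\int_{X}(-\psi_{j})f_{j}^{1/n}e^{G}\omega^{n}$ supplied by the capacity estimates of~\cite{DK12}; the \emph{lower} bound comes from comparing with an auxiliary solution $(\omega_{2}+dd^{c}u_{j})^{n}=c_{j}f_{j}\omega^{n}$, where $c_{j}\geq c_{0}>0$ by \cite[Lemma~5.9]{KN15}, and then \cite[Corollary~2.4]{Ngu16} forces $e^{\lambda M_{j}}\geq c_{j}$.

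For the convergence step, the paper does \emph{not} establish a uniform modulus of continuity at all, and your ``main obstacle'' is simply bypassed. The key observation is that $\hat\omega_{j}+dd^{c}\varphi_{j}=\omega_{2}+dd^{c}(\varphi_{j}+\rho_{j}-\rho_{2})$ for the \emph{fixed} Hermitian metric $\omega_{2}=\omega+\beta+dd^{c}\rho_{2}$. Setting $B_{j}:=\sup_{X}(\varphi_{j}+\rho_{j}-\rho_{2})$ (uniformly bounded once $M_{j}$ is), the functions $\varphi_{j}+\rho_{j}-\rho_{2}-B_{j}$ are $\omega_{2}$-psh, continuous, normalized, and satisfy Monge--Amp\`ere equations with densities uniformly bounded in $L^{p}$. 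Compactness gives a subsequence Cauchy in $L^{1}$; then the stability estimate \cite[Corollary~5.10]{KN15} upgrades this to Cauchy in $C(X)$, yielding uniform convergence of $\rho_{j}+\varphi_{j}$ and hence the continuous limit $\rho+\varphi_{\lambda}$. Thus the degenerate reference form $\theta$ never enters the continuity argument: everything runs on the genuine Hermitian metric $\omega_{2}$.
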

		
		\begin{proof}

Let $\rho_j$ be a decreasing sequence of smooth functions such that 
$\omega_j:=\omega+\beta+dd^c\rho_j \textgreater (1-\frac{1}{2j})\omega$. In particular, $\omega_j$ are Hermitian metrics on $X$ for $j\geq 2$.
Choose a sequence $0 \textless f_j \in L^p(X,\omega^n)\cap C^{\infty}(X)$, such that $||f_j-f||_p \rightarrow 0$.  By  \cite{Che87}, we can solve 
			$$(\omega_j+dd^c\varphi_j)^n=e^{\lambda \varphi_j}f_j\omega^n,$$ 
			with 	$\varphi_j\in  C^{\infty}(X)$ satisfying $\omega_j+dd^c\varphi_j>0$.

			%
Set $M_j:=\sup_X\varphi_j$, and $\psi_j=\varphi_j-M_j$.	Then the above equation reads that 
$$(\omega_j+dd^c\psi_j)^n=e^{\lambda \psi_j+\lambda M_j}f_j\omega^n.$$	
\noindent \textbf{Claim.} $M_j$ is uniformly bounded.\\
We first prove that $M_j$ is uniformly bounded from above. Let $G\in C^{\infty}(X)$ be the Gauduchon function such that $e^G\omega^{n-1}$ is $dd^c$-closed. Assume $\omega+\beta\leq C\omega$ for some sufficiently large $C=C(\omega,\beta)$, then 
$$(C\omega+dd^c\rho_j+dd^c\psi_j)^n\geq(\omega_j+dd^c\psi_j)^n=e^{\lambda \psi_j+\lambda M_j}f_j\omega^n.$$ 
By \cite[Lemma 1.9]{Ngu16}, we get
$$(C\omega+dd^c\rho_j+dd^c\psi_j)\wedge (C\omega)^{n-1}\geq e^{\frac{\lambda \psi_j+\lambda M_j}{n}}f_j^{\frac{1}{n}}\omega^n.$$
Therefore 
$$(C\omega+dd^c\rho_j+dd^c\psi_j)\wedge e^G(C\omega)^{n-1}\geq e^{\frac{\lambda \psi_j+\lambda M_j}{n}}f_j^{\frac{1}{n}}e^G\omega^n.$$
Integration by parts, we get 
$$C^n\int_Xe^G\omega^{n}\geq \int_Xe^{\frac{\lambda \psi_j+\lambda M_j}{n}}f_j^{\frac{1}{n}}e^G\omega^n.$$
Set $A_j=\int_Xf^{\frac{1}{n}}_je^G\omega^n$. 
By the convexity of exponential  function we get
$$\frac{1}{A_j}\int_Xe^{\frac{\lambda \psi_j}{n}}f_j^{\frac{1}{n}}e^G\omega^n\geq\exp(\frac{1}{A_j}\int_X\frac{\lambda \psi_j}{n}f_j^{\frac{1}{n}}e^G\omega^n).$$
Thus
$$M_j\leq\frac{1}{\lambda} \left(n\log\frac{\alpha_0C}{A_j}-\frac{1}{A_j}\int_X\frac{\lambda \psi_j}{n}f_j^{\frac{1}{n}}e^G\omega^n\right).$$
Noting that 
$$\int_X f_j^{\frac{1}{n}}e^G\omega^n \rightarrow \int_X f^{\frac{1}{n}}e^G\omega^n>0,$$	
similar arguments as in \cite[Proof of Claim 2.6]{Ngu16} yields  that $A_j$ is uniformly bounded by a constant indenpendent of $j\geq 1$. 	

Set $d\mu=f_j^{\frac{1}{n}}e^G\omega^n$.  
To get an upper bound for $M_j$, it suffices to show that $-\int_X\frac{\lambda \psi_j}{n}d\mu$ is uniformly bounded from above. Set $N_j=\sup_j(\psi_j+\rho_j)$.
Since $\rho_j$ is uniform bounded on X,  it is easy to see that $\inf_X\rho_j\leq N_j\leq \sup_X\rho_j$, thus $N_j$ is uniformly bounded. Thus we only need to prove $\int_X-(\psi_j+\rho_j-N_j)d\mu$ is uniformly bounded from above.

From H\"older's inequality, for any Borel set $E\subset X$, we have 
$$\mu(E)\leq ||f_j^{\frac{1}{n}}e^G\omega^n||_{np}·vol(E)^{\frac{1}{q}},$$ 
where $q$ satisfies $\frac{1}{q}+\frac{1}{np}=1$.						

From \cite[Corollary 2.4]{DK12}, there are constants $C_1=C_1(p,X,\omega)$ and $C_2=C_2(X,\omega)$, for any compact subset $K\subset X$, 	$$vol(K)\leq C_1Cap_{C\omega}(K)\exp(-C_2Cap_{C\omega}(K)^{-\frac{1}{n}}).$$  In particular,  $vol(K)\leq C'Cap_{C\omega}(K)^2$ for some uniform constant $C'(C_1,C_2)$. 
We have the following computation:
\begin{align*}
	\int_X-(\psi_j+\rho_j-N_j)(x)d\mu(x)&=\int_Xd\mu(x)\int_{0}^{-(\psi_j+\rho_j-N_j)(x)}dt\\
	&=\int_Xd\mu(x)\int_{0}^{1}dt+\int_Xd\mu(x)\int_{1}^{-(\psi_j+\rho_j-N_j)(x)}dt\\
	&=A_j+\int_1^{+\infty}\mu(\{\psi_j+\rho_j -N_j\leq -t\})dt\\
	&\leq A_j+ C'\int_1^{+\infty}Cap_{C\omega}(\{ \psi_j+\rho_j -N_j\leq -t \})^2\\
	&\leq A_j+C'\int_1^{+\infty}Cap_{C\omega}(\{ \psi_j+\rho_j -N_j< -t+\varepsilon \})^2\\
	&\leq A_j+C'C\int_1^{+\infty}\frac{1}{(t-\varepsilon)^2}\leq C'',
\end{align*}
where the third equality follows from the Fubini theorem and the third  inequality follows from \cite[Proposition 2.5]{DK12}, and $C''$ is a uniform constant depending only on $(p,X,\omega,\beta)$.

Next we want to get a lower bound of $M_j$. The proof is inspired by \cite[Claim 2.6]{Ngu16}. By \cite{TW10}, for each $j\geq 1$, there exist a unique $u_j\in \mbox{PSH}(X,\omega_2)\cap C(X)$ with  $\sup_Xu_j=0$, and a unique constant $c_j>0$ such that 
$$(\omega_2+dd^cu_j)^n=c_jf_j\omega^n.$$
By \cite[Lemma 5.9]{KN15}, the sequence $\{c_j\}$ is bounded away from $0$, i.e. there exists a uniform constant $c_0$ such that $c_j>c_0$.
Since $\sup_X\psi_j=0$, we get that 				
\begin{align*}
(\omega_2+dd^c({\rho_j+\psi_j-\rho_2}))^n&=\frac{e^{\lambda\psi_j+\lambda M_j}}{c_j}(\omega_2+dd^c(\rho_j+u_j))^n\\
&\leq  \frac{e^{\lambda M_j}}{c_j}(\omega_2+dd^c(\rho_j+u_j))^n,
\end{align*}
Then by \cite[Corollary 2.4]{Ngu16}, we have that $e^{\lambda M_j}\geq c_j>c_0>0$, i.e. $M_j>\frac{1}{\lambda}\log(c_0)$. We thus complete the proof of the \textbf{Claim}.

Now we rewrite the complex Monge-Amp\`ere equation 
	$$(\omega_j+dd^c\varphi_j)^n=e^{\lambda \varphi_j}f_j\omega^n$$ 
	as 
		$$(\omega_2+dd^c(\varphi_j+\rho_j-\rho_2-B_j))^n=e^{\lambda \varphi_j+\lambda B_j}f_j\omega^n,$$ 
		where $B_j=\sup_X(\varphi_j+\rho_j-\rho_2)$. Let $M_\infty:=\sup_j\|\rho_j\|_{L^\infty}<+\infty$. It is easy to see that $M_j-2M_\infty\leq B_j\leq M_j+2M_\infty$.
By \textbf{Claim}, we know that  $B_j$ is uniformly bounded, and thus
			$$e^{\lambda \varphi_j}f_j\omega^n \leq Cf_j\omega^n$$ 
	for some uniform constant $C>0$. This means that $e^{\lambda \varphi_j+\lambda B_j}f_j$ is uniformly bounded in $L^p(X,\omega)$, since $\|f_j-f\|_p\rightarrow 0$. Up to a subsequence, we assume that $B_j\rightarrow B<\infty$ as $j\rightarrow \infty$. Since $\{\varphi_j+\rho_j-\rho_2-B_j\}\in \mbox{PSH}(X,\omega_2)$ and $\sup_X(\varphi_j+\rho_j-\rho_2-B_j)=0$, up to a subsequence, we may assume that $\{\varphi_j+\rho_j-\rho_2-B_j\}$ is Cauchy in $L^1(X,\omega^n)$. Then from \cite[Corollary 5.10]{KN15}, we get that $\varphi_j+\rho_j-\rho_2-B_j$ is Cauchy in PSH$(X,\omega_2)\cap C(X)$, thus $\phi_j+\rho_j-\rho_2-B_j$ converges uniformly to $\phi+\rho-\rho_2-B\in C(X)$ on $X$.
%
By Bedford-Taylor \cite{BT76}, we have 
			$$(\omega_2 +dd^c (\phi_j+\rho_j-\rho_2-B_j))^n \rightarrow (\omega_2+ dd^c(\phi+\rho-\rho_2-B))^n=(\omega+\beta+dd^c(\phi+\rho-B))^n.$$
Meanwhile, it holds that 		$e^{-\lambda \phi_j+\lambda B_j}f_j \omega^n \rightarrow e^{\lambda \phi +\lambda B}f\omega^n$. Replacing $\phi$ by $\phi-B$, we get 
			$$(\omega+\beta+dd^c\rho+dd^c \phi)^n=e^{\lambda \phi}f\omega^n,$$
	in the weak sense of currents, where $\phi \in \mbox{PSH}(X,\omega+\beta+dd^c\rho)$ and $\rho+\phi \in C(X)$.

		\end{proof}
The following lemma is due to Lamari \cite{Lam1}.
		\begin{lem}[\cite{Lam1}]\label{lem: lamari}
			Let $\alpha$ be a smooth real (1,1)-form, There exists a distribution $\psi$ on X such that $\alpha+dd^c\psi\geq0$ if and only if:
			
			$$\int_X\alpha \wedge \gamma^{n-1}\geq0$$
			for any Gauduchon metric $\gamma$ on X.
		\end{lem}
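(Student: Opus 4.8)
The plan is to prove this by the Hahn--Banach duality argument of Sullivan--Harvey--Lawson type, which is exactly Lamari's method. First I would dispose of the ``only if'' direction, which is immediate: if $\psi$ is a distribution with $\alpha+dd^c\psi\geq 0$ and $\gamma$ is a Gauduchon metric, then $\gamma^{n-1}$ is a positive $(n-1,n-1)$-form with $dd^c(\gamma^{n-1})=0$, so integration by parts gives $\int_X dd^c\psi\wedge\gamma^{n-1}=\int_X\psi\,dd^c(\gamma^{n-1})=0$, whence $\int_X\alpha\wedge\gamma^{n-1}=\int_X(\alpha+dd^c\psi)\wedge\gamma^{n-1}\geq 0$.

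For the converse I would set up the following functional-analytic framework. Fix once and for all a Gauduchon metric $\omega_G$ on $X$ (it exists by Gauduchon's theorem). Work in the space $\mathcal D'^{1,1}(X)$ of real $(1,1)$-currents with its weak topology, which is the topological dual of the Fr\'echet space of smooth real $(n-1,n-1)$-forms. Let $\mathcal P\subset\mathcal D'^{1,1}(X)$ be the convex cone of positive $(1,1)$-currents and let $W=\{dd^c\psi:\psi\in\mathcal D'(X)\}$ be the linear subspace of $dd^c$-exact $(1,1)$-currents. The statement to be proved is precisely that $\alpha\in\mathcal P+W$. I would establish this in two steps: (i) $\mathcal P+W$ is a \emph{closed} convex cone; (ii) the hypothesis prevents $\alpha$ from being strictly separated from $\mathcal P+W$, so $\alpha\in\overline{\mathcal P+W}=\mathcal P+W$.

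Step (ii) I would carry out as follows. Suppose $\alpha\notin\mathcal P+W$. By the Hahn--Banach separation theorem there is a continuous linear functional, represented by a smooth real $(n-1,n-1)$-form $\Theta$, with $\int_X R\wedge\Theta\geq 0$ for all $R\in\mathcal P+W$ and $\int_X\alpha\wedge\Theta<0$. Testing against $R=\pm dd^c\psi\in W$ forces $dd^c\Theta=0$, and testing against positive currents of the form $i\,\ell\wedge\bar\ell\,\mu$ (with $\mu\geq 0$ a measure, $\ell$ a $(1,0)$-form) forces $\Theta$ to be a weakly positive $(n-1,n-1)$-form. Then for every $\varepsilon>0$ the form $\Theta+\varepsilon\,\omega_G^{n-1}$ is a \emph{strictly} positive, $dd^c$-closed $(n-1,n-1)$-form, so by Michelsohn's theorem it equals $\gamma_\varepsilon^{n-1}$ for a unique Hermitian metric $\gamma_\varepsilon$, which is Gauduchon because $dd^c(\gamma_\varepsilon^{n-1})=dd^c(\Theta+\varepsilon\omega_G^{n-1})=0$. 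The hypothesis then gives $0\leq\int_X\alpha\wedge\gamma_\varepsilon^{n-1}=\int_X\alpha\wedge\Theta+\varepsilon\int_X\alpha\wedge\omega_G^{n-1}$ for all $\varepsilon>0$; letting $\varepsilon\to 0$ yields $\int_X\alpha\wedge\Theta\geq 0$, contradicting $\int_X\alpha\wedge\Theta<0$.

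The main obstacle is step (i), the closedness of $\mathcal P+W$, and the crucial input there is again that $\omega_G$ is Gauduchon. If $T_j+S_j\to\alpha$ weakly with $T_j\in\mathcal P$ and $S_j\in W$, then pairing with $\omega_G^{n-1}$ and using $dd^c(\omega_G^{n-1})=0$ shows $\int_X T_j\wedge\omega_G^{n-1}=\int_X(T_j+S_j)\wedge\omega_G^{n-1}\to\int_X\alpha\wedge\omega_G^{n-1}$, so the positive currents $T_j$ have uniformly bounded mass; passing to a weakly convergent subsequence, $T_j\to T\in\mathcal P$, hence $S_j=(T_j+S_j)-T_j\to\alpha-T$. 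It then remains to know that $\alpha-T$, being a weak limit of $dd^c$-exact currents, is itself $dd^c$-exact — i.e. that the space of $dd^c$-exact $(1,1)$-currents is weakly closed. This is where I would invoke the finite-dimensionality of the Aeppli cohomology $H^{1,1}_A(X)$ on a compact complex manifold (equivalently, the Hodge-type decomposition for the associated fourth-order elliptic operator), which yields that $W$ is closed. Granting this, $\alpha=T+(\alpha-T)\in\mathcal P+W$, so $\mathcal P+W$ is closed and the proof concludes. I expect the delicate point to be precisely this topological bookkeeping — ensuring that the weak topology, the mass bound, and the closedness of $W$ fit together so that Hahn--Banach applies to a genuinely closed set.
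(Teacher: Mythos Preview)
Your proof is correct and is precisely Lamari's Hahn--Banach duality argument. Note that the paper does not give its own proof of this lemma; it is simply cited from \cite{Lam1}, so there is nothing to compare against beyond observing that your proposal reproduces the original method.
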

		\begin{proof}[Proof of Theorem \ref{thm:dp conj}]
	The proof follow the ideas of  Chiose \cite{Ch16}, Popovici \cite{Pop}, and \cite{Ngu16}. We argue by contradiction. Suppose to the contrary, that $\{\beta\}$ does not contain any K\"ahler current,  by Lemma \ref{lem: lamari}, there is a sequence of positive numbers  $\delta_j$ decreases to 0, for any $j\in \mathbb{N}^*$ there exists a Gauduchon metric $g_j$ such that
		$$\int_X(\beta+dd^c\rho-\delta_j\omega)\wedge g_j^{n-1}\leq0.$$
		Set $G_j:=g_j^{n-1}$. Then the above inequality is equivalent to
\begin{align}
	\int_X(\beta+dd^c\rho)\wedge G_j\leq \delta_j\int_X\omega \wedge G_j.\label{equ: dp conj 1}
\end{align}
By Theorem \ref{thm: main 1}, we can solve the following degenerate complex Monge-Amp\`ere equations:
		$$(\beta+dd^c\rho+dd^cv_j)^n=c_j\omega\wedge G_j,\ \ v_j\in \mbox{PSH}(X,\beta+dd^c\rho)\cap L^{\infty}(X),\ \ \sup_Xv_j=0.$$
Taking integration both sides, we get 
\begin{align}c_j=\frac{\int_X\beta^n}{\int_X\omega \wedge G_j}\textgreater0.\label{equ: dp conj 2}
	\end{align}
		Set $\beta_j:=\beta+dd^c\rho+dd^cv_j$. By (\ref{equ: dp conj 1}),
\begin{align}
	\int_X\beta_j\wedge G_j=\int_X(\beta+dd^c\rho)\wedge G_j\leq \delta_j \int_X\omega \wedge G_j.\label{equ: dp conj 3}
	\end{align}
We need the following
\begin{lem}\label{lem: dp conj key lem}
$\left(\int_X\beta_j\wedge G_j\right)\cdot\left(\int_X\beta_j^{n-1}\wedge\omega\right)\geq\frac{c_j}{n}\left(\int_X\omega\wedge G_j\right)^2.$
	\end{lem}
The proof of Lemma \ref{lem: dp conj key lem} is much technique, to make the idea of the proof of Theorem \ref{thm:dp conj} more easier to follow, we 	present the proof of  Lemma \ref{lem: dp conj key lem} at the end of this section.

	Combining (\ref{equ: dp conj 2}) , (\ref{equ: dp conj 3}), with Lemma \ref{lem: dp conj key lem}, we have 
$$\frac{1}{n}\int_X\beta^n\leq\delta_j\int_X\beta_j^{n-1}\wedge \omega.$$
Since $dd^c\omega=0$, 
$$\int_X\beta_j^{n-1}\wedge \omega=\int_X(\beta+dd^c\rho)^{n-1}\wedge\omega.$$
Letting $\delta_j\rightarrow 0$, we get a contradiction, since $\int_X\beta^n\textgreater0$. 
We thus complete the proof of Theorem \ref{thm:dp conj}.

		\end{proof}

\begin{lem}\label{lem: dp conj key lem 2}Let $g$ be a Gauduchon metric on $X$, and set $G=g^{n-1}$.
	For $0\textless\varepsilon\textless1$, let $v_{\varepsilon}\in \mbox{PSH}(X,\beta+dd^c\rho)\cap L^{\infty}(X)$ be the unique solution to
	$$(\beta+dd^c\rho+dd^cv_{\varepsilon})^n=e^{\varepsilon v_{\varepsilon}}\omega\wedge G.$$
	Then
	$$\left(\int_X(\beta+dd^c\rho+dd^c v_{\varepsilon})\wedge G\right)·\left(\int_X(\beta+dd^c\rho+dd^cv_{\varepsilon})^{n-1}\wedge \omega\right) \geq\frac{1}{n}\left(\int_Xe^{\frac{\varepsilon v_{\varepsilon}}{2}}\omega\wedge G\right)^2.$$
\end{lem}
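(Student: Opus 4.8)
The plan is to deduce the stated inequality from a pointwise linear--algebra estimate together with the Cauchy--Schwarz inequality, the Monge--Amp\`ere equation entering only to identify the resulting lower bound. Write $\alpha_\varepsilon:=\beta+dd^c\rho+dd^cv_\varepsilon$, a closed positive $(1,1)$--current with bounded potential, so that the equation reads $\alpha_\varepsilon^n=e^{\varepsilon v_\varepsilon}\,\omega\wedge G$ (Bedford--Taylor product on the left). For a positive $(n,n)$--current $\eta$ let $[\eta]$ denote the density of its absolutely continuous part with respect to $\omega^n$, so that $[\eta]\,\omega^n\le\eta$ with equality when $\eta$ is absolutely continuous (in particular for $\alpha_\varepsilon^n$ and for $\omega\wedge G$). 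Since $g$ is Gauduchon we have $dd^cG=0$, hence $\int_X\alpha_\varepsilon\wedge G=\int_X\beta\wedge G$; this will be used repeatedly.

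The pointwise core of the argument is the estimate
\[
[\gamma^n]\,[\omega\wedge g^{n-1}]\ \le\ n\,[\gamma\wedge g^{n-1}]\,[\gamma^{n-1}\wedge\omega],
\]
valid at each point for any Hermitian metrics $\omega,g$ and any semipositive real $(1,1)$--form $\gamma$. I would prove it by simultaneously diagonalizing $g$ and $\gamma$: choosing a basis with $g=i\sum_k dz_k\wedge d\bar z_k$, $\gamma=i\sum_k\lambda_k\,dz_k\wedge d\bar z_k$ ($\lambda_k\ge0$) and $\omega=i\sum_{k,l}\omega_{k\bar l}\,dz_k\wedge d\bar z_l$ ($(\omega_{k\bar l})>0$), an explicit computation of the four densities reduces the inequality to
\[
\Big(\prod_k\lambda_k\Big)\Big(\sum_l\omega_{l\bar l}\Big)\ \le\ \Big(\sum_k\lambda_k\Big)\Big(\sum_l\omega_{l\bar l}\prod_{j\ne l}\lambda_j\Big),
\]
which holds term by term because $\sum_k\lambda_k\ge\lambda_l$ and $\omega_{l\bar l}\ge0$ for each $l$. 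When $\gamma=\alpha_\varepsilon$ has only a bounded potential, the same estimate holds $\omega^n$--almost everywhere among the absolutely continuous parts: locally one writes the potential as a smooth function plus a psh function, regularizes the psh part by convolution to obtain smooth $\gamma_\delta\searrow\alpha_\varepsilon$, applies the smooth case to $\gamma_\delta$, and passes to the limit via the standard Bedford--Taylor comparison of mixed Monge--Amp\`ere measures with those of decreasing smooth approximants.

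Granting this, the lemma follows by Cauchy--Schwarz. Discarding singular parts only enlarges $\int_X\alpha_\varepsilon\wedge G$ and $\int_X\alpha_\varepsilon^{n-1}\wedge\omega$, so
\[
\Big(\int_X\alpha_\varepsilon\wedge G\Big)\Big(\int_X\alpha_\varepsilon^{n-1}\wedge\omega\Big)\ \ge\ \left(\int_X[\alpha_\varepsilon\wedge G]^{1/2}\,[\alpha_\varepsilon^{n-1}\wedge\omega]^{1/2}\,\omega^n\right)^2.
\]
By the pointwise estimate and then the equation, which gives $[\alpha_\varepsilon^n]=e^{\varepsilon v_\varepsilon}[\omega\wedge G]$, the integrand on the right is $\ge n^{-1/2}e^{\varepsilon v_\varepsilon/2}[\omega\wedge G]$ almost everywhere, and since $[\omega\wedge G]\,\omega^n=\omega\wedge G$ the right--hand side is $\ge\tfrac1n\big(\int_X e^{\varepsilon v_\varepsilon/2}\,\omega\wedge G\big)^2$, as claimed.

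The point I expect to be the main obstacle is the extension of the pointwise estimate from smooth $\gamma$ to the bounded potential $\alpha_\varepsilon$, i.e.\ controlling the absolutely continuous parts of $\alpha_\varepsilon\wedge G$, $\alpha_\varepsilon^{n-1}\wedge\omega$ and $\alpha_\varepsilon^n$ under convolution. A cleaner alternative, in the spirit of Sections~2--4, is a global approximation: with the Hermitian metrics $\beta_j:=\beta+dd^c\rho_j+\tfrac1j\omega\ge\tfrac1{2j^2}\omega$ solve, by \cite{Che87}, the smooth equations $(\beta_j+dd^cw_j)^n=e^{\varepsilon w_j}\,\omega\wedge G$, derive uniform $L^\infty$ bounds on $w_j$ as in Sections~3--4, and, using the monotonization $(\sup_{k\ge j}(\rho_k+w_k))^*$ and Bedford--Taylor convergence, show $\beta_j+dd^cw_j\to\alpha_\varepsilon$ (the limit being $\alpha_\varepsilon$ by the uniqueness Theorem~\ref{thm: uniq of lambda>0}) with weak convergence of the mixed Monge--Amp\`ere currents involved; applying the smooth case to $\beta_j+dd^cw_j$ and letting $j\to\infty$ then gives the result, since $\int_X(\beta_j+dd^cw_j)\wedge G=\int_X\beta\wedge G+\tfrac1j\int_X\omega\wedge G\to\int_X\alpha_\varepsilon\wedge G$ and $e^{\varepsilon w_j}\to e^{\varepsilon v_\varepsilon}$ boundedly and almost everywhere, so $\int_X e^{\varepsilon w_j/2}\,\omega\wedge G\to\int_X e^{\varepsilon v_\varepsilon/2}\,\omega\wedge G$. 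In this route the delicate step is the weak convergence of $(\beta_j+dd^cw_j)^{n-1}\wedge\omega$ for the non--monotone solutions $w_j$.
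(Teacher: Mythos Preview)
Your core strategy---the pointwise linear-algebra inequality followed by Cauchy--Schwarz---is exactly what underlies the smooth case (this is \cite[Claim~4.8]{Ngu16}, which the paper quotes). The content of the lemma is precisely the passage from smooth forms to the bounded-potential current $\alpha_\varepsilon$, and you correctly flag this as the obstacle without resolving it. Your Route~1 (extend the pointwise density inequality by convolution) is genuinely problematic: Bedford--Taylor gives weak convergence of $\gamma_\delta^k\wedge\eta$ as measures, but weak convergence does not control the absolutely continuous densities pointwise, so the inequality $[\gamma_\delta^n][\omega\wedge G]\le n[\gamma_\delta\wedge G][\gamma_\delta^{n-1}\wedge\omega]$ does not pass to the limit a.e.\ in any obvious way. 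Your Route~2 is closer to what the paper does, but you have identified the right difficulty: a single-parameter approximation by smooth $\beta_j$-solutions $w_j$ is not monotone, and the monotonized envelope $(\sup_{k\ge j}(\rho_k+w_k))^*$ no longer solves the equation, so you cannot use it to carry the product inequality for $(\beta_j+dd^cw_j)^{n-1}\wedge\omega$.

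The paper closes this gap with a \emph{two-parameter} approximation that makes each limit either uniform or monotone. First, for each $s\ge1$ it solves $(\tfrac{1}{s}\omega+\beta+dd^c\rho+dd^cu_s)^n=e^{u_s}\omega\wedge G$ via Theorem~\ref{thm: twist cma dem-pau}; the point of that theorem is that the solution $\rho+u_s$ is a \emph{uniform} limit of smooth solutions $\rho_k+\phi_{k,s}$ (this is where the $\rho_j$-approximation enters), so applying the smooth inequality to $\tau_{k,s}$ and letting $k\to\infty$ is unproblematic. Second---and this is the step you are missing---the comparison principle (\cite[Lemma~3.6]{Ngu16}, applied after passing to a common Hermitian reference $\beta+dd^c\rho_{s_3}$) shows that $u_s$ is \emph{decreasing} in $s$ and bounded below by the target $u$. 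Hence $u_s\searrow u$ monotonically, Bedford--Taylor applies to each of $(\beta+dd^c\rho+dd^cu_s)\wedge G$ and $(\beta+dd^c\rho+dd^cu_s)^{n-1}\wedge\omega$, and the inequality survives the second limit. In short: the extra perturbation by $\tfrac{1}{s}\omega$ is not redundant with the $\rho_j$-regularization; it is precisely what buys monotonicity and lets you avoid the non-monotone limit you were worried about.
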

\begin{proof}
	By rescaling, say  $({\beta+dd^c\rho},{\omega},g)$ replaced by $(\varepsilon(\beta+dd^c\rho),\varepsilon\omega,\varepsilon g)$, without loss of generaility,  we can assume $\varepsilon=1$ and  write $u=\varepsilon v_{\varepsilon}$. 
	
Then we have the equation  
\begin{align}
	(\beta+dd^c\rho+dd^cu)^n=e^{u}\omega\wedge G.\label{equ: dp conj ine 1}
	\end{align}	
We are going to show that 
			$$\left(\int_X(\beta+dd^c\rho+dd^c u)\wedge G\right)·\left(\int_X(\beta+dd^c\rho+dd^cu)^{n-1}\wedge \omega\right) \geq\frac{1}{n}\left(\int_Xe^{\frac{u}{2}}\omega\wedge G\right)^2.$$
			
			By Theorem \ref{thm: twist cma dem-pau}, for every $s\geq1$, there exists $u_s\in \mbox{PSH}(X,\beta+dd^c\rho+\frac{1}{s}\omega)$, such that
		\begin{align}(\beta+dd^c\rho+\frac{1}{s}\omega+dd^cu_s)^n=e^{u_s}\omega \wedge G,\ \ \rho+u_s\in C(X).\label{equ: dp conj ine 2}\end{align}
%
		By the proof of Theorem \ref{thm: twist cma dem-pau},  we know that $\rho+u_s$ is the uniform limit of $\rho_k+\phi_{k,s}\in C^\infty(X)$ as $k\rightarrow +\infty$:
			$$(\frac{1}{s}\omega+\beta+dd^c\rho_k+dd^c\phi_{k,s})^n=e^{\phi_{k,s}}\omega\wedge G, \ \  \frac{1}{s}\omega+\beta+dd^c(\rho_k+\phi_{k,s})>0.$$
Set 
$$\tau_{k,s}:=\frac{1}{s}\omega+\beta+dd^c\rho_k+dd^c\phi_{k,s}\textgreater0.$$
By \cite[Claim 4.8]{Ngu16}, we have 
				$$\left(\int_X\tau_{k,s}\wedge G\right)·\left(\int_X\tau_{k,s}^{n-1}\wedge \omega\right)\geq\frac{1}{n}\left(\int_X\sqrt{\frac{\tau_{k,s}^n}{\omega^n}·\frac{\omega\wedge G}{\omega^n}}\omega^n\right)^2.$$
Because $\rho_k+\phi_{k,s}$ uniformly convergence to $\rho+u_s$, by Bedford-Taylor \cite{BT76}, we have 
\begin{align*}\tau_{k,s}\wedge G&\rightarrow(\frac{1}{s}\omega+\beta+dd^c\rho+dd^cu_s)\wedge G,\\
			\tau_{k,s}^{n-1}\wedge \omega &\rightarrow (\frac{1}{s}\omega+\beta+dd^c\rho+dd^cu_s)^{n-1}\wedge \omega,
			\end{align*}
weakly as $k\rightarrow \infty$, thus
\begin{align}
	&\left(\int_X(\frac{1}{s}\omega+\beta+dd^c\rho+dd^cu_s)\wedge G\right)\cdot\left(\int_X(\frac{1}{s}\omega+\beta+dd^c\rho+dd^cu_s)^{n-1}\wedge \omega\right)\label{equ: dp conj lem ine 1} \\
&	\geq \lim_{k\rightarrow\infty}\frac{1}{n}(\int_Xe^{\frac{\phi_k}{2}}\omega\wedge G)^2=\frac{1}{n}(\int_Xe^{\frac{u_s}{2}}\omega\wedge G)^2.\notag
	\end{align}
\noindent\textbf{Claim 1.} The sequence $\{u_s\}$ is decreasing  as $s\rightarrow \infty$, and $u_s\geq u$. In particular, $\{u_s\}$ is uniformly bounded.

To see this, let $0<s_1<s_2$ and $s_3>0$ be a sufficiently large integer, such that 
$$\frac{\omega}{s_1}+\beta+dd^c\rho_{s_3}>\frac{\omega}{s_1}+\beta+dd^c\rho_{s_3}>0.$$
From (\ref{equ: dp conj ine 1}) and (\ref{equ: dp conj ine 2}), we have
\begin{align*}
	((\beta+dd^c\rho_{s_3})+dd^c(\rho-\rho_{s_3}+u))^n&=e^{\rho-\rho_{s_3}+u}\cdot(e^{\rho_{s_3}-\rho})\omega\wedge G,\\
((\frac{\omega}{s_1}+\beta+dd^c\rho_{s_3})+dd^c(\rho-\rho_{s_3}+u_{s_1}))^n&=e^{\rho-\rho_{s_3}+u_{s_1}}\cdot(e^{\rho_{s_3}-\rho}\omega\wedge G),\\
((\frac{\omega}{s_2}+\beta+dd^c\rho_{s_3})+dd^c(\rho-\rho_{s_3}+u_{s_2}))^n&=e^{\rho-\rho_{s_3}+u_{s_2}}\cdot(e^{\rho_{s_3}-\rho}\omega\wedge G).
	\end{align*}
By \cite[Lemma 3.6]{Ngu16}, 
we get that $u_{s_1}\geq u_{s_2}\geq u$. The proof of the \textbf{Claim 1} is complete.


Set $\tilde{u}:=\lim_{s\rightarrow\infty}u_s$. By Bedford-Taylor \cite{BT82}, taking $s\rightarrow \infty$, we have 
			$$(\beta+dd^c\rho+dd^c\tilde{u})^n=e^{\tilde{u}}\omega\wedge G.$$
			Then from Theorem \ref{thm: uniq of lambda>0}, 
%
 $\tilde{u}=u$. Applying Bedford-Taylor convergence theorem \cite{BT82}, we get 
			$$\left(\int_X(\beta+dd^c\rho+dd^c u)\wedge G\right)\cdot\left(\int_X(\beta+dd^c\rho+dd^cu)^{n-1}\wedge \omega\right) \geq\frac{1}{n}\left(\int_Xe^{\frac{u}{2}}\omega\wedge G\right)^2.$$
		The proof of Lemma \ref{lem: dp conj key lem 2} is complete.
	\end{proof}
\begin{proof}[Proof of Lemma \ref{lem: dp conj key lem}]
By Bedford-Taylor convergence theorem \cite{BT82}, it suffices to prove that $v_\varepsilon\rightarrow v_j$ uniformly on $X$  and $e^{\varepsilon v_\varepsilon}\rightarrow c_j$ as $\varepsilon\rightarrow 0$.
	The proof is the same as the proof of \cite[Theorem 3.2]{Ngu16} by considering the complex Monge-Ampere equation: 
	$$(\beta+dd^c(\rho+v_\varepsilon))^n=e^{\varepsilon(\rho+v_\varepsilon)}(e^{-\varepsilon\rho}\omega\wedge G_j).$$
	In fact, by Lemma \ref{prop: bound mlambda}, the sequence $\{\varepsilon M_\varepsilon:=\varepsilon\sup_X(\rho+v_\varepsilon)\}_{0\leq \varepsilon\leq 1}$ is unformly bounded, thus the LHS of the above equation is uniformly bounded in $L^p(X,\omega^n)$. Then by Theorem \ref{thm: main 2 in sect}, $\rho+v_\varepsilon-M_\varepsilon$ is uniformly bounded.  Also the sequence  $\{\rho+v_\varepsilon-M_\varepsilon\}_{0\leq \varepsilon\leq 1}$  is compact in $L^1(X,\omega^n)$. Without loss of generality, we may assume that $\varepsilon M_\varepsilon\rightarrow M$ and  $\{\rho+v_\varepsilon-M_\varepsilon\}$ is Cauchy in $L^1(X,\omega^n)$  as $\varepsilon\rightarrow 0$. An application of Proposition \ref{prop: stabi} yields that $\rho+v_\varepsilon-M_\varepsilon\rightarrow  \tilde v $ uniformly on $X$, and $\sup_X\tilde v=0$. By Bedford-Taylor convergence theorem \cite{BT82}, 
	$$(\beta+dd^c(\rho+v_\varepsilon))^n=(\beta+dd^c(\rho+v_\varepsilon-M_\varepsilon))^n\rightarrow (\beta+dd^c\tilde v)^n=e^M\omega\wedge G_j.$$
	By integration, one can see that $e^{\varepsilon v_\varepsilon}\rightarrow e^M=c_j$, and  
	by the uniqueness of the above equation (see Theorem \ref{thm: uniq of lambda=0}),  $\tilde v=v_j$. The proof of Lemma \ref{lem: dp conj key lem} is thus complete.
	\end{proof}


	\end{document}